\newtheorem{lem}{Lemma}[section]
\newtheorem{defi}[lem]{Definition}
\newtheorem{theo}[lem]{Theorem}
\newtheorem{cor}[lem]{Corollary}
\newtheorem{rem}[lem]{Remark}
\newcommand{\Abb}[5]{\begin{array}{ccccc}#1&:&#2&\longrightarrow&#3\\{}&{}&#4&\longmapsto&#5\end{array}}
\def\bs{\boldsymbol}
\def\mr{\mathring}
\def\ol{\overline}
\def\To{\longrightarrow}
\def\incl{\hookrightarrow}
\def\reals{\mathbb{R}}
\def\nat{\mathbb{N}}
\def\z{\mathbb{Z}}
\def\ga{\Gamma}
\def\gat{\ga_{\!t}}
\def\gan{\ga_{\!n}}
\def\om{\Omega}
\def\rdimom{\reals^{\dimom}}
\def\dimom{d}
\def\mcB{\mathcal{B}}
\def\mcH{\mathcal{H}}
\def\sfL{\mathsf{L}}
\def\sfH{\mathsf{H}}
\def\mbH{\bs{\mathsf{H}}}
\def\sfC{\mathsf{C}}
\newcommand{\B}[2]{\mcB^{#1}_{#2}}
\newcommand{\Harm}[2]{\mcH^{#1}_{#2}}
\renewcommand{\L}[2]{\sfL^{#1}_{#2}}
\renewcommand{\H}[2]{\sfH^{#1}_{#2}}
\newcommand{\bH}[2]{\mbH^{#1}_{#2}}
\newcommand{\C}[2]{\sfC^{#1}_{#2}}
\newcommand{\eps}{\varepsilon}
\DeclareMathOperator{\Lin}{Lin}
\DeclareMathOperator{\A}{A}
\DeclareMathOperator{\p}{\partial}
\DeclareMathOperator{\id}{id}
\DeclareMathOperator{\ed}{d}
\DeclareMathOperator{\cd}{\delta}
\DeclareMathOperator{\grad}{grad}
\DeclareMathOperator{\rot}{rot}
\DeclareMathOperator{\divergence}{div}
\def\div{\divergence}
\DeclareMathOperator{\Rot}{Rot}
\DeclareMathOperator{\Div}{Div}
\DeclareMathOperator{\symGrad}{symGrad}
\DeclareMathOperator{\devGrad}{devGrad}
\DeclareMathOperator{\RotS}{\Rot_{\S}}
\DeclareMathOperator{\DivS}{\Div_{\S}}
\DeclareMathOperator{\DivT}{\Div_{\T}}
\DeclareMathOperator{\symRot}{symRot}
\DeclareMathOperator{\symRotT}{\symRot_{\T}}
\DeclareMathOperator{\Gradgrad}{Gradgrad}
\DeclareMathOperator{\RotRot}{RotRot}
\DeclareMathOperator{\RotRottS}{\RotRot_{\S}^{\!\top}}
\DeclareMathOperator{\divDiv}{divDiv}
\DeclareMathOperator{\divDivS}{\divDiv_{\S}}
\renewcommand{\S}{\mathbb{S}}
\newcommand{\T}{\mathbb{T}}
\newcommand{\E}{\mathcal{E}}
\newcommand{\R}{\mathcal{R}}
\newcommand{\PotP}{\mathcal{P}}
\newcommand{\PotQ}{\mathcal{Q}}
\newcommand{\PotN}{\mathcal{N}}
\newcommand{\norm}[1]{|#1|}
\newcommand{\bnorm}[1]{\big|#1\big|}
\newcommand{\scp}[2]{\langle#1,#2\rangle}
\title[Hilbert Complexes with Mixed Boundary Conditions -- Part 1: De Rham Complex]
{Hilbert Complexes with Mixed Boundary Conditions\\
Part 1: De Rham Complex}
\author{Dirk Pauly}
\author{Michael Schomburg}
\address{Fakult\"at f\"ur Mathematik, Universit\"at Duisburg-Essen, Germany}
\email[Dirk Pauly]{dirk.pauly@uni-due.de}
\email[Michael Schomburg]{michael.schomburg@uni-due.de}
\keywords{Hilbert complexes, compact embeddings, 
mixed boundary conditions, de Rham complex,
regular potentials, regular decompositions}
\subjclass{}
\date{\today; {\it Corresponding Author}: Dirk Pauly}
\thanks{}
\begin{document}

\def\titlerepude{\sf Hilbert Complexes with Mixed Boundary Conditions:\\
Regular Decompositions, Compact Embeddings,\\ 
and Functional Analysis ToolBox\\
Part 1: De Rham Complex}
\def\authorrepude{Dirk Pauly \& Michael Schomburg}
\def\daterepdue{\today}
\def\reportudemathyesno{no}
\def\reportudemathnumber{SM-UDE-826}
\def\reportudemathyear{2021}
\def\reportudematheingang{\daterepdue}
\newcommand{\preprintudemath}[5]{
\thispagestyle{empty}
\begin{center}\normalsize SCHRIFTENREIHE DER FAKULT\"AT F\"UR MATHEMATIK\end{center}
\vspace*{5mm}
\begin{center}#1\end{center}
\vspace*{5mm}
\begin{center}by\end{center}
\vspace*{0mm}
\begin{center}#2\end{center}
\vspace*{5mm}
\normalsize 
\begin{center}#3\hspace{69mm}#4\end{center}
\newpage
\thispagestyle{empty}
\vspace*{210mm}
Received: #5
\newpage
\addtocounter{page}{-2}
\normalsize
}
\ifthenelse{\equal{\reportudemathyesno}{yes}}
{\preprintudemath{\titlerepude}{\authorrepude}{\reportudemathnumber}{\reportudemathyear}{\reportudematheingang}}
{}


\begin{abstract}
We show that the de Rham Hilbert complex
with mixed boundary conditions on bounded strong Lipschitz domains
is closed and compact. The crucial results are compact embeddings 
which follow by abstract arguments using functional analysis
together with particular regular decompositions.
Higher Sobolev order results are proved as well.
\end{abstract}


\maketitle
\setcounter{tocdepth}{3}
{\small
\tableofcontents}


\section{Introduction}

In this paper we prove regular decompositions and resulting compact embeddings for 
the \emph{de Rham complex} (of vector fields)
\begin{equation*}
\def\arrowlength{5ex}
\def\arrowdistance{0}
\begin{tikzcd}[column sep=\arrowlength]
\cdots 
\arrow[r, rightarrow, shift left=\arrowdistance, "\cdots"] 
& 
\L{2}{}(\om) 
\ar[r, rightarrow, shift left=\arrowdistance, "\grad"] 
& 
[1em]
\L{2}{}(\om)
\arrow[r, rightarrow, shift left=\arrowdistance, "\rot"] 
& 
\L{2}{}(\om)
\arrow[r, rightarrow, shift left=\arrowdistance, "\div"] 
& 
\L{2}{}(\om)
\arrow[r, rightarrow, shift left=\arrowdistance, "\cdots"] 
&
\cdots,
\end{tikzcd}
\end{equation*}
and, more generally, for the \emph{de Rham complex} (of differential forms) 
\begin{equation*}
\def\arrowlength{5ex}
\def\arrowdistance{0}
\begin{tikzcd}[column sep=\arrowlength]
\cdots 
\arrow[r, rightarrow, shift left=\arrowdistance, "\cdots"] 
& 
\L{q-1,2}{}(\om) 
\ar[r, rightarrow, shift left=\arrowdistance, "\ed^{q-1}"] 
& 
[1em]
\L{q,2}{}(\om) 
\arrow[r, rightarrow, shift left=\arrowdistance, "\ed^{q}"] 
& 
\L{q+1,2}{}(\om) 
\arrow[r, rightarrow, shift left=\arrowdistance, "\cdots"] 
&
\cdots.
\end{tikzcd}
\end{equation*}
In forthcoming papers, we shall extend our results
to other more complicated complexes as well, such as 
the elasticity complex
\begin{equation*}
\def\arrowlength{5ex}
\def\arrowdistance{0}
\begin{tikzcd}[column sep=\arrowlength]
\cdots 
\arrow[r, rightarrow, shift left=\arrowdistance, "\cdots"] 
& 
\L{2}{}(\om) 
\ar[r, rightarrow, shift left=\arrowdistance, "\symGrad"] 
& 
[2.5em]
\L{2}{\S}(\om)
\arrow[r, rightarrow, shift left=\arrowdistance, "\RotRottS"] 
& 
[2.5em]
\L{2}{\S}(\om)
\arrow[r, rightarrow, shift left=\arrowdistance, "\DivS"] 
& 
[1em]
\L{2}{}(\om)
\arrow[r, rightarrow, shift left=\arrowdistance, "\cdots"] 
&
\cdots,
\end{tikzcd}
\end{equation*}
or the primal and dual biharmonic complexes
\begin{equation*}
\def\arrowlength{5ex}
\def\arrowdistance{0}
\begin{tikzcd}[column sep=\arrowlength]
\cdots 
\arrow[r, rightarrow, shift left=\arrowdistance, "\cdots"] 
& 
\L{2}{}(\om) 
\ar[r, rightarrow, shift left=\arrowdistance, "\Gradgrad"] 
& 
[2.5em]
\L{2}{\S}(\om)
\arrow[r, rightarrow, shift left=\arrowdistance, "\RotS"] 
& 
[1em]
\L{2}{\T}(\om)
\arrow[r, rightarrow, shift left=\arrowdistance, "\DivT"] 
& 
[1em]
\L{2}{}(\om)
\arrow[r, rightarrow, shift left=\arrowdistance, "\cdots"] 
&
\cdots,
\end{tikzcd}
\end{equation*}
\begin{equation*}
\def\arrowlength{5ex}
\def\arrowdistance{0}
\begin{tikzcd}[column sep=\arrowlength]
\cdots 
\arrow[r, rightarrow, shift left=\arrowdistance, "\cdots"] 
& 
\L{2}{}(\om) 
\ar[r, rightarrow, shift left=\arrowdistance, "\devGrad"] 
& 
[2.5em]
\L{2}{\T}(\om)
\arrow[r, rightarrow, shift left=\arrowdistance, "\symRotT"] 
& 
[2.5em]
\L{2}{\S}(\om)
\arrow[r, rightarrow, shift left=\arrowdistance, "\divDivS"] 
& 
[2em]
\L{2}{}(\om)
\arrow[r, rightarrow, shift left=\arrowdistance, "\cdots"] 
&
\cdots,
\end{tikzcd}
\end{equation*}
which is possible
due to the general structure and our unified approach and methods.
All complexes are considered with mixed boundary conditions
on a bounded strong Lipschitz domain $\om\subset\rdimom$.
Some of our results hold also for higher Sobolev orders.
Note that the first three complexes are formally symmetric
and that the last two complexes are formally adjoint or dual to each other.

These \emph{Hilbert complexes} share the same geometric sequence (complex) structure 
\begin{equation*}
\def\arrowlength{5ex}
\def\arrowdistance{0}
\begin{tikzcd}[column sep=\arrowlength]
\cdots 
\arrow[r, rightarrow, shift left=\arrowdistance, "\cdots"] 
& 
\H{}{0} 
\ar[r, rightarrow, shift left=\arrowdistance, "\A_{0}"] 
& 
\H{}{1}
\arrow[r, rightarrow, shift left=\arrowdistance, "\A_{1}"] 
& 
\H{}{2}
\arrow[r, rightarrow, shift left=\arrowdistance, "\cdots"] 
&
\cdots,
\end{tikzcd}
\qquad
R(\A_{0})\subset N(\A_{1}),
\end{equation*}
where $\A_{0}$ and $\A_{1}$ are densely defined and closed (unbounded) linear operators
between Hilbert spaces $\H{}{\ell}$.
The corresponding \emph{domain Hilbert complex} is denoted by
\begin{equation*}
\def\arrowlength{5ex}
\def\arrowdistance{0}
\begin{tikzcd}[column sep=\arrowlength]
\cdots 
\arrow[r, rightarrow, shift left=\arrowdistance, "\cdots"] 
& 
D(\A_{0})
\ar[r, rightarrow, shift left=\arrowdistance, "\A_{0}"] 
& 
D(\A_{1})
\arrow[r, rightarrow, shift left=\arrowdistance, "\A_{1}"] 
& 
\H{}{2}
\arrow[r, rightarrow, shift left=\arrowdistance, "\cdots"] 
&
\cdots.
\end{tikzcd}
\end{equation*}

In fact, we show that the assumptions of Lemma \ref{lem:cptembmaintheo} hold,
which provides an elegant, abstract, and short way to prove the crucial compact embeddings
\begin{align}
\label{cptemb1}
D(\A_{1})\cap D(\A_{0}^{*})\incl\H{}{1}
\end{align}
for the de Rham Hilbert complexes, cf.~Theorem \ref{theo:cptemb:derham},
Theorem \ref{theo:cptemb:derhamk}, and Theorem \ref{theo:cptemb:derhamvec},
Theorem \ref{theo:cptemb:derhamveck}.
In principle, our general technique 
-- compact embeddings by regular decompositions and Rellich's selection theorem --
works for all Hilbert complexes known in the literature, see, e.g., 
\cite{AH2020a} for a comprehensive list of such Hilbert complexes.

Roughly speaking a regular decomposition has the form
$$D(\A_{1})=\H{+}{1}+\A_{0}\H{+}{0}$$
with regular subspaces $\H{+}{0}\subset D(\A_{0})$ and $\H{+}{1}\subset D(\A_{1})$
such that the embeddings $\H{+}{0}\incl\H{}{0}$ and $\H{+}{1}\incl\H{}{1}$
are compact. The compactness is typically and simply given by Rellich's selection theorem,
which justifies the notion ``regular''.
By applying $\A_{1}$ any regular decomposition implies regular potentials
$$R(\A_{1})=\A_{1}\H{+}{1}$$
by the complex property. 
The respective regular potential and decomposition operators 
\begin{align*}
\PotP_{\A_{1}}:R(\A_{1})&\to\H{+}{1},
&
\PotQ_{\A_{1}}^{1}:D(\A_{1})&\to\H{+}{1},
&
\PotQ_{\A_{1}}^{0}:D(\A_{1})&\to\H{+}{0}
\end{align*}
are bounded and satisfy $\A_{1}\PotP_{\A_{1}}=\id_{R(\A_{1})}$ as well as
$\id_{D(\A_{1})}=\PotQ_{\A_{1}}^{1}+\A_{0}\PotQ_{\A_{1}}^{0}$.

Note that \eqref{cptemb1} implies several important results related 
to the particular Hilbert complex by the so-called FA-ToolBox, 
cf.~\cite{P2017a,P2019b,P2019a,P2020a} and \cite{PZ2016a,PZ2020a,PZ2020b}.
Upon others, one gets Friedrichs/Poincar\'e type estimates, closed ranges, compact resolvents, 
Helmholtz typ decompositions, comprehensive solution theories,
div-curl lemmas, discrete point spectra, eigenvector expansions, a posteriori error estimates,
and index theorems for related Dirac type operators.
See Theorem \ref{theo:minifatb:derham} and Theorem \ref{theo:minifatb:derhamvec} 
for a selection of such results.

For an historical overview on 
the compact embeddings \eqref{cptemb1} corresponding to the de Rham complex and Maxwell's equations,
also called Weck's or Weber-Weck-Picard's selection theorem,
see, e.g., the introductions in \cite{BPS2016a,NPW2015a},
the original papers \cite{W1974a,W1980a,P1984a,W1993a,J1997a,PWW2001a},
and the recent state of the art results 
for mixed boundary conditions and bounded weak Lipschitz domains in \cite{BPS2016a,BPS2018a,BPS2019a}.
Compact embeddings \eqref{cptemb1} corresponding to the biharmonic and the elasticity complex 
are given in \cite{PZ2020b} and \cite{PZ2016a,PZ2020a}, respectively.
Note that in the recent paper \cite{AH2020a}
similar results have been shown for the special case of no or full boundary conditions
using an alternative and more algebraic approach, 
the so-called Bernstein-Gelfand-Gelfand resolution (BGG). 

\section{FAT: FA-ToolBox}
\label{sec:FA}

We collect and present some old and new results from the so-called
functional analysis toolbox (FA-ToolBox).

\subsection{FAT I: Linear Operators, Adjoints, and Fundamental Lemmas}
\label{sec:FA1}

We shall work with bounded and unbounded linear operators.
For this, let $\H{}{0}$ and $\H{}{1}$ be Hilbert spaces.
For a \emph{bounded} linear operator $\A$ we use the notation
\begin{align}
\label{bdop}
\A:D(\A)\to\H{}{1}
\end{align}
where $D(\A)\subset\H{}{0}$ is the domain of definition of $\A$.
It's \emph{unbounded} version will be denoted by
\begin{align}
\label{unbdop}
\A:D(\A)\subset\H{}{0}\to\H{}{1}.
\end{align}
Kernel and range of $\A$ shall be denoted by $N(\A)$ and $R(\A)$, respectively.
Note that -- equipped with the standard graph inner product -- 
$D(\A)$ becomes a Hilbert space as long as $\A$ is closed.
The difference of the latter two versions of $\A$
comes from using the norm of $D(\A)$ or simply the norm of $\H{}{0}$, respectively.
Generally, inner product, norm, orthogonality, and orthogonal sum in a Hilbert space $\H{}{}$
shall be denoted by $\scp{\,\cdot\,}{\,\cdot\,}_{\H{}{}}$, $\norm{\,\cdot\,}_{\H{}{}}$,
$\bot_{\H{}{}}$, and $\oplus_{\H{}{}}$, respectively.
By $\dotplus$ we indicate a direct sum.
The dual space of a Banach or Hilbert space $\H{}{}$ 
will be written as $\H{'}{}$.

There are at least three different adjoints.
The bounded linear operator \eqref{bdop} has the \emph{Banach space adjoint}
$\A':\H{'}{1}\to D(\A)'$, which -- as usual -- may be identified with its modification 
$$\A'\R_{\H{}{1}}:\H{}{1}\to D(\A)',$$
where $\R_{\H{}{1}}:\H{}{1}\to\H{'}{1}$ denotes the Riesz isomorphism of $\H{}{1}$.
Another option is the \emph{Hilbert space adjoint} defined by
$$\A^{*}:=\R_{D(\A)}^{-1}\A'\R_{\H{}{1}}:\H{}{1}\to D(\A).$$
On the other hand, the unbounded linear operator \eqref{unbdop} has the \emph{Hilbert space adjoint}
$$\A^{*}:D(\A^{*})\subset\H{}{1}\to\H{}{0},$$
provided that $\A$ is densely defined. $\A^{*}$ is always closed and characterised by
$$\forall\,x\in D(\A)\quad
\forall\,y\in D(\A^{*})\qquad
\scp{\A x}{y}_{\H{}{1}}=\scp{x}{\A^{*}y}_{\H{}{0}}.$$
Note that the different adjoints are strongly related through the respective Riesz isomorphisms.
If the unbounded operator $\A$ is densely defined and closed, so is $\A^{*}$.
In this case, $\A^{**}=\ol{\A}=\A$ and we call $(\A,\A^{*})$ a dual pair.

Let us recall a small part of the co-called FA-ToolBox from, e.g.,
\cite[Lemma 4.1, Lemma 4.3]{P2019b},
see also \cite{P2017a,P2019a,P2020a,PZ2020a,PZ2020b},
for more details. For this, let $\A$ from \eqref{unbdop}
be \emph{densely defined} and \emph{closed}. Moreover, let 
\begin{align*}
\A_{\bot}:=\mathcal{A}:=\A|_{N(\A)^{\bot_{\H{}{0}}}}
:D(\A_{\bot})&\subset N(\A)^{\bot_{\H{}{0}}}\to N(\A^{*})^{\bot_{\H{}{1}}},
&
D(\A_{\bot})&:=D(\A)\cap N(\A)^{\bot_{\H{}{0}}},\\
\A^{*}_{\bot}:=\mathcal{A}^{*}:=\A^{*}|_{N(\A^{*})^{\bot_{\H{}{1}}}}
:D(\A^{*}_{\bot})&\subset N(\A^{*})^{\bot_{\H{}{1}}}\to N(\A)^{\bot_{\H{}{0}}},
&
D(\A^{*}_{\bot})&:=D(\A^{*})\cap N(\A^{*})^{\bot_{\H{}{1}}}
\end{align*}
denote the reduced operators, which are
densely defined, closed, and injective. 
Note that by the projection theorem we have the orthogonal Helmholtz-type decompositions
\begin{align}
\label{helm1}
\begin{aligned}
\H{}{0}&=N(\A)\oplus_{\H{}{0}}N(\A)^{\bot_{\H{}{0}}},
&
N(\A)^{\bot_{\H{}{0}}}&=\ol{R(\A^{*})},
&
N(\A)&=R(\A^{*})^{\bot_{\H{}{0}}},\\
D(\A)&=N(\A)\oplus_{\H{}{0}}D(\A_{\bot}),\\
\H{}{1}&=N(\A^{*})\oplus_{\H{}{1}}N(\A^{*})^{\bot_{\H{}{1}}},
&
N(\A^{*})^{\bot_{\H{}{1}}}&=\ol{R(\A)},
&
N(\A^{*})&=R(\A)^{\bot_{\H{}{1}}},\\
D(\A^{*})&=N(\A^{*})\oplus_{\H{}{1}}D(\A^{*}_{\bot}),
\end{aligned}
\end{align}
and thus $R(\A_{\bot})=R(\A)$ and $R(\A^{*}_{\bot})=R(\A^{*})$. 

\begin{lem}[fundamental lemma 1]
\label{lem:toolboxcpt1}
The following assertions are equivalent:
\begin{itemize}
\item[\bf(i)]
$\exists\hspace{1.3ex}c_{\A}>0\quad\forall\,x\in D(\A_{\bot})\qquad\norm{x}_{\H{}{0}}\leq c_{\A}\norm{\A x}_{\H{}{1}}$
\item[\bf(i')]
$\exists\,c_{\A^{*}}>0\quad\forall\,y\in D(\A^{*}_{\bot})\qquad\norm{y}_{\H{}{1}}\leq c_{\A^{*}}\norm{\A^{*}x}_{\H{}{0}}$
\item[\bf(ii)]
$R(\A)=R(\A_{\bot})$ is closed.
\item[\bf(ii')]
$R(\A^{*})=R(\A^{*}_{\bot})$ is closed.
\item[\bf(iii)]
$\A_{\bot}^{-1}:R(\A)\to D(\A_{\bot})$ is continuous.
\item[\bf(iii')]
$(\A^{*}_{\bot})^{-1}:R(\A^{*})\to D(\A^{*}_{\bot})$ is continuous.
\end{itemize}
Moreover, for the ``best'' constants it holds
$\bnorm{\A_{\bot}^{-1}}_{R(\A),\H{}{0}}
=c_{\A}
=c_{\A^{*}}
=\bnorm{(\A^{*}_{\bot})^{-1}}_{R(\A^{*}),\H{}{1}}$.
\end{lem}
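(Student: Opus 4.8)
The plan is to prove the chain of equivalences by first establishing the adjoint symmetry between the unbarred and barred statements, then closing a short cycle among the remaining implications. The key structural fact I would rely on is the standard closed range theorem for densely defined closed operators: $R(\A)$ is closed if and only if $R(\A^{*})$ is closed. Since $R(\A)=R(\A_{\bot})$ and $R(\A^{*})=R(\A^{*}_{\bot})$ by the Helmholtz-type decompositions \eqref{helm1}, the equivalence $(ii)\Leftrightarrow(ii')$ follows immediately. Moreover, because $(\A_{\bot})^{*}=\A^{*}_{\bot}$ (the reduced operators form a dual pair of injective operators, each the adjoint of the other), any statement about $\A$ and $\A_{\bot}$ has a mirror statement about $\A^{*}$ and $\A^{*}_{\bot}$; this is what makes $(i)\Leftrightarrow(i')$, $(ii)\Leftrightarrow(ii')$, and $(iii)\Leftrightarrow(iii')$ simultaneous once we prove, say, $(i)\Leftrightarrow(ii)\Leftrightarrow(iii)$ on one side.

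First I would show $(i)\Leftrightarrow(iii)$. Since $\A_{\bot}$ is injective with $R(\A_{\bot})=R(\A)$, the inverse $\A_{\bot}^{-1}:R(\A)\to D(\A_{\bot})\subset\H{}{0}$ is a well-defined linear map, and the Friedrichs/Poincar\'e estimate in $(i)$, namely $\norm{x}_{\H{}{0}}\leq c_{\A}\norm{\A x}_{\H{}{1}}$ for all $x\in D(\A_{\bot})$, is literally the statement that $\norm{\A_{\bot}^{-1}y}_{\H{}{0}}\leq c_{\A}\norm{y}_{\H{}{1}}$ for all $y=\A x\in R(\A)$; here one also uses that on $D(\A_{\bot})$ the graph norm of $\A$ is equivalent to $\norm{\A\,\cdot\,}_{\H{}{1}}$ thanks to $(i)$ itself, so continuity into $\H{}{0}$ and into $D(\A_{\bot})$ amount to the same bound. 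This gives not only the equivalence but also the equality $\bnorm{\A_{\bot}^{-1}}_{R(\A),\H{}{0}}=c_{\A}$ for the best constants.

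Next I would show $(iii)\Rightarrow(ii)$ and $(ii)\Rightarrow(i)$, thereby closing the cycle. For $(iii)\Rightarrow(ii)$: suppose $\A_{\bot}^{-1}$ is continuous on $R(\A)$; take a sequence $\A x_{n}\to g$ in $\H{}{1}$ with $x_{n}\in D(\A_{\bot})$, then $x_{n}=\A_{\bot}^{-1}(\A x_{n})$ is Cauchy in $\H{}{0}$ by continuity, hence $x_{n}\to x$ for some $x\in\H{}{0}$; since $\A$ is closed, $x\in D(\A)$ and $\A x=g$, and $x\in N(\A)^{\bot_{\H{}{0}}}$ because this is a closed subspace, so $g=\A x\in R(\A_{\bot})$, proving $R(\A)$ closed. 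For $(ii)\Rightarrow(i)$: if $R(\A_{\bot})=R(\A)$ is closed, then $\A_{\bot}:D(\A_{\bot})\to R(\A)$ is a closed, injective, surjective operator between Hilbert spaces (where $D(\A_{\bot})$ carries the graph norm), so by the bounded inverse theorem $\A_{\bot}^{-1}$ is continuous, and unravelling this continuity estimate as above yields $(i)$. Applying the whole argument to the dual pair $(\A_{\bot},\A^{*}_{\bot})$ — equivalently, repeating it verbatim with $\A$ replaced by $\A^{*}$ — gives $(i')\Leftrightarrow(ii')\Leftrightarrow(iii')$ together with $\bnorm{(\A^{*}_{\bot})^{-1}}_{R(\A^{*}),\H{}{1}}=c_{\A^{*}}$.

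It remains to tie the two sides together and to get the equality $c_{\A}=c_{\A^{*}}$. Here I would invoke the closed range theorem, $R(\A)$ closed $\Leftrightarrow$ $R(\A^{*})$ closed, to bridge $(ii)\Leftrightarrow(ii')$; combined with the two established cycles this shows all eight assertions are equivalent. For the quantitative statement, the cleanest route is to note that $\A^{*}_{\bot}=(\A_{\bot})^{*}$ and to use the identity $\bnorm{T^{-1}}=\bnorm{(T^{*})^{-1}}$ valid for any densely defined, closed, injective operator $T$ with closed range between Hilbert spaces (which follows since $(T^{-1})^{*}=(T^{*})^{-1}$ as bounded operators on the respective ranges, and adjoints preserve operator norm). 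I expect the main obstacle to be purely bookkeeping: carefully verifying that $\A^{*}_{\bot}$ really is the Hilbert space adjoint of $\A_{\bot}$ (using the decompositions \eqref{helm1} and the fact that $N(\A)^{\bot_{\H{}{0}}}=\ol{R(\A^{*})}$, $N(\A^{*})^{\bot_{\H{}{1}}}=\ol{R(\A)}$), and checking that the various norms — graph norm on $D(\A_{\bot})$ versus the $\H{}{0}$-norm, operator norm of $\A_{\bot}^{-1}$ as a map into $D(\A_{\bot})$ versus into $\H{}{0}$ — all coincide under the standing estimate, so that the single constant $c_{\A}$ legitimately serves in all four places.
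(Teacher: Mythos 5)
Your proposal is correct; the paper itself does not reproduce a proof of Lemma \ref{lem:toolboxcpt1} but cites it from the FA-ToolBox literature, and your cycle $(i)\Leftrightarrow(iii)$, $(iii)\Rightarrow(ii)\Rightarrow(i)$ via closedness of $\A$, the orthogonal splitting $D(\A)=N(\A)\oplus_{\H{}{0}}D(\A_{\bot})$, and the bounded inverse theorem is exactly the standard argument. The one place where you take a genuinely different (and heavier) route is the bridge between the primal and dual columns: you invoke the closed range theorem for $(ii)\Leftrightarrow(ii')$ and the identity $(T^{*})^{-1}=(T^{-1})^{*}$, with $(\A_{\bot})^{*}=\A^{*}_{\bot}$, for $c_{\A}=c_{\A^{*}}$. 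The usual FA-ToolBox proof gets $(i)\Rightarrow(i')$ with $c_{\A^{*}}\leq c_{\A}$ by a two-line duality computation that needs neither of these: for $y\in D(\A^{*}_{\bot})\subset\ol{R(\A)}=\ol{R(\A_{\bot})}$ pick $x_{n}\in D(\A_{\bot})$ with $\A x_{n}\to y$ and estimate
\begin{equation*}
\norm{y}_{\H{}{1}}^{2}
=\lim_{n}\scp{\A x_{n}}{y}_{\H{}{1}}
=\lim_{n}\scp{x_{n}}{\A^{*}y}_{\H{}{0}}
\leq c_{\A}\limsup_{n}\norm{\A x_{n}}_{\H{}{1}}\norm{\A^{*}y}_{\H{}{0}}
=c_{\A}\norm{y}_{\H{}{1}}\norm{\A^{*}y}_{\H{}{0}},
\end{equation*}
and then $c_{\A}\leq c_{\A^{*}}$ by symmetry; this closes the full equivalence and the equality of best constants without the closed range theorem, whereas your route buys nothing extra but does require the bookkeeping you flag, namely verifying $(\A_{\bot})^{*}=\A^{*}_{\bot}$ via the decompositions \eqref{helm1}. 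Both arguments are complete and correct.
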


\begin{lem}[fundamental lemma 2]
\label{lem:toolboxcpt2}
Let $D(\A_{\bot})\incl\H{}{0}$ be compact.
Then each of (i)-(iii') in Lemma \ref{lem:toolboxcpt1} holds.
\end{lem}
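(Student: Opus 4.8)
The plan is to reduce the statement to its reduced operator $\A_\bot$ and then prove the Friedrichs/Poincar\'e estimate (i) of Lemma \ref{lem:toolboxcpt1}, which by that lemma yields all of (i)--(iii'). Recall that $\A_\bot:D(\A_\bot)\subset N(\A)^{\bot_{\H{}{0}}}\to N(\A^*)^{\bot_{\H{}{1}}}$ is densely defined, closed, and \emph{injective}, and that $R(\A_\bot)=R(\A)$. So it suffices to show
\begin{align*}
\exists\,c_\A>0\quad\forall\,x\in D(\A_\bot)\qquad \norm{x}_{\H{}{0}}\leq c_\A\norm{\A x}_{\H{}{1}}.
\end{align*}

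First I would argue by contradiction: suppose no such constant exists. Then there is a sequence $(x_n)\subset D(\A_\bot)$ with $\norm{x_n}_{\H{}{0}}=1$ and $\norm{\A x_n}_{\H{}{1}}\to 0$. In particular $(x_n)$ is bounded in the graph norm, i.e.\ bounded in $D(\A)$; since $x_n\in N(\A)^{\bot_{\H{}{0}}}$, it is in fact bounded in $D(\A_\bot)$ (equipped with the graph norm of $\H{}{0}$). Now invoke the hypothesis that the embedding $D(\A_\bot)\incl\H{}{0}$ is compact: passing to a subsequence (not relabelled), $(x_n)$ converges in $\H{}{0}$ to some $x$. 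Because $\A x_n\to 0$ in $\H{}{1}$ and $x_n\to x$ in $\H{}{0}$, and $\A$ (hence $\A_\bot$) is closed, we get $x\in D(\A_\bot)$ with $\A x=0$, so $x\in N(\A)\cap N(\A)^{\bot_{\H{}{0}}}=\{0\}$, i.e.\ $x=0$. But $\norm{x}_{\H{}{0}}=\lim_n\norm{x_n}_{\H{}{0}}=1$, a contradiction. This establishes (i), and Lemma \ref{lem:toolboxcpt1} then delivers (i'), (ii), (ii'), (iii), (iii') at once.

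I do not expect a genuine obstacle here; the only point requiring a little care is the bookkeeping around graph norms, namely that compactness of $D(\A_\bot)\incl\H{}{0}$ combined with boundedness of $(\A x_n)$ really does give a subsequence converging in $\H{}{0}$, and then that closedness of $\A$ applies to conclude $x\in D(\A)$ with $\A x=0$. (One may equivalently phrase the whole argument as: $D(\A_\bot)\incl\H{}{0}$ compact forces the injective closed operator $\A_\bot^{-1}:R(\A_\bot)\to D(\A_\bot)\subset\H{}{0}$ to have closed range / be bounded, which is exactly (ii)/(iii).) No new macros are needed and the argument is short.
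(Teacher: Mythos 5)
Your proof is correct and is exactly the standard contradiction argument used in the cited FA-ToolBox references (the paper itself only cites the lemma rather than reproving it): normalise a putative counterexample sequence, extract an $\H{}{0}$-convergent subsequence by the compact embedding, and use closedness of $\A$ together with $N(\A)\cap N(\A)^{\bot_{\H{}{0}}}=\{0\}$ to reach a contradiction. The bookkeeping you flag (boundedness in the graph norm, $x\in N(\A)^{\bot_{\H{}{0}}}$ since that subspace is closed) is handled correctly, so nothing is missing.
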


\begin{lem}[fundamental lemma 3]
\label{lem:toolboxcpt3}
The following assertions are equivalent:
\begin{itemize}
\item[\bf(i)]
$D(\A_{\bot})\incl\H{}{0}$ is compact.
\item[\bf(i')]
$D(\A^{*}_{\bot})\incl\H{}{1}$ is compact.
\item[\bf(ii)]
$\A_{\bot}^{-1}:R(\A)\to\H{}{0}$ is compact.
\item[\bf(ii')]
$(\A^{*}_{\bot})^{-1}:R(\A^{*})\to\H{}{1}$ is compact.
\end{itemize}
\end{lem}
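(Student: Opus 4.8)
The plan is to prove the cycle of implications (i) $\Rightarrow$ (ii) $\Rightarrow$ (ii') $\Rightarrow$ (i') $\Rightarrow$ (i), exploiting the duality symmetry built into the reduced operators. First observe that by Lemma \ref{lem:toolboxcpt2}, assumption (i) already gives all of (i)--(iii') in Lemma \ref{lem:toolboxcpt1}; in particular $R(\A)$ is closed and $\A_{\bot}^{-1}:R(\A)\to D(\A_{\bot})$ is continuous. Then (i) $\Rightarrow$ (ii) is immediate: the compact embedding $\iota:D(\A_{\bot})\incl\H{}{0}$ composed with the bounded operator $\A_{\bot}^{-1}:R(\A)\to D(\A_{\bot})$ yields $\A_{\bot}^{-1}:R(\A)\to\H{}{0}$ as a composition of a bounded and a compact map, hence compact. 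For (ii) $\Rightarrow$ (ii') I would use that $\A_{\bot}$ and $\A^{*}_{\bot}$ are, up to the natural identifications, adjoint to each other as densely defined closed injective operators between the Hilbert spaces $N(\A)^{\bot_{\H{}{0}}}$ and $N(\A^{*})^{\bot_{\H{}{1}}}$ with dense ranges; then $(\A^{*}_{\bot})^{-1}=(\A_{\bot}^{-1})^{*}$ on the respective ranges (after closure), and the adjoint of a compact operator is compact by Schauder's theorem. Concretely: if $\A_{\bot}^{-1}:R(\A)\to\H{}{0}$ is compact then so is its closure $\ol{R(\A)}\to\H{}{0}$; identifying this with $(\A^{*}_{\bot})^{-1}$ via \eqref{helm1} and taking adjoints gives compactness of $(\A^{*}_{\bot})^{-1}:R(\A^{*})\to\H{}{1}$.

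Next, (ii') $\Rightarrow$ (i') follows because $D(\A^{*}_{\bot})$ carries the graph norm, and on $D(\A^{*}_{\bot})$ the map $y\mapsto\A^{*}y$ is a bijection onto $R(\A^{*})$ whose inverse is exactly $(\A^{*}_{\bot})^{-1}$; hence the embedding $D(\A^{*}_{\bot})\incl\H{}{1}$ factors (via $y\mapsto\A^{*}y$, which is continuous from the graph norm into $\H{}{0}$, followed by the compact $(\A^{*}_{\bot})^{-1}$) as compact $\circ$ bounded, so it is compact as a map into $\H{}{1}$. Actually one must be slightly careful: $y=(\A^{*}_{\bot})^{-1}\A^{*}y$ holds for $y\in D(\A^{*}_{\bot})$, and $\|y\|_{D(\A^{*}_{\bot})}^2=\|y\|_{\H{}{1}}^2+\|\A^{*}y\|_{\H{}{0}}^2$, so the linear map $D(\A^{*}_{\bot})\to R(\A^{*})$, $y\mapsto\A^{*}y$, is bounded; composing with the compact $(\A^{*}_{\bot})^{-1}:R(\A^{*})\to\H{}{1}$ shows $D(\A^{*}_{\bot})\incl\H{}{1}$ is compact. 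Finally (i') $\Rightarrow$ (i) is the same argument run with the roles of $\A$ and $\A^{*}$ interchanged, using $\A^{**}=\A$, $(\A^{*})_{\bot}=(\A_{\bot})^{*}$ and the symmetry of the Helmholtz decompositions \eqref{helm1}; this closes the cycle and also shows (i) $\Leftrightarrow$ (i') directly. The equivalences (i) $\Leftrightarrow$ (ii) and (i') $\Leftrightarrow$ (ii') then drop out of the cycle together with Lemma \ref{lem:toolboxcpt2}.

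The step I expect to be the main obstacle is (ii) $\Rightarrow$ (ii'), i.e.\ transporting compactness of $\A_{\bot}^{-1}$ to compactness of $(\A^{*}_{\bot})^{-1}$. The clean way is to verify that $\A_{\bot}:D(\A_{\bot})\subset N(\A)^{\bot_{\H{}{0}}}\to\ol{R(\A)}$ and $\A^{*}_{\bot}:D(\A^{*}_{\bot})\subset N(\A^{*})^{\bot_{\H{}{1}}}\to\ol{R(\A^{*})}$ form a dual pair of injective operators with dense range between the Hilbert spaces $\ol{R(\A^{*})}$ and $\ol{R(\A)}$ (this uses precisely the identities $N(\A)^{\bot_{\H{}{0}}}=\ol{R(\A^{*})}$ and $N(\A^{*})^{\bot_{\H{}{1}}}=\ol{R(\A)}$ from \eqref{helm1}), and then to invoke the general fact that for such a pair the bounded inverses $\A_{\bot}^{-1}$ and $(\A^{*}_{\bot})^{-1}$ (defined on the closures of the ranges once closedness is known) are mutually adjoint, so that Schauder's theorem applies. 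Alternatively, one can avoid Schauder entirely and argue directly by a weak-convergence/diagonal-sequence argument: given a bounded sequence $(y_n)$ in $D(\A^{*}_{\bot})$ one extracts weak limits of $y_n$ and $\A^{*}y_n$, tests against $\A x$ for $x\in D(\A_{\bot})$, and uses the compactness of $D(\A_{\bot})\incl\H{}{0}$ together with the closed-range property from Lemma \ref{lem:toolboxcpt1} to upgrade weak to strong convergence of $(y_n)$ in $\H{}{1}$. Both routes are standard FA-ToolBox manoeuvres; I would present the adjoint/Schauder version as the short one and remark that the direct version gives an alternative self-contained proof.
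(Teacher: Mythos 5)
Your proposal is correct. The paper itself gives no proof of Lemma \ref{lem:toolboxcpt3} (it is quoted from the FA-ToolBox literature, e.g.\ \cite{P2019b}), so let me compare with the standard argument there. Your cycle (i) $\Rightarrow$ (ii) $\Rightarrow$ (ii') $\Rightarrow$ (i') $\Rightarrow$ (i) is sound: (i) $\Rightarrow$ (ii) and (ii') $\Rightarrow$ (i') are clean factorisations through the graph norm (and you correctly note that compactness of $\A_{\bot}^{-1}$ into $\H{}{0}$ already forces boundedness, hence closedness of $R(\A)$ via Lemma \ref{lem:toolboxcpt1}, so the adjoint machinery is available). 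The step (ii) $\Rightarrow$ (ii') via $(\A_{\bot})^{*}=\A^{*}_{\bot}$, $\bigl(\A_{\bot}^{-1}\bigr)^{*}=\bigl(\A^{*}_{\bot}\bigr)^{-1}$ between the Hilbert spaces $\ol{R(\A)}$ and $\ol{R(\A^{*})}$, and Schauder's theorem is a genuinely different (and perfectly valid) route from the one used in the cited references, which prove (i) $\Leftrightarrow$ (i') directly: given a bounded sequence $(y_{n})\subset D(\A^{*}_{\bot})\subset R(\A)$ one sets $x_{n}:=\A_{\bot}^{-1}y_{n}$, extracts an $\H{}{0}$-convergent subsequence of $(x_{n})$ by (i), and estimates $\norm{y_{n}-y_{m}}_{\H{}{1}}^{2}=\scp{x_{n}-x_{m}}{\A^{*}(y_{n}-y_{m})}_{\H{}{0}}\leq c\norm{x_{n}-x_{m}}_{\H{}{0}}$ — exactly the second route you sketch at the end, and the same duality trick used in the proof of Lemma \ref{lem:cptembmaintheo}. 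The Schauder version is shorter once $(\A_{\bot})^{*}=\A^{*}_{\bot}$ is established (which does require a short verification you only gesture at), while the direct version is self-contained and matches the style of the rest of the paper; either is acceptable.
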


\begin{rem}
\label{rem:app:toolboxcpt}
$D(\A)\incl\H{}{0}$ compact implies $D(\A_{\bot})\incl\H{}{0}$ compact, and
$D(\A^{*})\incl\H{}{1}$ compact implies $D(\A^{*}_{\bot})\incl\H{}{1}$ compact.
\end{rem}

\subsection{FAT II: Hilbert Complexes and Mini FA-ToolBox}
\label{sec:FA2}

We continue to make use of parts of the FA-ToolBox from, e.g., 
\cite{P2017a,P2019a,P2019b,P2020a} and \cite{PZ2016a,PZ2020a,PZ2020b},
together with an extension suited for so called (bounded linear) regular potential operators
and regular decompositions introduced in \cite{PZ2020b}.
Lemma \ref{lem:cptembmaintheo} provides an elegant, abstract, and short way
to prove compact embedding results for an arbitrary Hilbert complex.

For this, let $\H{}{0},\H{}{1},\H{}{2}$ be Hilbert spaces and let
\begin{equation}
\label{hcomplex}
\def\arrowlength{6ex}
\def\arrowdistance{.8}
\begin{tikzcd}[column sep=\arrowlength]
\cdots 
\arrow[r, rightarrow, shift left=\arrowdistance, "\cdots"] 
\arrow[r, leftarrow, shift right=\arrowdistance, "\cdots"']
& 
\H{}{0} 
\ar[r, rightarrow, shift left=\arrowdistance, "\A_{0}"] 
\ar[r, leftarrow, shift right=\arrowdistance, "\A_{0}^{*}"']
& 
\H{}{1}
\arrow[r, rightarrow, shift left=\arrowdistance, "\A_{1}"] 
\arrow[r, leftarrow, shift right=\arrowdistance, "\A_{1}^{*}"']
& 
\H{}{2}
\arrow[r, rightarrow, shift left=\arrowdistance, "\cdots"] 
\arrow[r, leftarrow, shift right=\arrowdistance, "\cdots"']
&
\cdots 
\end{tikzcd}
\end{equation}
be a \emph{primal and dual Hilbert complex}, i.e.,
$$\A_{0}:D(\A_{0})\subset\H{}{0}\to\H{}{1},\qquad
\A_{1}:D(\A_{1})\subset\H{}{1}\to\H{}{2}$$
are \emph{densely defined} and \emph{closed} (unbounded) linear operators
satisfying the \emph{complex property} 
\begin{align}
\label{compprop}
\A_{1}\A_{0}\subset0,
\end{align}
together with (densely defined and closed Hilbert space) adjoints
$$\A_{0}^{*}:D(\A_{0}^{*})\subset\H{}{1}\to\H{}{0},\qquad
\A_{1}^{*}:D(\A_{1}^{*})\subset\H{}{2}\to\H{}{1}.$$

\begin{rem}
\label{remhilcom}
Note that the complex property \eqref{compprop} is equivalent to $R(\A_{0})\subset N(\A_{1})$,
which is equivalent to the dual complex property $R(\A_{1}^{*})\subset N(\A_{0}^{*})$ as 
$$R(\A_{1}^{*})
\subset\ol{R(\A_{1}^{*})}
=N(\A_{1})^{\bot_{\H{}{1}}}
\subset R(\A_{0})^{\bot_{\H{}{1}}}
=N(\A_{0}^{*})$$
and vice versa. 
\end{rem}

\begin{rem}
\label{remhilcomclosure}
Let $\A_{0}$, $\A_{1}$ be given by the closures of 
densely defined (unbounded) linear operators
$$\mr{\A}_{0}:D(\mr{\A}_{0})\subset\H{}{0}\to\H{}{1},\qquad
\mr{\A}_{1}:D(\mr{\A}_{1})\subset\H{}{1}\to\H{}{2}$$
satisfying the complex property $\mr{\A}_{1}\mr{\A}_{0}\subset0$.
Then $\A_{0}=\ol{\mr{\A}_{0}}$ and $\A_{1}=\ol{\mr{\A}_{1}}$
are densely defined and closed (unbounded) linear operators
satisfying the complex property $\A_{1}\A_{0}\subset0$,
since $N(\A_{1})$ is closed and thus
$R(\mr{\A}_{0})\subset N(\mr{\A}_{1})\subset N(\A_{1})$
implies $R(\A_{0})\subset N(\A_{1})$.
\end{rem}

As in \eqref{helm1} and defining the cohomology group 
$$N_{0,1}:=N(\A_{1})\cap N(\A_{0}^{*})$$
we get the following orthogonal Helmholtz-type decompositions.

\begin{lem}[Helmholtz decomposition lemma]
\label{lem:toolboxhelm1}
The refined orthogonal Helmholtz-type decompositions
\begin{align}
\label{helm2}
\begin{aligned}
\H{}{1}&=\ol{R(\A_{0})}\oplus_{\H{}{1}}N(\A_{0}^{*}),
&
\H{}{1}&=N(\A_{1})\oplus_{\H{}{1}}\ol{R(\A_{1}^{*})},\\
N(\A_{1})&=\ol{R(\A_{0})}\oplus_{\H{}{1}}N_{0,1},
&
N(\A_{0}^{*})&=N_{0,1}\oplus_{\H{}{1}}\ol{R(\A_{1}^{*})},\\
D(\A_{1})&=\ol{R(\A_{0})}\oplus_{\H{}{1}}\big(D(\A_{1})\cap N(\A_{0}^{*})\big),
&
D(\A_{0}^{*})&=\big(N(\A_{1})\cap D(\A_{0}^{*})\big)\oplus_{\H{}{1}}\ol{R(\A_{1}^{*})},\\
D(\A_{0}^{*})&=D\big((\A_{0}^{*})_{\bot}\big)\oplus_{\H{}{1}}N(\A_{0}^{*}),
&
D(\A_{1})&=N(\A_{1})\oplus_{\H{}{1}}D\big((\A_{1})_{\bot}\big),
\end{aligned}
\end{align}
as well as $R\big((\A_{0}^{*})_{\bot}\big)=R(\A_{0}^{*})$ 
and $R\big((\A_{1})_{\bot}\big)=R(\A_{1})$ hold. Moreover,
\begin{align}
\label{helm3}
\begin{aligned}
\H{}{1}
&=\ol{R(\A_{0})}\oplus_{\H{}{1}}N_{0,1}\oplus_{\H{}{1}}\ol{R(\A_{1}^{*})},\\
D(\A_{0}^{*})
&=D\big((\A_{0}^{*})_{\bot}\big)\oplus_{\H{}{1}}N_{0,1}\oplus_{\H{}{1}}\ol{R(\A_{1}^{*})},\\
D(\A_{1})
&=\ol{R(\A_{0})}\oplus_{\H{}{1}}N_{0,1}\oplus_{\H{}{1}}D\big((\A_{1})_{\bot}\big),\\
D(\A_{1})\cap D(\A_{0}^{*})
&=D\big((\A_{0}^{*})_{\bot}\big)\oplus_{\H{}{1}}N_{0,1}\oplus_{\H{}{1}}D\big((\A_{1})_{\bot}\big).
\end{aligned}
\end{align}
\end{lem}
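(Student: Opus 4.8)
The plan is to obtain everything from the standard single-operator decompositions in \eqref{helm1}, applied separately to $\A_0$ and to $\A_1$, and then to glue them using the complex property together with Remark \ref{remhilcom}. First I would record that by \eqref{helm1} applied to $\A_0$ we have $\H{}{1}=N(\A_0^{*})\oplus_{\H{}{1}}\ol{R(\A_0)}$ (using $N(\A_0^{*})^{\bot_{\H{}{1}}}=\ol{R(\A_0)}$), and applied to $\A_1$ we have $\H{}{1}=N(\A_1)\oplus_{\H{}{1}}\ol{R(\A_1^{*})}$ (using $N(\A_1)^{\bot_{\H{}{1}}}=\ol{R(\A_1^{*})}$); this is the first line of \eqref{helm2}. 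The identities $R\big((\A_0^{*})_{\bot}\big)=R(\A_0^{*})$ and $R\big((\A_1)_{\bot}\big)=R(\A_1)$, as well as the last line $D(\A_0^{*})=D\big((\A_0^{*})_{\bot}\big)\oplus_{\H{}{1}}N(\A_0^{*})$ and $D(\A_1)=N(\A_1)\oplus_{\H{}{1}}D\big((\A_1)_{\bot}\big)$, are likewise just \eqref{helm1} read off for $\A_0^{*}$ and for $\A_1$, noting that $(\A_0^{*})$ is itself densely defined and closed so that \eqref{helm1} legitimately applies to it, with $(\A_0^{*})^{*}=\A_0$.

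Next I would derive the refined (three-term) decompositions. The complex property, in the form $R(\A_0)\subset N(\A_1)$ (Remark \ref{remhilcom}), gives $\ol{R(\A_0)}\subset N(\A_1)$ since $N(\A_1)$ is closed, and dually $\ol{R(\A_1^{*})}\subset N(\A_0^{*})$. Hence inside the closed subspace $N(\A_1)$ we may split off the closed subspace $\ol{R(\A_0)}$, and the orthogonal complement of $\ol{R(\A_0)}$ within $N(\A_1)$ is exactly $N(\A_1)\cap N(\A_0^{*})=N_{0,1}$, because $N(\A_1)\cap\ol{R(\A_0)}^{\bot_{\H{}{1}}}=N(\A_1)\cap N(\A_0^{*})$ by \eqref{helm1}. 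This yields $N(\A_1)=\ol{R(\A_0)}\oplus_{\H{}{1}}N_{0,1}$, and symmetrically $N(\A_0^{*})=N_{0,1}\oplus_{\H{}{1}}\ol{R(\A_1^{*})}$; substituting these into the first-line decompositions gives the middle line of \eqref{helm2} and then the first line of \eqref{helm3}. For the $D(\A_1)$ and $D(\A_0^{*})$ rows of \eqref{helm2}, I would intersect the decomposition $\H{}{1}=\ol{R(\A_0)}\oplus_{\H{}{1}}N(\A_0^{*})$ with $D(\A_1)$: since $\ol{R(\A_0)}\subset N(\A_1)\subset D(\A_1)$, the summand $\ol{R(\A_0)}$ lies entirely in $D(\A_1)$, so by the modular law $D(\A_1)=\ol{R(\A_0)}\oplus_{\H{}{1}}\big(D(\A_1)\cap N(\A_0^{*})\big)$; the companion identity for $D(\A_0^{*})$ is the dual statement using $\ol{R(\A_1^{*})}\subset N(\A_0^{*})\subset D(\A_0^{*})$. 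Refining these intersections once more by $N(\A_0^{*})=N_{0,1}\oplus_{\H{}{1}}\ol{R(\A_1^{*})}$ (resp.\ $N(\A_1)=\ol{R(\A_0)}\oplus_{\H{}{1}}N_{0,1}$) and using $N_{0,1}\subset D(\A_1)\cap D(\A_0^{*})$ yields the second and third lines of \eqref{helm3}. Finally, intersecting the third line of \eqref{helm3} with $D(\A_0^{*})$, or equivalently the second line with $D(\A_1)$ — both summands $D\big((\A_0^{*})_{\bot}\big)$ and $N_{0,1}$ on one side, resp.\ $N_{0,1}$ and $D\big((\A_1)_{\bot}\big)$ on the other, already sit in the appropriate domain — gives the last line $D(\A_1)\cap D(\A_0^{*})=D\big((\A_0^{*})_{\bot}\big)\oplus_{\H{}{1}}N_{0,1}\oplus_{\H{}{1}}D\big((\A_1)_{\bot}\big)$, after observing that $D\big((\A_1)_{\bot}\big)\subset\ol{R(\A_1^{*})}\subset N(\A_0^{*})\subset D(\A_0^{*})$ and $D\big((\A_0^{*})_{\bot}\big)\subset\ol{R(\A_0)}\subset N(\A_1)\subset D(\A_1)$.

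The only genuinely delicate point is the repeated use of the modular law for the intersection $V=(U\oplus W)\cap V$ when $U\subset V$: this is elementary Hilbert-space bookkeeping, but one must each time check that the summand being pulled out really is contained in the subspace one intersects with, and that the orthogonality is preserved under intersection (it is, since orthogonality of subspaces is inherited by subspaces). All the required inclusions are exactly the complex property $\ol{R(\A_0)}\subset N(\A_1)$ and its dual $\ol{R(\A_1^{*})}\subset N(\A_0^{*})$, plus $N_{0,1}\subset N(\A_1)\cap N(\A_0^{*})$, together with the trivial inclusions $N(\A_1)\subset D(\A_1)$ and $N(\A_0^{*})\subset D(\A_0^{*})$. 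So I expect no real obstacle; the proof is a careful but routine assembly of \eqref{helm1} for the two operators $\A_0$, $\A_1$ (and their adjoints) under the complex property. I would present it compactly, proving the two middle identities of \eqref{helm2} explicitly and then indicating that the $D$-versions and \eqref{helm3} follow by intersecting with the respective domains and re-substituting.
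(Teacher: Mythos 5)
Your proposal is correct and follows exactly the route the paper intends: the lemma is stated without explicit proof, being presented as an immediate consequence of the projection-theorem decompositions \eqref{helm1} applied to $\A_{0}$, $\A_{1}$ and their adjoints, combined with the complex property $\ol{R(\A_{0})}\subset N(\A_{1})$ and its dual $\ol{R(\A_{1}^{*})}\subset N(\A_{0}^{*})$. Your careful tracking of the "modular law" step (checking that the summand split off is contained in the subspace being intersected) fills in precisely the bookkeeping the paper leaves to the reader.
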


As 
\begin{align*}
D\big((\A_{1})_{\bot}\big)
&=D(\A_{1})\cap\ol{R(\A_{1}^{*})}
\subset D(\A_{1})\cap N(\A_{0}^{*})
\subset D(\A_{1})\cap D(\A_{0}^{*}),\\
D\big((\A_{0}^{*})_{\bot}\big)
&=\ol{R(\A_{0})}\cap D(\A_{0}^{*})
\subset N(\A_{1})\cap D(\A_{0}^{*})
\subset D(\A_{1})\cap D(\A_{0}^{*})
\end{align*}
with continuous embeddings we get the following result.

\begin{lem}[compactness lemma]
\label{lem:toolboxcpt4}
The following assertions are equivalent:
\begin{itemize}
\item[\bf(i)]
$D\big((\A_{0})_{\bot}\big)\incl\H{}{0}$,
$D\big((\A_{1})_{\bot}\big)\incl\H{}{1}$, and 
$N_{0,1}\incl\H{}{1}$ are compact.
\item[\bf(ii)]
$D(\A_{1})\cap D(\A_{0}^{*})\incl\H{}{1}$ is compact.
\end{itemize}
In this case, the cohomology group $N_{0,1}$ has finite dimension.
\end{lem}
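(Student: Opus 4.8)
The plan is to prove the equivalence (i)$\Leftrightarrow$(ii) by exploiting the last orthogonal decomposition in \eqref{helm3},
$$D(\A_{1})\cap D(\A_{0}^{*})
=D\big((\A_{0}^{*})_{\bot}\big)\oplus_{\H{}{1}}N_{0,1}\oplus_{\H{}{1}}D\big((\A_{1})_{\bot}\big),$$
which is an \emph{orthogonal} decomposition in $\H{}{1}$ and, one checks, also orthogonal (hence topological) in the graph norm of $D(\A_{1})\cap D(\A_{0}^{*})$, since the three summands lie in mutually orthogonal reducing subspaces on which the relevant operators act. A finite orthogonal sum of Hilbert spaces embeds compactly into the corresponding orthogonal sum of target spaces if and only if each summand does; this is the elementary fact that drives the whole argument.

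First I would record that for the reduced operators the graph-norm embeddings of $D\big((\A_{1})_{\bot}\big)$ and $D\big((\A_{0}^{*})_{\bot}\big)$ into $\H{}{1}$ are the natural ones, so that the embeddings appearing in (i) are literally the embeddings of the three orthogonal summands in the decomposition of $D(\A_{1})\cap D(\A_{0}^{*})$. For the direction (i)$\Rightarrow$(ii): given a bounded sequence in $D(\A_{1})\cap D(\A_{0}^{*})$, project it orthogonally onto the three summands using \eqref{helm3}; the three projected sequences are bounded in $D\big((\A_{0}^{*})_{\bot}\big)$, $N_{0,1}$, and $D\big((\A_{1})_{\bot}\big)$ respectively (the projections are graph-bounded because the decomposition reduces both $\A_{1}$ and $\A_{0}^{*}$); extract $\H{}{1}$-convergent subsequences from each by the three compactness assumptions; recombine. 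For the converse (ii)$\Rightarrow$(i): each of the three spaces $D\big((\A_{0}^{*})_{\bot}\big)$, $N_{0,1}$, $D\big((\A_{1})_{\bot}\big)$ is a closed subspace of $D(\A_{1})\cap D(\A_{0}^{*})$ with the induced graph norm (using that on $N_{0,1}$ the graph norm coincides with the $\H{}{1}$-norm, and on the reduced-operator summands it is the relevant single-operator graph norm), and a compact embedding restricts to a compact embedding on any closed subspace, giving each of the three statements in (i). The finite-dimensionality of $N_{0,1}$ then follows because $N_{0,1}\incl\H{}{1}$ is compact and $N_{0,1}$ is a closed subspace of a Hilbert space on which this embedding is the identity, so its unit ball is compact, forcing $\dim N_{0,1}<\infty$.

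The one point requiring a little care -- and the step I expect to be the main (minor) obstacle -- is verifying that the decomposition \eqref{helm3} of $D(\A_{1})\cap D(\A_{0}^{*})$ is topological in the graph norm, i.e. that the three orthogonal projections are continuous with respect to $\norm{\,\cdot\,}_{\H{}{1}}+\norm{\A_{1}\,\cdot\,}_{\H{}{2}}+\norm{\A_{0}^{*}\,\cdot\,}_{\H{}{0}}$, and that on each summand the induced graph norm is equivalent to the norm making that summand one of the spaces in (i). This is where the complex property and Remark \ref{remhilcom} enter: $R(\A_{0})\subset N(\A_{1})$ ensures $\ol{R(\A_{0})}\subset N(\A_{1})$ so elements of $D\big((\A_{0}^{*})_{\bot}\big)=\ol{R(\A_{0})}\cap D(\A_{0}^{*})$ have $\A_{1}\,\cdot\,=0$, and dually $\ol{R(\A_{1}^{*})}\subset N(\A_{0}^{*})$ ensures elements of $D\big((\A_{1})_{\bot}\big)$ have $\A_{0}^{*}\,\cdot\,=0$, while $N_{0,1}\subset N(\A_{1})\cap N(\A_{0}^{*})$ kills both; hence on each summand the graph norm collapses to exactly one nontrivial term (or none), and the projections, being orthogonal in $\H{}{1}$ and mapping into subspaces where at most one of $\A_{1},\A_{0}^{*}$ survives, are graph-bounded. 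Once this bookkeeping is in place, the equivalence is just the ``finite orthogonal sum embeds compactly iff each factor does'' lemma, and the dimension statement is immediate.
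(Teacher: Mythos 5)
Your argument is essentially the paper's: the paper proves this lemma precisely by combining the orthogonal decomposition
$D(\A_{1})\cap D(\A_{0}^{*})=D\big((\A_{0}^{*})_{\bot}\big)\oplus_{\H{}{1}}N_{0,1}\oplus_{\H{}{1}}D\big((\A_{1})_{\bot}\big)$
from \eqref{helm3} with the continuous inclusions of the three summands recorded just before the lemma, and your bookkeeping about the graph norms collapsing on each summand (via the complex property and its dual) is exactly the content that makes this work. The finite-dimensionality argument via Riesz is also the intended one.

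There is one small but genuine mismatch you should repair: assertion (i) speaks of $D\big((\A_{0})_{\bot}\big)\incl\H{}{0}$, whereas the summand appearing in \eqref{helm3} is $D\big((\A_{0}^{*})_{\bot}\big)$, which embeds into $\H{}{1}$. These are \emph{not} literally the same embedding, contrary to what you assert; your decomposition argument directly yields the equivalence of (ii) with the compactness of $D\big((\A_{0}^{*})_{\bot}\big)\incl\H{}{1}$, $D\big((\A_{1})_{\bot}\big)\incl\H{}{1}$, and $N_{0,1}\incl\H{}{1}$. To pass between $D\big((\A_{0})_{\bot}\big)\incl\H{}{0}$ and $D\big((\A_{0}^{*})_{\bot}\big)\incl\H{}{1}$ you must invoke Lemma \ref{lem:toolboxcpt3} (fundamental lemma 3), applied to $\A=\A_{0}$, which states exactly that these two compactness properties are equivalent. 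With that one additional citation the proof is complete.
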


Summarising the latter results we get the following theorem.

\begin{theo}[mini FAT]
\label{theo:toolboxgenmain}
Let $D(\A_{1})\cap D(\A_{0}^{*})\incl\H{}{1}$ be compact. Then:
\begin{itemize}
\item[\bf(i)]
The ranges $R(\A_{0})$, $R(\A_{0}^{*})$ and $R(\A_{1})$, $R(\A_{1}^{*})$ are closed.
\item[\bf(ii)]
The inverse operators $(\A_{0})_{\bot}^{-1}$, $(\A_{0}^{*})_{\bot}^{-1}$ 
and $(\A_{1})_{\bot}^{-1}$, $(\A_{1}^{*})_{\bot}^{-1}$ are compact.
\item[\bf(iii)]
The cohomology group $N_{0,1}=N(\A_{1})\cap N(\A_{0}^{*})$ has finite dimension.
\item[\bf(iv)]
The orthogonal Helmholtz-type decomposition
$\H{}{1}=R(\A_{0})\oplus_{\H{}{1}}N_{0,1}\oplus_{\H{}{1}}R(\A_{1}^{*})$ 
holds.
\item[\bf(v)]
There exist $c_{\A_{0}},c_{\A_{1}}>0$ such that
\begin{align*}
\forall\,x&\in D\big((\A_{0})_{\bot}\big)
=D(\A_{0})\cap N(\A_{0})^{\bot_{\H{}{0}}}=D(\A_{0})\cap R(\A_{0}^{*})
&
\norm{x}_{\H{}{0}}&\leq c_{\A_{0}}\norm{\A_{0}x}_{\H{}{1}},\\
\forall\,y&\in D\big((\A_{0}^{*})_{\bot}\big)
=D(\A_{0}^{*})\cap N(\A_{0}^{*})^{\bot_{\H{}{1}}}=D(\A_{0}^{*})\cap R(\A_{0})
&
\norm{y}_{\H{}{1}}&\leq c_{\A_{0}}\norm{\A_{0}^{*}y}_{\H{}{0}},\\
\forall\,y&\in D\big((\A_{1})_{\bot}\big)
=D(\A_{1})\cap N(\A_{1})^{\bot_{\H{}{1}}}=D(\A_{1})\cap R(\A_{1}^{*})
&
\norm{y}_{\H{}{1}}&\leq c_{\A_{1}}\norm{\A_{1}y}_{\H{}{2}},\\
\forall\,z&\in D\big((\A_{1}^{*})_{\bot}\big)
=D(\A_{1}^{*})\cap N(\A_{1}^{*})^{\bot_{\H{}{2}}}=D(\A_{1}^{*})\cap R(\A_{1})
&
\norm{z}_{\H{}{2}}&\leq c_{\A_{1}}\norm{\A_{1}^{*}z}_{\H{}{1}}.
\end{align*}
\item[\bf(v')]
With $c_{\A_{0}}$ and $c_{\A_{1}}$ from (v) it holds
$$\forall\,y\in D(\A_{1})\cap D(\A_{0}^{*})\cap N_{0,1}^{\bot_{\H{}{1}}}
\qquad\norm{y}_{\H{}{1}}^{2}\leq 
c_{\A_{0}}^{2}\norm{\A_{0}^{*}y}_{\H{}{0}}^{2}
+c_{\A_{1}}^{2}\norm{\A_{1}y}_{\H{}{2}}^{2}.$$
\end{itemize}
\end{theo}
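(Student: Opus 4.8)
The plan is to derive Theorem \ref{theo:toolboxgenmain} as a direct consequence of the compactness lemma (Lemma \ref{lem:toolboxcpt4}) together with fundamental lemmas 1--3 (Lemmas \ref{lem:toolboxcpt1}--\ref{lem:toolboxcpt3}) and the Helmholtz decomposition lemma (Lemma \ref{lem:toolboxhelm1}). First I would observe that by Lemma \ref{lem:toolboxcpt4} the hypothesis ``$D(\A_{1})\cap D(\A_{0}^{*})\incl\H{}{1}$ is compact'' is equivalent to the three compact embeddings $D((\A_{0})_{\bot})\incl\H{}{0}$, $D((\A_{1})_{\bot})\incl\H{}{1}$, and $N_{0,1}\incl\H{}{1}$, and that the last of these immediately gives (iii) since a Hilbert space with compact identity is finite-dimensional. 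Applying Lemma \ref{lem:toolboxcpt2} to $\A_{0}$ and to $\A_{1}$ then yields that $R(\A_{0})$ and $R(\A_{1})$ are closed, and the equivalences (ii)$\Leftrightarrow$(ii') in Lemma \ref{lem:toolboxcpt1} give closedness of $R(\A_{0}^{*})$ and $R(\A_{1}^{*})$; this is (i). For (ii) I would invoke Lemma \ref{lem:toolboxcpt3}: the compactness of $D((\A_{0})_{\bot})\incl\H{}{0}$ is equivalent to compactness of $(\A_{0})_{\bot}^{-1}$ and of $(\A_{0}^{*})_{\bot}^{-1}$, and likewise for the index-$1$ operators, which is exactly the assertion.

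For (iv) I would combine (i) with the last decomposition in \eqref{helm3}: since all ranges are now closed, $\ol{R(\A_{0})}=R(\A_{0})$ and $\ol{R(\A_{1}^{*})}=R(\A_{1}^{*})$, so $\H{}{1}=\ol{R(\A_{0})}\oplus_{\H{}{1}}N_{0,1}\oplus_{\H{}{1}}\ol{R(\A_{1}^{*})}$ becomes the stated orthogonal Helmholtz decomposition. For (v), closed range for each of the four reduced operators is, by the equivalence (ii)$\Leftrightarrow$(i) (and (ii')$\Leftrightarrow$(i')) in Lemma \ref{lem:toolboxcpt1}, precisely the four Friedrichs/Poincar\'e estimates; the identifications of the domains of the reduced operators, e.g.\ $D((\A_{0})_{\bot})=D(\A_{0})\cap N(\A_{0})^{\bot_{\H{}{0}}}=D(\A_{0})\cap R(\A_{0}^{*})$, follow from the definition of the reduced operator together with $N(\A_{0})^{\bot_{\H{}{0}}}=\ol{R(\A_{0}^{*})}=R(\A_{0}^{*})$ from \eqref{helm1} (using again that the range is closed). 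The matching of the constants $c_{\A_{0}}$ on the two estimates for $\A_{0}$ and $\A_{0}^{*}$, and $c_{\A_{1}}$ on those for $\A_{1}$ and $\A_{1}^{*}$, is the ``best constant'' equality $c_{\A}=c_{\A^{*}}$ from the last line of Lemma \ref{lem:toolboxcpt1}.

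For (v') I would take $y\in D(\A_{1})\cap D(\A_{0}^{*})\cap N_{0,1}^{\bot_{\H{}{1}}}$ and use the fourth line of \eqref{helm3}, namely $D(\A_{1})\cap D(\A_{0}^{*})=D((\A_{0}^{*})_{\bot})\oplus_{\H{}{1}}N_{0,1}\oplus_{\H{}{1}}D((\A_{1})_{\bot})$, to write $y=y_{0}+y_{1}$ with $y_{0}\in D((\A_{0}^{*})_{\bot})\subset\ol{R(\A_{0})}$ and $y_{1}\in D((\A_{1})_{\bot})\subset\ol{R(\A_{1}^{*})}$, the $N_{0,1}$-component vanishing by the orthogonality assumption. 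Since these two summands are orthogonal in $\H{}{1}$, $\norm{y}_{\H{}{1}}^{2}=\norm{y_{0}}_{\H{}{1}}^{2}+\norm{y_{1}}_{\H{}{1}}^{2}$. Now $\A_{0}^{*}y_{1}=0$ (as $y_{1}\in\ol{R(\A_{1}^{*})}\subset N(\A_{0}^{*})$ by Remark \ref{remhilcom}) and $\A_{1}y_{0}=0$ (as $y_{0}\in\ol{R(\A_{0})}\subset N(\A_{1})$ by \eqref{compprop}), so $\A_{0}^{*}y=\A_{0}^{*}y_{0}$ and $\A_{1}y=\A_{1}y_{1}$; applying the estimates from (v) to $y_{0}$ and $y_{1}$ separately and adding gives $\norm{y}_{\H{}{1}}^{2}\leq c_{\A_{0}}^{2}\norm{\A_{0}^{*}y}_{\H{}{0}}^{2}+c_{\A_{1}}^{2}\norm{\A_{1}y}_{\H{}{2}}^{2}$.

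The whole argument is essentially bookkeeping: no genuinely new estimate is needed, everything is assembled from the fundamental lemmas and the Helmholtz decompositions. The one place that needs a little care --- and what I would regard as the main (minor) obstacle --- is (v'): one must correctly identify which component of the three-term decomposition of $D(\A_{1})\cap D(\A_{0}^{*})$ is killed by each operator and exploit the \emph{orthogonality} of the summands so that the squared norms and the two Friedrichs estimates add cleanly; getting the inclusions $D((\A_{0}^{*})_{\bot})\subset N(\A_{1})$ and $D((\A_{1})_{\bot})\subset N(\A_{0}^{*})$ right (which were already recorded just before Lemma \ref{lem:toolboxcpt4}) is what makes the cross terms disappear.
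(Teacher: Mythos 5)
Your proposal is correct and is precisely the argument the paper intends: Theorem \ref{theo:toolboxgenmain} is stated as a summary of Lemma \ref{lem:toolboxcpt1}--Lemma \ref{lem:toolboxcpt3}, Lemma \ref{lem:toolboxhelm1}, and Lemma \ref{lem:toolboxcpt4}, and your assembly (including the orthogonal splitting $y=y_{0}+y_{1}$ with the cross terms killed by $D\big((\A_{0}^{*})_{\bot}\big)\subset N(\A_{1})$ and $D\big((\A_{1})_{\bot}\big)\subset N(\A_{0}^{*})$ for (v')) is exactly the intended bookkeeping. No gaps.
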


\begin{defi}
\label{defihilcom}
The Hilbert complex \eqref{hcomplex} is called 
\begin{itemize}
\item
closed, if $R(\A_{0})$ and $R(\A_{1})$ are closed,
\item
compact, if the embedding $D(\A_{1})\cap D(\A_{0}^{*})\incl\H{}{1}$ is compact.
\end{itemize}
\end{defi}

\begin{rem}
\label{remhilcom2}
A compact Hilbert complex is already closed.
\end{rem}

\subsection{FAT III: Bounded Regular Decompositions and Potentials}
\label{sec:FA3}

Bounded regular decompositions and bounded regular potentials are very powerful tools.
In particular, compact embeddings can easily be proved, cf.~Lemma \ref{lem:cptembmaintheo},
which then -- in combination with the FA-ToolBox --
immediately lead to a comprehensive list of important results for the underlying Hilbert complex,
cf.~Theorem \ref{theo:toolboxgenmain} and \cite{P2020a}. 

Throughout this subsection, 
let $\A_{0}$ and $\A_{1}$ be \emph{densely defined} and \emph{closed} linear operators
satisfying the \emph{complex property}, i.e., $R(\A_{0})\subset N(\A_{1})$.
Moreover, we fix some \emph{regular subspaces}
$\H{+}{0}$, $\H{+}{1}$, and $\H{+}{2}$, such that either
\begin{align}
\begin{aligned}
\label{regsubspaceintro}
&&
\H{+}{0}\incl&D(\A_{0})\incl\H{}{0}
&&\quad\text{and}\quad&
\H{+}{1}\incl&D(\A_{1})\incl\H{}{1},\\
\text{or}\quad&&
\H{+}{1}\incl&D(\A_{0}^{*})\incl\H{}{1}
&&\quad\text{and}\quad&
\H{+}{2}\incl&D(\A_{1}^{*})\incl\H{}{2}
\end{aligned}
\end{align}
hold with continuous embeddings.
In the following, we consider \emph{regular decompositions} of $D(\A_{1})$ and $D(\A_{0}^{*})$
of the following type
\begin{align}
\label{bdregdeco1}
D(\A_{1})=\H{+}{1}+\A_{0}\H{+}{0},\qquad
D(\A_{0}^{*})=\H{+}{1}+\A_{1}^{*}\H{+}{2}.
\end{align}

For the rest of this subsection we concentrate on the first 
regular decomposition in \eqref{bdregdeco1}.
Analogous results hold true for the second regular decomposition in \eqref{bdregdeco1},
and we leave the corresponding reformulations to the interested reader.

\begin{defi}[bounded regular decompositions]
\label{def:cptembmaindef1}
In \eqref{bdregdeco1} we call the regular decomposition
$D(\A_{1})=\H{+}{1}+\A_{0}\H{+}{0}$ bounded, if there exist bounded linear operators
$$\PotQ_{\A_{1},1}:D(\A_{1})\to\H{+}{1},\qquad
\PotQ_{\A_{1},0}:D(\A_{1})\to\H{+}{0},$$
such that 
$$\PotQ_{\A_{1},1}+\A_{0}\PotQ_{\A_{1},0}=\id_{D(\A_{1})}.$$
$\PotQ_{\A_{1},1}$ and $\PotQ_{\A_{1},0}$ are then called bounded linear regular decomposition operators.

More precisely, for each $x\in D(\A_{1})$ there exist two potentials 
$$x_{1}:=\PotQ_{\A_{1},1}x\in\H{+}{1},\qquad
z:=\PotQ_{\A_{1},0}x\in\H{+}{0},$$ 
such that $x=x_{1}+\A_{0}z$ 
and $\norm{x_{1}}_{\H{+}{1}}+\norm{z}_{\H{+}{0}}\leq c\norm{x}_{D(\A_{1})}$
with some $c>0$ independent of $x,x_{1},z$.
\end{defi}

\begin{defi}[weak bounded regular decompositions]
\label{def:cptembmaindef1weak}
$D(\A_{1})=\H{+}{1}+N(\A_{1})$
is called a weak bounded regular decomposition, if there exist bounded linear operators
$$\PotQ_{\A_{1},1}:D(\A_{1})\to\H{+}{1},\qquad
\PotN_{\A_{1}}:D(\A_{1})\to N(\A_{1})$$
such that $\PotQ_{\A_{1},1}+\PotN_{\A_{1}}=\id_{D(\A_{1})}$.
$\PotQ_{\A_{1},1}$ and $\PotN_{\A_{1}}$ are again called bounded linear regular decomposition operators.

More precisely, for each $x\in D(\A_{1})$ there exist
$$x_{1}:=\PotQ_{\A_{1},1}x\in\H{+}{1},\qquad
x_{0}:=\PotN_{\A_{1}}x\in N(\A_{1}),$$ 
such that $x=x_{1}+x_{0}$ 
and $\norm{x_{1}}_{\H{+}{1}}+\norm{x_{0}}_{\H{}{1}}\leq c\norm{x}_{D(\A_{1})}$
with some $c>0$ independent of $x,x_{1},x_{0}$.
\end{defi}

\begin{rem}[bounded regular decompositions]
\label{rem:cptembmaindef1}
For bounded regular decompositions it holds:
\begin{itemize}
\item[\bf(i)]
For $\PotQ_{\A_{1},1}$ from Definition \ref{def:cptembmaindef1} or Definition \ref{def:cptembmaindef1weak}
we have $\A_{1}\PotQ_{\A_{1},1}=\A_{1}$ by the complex property.
Hence $N(\A_{1})$ is invariant under $\PotQ_{\A_{1},1}$, i.e.,
$\PotQ_{\A_{1},1}N(\A_{1})\subset N(\A_{1})$.
\item[\bf(ii)]
A bounded regular decomposition from Definition \ref{def:cptembmaindef1}
implies a weak bounded regular decomposition from Definition \ref{def:cptembmaindef1weak}
by setting $\PotN_{\A_{1}}:=\A_{0}\PotQ_{\A_{1},0}$
since $\A_{0}\H{+}{0}\subset N(\A_{1})$ holds by the complex property.
\end{itemize}
\end{rem}

\begin{defi}[bounded regular potentials]
\label{def:cptembmaindef2}
We call $R(\A_{1})=\A_{1}\H{+}{1}$ a bounded regular potential representation,
if there exists a bounded linear operator
$$\PotP_{\A_{1}}:R(\A_{1})\to\H{+}{1}
\qquad\text{with}\qquad
\A_{1}\PotP_{\A_{1}}=\id_{R(\A_{1})}.$$
We say that $\PotP_{\A_{1}}$ is a bounded linear regular potential operator of $\A_{1}$.
In particular, $\PotP_{\A_{1}}$ is a bounded linear right inverse of $\A_{1}$.
\end{defi}

Analogously, we extend the latter definition to the operators $\A_{0}$, $\A_{0}^{*}$, and $\A_{1}^{*}$.

\begin{rem}[bounded regular potentials]
\label{rem:cptembmaindef2rem}
We state two simple facts about potential operators:
\begin{itemize}
\item[\bf(i)]
Let a linear operator
$$\PotP_{\A_{0}}:N(\A_{1})\cap N_{0,1}^{\bot_{\H{}{1}}}\to D(\A_{0})
\qquad\text{with}\qquad
\A_{0}\PotP_{\A_{0}}=\id_{N(\A_{1})\cap N_{0,1}^{\bot_{\H{}{1}}}}$$
be given. Then $R(\A_{0})$ is closed as
$\ol{R(\A_{0})}=N(\A_{1})\cap N_{0,1}^{\bot_{\H{}{1}}}=R(\A_{0}\PotP_{\A_{0}})\subset R(\A_{0})$.
\item[\bf(ii)]
Let a bounded linear operator
$$\PotP_{\A_{0}}:N(\A_{1})\cap N_{0,1}^{\bot_{\H{}{1}}}\to\H{+}{0}
\qquad\text{with}\qquad
\A_{0}\PotP_{\A_{0}}=\id_{N(\A_{1})\cap N_{0,1}^{\bot_{\H{}{1}}}}$$
be given. Then (as above) $R(\A_{0})=N(\A_{1})\cap N_{0,1}^{\bot_{\H{}{1}}}=\A_{0}\H{+}{0}$ 
is closed and
$$\PotP_{\A_{0}}:R(\A_{0})\to\H{+}{0}
\qquad\text{with}\qquad
\A_{0}\PotP_{\A_{0}}=\id_{R(\A_{0})}$$
is a bounded linear regular potential operator of $\A_{0}$.
\end{itemize}
\end{rem}

\begin{lem}[bounded regular potentials by weak bounded regular decompositions]
\label{lem:regpotdeco1}
Let $R(\A_{1})$ be closed,
and let $D(\A_{1})=\H{+}{1}+N(\A_{1})$ be a weak bounded regular decomposition.
Then the bounded regular potential representation
$R(\A_{1})=\A_{1}\H{+}{1}$
holds and 
$$\PotP_{\A_{1}}:=\PotQ_{\A_{1},1}(\A_{1})_{\bot}^{-1}:R(\A_{1})\to\H{+}{1}
\qquad\text{with}\qquad
\A_{1}\PotP_{\A_{1}}=\id_{R(\A_{1})}$$
is a respective bounded linear regular potential operator of $\A_{1}$.
\end{lem}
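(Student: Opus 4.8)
The plan is to verify directly that the operator $\PotP_{\A_{1}}:=\PotQ_{\A_{1},1}(\A_{1})_{\bot}^{-1}$ is well-defined, bounded, maps into $\H{+}{1}$, and satisfies $\A_{1}\PotP_{\A_{1}}=\id_{R(\A_{1})}$; the representation $R(\A_{1})=\A_{1}\H{+}{1}$ then follows as a byproduct. First I would note that since $R(\A_{1})$ is closed, Lemma \ref{lem:toolboxcpt1} guarantees that the reduced operator $(\A_{1})_{\bot}:D((\A_{1})_{\bot})\to R(\A_{1})$ is a topological isomorphism onto $R(\A_{1})$, so $(\A_{1})_{\bot}^{-1}:R(\A_{1})\to D((\A_{1})_{\bot})\subset D(\A_{1})$ is a well-defined bounded linear operator. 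Composing with the bounded decomposition operator $\PotQ_{\A_{1},1}:D(\A_{1})\to\H{+}{1}$ from Definition \ref{def:cptembmaindef1weak} shows $\PotP_{\A_{1}}:R(\A_{1})\to\H{+}{1}$ is bounded, which is the core boundedness claim.

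Next I would check the right-inverse property. Fix $g\in R(\A_{1})$ and set $x:=(\A_{1})_{\bot}^{-1}g\in D((\A_{1})_{\bot})\subset D(\A_{1})$, so that $\A_{1}x=(\A_{1})_{\bot}x=g$. Applying the weak bounded regular decomposition, $x=\PotQ_{\A_{1},1}x+\PotN_{\A_{1}}x$ with $\PotN_{\A_{1}}x\in N(\A_{1})$. Therefore
$$\A_{1}\PotP_{\A_{1}}g=\A_{1}\PotQ_{\A_{1},1}x=\A_{1}\big(x-\PotN_{\A_{1}}x\big)=\A_{1}x=g,$$
where $\A_{1}\PotN_{\A_{1}}x=0$ because $\PotN_{\A_{1}}x\in N(\A_{1})$; equivalently one may simply invoke Remark \ref{rem:cptembmaindef1}(i), which states $\A_{1}\PotQ_{\A_{1},1}=\A_{1}$ on $D(\A_{1})$. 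This proves $\A_{1}\PotP_{\A_{1}}=\id_{R(\A_{1})}$.

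Finally, the bounded regular potential representation $R(\A_{1})=\A_{1}\H{+}{1}$ follows: the inclusion $\A_{1}\H{+}{1}\subset R(\A_{1})$ is immediate since $\H{+}{1}\incl D(\A_{1})$, and conversely for any $g\in R(\A_{1})$ we have $g=\A_{1}\PotP_{\A_{1}}g$ with $\PotP_{\A_{1}}g\in\H{+}{1}$, giving $R(\A_{1})\subset\A_{1}\H{+}{1}$. The only genuine subtlety — hardly an obstacle — is making sure the closedness hypothesis on $R(\A_{1})$ is used precisely where it is needed, namely to invoke Lemma \ref{lem:toolboxcpt1} so that $(\A_{1})_{\bot}^{-1}$ is bounded rather than merely defined on a dense subspace; without it $\PotP_{\A_{1}}$ would fail to be bounded. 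Everything else is bookkeeping with the orthogonal decomposition $D(\A_{1})=N(\A_{1})\oplus_{\H{}{1}}D((\A_{1})_{\bot})$ from \eqref{helm1}.
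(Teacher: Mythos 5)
Your proposal is correct and follows essentially the same route as the paper: closedness of $R(\A_{1})$ plus Lemma \ref{lem:toolboxcpt1} gives boundedness of $(\A_{1})_{\bot}^{-1}$ and hence of $\PotP_{\A_{1}}$, and the identity $\A_{1}\PotQ_{\A_{1},1}=\A_{1}$ (Remark \ref{rem:cptembmaindef1}) yields $\A_{1}\PotP_{\A_{1}}=\id_{R(\A_{1})}$. Your version merely spells out in more detail why $\A_{1}\PotQ_{\A_{1},1}=\A_{1}$ in the weak-decomposition setting and why the representation $R(\A_{1})=\A_{1}\H{+}{1}$ follows, which the paper leaves implicit.
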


\begin{proof}
As $R(\A_{1})$ is closed, Lemma \ref{lem:toolboxcpt1}
shows that $(\A_{1})_{\bot}^{-1}:R(\A_{1})\to D(\A_{1})$ is bounded.
Hence so is $\PotP_{\A_{1}}$. Moreover, 
$\A_{1}\PotP_{\A_{1}}
=\A_{1}\PotQ_{\A_{1},1}(\A_{1})_{\bot}^{-1}
=\A_{1}(\A_{1})_{\bot}^{-1}
=\id_{R(\A_{1})}$
by Remark \ref{rem:cptembmaindef1}.
\end{proof}

\begin{lem}[weak bounded regular decompositions by bounded regular potentials]
\label{lem:regpotdeco2}
Let a bounded regular potential representation $R(\A_{1})=\A_{1}\H{+}{1}$ be given
with bounded linear regular potential operator
$\PotP_{\A_{1}}:R(\A_{1})\to\H{+}{1}$
satisfying
$\A_{1}\PotP_{\A_{1}}=\id_{R(\A_{1})}$.
Then
$$\PotQ_{\A_{1},1}:=\PotP_{\A_{1}}\A_{1}:D(\A_{1})\to\H{+}{1},\qquad
\PotN_{\A_{1}}:=\id_{D(\A_{1})}-\PotQ_{\A_{1},1}:D(\A_{1})\to N(\A_{1})$$
are bounded linear regular decomposition operators with 
$$\PotQ_{\A_{1},1}+\PotN_{\A_{1}}=\id_{D(\A_{1})}$$
and implying the weak bounded regular decompositions
$$D(\A_{1})
=\H{+}{1}+N(\A_{1})
=R(\PotQ_{\A_{1},1})\dotplus N(\A_{1})
=R(\PotQ_{\A_{1},1})\dotplus R(\PotN_{\A_{1}}).$$
It holds $\A_{1}\PotQ_{\A_{1},1}=\A_{1}$, i.e., $N(\A_{1})$ is invariant under $\PotQ_{\A_{1},1}$.
Note that $R(\PotQ_{\A_{1},1})=R(\PotP_{\A_{1}})$.
\end{lem}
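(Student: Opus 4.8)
The plan is to verify each assertion in turn, all of which follow quite directly from the hypothesis that $\PotP_{\A_{1}}$ is a bounded linear right inverse of $\A_{1}$ on the closed range $R(\A_{1})$. First I would check that $\PotQ_{\A_{1},1}:=\PotP_{\A_{1}}\A_{1}$ is well defined and bounded as a map $D(\A_{1})\to\H{+}{1}$: on $D(\A_{1})$ (with its graph norm) the operator $\A_{1}:D(\A_{1})\to\H{}{2}$ is bounded by definition of the graph norm, its range lies in $R(\A_{1})$, and $\PotP_{\A_{1}}:R(\A_{1})\to\H{+}{1}$ is bounded by assumption; composing gives boundedness, and $R(\PotQ_{\A_{1},1})\subset R(\PotP_{\A_{1}})\subset\H{+}{1}$. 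Since $\H{+}{1}\incl D(\A_{1})$ continuously (this is part of the standing assumption \eqref{regsubspaceintro} on regular subspaces, in the form $\H{+}{1}\incl D(\A_{1})$), we may also regard $\PotQ_{\A_{1},1}$ as a bounded operator $D(\A_{1})\to D(\A_{1})$, so that $\PotN_{\A_{1}}:=\id_{D(\A_{1})}-\PotQ_{\A_{1},1}$ makes sense and is bounded, and $\PotQ_{\A_{1},1}+\PotN_{\A_{1}}=\id_{D(\A_{1})}$ holds by construction.

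Next I would establish the identity $\A_{1}\PotQ_{\A_{1},1}=\A_{1}$: indeed $\A_{1}\PotQ_{\A_{1},1}=\A_{1}\PotP_{\A_{1}}\A_{1}=\id_{R(\A_{1})}\A_{1}=\A_{1}$ since $\A_{1}x\in R(\A_{1})$ for every $x\in D(\A_{1})$. From this, $\A_{1}\PotN_{\A_{1}}=\A_{1}-\A_{1}\PotQ_{\A_{1},1}=0$, so $R(\PotN_{\A_{1}})\subset N(\A_{1})$, which shows $\PotN_{\A_{1}}$ maps into $N(\A_{1})$ as claimed; this is exactly the statement of Remark \ref{rem:cptembmaindef1}(i) specialised to the present $\PotQ_{\A_{1},1}$. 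Hence $\PotQ_{\A_{1},1}$ and $\PotN_{\A_{1}}$ are bounded linear regular decomposition operators in the sense of Definition \ref{def:cptembmaindef1weak}, yielding the weak bounded regular decomposition $D(\A_{1})=\H{+}{1}+N(\A_{1})$.

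For the refined decomposition $D(\A_{1})=R(\PotQ_{\A_{1},1})\dotplus N(\A_{1})=R(\PotQ_{\A_{1},1})\dotplus R(\PotN_{\A_{1}})$, I would first show $\PotQ_{\A_{1},1}$ is idempotent: $\PotQ_{\A_{1},1}^{2}=\PotP_{\A_{1}}\A_{1}\PotP_{\A_{1}}\A_{1}=\PotP_{\A_{1}}(\A_{1}\PotP_{\A_{1}})\A_{1}=\PotP_{\A_{1}}\A_{1}=\PotQ_{\A_{1},1}$, using $\A_{1}\PotP_{\A_{1}}=\id_{R(\A_{1})}$. Thus $\PotQ_{\A_{1},1}$ is a (bounded) projection on $D(\A_{1})$ and $\PotN_{\A_{1}}=\id-\PotQ_{\A_{1},1}$ is the complementary projection, so $D(\A_{1})=R(\PotQ_{\A_{1},1})\dotplus R(\PotN_{\A_{1}})$ is the associated algebraic direct sum decomposition; since $N(\PotQ_{\A_{1},1})=R(\PotN_{\A_{1}})$ for a projection, and $N(\A_{1})\subset N(\PotQ_{\A_{1},1})$ because $\PotQ_{\A_{1},1}=\PotP_{\A_{1}}\A_{1}$ annihilates $N(\A_{1})$, combined with $R(\PotN_{\A_{1}})\subset N(\A_{1})$ shown above, we get $R(\PotN_{\A_{1}})=N(\A_{1})$, hence $D(\A_{1})=R(\PotQ_{\A_{1},1})\dotplus N(\A_{1})$ as well. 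Finally, $R(\PotQ_{\A_{1},1})=R(\PotP_{\A_{1}}\A_{1})=R(\PotP_{\A_{1}})$ follows since $\A_{1}:D(\A_{1})\to R(\A_{1})$ is surjective and $\PotP_{\A_{1}}$ is defined on all of $R(\A_{1})$.

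I do not anticipate a genuine obstacle here; the only point requiring a little care is bookkeeping of the several ways $\PotQ_{\A_{1},1}$ is viewed (as a map into $\H{+}{1}$, as a projection on $D(\A_{1})$), and the harmless subtlety that the "direct sums" in the last display are algebraic/topological direct sums of subspaces of the Hilbert space $D(\A_{1})$, not orthogonal ones — the boundedness of the projection $\PotQ_{\A_{1},1}$ is what makes the sum topological. Everything else is a short formal computation with the identities $\A_{1}\PotP_{\A_{1}}=\id_{R(\A_{1})}$ and $\PotQ_{\A_{1},1}=\PotP_{\A_{1}}\A_{1}$.
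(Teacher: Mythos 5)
Your proof is correct and follows essentially the same route as the paper's: the key computations are $\A_{1}\PotP_{\A_{1}}\A_{1}=\A_{1}$ (so that $x-\PotP_{\A_{1}}\A_{1}x\in N(\A_{1})$) and the verification of directness. The only cosmetic difference is that you derive directness from the idempotency $\PotQ_{\A_{1},1}^{2}=\PotQ_{\A_{1},1}$ (which the paper records separately in Remark \ref{rem:regpotdeco2}), whereas the paper argues directly that an element of $R(\PotQ_{\A_{1},1})\cap N(\A_{1})$ must vanish; both are fine.
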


\begin{proof}
$\PotQ_{\A_{1},1}$ and $\PotN_{\A_{1}}$ are bounded.
Let $x\in D(\A_{1})$. 
Then $\A_{1}x\in R(\A_{1})$
and $\PotP_{\A_{1}}\A_{1}x\in\H{+}{1}$ with
$\widetilde{x}:=x-\PotP_{\A_{1}}\A_{1}x\in N(\A_{1})$.
For the directness, let 
$x=\PotQ_{\A_{1},1}\varphi=\PotP_{\A_{1}}\A_{1}\varphi\in N(\A_{1})$
with $\varphi\in D(\A_{1})$. Then $0=\A_{1} x=\A_{1}\varphi$
and hence $x=0$.
\end{proof}

\begin{rem}
\label{rem:regpotdeco2}
Note that $\PotQ_{\A_{1},1}^2=\PotQ_{\A_{1},1}$ 
and $\PotQ_{\A_{1},1}\PotN_{\A_{1}}=\PotN_{\A_{1}}\PotQ_{\A_{1},1}=0$
hold for the special bounded linear regular decomposition operator 
$\PotQ_{\A_{1},1}=\PotP_{\A_{1}}\A_{1}$ from the latter lemma. Hence:
\begin{itemize}
\item[\bf(i)]
$\PotQ_{\A_{1},1}$ and $\PotN_{\A_{1}}$ are projections.
\item[\bf(ii)]
For $I_{\pm}:=\PotQ_{\A_{1},1}\pm\PotN_{\A_{1}}$ we observe
$I_{+}=I_{-}^2=\id_{D(\A_{1})}$. Thus the operators $I_{+}$, $I_{-}^2$, as well as
$I_{-}=2\PotQ_{\A_{1},1}-\id_{D(\A_{1})}$ are topological isomorphisms on $D(\A_{1})$.
\item[\bf(iii)]
There exists $c>0$ such that for $x\in D(\A_{1})$ it holds
\begin{align*}
c\norm{\PotQ_{\A_{1},1}x}_{\H{+}{1}}
&\leq\norm{\A_{1}x}_{\H{}{2}}
\leq\norm{x}_{D(\A_{1})},
&
\norm{\PotN_{\A_{1}}x}_{\H{}{1}}
&\leq\norm{x}_{\H{}{1}}
+\norm{\PotQ_{\A_{1},1}x}_{\H{}{1}}.
\end{align*}
\item[\bf(iii')]
For $x\in N(\A_{1})$ we have $\PotQ_{\A_{1},1}x=0$
and $\PotN_{\A_{1}}x=x$, i.e.,
$\PotQ_{\A_{1},1}|_{N(\A_{1})}=0$
as well as $\PotN_{\A_{1}}|_{N(\A_{1})}=\id_{N(\A_{1})}$.
In particular, $\PotN_{\A_{1}}$ is onto.
\end{itemize}
\end{rem}

\begin{cor}[bounded regular decompositions by bounded regular potentials]
\label{cor:regpotdeco2}
Let the complex be exact, i.e., $N(\A_{1})=R(\A_{0})$, and let
$R(\A_{1})=\A_{1}\H{+}{1}$ 
as well as $R(\A_{0})=\A_{0}\H{+}{0}$
be bounded regular potential representations
with bounded linear regular potential operators
$\PotP_{\A_{1}}:R(\A_{1})\to\H{+}{1}$ 
and $\PotP_{\A_{0}}:R(\A_{0})\to\H{+}{0}$
satisfying 
$\A_{1}\PotP_{\A_{1}}=\id_{R(\A_{1})}$
and
$\A_{0}\PotP_{\A_{0}}=\id_{R(\A_{0})}$,
respectively. Then
$$\PotQ_{\A_{1},1}:D(\A_{1})\to\H{+}{1},\qquad
\PotQ_{\A_{1},0}:=\PotP_{\A_{0}}\PotN_{\A_{1}}:D(\A_{1})\to\H{+}{0}$$
with $\PotQ_{\A_{1},1}=\PotP_{\A_{1}}\A_{1}$ and
$\PotN_{\A_{1}}=\id_{D(\A_{1})}-\PotQ_{\A_{1},1}$ from Lemma \ref{lem:regpotdeco2}
are bounded linear regular decomposition operators with 
$$\PotQ_{\A_{1},1}+\A_{0}\PotQ_{\A_{1},0}=\id_{D(\A_{1})}$$
and implying bounded regular decompositions 
$$D(\A_{1})
=\H{+}{1}+\A_{0}\H{+}{0}
=R(\PotQ_{\A_{1},1})\dotplus\A_{0}\H{+}{0}
=R(\PotQ_{\A_{1},1})\dotplus\A_{0}R(\PotQ_{\A_{1},0}).$$
It holds $\A_{1}\PotQ_{\A_{1},1}=\A_{1}$, i.e., $N(\A_{1})$ is invariant under $\PotQ_{\A_{1},1}$.
Note that $R(\PotQ_{\A_{1},1})=R(\PotP_{\A_{1}})$
and $R(\PotQ_{\A_{1},0})=R(\PotP_{\A_{0}})$.
\end{cor}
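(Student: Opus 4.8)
The plan is to build everything on top of Lemma \ref{lem:regpotdeco2}, applied to $\A_{1}$, and then to use the exactness hypothesis to convert the $N(\A_{1})$-part of the weak decomposition into an $\A_{0}\H{+}{0}$-part via the given potential operator $\PotP_{\A_{0}}$. First I would invoke Lemma \ref{lem:regpotdeco2} with the data $R(\A_{1})=\A_{1}\H{+}{1}$ and $\PotP_{\A_{1}}$: this immediately gives the bounded linear operators $\PotQ_{\A_{1},1}=\PotP_{\A_{1}}\A_{1}:D(\A_{1})\to\H{+}{1}$ and $\PotN_{\A_{1}}=\id_{D(\A_{1})}-\PotQ_{\A_{1},1}:D(\A_{1})\to N(\A_{1})$, together with the facts $\PotQ_{\A_{1},1}+\PotN_{\A_{1}}=\id_{D(\A_{1})}$, $\A_{1}\PotQ_{\A_{1},1}=\A_{1}$, $R(\PotQ_{\A_{1},1})=R(\PotP_{\A_{1}})$, and the directness $D(\A_{1})=R(\PotQ_{\A_{1},1})\dotplus N(\A_{1})$.

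Next I would check that $\PotQ_{\A_{1},0}:=\PotP_{\A_{0}}\PotN_{\A_{1}}$ is well-defined and bounded. Boundedness is just composition of the bounded operator $\PotN_{\A_{1}}:D(\A_{1})\to N(\A_{1})$ with the bounded operator $\PotP_{\A_{0}}$; well-definedness needs $R(\PotN_{\A_{1}})\subset R(\A_{0})$, which holds because $R(\PotN_{\A_{1}})\subset N(\A_{1})=R(\A_{0})$ by the exactness assumption, so $\PotP_{\A_{0}}$ may be applied. Then for $x\in D(\A_{1})$ we compute $\A_{0}\PotQ_{\A_{1},0}x=\A_{0}\PotP_{\A_{0}}\PotN_{\A_{1}}x=\PotN_{\A_{1}}x$, using $\A_{0}\PotP_{\A_{0}}=\id_{R(\A_{0})}$ on the element $\PotN_{\A_{1}}x\in R(\A_{0})$; hence $\PotQ_{\A_{1},1}x+\A_{0}\PotQ_{\A_{1},0}x=\PotQ_{\A_{1},1}x+\PotN_{\A_{1}}x=x$, i.e. $\PotQ_{\A_{1},1}+\A_{0}\PotQ_{\A_{1},0}=\id_{D(\A_{1})}$. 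This establishes that these two operators are bounded linear regular decomposition operators in the sense of Definition \ref{def:cptembmaindef1}, giving the first asserted identity; note this also re-confirms $\H{+}{1}\incl D(\A_{1})$ and $\H{+}{0}\incl D(\A_{0})$ are the relevant regular subspaces, so the decomposition $D(\A_{1})=\H{+}{1}+\A_{0}\H{+}{0}$ in \eqref{bdregdeco1} holds.

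It remains to upgrade the three chained equalities $D(\A_{1})=\H{+}{1}+\A_{0}\H{+}{0}=R(\PotQ_{\A_{1},1})\dotplus\A_{0}\H{+}{0}=R(\PotQ_{\A_{1},1})\dotplus\A_{0}R(\PotQ_{\A_{1},0})$ to \emph{direct} sums. The middle one follows from the weak directness $D(\A_{1})=R(\PotQ_{\A_{1},1})\dotplus N(\A_{1})$ of Lemma \ref{lem:regpotdeco2} combined with $\A_{0}\H{+}{0}\subset N(\A_{1})$ (complex property) and $\A_{0}\H{+}{0}\supset R(\PotN_{\A_{1}})=N(\A_{1})$, the latter because $\PotN_{\A_{1}}$ restricted to $N(\A_{1})$ is the identity (Remark \ref{rem:regpotdeco2}(iii')) and $\PotN_{\A_{1}}=\A_{0}\PotQ_{\A_{1},0}$; so in fact $\A_{0}\H{+}{0}$ already equals $N(\A_{1})$ here and the directness is inherited verbatim. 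For the last equality, $R(\A_{0}\PotQ_{\A_{1},0})=\A_{0}R(\PotQ_{\A_{1},0})$ is a tautology, and since $\A_{0}\PotQ_{\A_{1},0}=\PotN_{\A_{1}}$ is onto $N(\A_{1})$ one has $\A_{0}R(\PotQ_{\A_{1},0})=N(\A_{1})=\A_{0}\H{+}{0}$, so the sum stays direct. Finally $\A_{1}\PotQ_{\A_{1},1}=\A_{1}$ and the invariance of $N(\A_{1})$ under $\PotQ_{\A_{1},1}$ are quoted directly from Lemma \ref{lem:regpotdeco2}, and $R(\PotQ_{\A_{1},1})=R(\PotP_{\A_{1}})$ is likewise from there, while $R(\PotQ_{\A_{1},0})=R(\PotP_{\A_{0}})$ holds because $\PotN_{\A_{1}}:D(\A_{1})\to N(\A_{1})=R(\A_{0})$ is onto, so $\PotQ_{\A_{1},0}=\PotP_{\A_{0}}\PotN_{\A_{1}}$ has the same range as $\PotP_{\A_{0}}$. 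The only place demanding care — and thus the main obstacle — is verifying the \emph{directness} of these sums rather than mere sum decompositions; the key observation that dissolves it is that under exactness $\A_{0}\H{+}{0}$, $N(\A_{1})$, and $R(\PotN_{\A_{1}})$ all coincide, so the directness of Lemma \ref{lem:regpotdeco2} transfers without extra work.
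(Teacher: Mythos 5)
Your proposal is correct and follows essentially the same route as the paper: decompose $x=\PotQ_{\A_{1},1}x+\PotN_{\A_{1}}x$ via Lemma \ref{lem:regpotdeco2}, use exactness $N(\A_{1})=R(\A_{0})=\A_{0}\H{+}{0}$ to lift the kernel part with $\PotP_{\A_{0}}$, and inherit the directness from the weak decomposition of Lemma \ref{lem:regpotdeco2} since $\A_{0}\H{+}{0}\subset N(\A_{1})$. The paper's proof is merely a more condensed version of the same computation.
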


\begin{proof}
$\PotQ_{\A_{1},1}$ and $\PotQ_{\A_{1},0}$ are bounded.
Let $x\in D(\A_{1})$. 
Then $\A_{1}x\in R(\A_{1})$
and $\PotP_{\A_{1}}\A_{1}x\in\H{+}{1}$ with
$\widetilde{x}:=x-\PotP_{\A_{1}}\A_{1}x\in N(\A_{1})=R(\A_{0})$.
Thus $z:=\PotP_{\A_{0}}\widetilde{x}\in\H{+}{0}$ and $\A_{0}z=\widetilde{x}$, i.e.,
$$x=\PotP_{\A_{1}}\A_{1}x+\widetilde{x}
=\PotP_{\A_{1}}\A_{1}x+\A_{0}\PotP_{\A_{0}}\widetilde{x}
=\PotP_{\A_{1}}\A_{1}x+\A_{0}\PotP_{\A_{0}}(1-\PotP_{\A_{1}}\A_{1})x.$$
Directness is clear by Lemma \ref{lem:regpotdeco2} as 
$\A_{0}\H{+}{0}\subset N(\A_{1})$ holds by the complex property.
\end{proof}

\begin{rem}
\label{rem:regpotdeco3}
There exists $c>0$ such that for $x\in D(\A_{1})$ it holds
\begin{align*}
c\norm{\PotQ_{\A_{1},1}x}_{\H{+}{1}}
&\leq\norm{\A_{1}x}_{\H{}{2}}
\leq\norm{x}_{D(\A_{1})},
&
c\norm{\PotQ_{\A_{1},0}x}_{\H{+}{0}}
&\leq\norm{\PotN_{\A_{1}}x}_{\H{}{1}}
\leq\norm{x}_{\H{}{1}}
+\norm{\PotQ_{\A_{1},1}x}_{\H{}{1}}.
\end{align*}
Note that $\PotQ_{\A_{1},1}|_{N(\A_{1})}=0$.
\end{rem}

\subsection{FAT IV: Compactness Results and Mini FA-ToolBox}
\label{sec:FA4}

From \cite[Theorem 2.8, Corollary 2.9]{PZ2020b} we cite the following compactness result.

\begin{lem}[compact embedding by regular decompositions]
\label{lem:cptembmaintheo}
Let $\A_{0}$ and $\A_{1}$ be densely defined and closed linear operators
satisfying the complex property, i.e., $R(\A_{0})\subset N(\A_{1})$.
Moreover, let
\begin{itemize}
\item[\bf(i)] 
either the bounded regular decomposition
$D(\A_{1})=\H{+}{1}+\A_{0}\H{+}{0}$ hold 
with compact embeddings $\H{+}{0}\incl\H{}{0}$ and $\H{+}{1}\incl\H{}{1}$,
\item[\bf(ii)] 
or the bounded regular decomposition
$D(\A_{0}^{*})=\H{+}{1}+\A_{1}^{*}\H{+}{2}$ hold 
with compact embeddings $\H{+}{1}\incl\H{}{1}$ and $\H{+}{2}\incl\H{}{2}$.
\end{itemize}
Then the embedding $D(\A_{1})\cap D(\A_{0}^{*})\incl\H{}{1}$ is compact.
\end{lem}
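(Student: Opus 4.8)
The plan is to prove case (i); case (ii) then follows by applying the same argument to the dual pair $(\A_1^*,\A_0^*)$, which again forms a complex by Remark \ref{remhilcom}. So assume the bounded regular decomposition $D(\A_{1})=\H{+}{1}+\A_{0}\H{+}{0}$ with the associated bounded linear decomposition operators $\PotQ_{\A_1,1}:D(\A_1)\to\H{+}{1}$ and $\PotQ_{\A_1,0}:D(\A_1)\to\H{+}{0}$ satisfying $\PotQ_{\A_1,1}+\A_0\PotQ_{\A_1,0}=\id_{D(\A_1)}$, and assume $\H{+}{0}\incl\H{}{0}$ and $\H{+}{1}\incl\H{}{1}$ are compact. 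We want: every bounded sequence $(x_n)$ in $D(\A_1)\cap D(\A_0^*)$ (with the graph norm $\norm{x}_{\H{}{1}}^2+\norm{\A_1 x}_{\H{}{2}}^2+\norm{\A_0^* x}_{\H{}{0}}^2$) has an $\H{}{1}$-convergent subsequence.

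First I would split $x_n = \PotQ_{\A_1,1}x_n + \A_0\PotQ_{\A_1,0}x_n =: y_n + \A_0 z_n$, where $y_n=\PotQ_{\A_1,1}x_n\in\H{+}{1}$ and $z_n=\PotQ_{\A_1,0}x_n\in\H{+}{0}$; by boundedness of the decomposition operators, $(y_n)$ is bounded in $\H{+}{1}$ and $(z_n)$ is bounded in $\H{+}{0}$. By the compact embedding $\H{+}{1}\incl\H{}{1}$, after passing to a subsequence $y_n\to y$ in $\H{}{1}$. It remains to extract a convergent subsequence of $(\A_0 z_n)$ in $\H{}{1}$. Here $\A_0 z_n\in R(\A_0)\subset N(\A_1)$, and the key observation is that $\A_0 z_n = x_n - y_n$, so on the one hand $(\A_0 z_n)$ is bounded in $\H{}{1}$; moreover its $\A_0^*$-image makes sense only after projecting into the right Helmholtz component. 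Concretely, write the orthogonal Helmholtz decomposition $\H{}{1}=\ol{R(\A_0)}\oplus_{\H{}{1}}N(\A_0^*)$ from \eqref{helm2}, and let $\pi$ denote the orthogonal projection onto $\ol{R(\A_0)}$. Since $\A_0 z_n\in R(\A_0)$ we have $\pi(\A_0 z_n)=\A_0 z_n$, so it suffices to handle $\pi x_n$: indeed $\pi x_n = \A_0 z_n + \pi y_n$ and $\pi y_n\to\pi y$, so convergence of $(\A_0 z_n)$ is equivalent to convergence of $(\pi x_n)$.

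Now I use the \emph{second} decomposition hypothesis \emph{implicitly}: we do not have it, so instead the finishing step is the standard Weber--Weck--Picard compactness argument via the reduced operator. Set $u_n:=\pi x_n\in\ol{R(\A_0)}\cap D(\A_0^*)=D\big((\A_0^*)_\bot\big)$ (using that $\pi$ commutes with the decomposition $D(\A_0^*)=D((\A_0^*)_\bot)\oplus N(\A_0^*)$ from \eqref{helm2}, so $u_n\in D(\A_0^*)$). Then $(u_n)$ is bounded in $\H{}{1}$, and $\A_0^* u_n=\A_0^* x_n - \A_0^*(I-\pi)x_n=\A_0^* x_n$ since $(I-\pi)x_n\in N(\A_0^*)$; hence $(\A_0^* u_n)$ is bounded in $\H{}{0}$. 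Also $u_n\in\ol{R(\A_0)}\subset N(\A_1)$ gives $\A_1 u_n=0$. So $(u_n)$ is bounded in $D((\A_0^*)_\bot)$ with the graph norm; if $D((\A_0^*)_\bot)\incl\H{}{1}$ were already known compact we would be done — but that is essentially what we are proving. The hard part, and the actual content, is therefore to get compactness of $(u_n)$ directly from the decomposition. Here is the trick: apply the decomposition not to $x_n$ but observe that $u_n\in D(\A_0^*)$ with $\A_1 u_n=0$, and decompose $u_n$ itself using the \emph{bounded regular potential} $\PotP_{\A_0}$ — by Remark \ref{rem:cptembmaindef2rem} and Corollary \ref{cor:regpotdeco2} the decomposition of $D(\A_1)$ yields a bounded regular potential $\PotP_{\A_0}:R(\A_0)\to\H{+}{0}$ with $\A_0\PotP_{\A_0}=\id$, so $u_n=\A_0 w_n$ with $w_n=\PotP_{\A_0}u_n\in\H{+}{0}$. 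Then $\norm{w_n}_{\H{+}{0}}\le c\norm{u_n}_{\H{}{1}}$ is bounded (using closedness of $R(\A_0)$, which follows from the regular potential via Remark \ref{rem:cptembmaindef2rem}), so by the compact embedding $\H{+}{0}\incl\H{}{0}$ we get $w_n\to w$ in $\H{}{0}$ along a subsequence. Finally $\norm{u_n-u_m}_{\H{}{1}}^2=\scp{\A_0(w_n-w_m)}{u_n-u_m}_{\H{}{1}}=\scp{w_n-w_m}{\A_0^*(u_n-u_m)}_{\H{}{0}}\le\norm{w_n-w_m}_{\H{}{0}}\big(\norm{\A_0^* u_n}_{\H{}{0}}+\norm{\A_0^* u_m}_{\H{}{0}}\big)\to0$, so $(u_n)$ is Cauchy in $\H{}{1}$. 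Combining $u_n\to u$, $\pi y_n\to\pi y$, and $(I-\pi)x_n=(I-\pi)y_n=y_n-\pi y_n\to y-\pi y$, we conclude $x_n=u_n+(I-\pi)x_n\to u+(y-\pi y)$ in $\H{}{1}$, which proves the compact embedding.

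I expect the main obstacle to be the bookkeeping around the Helmholtz projection $\pi$: one must check that $\pi$ maps $D(\A_0^*)$ into $D(\A_0^*)$ and $D(\A_1)$ into $D(\A_1)$ compatibly (this is exactly the content of \eqref{helm2}, \eqref{helm3} in Lemma \ref{lem:toolboxhelm1}), and that it does not enlarge the graph norm — so that $(u_n)$ stays bounded in the appropriate domain. The other delicate point is justifying that the bounded regular decomposition of $D(\A_1)$ legitimately produces a bounded regular potential $\PotP_{\A_0}$ with closed range $R(\A_0)$; this is where Remark \ref{rem:cptembmaindef2rem}(ii) and Corollary \ref{cor:regpotdeco2} do the work, but only after one knows the complex is exact in the relevant degree or restricts to $N_{0,1}^{\bot}$ — so in general one should work on $N(\A_1)\cap N_{0,1}^{\bot_{\H{}{1}}}$ and treat the finite-dimensional cohomology $N_{0,1}$ separately, for which compactness of the embedding is automatic once $\dim N_{0,1}<\infty$ (Lemma \ref{lem:toolboxcpt4}). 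Everything else is the routine diagonal-subsequence extraction.
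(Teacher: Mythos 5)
Your opening moves coincide with the paper's: decompose $x_{n}=y_{n}+\A_{0}z_{n}$ with $(y_{n})$ bounded in $\H{+}{1}$ and $(z_{n})$ bounded in $\H{+}{0}$, and extract subsequences converging in $\H{}{1}$ and $\H{}{0}$. But you then abandon the direct route, and the argument develops a genuine gap at the step where you claim that the bounded regular decomposition of $D(\A_{1})$ ``yields a bounded regular potential $\PotP_{\A_{0}}:R(\A_{0})\to\H{+}{0}$ with $\A_{0}\PotP_{\A_{0}}=\id$''. Nothing in hypothesis (i) provides a potential operator for $\A_{0}$: Lemma \ref{lem:regpotdeco1} produces a potential for $\A_{1}$ (not $\A_{0}$) and needs $R(\A_{1})$ closed; Corollary \ref{cor:regpotdeco2} goes in the opposite direction (it builds a decomposition of $D(\A_{1})$ \emph{from} two given potentials, one of which is exactly the $\PotP_{\A_{0}}$ you are trying to construct) and additionally assumes exactness; Remark \ref{rem:cptembmaindef2rem} merely records consequences of such a potential once it is given. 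Applying the decomposition of $D(\A_{1})$ to $u_{n}\in\ol{R(\A_{0})}$ yields $u_{n}=\PotQ_{\A_{1},1}u_{n}+\A_{0}\PotQ_{\A_{1},0}u_{n}$, not $u_{n}=\A_{0}w_{n}$, so the boundedness of $(w_{n})$ in $\H{+}{0}$ is unjustified. Your fallback---splitting off $N_{0,1}$ and using $\dim N_{0,1}<\infty$---is circular, since finite dimensionality of the cohomology group is itself a consequence of the compact embedding being proved (Lemma \ref{lem:toolboxcpt4}).

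The repair is to apply the duality identity you wrote for $u_{n,m}$ and $w_{n,m}$ directly to $x_{n,m}:=x_{n}-x_{m}$ and its \emph{own} decomposition $p_{1,n,m}+\A_{0}p_{0,n,m}$ (with $p_{1,n}:=y_{n}$, $p_{0,n}:=z_{n}$): since $x_{n,m}\in D(\A_{0}^{*})$ and $p_{0,n,m}\in\H{+}{0}\subset D(\A_{0})$,
\begin{equation*}
\norm{x_{n,m}}_{\H{}{1}}^{2}
=\scp{x_{n,m}}{p_{1,n,m}}_{\H{}{1}}+\scp{\A_{0}^{*}x_{n,m}}{p_{0,n,m}}_{\H{}{0}}
\leq c\big(\norm{p_{1,n,m}}_{\H{}{1}}+\norm{p_{0,n,m}}_{\H{}{0}}\big)\to0
\end{equation*}
along the extracted subsequences. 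This three-line finish is the paper's proof; no Helmholtz projection, no reduced operator, and no potential operator for $\A_{0}$ are needed. Your reduction of case (ii) to case (i) via the dual complex $(\A_{1}^{*},\A_{0}^{*})$ is fine.
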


For convenience we repeat the proof of \cite[Theorem 2.8]{PZ2020b}.

\begin{proof}
Let $(x_{n})\subset D(\A_{1})\cap D(\A_{0}^{*})$ be a bounded sequence, 
i.e., there exists $c>0$ such that for all $n$ we have
$\norm{x_{n}}_{\H{}{1}}
+\norm{\A_{1} x_{n}}_{\H{}{2}}
+\norm{\A_{0}^{*}x_{n}}_{\H{}{0}}
\leq c$.
By assumption we decompose 
$x_{n}=p_{1,n}+\A_{0}p_{0,n}$ with $p_{1,n}\in\H{+}{1}$
and $p_{0,n}\in\H{+}{0}$ satisfying
$\norm{p_{1,n}}_{\H{+}{1}}
+\norm{p_{0,n}}_{\H{+}{0}}
\leq c\norm{x_{n}}_{D(\A_{1})}
\leq c.$
Hence $(p_{\ell,n})\subset\H{+}{\ell}$ is bounded in $\H{+}{\ell}$, $\ell=0,1$,
and thus we can extract convergent subsequences, again denoted by $(p_{\ell,n})$,
such that $(p_{\ell,n})$ are convergent in $\H{}{\ell}$, $\ell=0,1$. 
Then with $x_{n,m}:=x_{n}-x_{m}$ and $p_{\ell,n,m}:=p_{\ell,n}-p_{\ell,m}$ we get
$$\norm{x_{n,m}}_{\H{}{1}}^2
=\scp{x_{n,m}}{p_{1,n,m}}_{\H{}{1}}
+\scp{\A_{0}^{*}x_{n,m}}{p_{0,n,m}}_{\H{}{0}}\\
\leq c\big(\norm{p_{1,n,m}}_{\H{}{1}}
+\norm{p_{0,n,m}}_{\H{}{0}}\big),$$
which shows that $(x_{n})$ is a Cauchy sequence in $\H{}{1}$.
Hence we have shown (i), and (ii) follows analogously.
\end{proof}

\begin{theo}[mini FAT by regular decompositions]
\label{theo:cptembmaintheo1}
Let the assumptions of Lemma \ref{lem:cptembmaintheo} (i) hold
with the bounded linear regular decomposition operators
$\PotQ_{\A_{1},1}:D(\A_{1})\to\H{+}{1}$ as well as $\PotQ_{\A_{1},0}:D(\A_{1})\to\H{+}{0}$. 
Then:
\begin{itemize}
\item[\bf(i)] 
The embedding $D(\A_{1})\cap D(\A_{0}^{*})\incl\H{}{1}$ is compact.
\item[\bf(ii)] 
The assertions of Theorem \ref{theo:toolboxgenmain} (mini FAT) hold.
\item[\bf(iii)] 
The bounded regular potential representation
$R(\A_{1})=\A_{1}\H{+}{1}$
holds with bounded linear regular potential operator
$\PotP_{\A_{1}}=\PotQ_{\A_{1},1}(\A_{1})_{\bot}^{-1}:R(\A_{1})\to\H{+}{1}$
satisfying $\A_{1}\PotP_{\A_{1}}=\id_{R(\A_{1})}$.
\item[\bf(iv)] 
$\widetilde\PotQ_{\A_{1},1}=\PotP_{\A_{1}}\A_{1}:D(\A_{1})\to\H{+}{1}$
and $\widetilde\PotN_{\A_{1}}=\id_{D(\A_{1})}-\widetilde\PotQ_{\A_{1},1}:D(\A_{1})\to N(\A_{1})$
are bounded linear regular decomposition operators with 
$\widetilde\PotQ_{\A_{1},1}+\widetilde\PotN_{\A_{1}}=\id_{D(\A_{1})}$
and the bounded regular decompositions
$$D(\A_{1})
=\H{+}{1}+\A_{0}\H{+}{0}
=\H{+}{1}+N(\A_{1})
=R(\widetilde\PotQ_{\A_{1},1})\dotplus N(\A_{1})
=R(\widetilde\PotQ_{\A_{1},1})\dotplus R(\widetilde\PotN_{\A_{1}})$$
hold. Moreover, $R(\widetilde\PotQ_{\A_{1},1})=R(\PotP_{\A_{1}})$.
\item[\bf(iv')] 
$\A_{1}\widetilde\PotQ_{\A_{1},1}=\A_{1}\PotQ_{\A_{1},1}=\A_{1}$, i.e., $N(\A_{1})$ 
is invariant under $\PotQ_{\A_{1},1}$ and $\widetilde\PotQ_{\A_{1},1}$.
It holds $\widetilde\PotQ_{\A_{1},1}=\PotQ_{\A_{1},1}(\A_{1})_{\bot}^{-1}\A_{1}$.
Hence $\widetilde\PotQ_{\A_{1},1}|_{D((\A_{1})_{\bot})}=\PotQ_{\A_{1},1}|_{D((\A_{1})_{\bot})}$
and thus $\widetilde\PotQ_{\A_{1},1}$ may differ from $\PotQ_{\A_{1},1}$ only on $N(\A_{1})$.
\end{itemize}
\end{theo}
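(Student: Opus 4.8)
The plan is to recognise the statement as a direct assembly of previously established parts of the FA-ToolBox; essentially no new argument is needed. Part (i) is literally Lemma \ref{lem:cptembmaintheo}(i). With the compact embedding $D(\A_{1})\cap D(\A_{0}^{*})\incl\H{}{1}$ in hand I would invoke Theorem \ref{theo:toolboxgenmain} (mini FAT) verbatim to obtain (ii); in particular this hands over the closedness of $R(\A_{0})$, $R(\A_{0}^{*})$, $R(\A_{1})$, $R(\A_{1}^{*})$, which is the single extra fact used below.

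For (iii) I would first upgrade the given bounded regular decomposition to a weak one. By Remark \ref{rem:cptembmaindef1}(ii), putting $\PotN_{\A_{1}}:=\A_{0}\PotQ_{\A_{1},0}$ (and keeping $\PotQ_{\A_{1},1}$) turns $D(\A_{1})=\H{+}{1}+\A_{0}\H{+}{0}$ into a weak bounded regular decomposition $D(\A_{1})=\H{+}{1}+N(\A_{1})$ in the sense of Definition \ref{def:cptembmaindef1weak}, using $\A_{0}\H{+}{0}\subset N(\A_{1})$ from the complex property. Since $R(\A_{1})$ is closed by (ii), Lemma \ref{lem:regpotdeco1} then applies and produces exactly the bounded regular potential representation $R(\A_{1})=\A_{1}\H{+}{1}$ together with the bounded linear regular potential operator $\PotP_{\A_{1}}=\PotQ_{\A_{1},1}(\A_{1})_{\bot}^{-1}:R(\A_{1})\to\H{+}{1}$ satisfying $\A_{1}\PotP_{\A_{1}}=\id_{R(\A_{1})}$.

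For (iv) I would feed this $\PotP_{\A_{1}}$ into Lemma \ref{lem:regpotdeco2}, which directly delivers the bounded linear regular decomposition operators $\widetilde\PotQ_{\A_{1},1}=\PotP_{\A_{1}}\A_{1}$ and $\widetilde\PotN_{\A_{1}}=\id_{D(\A_{1})}-\widetilde\PotQ_{\A_{1},1}$, the identity $\widetilde\PotQ_{\A_{1},1}+\widetilde\PotN_{\A_{1}}=\id_{D(\A_{1})}$, the direct decompositions $D(\A_{1})=\H{+}{1}+N(\A_{1})=R(\widetilde\PotQ_{\A_{1},1})\dotplus N(\A_{1})=R(\widetilde\PotQ_{\A_{1},1})\dotplus R(\widetilde\PotN_{\A_{1}})$, and $R(\widetilde\PotQ_{\A_{1},1})=R(\PotP_{\A_{1}})$. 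The remaining equality $D(\A_{1})=\H{+}{1}+\A_{0}\H{+}{0}$ is the hypothesis, and I would close the loop via $\H{+}{1}+\A_{0}\H{+}{0}\subset\H{+}{1}+N(\A_{1})\subset D(\A_{1})=\H{+}{1}+\A_{0}\H{+}{0}$. Part (iv') is then a short computation: $\A_{1}\PotQ_{\A_{1},1}=\A_{1}$ is Remark \ref{rem:cptembmaindef1}(i), and $\A_{1}\widetilde\PotQ_{\A_{1},1}=\A_{1}\PotP_{\A_{1}}\A_{1}=\A_{1}$ since $\A_{1}x\in R(\A_{1})$ and $\A_{1}\PotP_{\A_{1}}=\id_{R(\A_{1})}$; applying $\A_{1}$ to a kernel element shows $N(\A_{1})$ is invariant under both operators. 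Substituting the formula for $\PotP_{\A_{1}}$ gives $\widetilde\PotQ_{\A_{1},1}=\PotQ_{\A_{1},1}(\A_{1})_{\bot}^{-1}\A_{1}$; on $D\big((\A_{1})_{\bot}\big)$ the restriction of $\A_{1}$ equals $(\A_{1})_{\bot}$, so $(\A_{1})_{\bot}^{-1}\A_{1}$ acts as the identity there and $\widetilde\PotQ_{\A_{1},1}$, $\PotQ_{\A_{1},1}$ coincide on $D\big((\A_{1})_{\bot}\big)$; the splitting $D(\A_{1})=N(\A_{1})\oplus_{\H{}{1}}D\big((\A_{1})_{\bot}\big)$ from \eqref{helm2} then confines any discrepancy to $N(\A_{1})$.

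I do not anticipate a serious obstacle: the theorem is a bookkeeping corollary of Lemma \ref{lem:cptembmaintheo}, Theorem \ref{theo:toolboxgenmain}, Remark \ref{rem:cptembmaindef1}, Lemma \ref{lem:regpotdeco1}, and Lemma \ref{lem:regpotdeco2}. The only points needing a little care are checking that the closedness hypothesis of Lemma \ref{lem:regpotdeco1} is genuinely supplied (it comes via (ii), ultimately from the compact embedding), and the last clause of (iv'), where one must combine the reduced-operator identity $(\A_{1})_{\bot}^{-1}\A_{1}|_{D((\A_{1})_{\bot})}=\id$ with the Helmholtz-type splitting of $D(\A_{1})$ to locate precisely where $\widetilde\PotQ_{\A_{1},1}$ and $\PotQ_{\A_{1},1}$ may disagree.
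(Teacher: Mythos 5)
Your proposal is correct and follows essentially the same route as the paper: (i) and (ii) are immediate from Lemma \ref{lem:cptembmaintheo} and Theorem \ref{theo:toolboxgenmain}, (iii) comes from Lemma \ref{lem:regpotdeco1} (after passing to the weak decomposition via Remark \ref{rem:cptembmaindef1}, which you rightly make explicit and the paper leaves implicit), (iv) from Lemma \ref{lem:regpotdeco2}, and (iv') from the same computation with $(\A_{1})_{\bot}^{-1}(\A_{1})_{\bot}=\id_{D((\A_{1})_{\bot})}$ that the paper gives.
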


\begin{proof}
(i) and (ii) are trivial. (iii) follows by Lemma \ref{lem:regpotdeco1}
and Lemma \ref{lem:regpotdeco2} shows (iv). It holds
\begin{align*}
\widetilde\PotQ_{\A_{1},1}|_{D((\A_{1})_{\bot})}
=\PotQ_{\A_{1},1}(\A_{1})_{\bot}^{-1}\A_{1}|_{D((\A_{1})_{\bot})}
&=\PotQ_{\A_{1},1}(\A_{1})_{\bot}^{-1}(\A_{1})_{\bot}\\
&=\PotQ_{\A_{1},1}\id_{D((\A_{1})_{\bot})}
=\PotQ_{\A_{1},1}|_{D((\A_{1})_{\bot})},
\end{align*}
which shows the last assertion of (iv').
\end{proof}

\begin{cor}[mini FAT by regular decompositions]
\label{cor:cptembmaintheo2}
Let the assumptions of Lemma \ref{lem:cptembmaintheo} (ii) hold
with the bounded linear regular decomposition operators
$\PotQ_{\A_{0}^{*},1}:D(\A_{1})\to\H{+}{1}$ as well as $\PotQ_{\A_{0}^{*},2}:D(\A_{1})\to\H{+}{2}$. 
Then (i) and (ii) of Theorem \ref{theo:cptembmaintheo1} hold. Moreover:
\begin{itemize}
\item[\bf(iii)] 
The bounded regular potential representation
$R(\A_{0}^{*})=\A_{0}^{*}\H{+}{1}$
holds with bounded linear regular potential operator
$\PotP_{\A_{0}^{*}}=\PotQ_{\A_{0}^{*},1}(\A_{0}^{*})_{\bot}^{-1}:R(\A_{0}^{*})\to\H{+}{1}$
satisfying $\A_{0}^{*}\PotP_{\A_{0}^{*}}=\id_{R(\A_{0}^{*})}$.
\item[\bf(iv)] 
$\widetilde\PotQ_{\A_{0}^{*},1}=\PotP_{\A_{0}^{*}}\A_{0}^{*}:D(\A_{0}^{*})\to\H{+}{1}$
and $\widetilde\PotN_{\A_{0}^{*}}=\id_{D(\A_{0}^{*})}-\widetilde\PotQ_{\A_{0}^{*},1}:D(\A_{0}^{*})\to N(\A_{0}^{*})$
are bounded linear regular decomposition operators with 
$\widetilde\PotQ_{\A_{0}^{*},1}+\widetilde\PotN_{\A_{0}^{*}}=\id_{D(\A_{0}^{*})}$
and the bounded regular decompositions
$$D(\A_{0}^{*})
=\H{+}{1}+\A_{1}^{*}\H{+}{2}
=\H{+}{1}+N(\A_{0}^{*})
=R(\widetilde\PotQ_{\A_{0}^{*},1})\dotplus N(\A_{0}^{*})
=R(\widetilde\PotQ_{\A_{0}^{*},1})\dotplus R(\widetilde\PotN_{\A_{0}^{*}})$$
hold. Moreover,
$R(\widetilde\PotQ_{\A_{0}^{*},1})=R(\PotP_{\A_{0}^{*}})$.
\item[\bf(iv')] 
$\A_{0}^{*}\widetilde\PotQ_{\A_{0}^{*},1}=\A_{0}^{*}\PotQ_{\A_{0}^{*},1}=\A_{0}^{*}$, i.e., $N(\A_{0}^{*})$ 
is invariant under $\PotQ_{\A_{0}^{*},1}$ and $\widetilde\PotQ_{\A_{0}^{*},1}$.
It holds $\widetilde\PotQ_{\A_{0}^{*},1}=\PotQ_{\A_{0}^{*},1}(\A_{0}^{*})_{\bot}^{-1}\A_{0}^{*}$.
Hence $\widetilde\PotQ_{\A_{0}^{*},1}|_{D((\A_{0}^{*})_{\bot})}=\PotQ_{\A_{0}^{*},1}|_{D((\A_{0}^{*})_{\bot})}$
and thus $\widetilde\PotQ_{\A_{0}^{*},1}$ may differ from $\PotQ_{\A_{0}^{*},1}$ only on $N(\A_{0}^{*})$.
\end{itemize}
\end{cor}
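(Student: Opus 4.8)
The plan is to mirror, essentially verbatim, the proof of Theorem \ref{theo:cptembmaintheo1}, replacing the pair $(\A_{0},\A_{1})$ by $(\A_{1}^{*},\A_{0}^{*})$ and the decomposition $D(\A_{1})=\H{+}{1}+\A_{0}\H{+}{0}$ by $D(\A_{0}^{*})=\H{+}{1}+\A_{1}^{*}\H{+}{2}$. Assertion (i) is precisely Lemma \ref{lem:cptembmaintheo} (ii), and (ii) then follows at once from Theorem \ref{theo:toolboxgenmain} (mini FAT), whose only hypothesis -- compactness of $D(\A_{1})\cap D(\A_{0}^{*})\incl\H{}{1}$ -- is exactly (i). In particular, Theorem \ref{theo:toolboxgenmain} (i) yields that $R(\A_{0}^{*})$ is closed, which will be needed below.

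For (iii) I would first record that $\A_{0}^{*}$ is densely defined and closed (since $\A_{0}$ is), and that $\A_{1}^{*}\H{+}{2}\subset R(\A_{1}^{*})\subset N(\A_{0}^{*})$ by the dual complex property from Remark \ref{remhilcom}. Hence, by Remark \ref{rem:cptembmaindef1} (ii), the bounded regular decomposition $D(\A_{0}^{*})=\H{+}{1}+\A_{1}^{*}\H{+}{2}$ induces the weak bounded regular decomposition $D(\A_{0}^{*})=\H{+}{1}+N(\A_{0}^{*})$ with decomposition operator $\PotQ_{\A_{0}^{*},1}$. Since $R(\A_{0}^{*})$ is closed, Lemma \ref{lem:regpotdeco1} -- applied with $\A_{0}^{*}$ in the role of $\A_{1}$ -- gives the bounded regular potential representation $R(\A_{0}^{*})=\A_{0}^{*}\H{+}{1}$ together with the bounded linear regular potential operator $\PotP_{\A_{0}^{*}}=\PotQ_{\A_{0}^{*},1}(\A_{0}^{*})_{\bot}^{-1}:R(\A_{0}^{*})\to\H{+}{1}$ satisfying $\A_{0}^{*}\PotP_{\A_{0}^{*}}=\id_{R(\A_{0}^{*})}$.

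For (iv) I would invoke Lemma \ref{lem:regpotdeco2}, once more with $\A_{0}^{*}$ in the role of $\A_{1}$ and with the operator $\PotP_{\A_{0}^{*}}$ just constructed. This produces directly the bounded linear regular decomposition operators $\widetilde\PotQ_{\A_{0}^{*},1}=\PotP_{\A_{0}^{*}}\A_{0}^{*}$ and $\widetilde\PotN_{\A_{0}^{*}}=\id_{D(\A_{0}^{*})}-\widetilde\PotQ_{\A_{0}^{*},1}$, the identity $\widetilde\PotQ_{\A_{0}^{*},1}+\widetilde\PotN_{\A_{0}^{*}}=\id_{D(\A_{0}^{*})}$, the direct decompositions $D(\A_{0}^{*})=R(\widetilde\PotQ_{\A_{0}^{*},1})\dotplus N(\A_{0}^{*})=R(\widetilde\PotQ_{\A_{0}^{*},1})\dotplus R(\widetilde\PotN_{\A_{0}^{*}})$, and the equality $R(\widetilde\PotQ_{\A_{0}^{*},1})=R(\PotP_{\A_{0}^{*}})$; combining these with the given decomposition $D(\A_{0}^{*})=\H{+}{1}+\A_{1}^{*}\H{+}{2}$ and the weak one from (iii) assembles the displayed chain of equalities. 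For (iv'), the invariance identities $\A_{0}^{*}\widetilde\PotQ_{\A_{0}^{*},1}=\A_{0}^{*}$ and $\A_{0}^{*}\PotQ_{\A_{0}^{*},1}=\A_{0}^{*}$ are Remark \ref{rem:cptembmaindef1} (i); substituting $\PotP_{\A_{0}^{*}}=\PotQ_{\A_{0}^{*},1}(\A_{0}^{*})_{\bot}^{-1}$ gives $\widetilde\PotQ_{\A_{0}^{*},1}=\PotQ_{\A_{0}^{*},1}(\A_{0}^{*})_{\bot}^{-1}\A_{0}^{*}$, and restricting to $D\big((\A_{0}^{*})_{\bot}\big)$ while using $(\A_{0}^{*})_{\bot}^{-1}(\A_{0}^{*})_{\bot}=\id_{D((\A_{0}^{*})_{\bot})}$ yields $\widetilde\PotQ_{\A_{0}^{*},1}|_{D((\A_{0}^{*})_{\bot})}=\PotQ_{\A_{0}^{*},1}|_{D((\A_{0}^{*})_{\bot})}$, exactly as in the proof of Theorem \ref{theo:cptembmaintheo1} (iv').

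I do not anticipate a genuine obstacle: this statement is the dual counterpart the authors announced would be left to the reader, and the whole FA-ToolBox above (Lemmas \ref{lem:regpotdeco1}, \ref{lem:regpotdeco2} and Remark \ref{rem:cptembmaindef1}) was formulated for a single densely defined closed operator together with an ``incoming'' operator into its domain. The only point needing a line of care is checking that these structural hypotheses survive the substitution of $\A_{0}^{*}$ for $\A_{1}$ and $\A_{1}^{*}$ for $\A_{0}$ -- namely that $\A_{0}^{*}$ is densely defined and closed and that $\A_{1}^{*}\H{+}{2}\subset N(\A_{0}^{*})$ -- and both hold by the remarks cited. Everything else is transcription with the index bookkeeping $1\leftrightarrow0^{*}$, $0\leftrightarrow1^{*}$.
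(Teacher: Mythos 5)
Your proposal is correct and is exactly the dualization the paper intends: the corollary is stated without proof as the mirror image of Theorem \ref{theo:cptembmaintheo1}, obtained by applying Lemma \ref{lem:cptembmaintheo} (ii), Theorem \ref{theo:toolboxgenmain}, Lemma \ref{lem:regpotdeco1}, Lemma \ref{lem:regpotdeco2}, and Remark \ref{rem:cptembmaindef1} with $\A_{0}^{*}$ in the role of $\A_{1}$ and $\A_{1}^{*}$ in the role of $\A_{0}$. Your check that the structural hypotheses (closedness and dense definition of $\A_{0}^{*}$, and $\A_{1}^{*}\H{+}{2}\subset N(\A_{0}^{*})$ via the dual complex property) survive the substitution is precisely the point that needed verifying.
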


\subsection{FAT V: Long Hilbert Complexes}
\label{sec:FA5}

As a typical situation in 3D (extending literally to any dimension)
we have a \emph{long primal and dual Hilbert complex}
\begin{equation}
\label{lhcomplex}
\def\arrowlength{7ex}
\def\arrowdistance{.8}
\begin{tikzcd}[column sep=\arrowlength]
\H{}{-1} 
\arrow[r, rightarrow, shift left=\arrowdistance, "\A_{-1}"] 
\arrow[r, leftarrow, shift right=\arrowdistance, "\A_{-1}^{*}"']
& 
\H{}{0} 
\ar[r, rightarrow, shift left=\arrowdistance, "\A_{0}"] 
\ar[r, leftarrow, shift right=\arrowdistance, "\A_{0}^{*}"']
& 
\H{}{1}
\arrow[r, rightarrow, shift left=\arrowdistance, "\A_{1}"] 
\arrow[r, leftarrow, shift right=\arrowdistance, "\A_{1}^{*}"']
& 
\H{}{2}
\arrow[r, rightarrow, shift left=\arrowdistance, "\A_{2}"] 
\arrow[r, leftarrow, shift right=\arrowdistance, "\A_{2}^{*}"']
&
\H{}{3}
\arrow[r, rightarrow, shift left=\arrowdistance, "\A_{3}"] 
\arrow[r, leftarrow, shift right=\arrowdistance, "\A_{3}^{*}"']
&
\H{}{4}.
\end{tikzcd}
\end{equation}
Here, $\A_{0},\A_{1},\A_{2}$ are densely defined and closed (unbounded) linear operators
between three Hilbert spaces $\H{}{0},\H{}{1},\H{}{2}$
satisfying the complex properties 
$$R(\A_{0})\subset N(\A_{1}),\qquad
R(\A_{1})\subset N(\A_{2}).$$ 
$\A_{0}^{*},\A_{1}^{*},\A_{2}^{*}$
are the corresponding (Hilbert space) adjoints.
Moreover, $\A_{-1},\A_{4}$ and $\H{}{-1}$, $\H{}{4}$ are particular operators and kernels, 
respectively, i.e.,
$$\H{}{-1}:=N(\A_{0})=R(\A_{0}^{*})^{\bot_{\H{}{0}}},\qquad
\H{}{4}:=N(\A_{2}^{*})=R(\A_{2})^{\bot_{\H{}{3}}}$$
with corresponding bounded embeddings
$$\A_{-1}:=\iota_{N(\A_{0})}:N(\A_{0})\to\H{}{0},\qquad
\A_{3}^{*}:=\iota_{N(\A_{2}^{*})}:N(\A_{2}^{*})\to\H{}{3}.$$

\begin{rem}
\label{firstadjoints}
It holds $\A_{-1}^{*}=\iota_{N(\A_{0})}^{*}=\pi_{N(\A_{0})}:\H{}{0}\to N(\A_{0})$, 
the ``orthonormal projection'' onto the kernel of $\A_{0}$.
To see this, we note $\A_{-1}^{*}:\H{}{0}\to N(\A_{0})$ and for $x\in\H{}{0}$
and $\varphi\in N(\A_{0})$
$$\scp{\A_{-1}\varphi}{x}_{\H{}{0}}
=\scp{\varphi}{x}_{\H{}{0}}
=\scp{\pi_{N(\A_{0})}\varphi}{x}_{\H{}{0}}
=\scp{\varphi}{\pi_{N(\A_{0})}x}_{\H{}{0}}
=\scp{\varphi}{\pi_{N(\A_{0})}x}_{N(\A_{0})}.$$
Actually, the correct orthonormal projection onto $N(\A_{0})$
is then given by the self-adjoint bounded linear operator
$\A_{-1}\A_{-1}^{*}
=\iota_{N(\A_{0})}\iota_{N(\A_{0})}^{*}
=\pi_{N(\A_{0})}:\H{}{0}\to\H{}{0}$
with $R(\pi_{N(\A_{0})})=N(\A_{0})$.
Analogously, 
$\A_{3}=\iota_{N(\A_{2}^{*})}^{*}
=\pi_{N(\A_{2}^{*})}:\H{}{3}\to N(\A_{2}^{*})$
and 
$\A_{3}^{*}\A_{3}
=\iota_{N(\A_{2}^{*})}\iota_{N(\A_{2}^{*})}^{*}
=\pi_{N(\A_{2}^{*})}:\H{}{3}\to\H{}{3}$,
respectively, with $R(\pi_{N(\A_{2}^{*})})=N(\A_{2}^{*})$.
\end{rem}

The latter arguments show that the long primal and dual Hilbert complex \eqref{lhcomplex} reads
\begin{equation}
\label{lhcomplex2}
\def\arrowlength{15ex}
\def\arrowdistance{.8}
\begin{tikzcd}[column sep=\arrowlength]
N(\A_{0})
\arrow[r, rightarrow, shift left=\arrowdistance, "\A_{-1}=\iota_{N(\A_{0})}"] 
\arrow[r, leftarrow, shift right=\arrowdistance, "\A_{-1}^{*}=\pi_{N(\A_{0})}"']
& 
\H{}{0} 
\ar[r, rightarrow, shift left=\arrowdistance, "\A_{0}"] 
\ar[r, leftarrow, shift right=\arrowdistance, "\A_{0}^{*}"']
&
[-3em]
\H{}{1}
\arrow[r, rightarrow, shift left=\arrowdistance, "\A_{1}"] 
\arrow[r, leftarrow, shift right=\arrowdistance, "\A_{1}^{*}"']
& 
[-3em]
\H{}{2}
\arrow[r, rightarrow, shift left=\arrowdistance, "\A_{2}"] 
\arrow[r, leftarrow, shift right=\arrowdistance, "\A_{2}^{*}"']
&
[-3em]
\H{}{3}
\arrow[r, rightarrow, shift left=\arrowdistance, "\A_{3}=\pi_{N(\A_{2}^{*})}"] 
\arrow[r, leftarrow, shift right=\arrowdistance, "\A_{3}^{*}=\iota_{N(\A_{2}^{*})}"']
&
[-.5em]
N(\A_{2}^{*})
\end{tikzcd}
\end{equation}
with the complex properties 
\begin{align*}
R(\A_{-1})&=N(\A_{0}),
&
R(\A_{0})&\subset N(\A_{1}),
&
R(\A_{1})&\subset N(\A_{2}),
&
\ol{R(\A_{2})}&=N(\A_{3}),\\
\ol{R(\A_{0}^{*})}&=N(\A_{-1}^{*}),
&
R(\A_{1}^{*})&\subset N(\A_{0}^{*}),
&
R(\A_{2}^{*})&\subset N(\A_{1}^{*}),
&
R(\A_{3}^{*})&=N(\A_{2}^{*}).
\end{align*}

\begin{defi}
\label{defihilcom2}
The long Hilbert complex \eqref{lhcomplex2} is called 
\begin{itemize}
\item
closed, if $R(\A_{0})$, $R(\A_{1})$, and $R(\A_{2})$ are closed,
\item
compact, if the embeddings 
$D(\A_{1})\cap D(\A_{0}^{*})\incl\H{}{1}$ and $D(\A_{2})\cap D(\A_{1}^{*})\incl\H{}{1}$ 
as well as 
$$D(\A_{0})\cap D(\A_{-1}^{*})=D(\A_{0})\incl\H{}{0},\qquad
D(\A_{3})\cap D(\A_{2}^{*})=D(\A_{2}^{*})\incl\H{}{3}$$ 
are compact.
\end{itemize}
\end{defi}

\begin{rem}
\label{remhilcom3}
A compact long Hilbert complex is already closed.
\end{rem}

Note that the cohomology groups at both ends are trivial, i.e.,
\begin{align}
\label{cohomspecial}
\begin{aligned}
N_{-1,0}
&=N(\A_{0})\cap N(\A_{-1}^{*})=N(\A_{0})\cap N(\A_{0})^{\bot_{\H{}{0}}}=\{0\},\\
N_{2,3}
&=N(\A_{3})\cap N(\A_{2}^{*})=N(\A_{2}^{*})^{\bot_{\H{}{3}}}\cap N(\A_{2}^{*})=\{0\}.
\end{aligned}
\end{align}

\section{Notations and Preliminaries}

\subsection{Domains}
\label{sec:domains}

Throughout this paper, let $\om\subset\rdimom$, $\dimom\in\nat$, 
be a bounded strong Lipschitz domain
(locally $\om$ lies above a graph of some Lipschitz function).
Moreover, let the boundary $\ga$ of $\om$ be decomposed into two
strong Lipschitz subsets $\gat$ and $\gan:=\ga\setminus\ol{\gat}$
forming the interface $\ol{\gat}\cap\ol{\gan}$ for the mixed boundary conditions
(tangential and normal). See~\cite{BPS2016a,BPS2018a,BPS2019a}
for exact definitions. We call $(\om,\gat)$ a bounded strong Lipschitz pair.

We also recall the notion of an extendable strong Lipschitz domain
through either one of the boundary parts $\gat$ or $\gan$, 
see \cite[Section 5.4]{BPS2019a} and \cite[Section 7]{BPS2018a} for a definition.
Roughly speaking, a bounded strong Lipschitz pair $(\om,\gat)$ 
is called \emph{extendable}, if\\[-1mm]
\begin{minipage}[b]{105mm}
\begin{itemize}
\item
$\om$ and $\gat$ are \emph{topologically trivial}, and
\item
$\om$ can be \emph{extended} through $\gat$ to some 
topologically trivial and bounded strong Lipschitz domain $\widehat{\om}$,
resulting in a new topologically trivial and bounded strong Lipschitz domain 
$\widetilde{\om}=\text{\rm int}(\ol\om\cup\ol{\widehat\om})$,
cf.~the figure on the right or \cite[Figure 3.2]{BPS2019a}
for more details.
\end{itemize}
\end{minipage}
{\footnotesize		
\tdplotsetmaincoords{60}{140}
\begin{tikzpicture}[scale=.85,tdplot_main_coords]
	\draw [black] (0,0,0) -- (1,0,0);
	\draw [black] (0,0,0) -- (0,1,0);
	\draw [black] (0,0,0) -- (0,0,1);
	\draw [red,dashed,line width=1pt] (1,1,-1) -- (1,-1,-1) -- (-1,-1,-1) -- (-1,1,-1) -- cycle;	
	\draw [red,dashed,line width=1pt] (1,1,0) -- (1,1,-1);							
	\draw [red,dashed,line width=1pt] (1,-1,0) -- (1,-1,-1);							
	\draw [red,dashed,line width=1pt] (-1,-1,0) -- (-1,-1,-1);						
	\draw [red,dashed,line width=1pt] (-1,1,0) -- (-1,1,-1);							
	\fill [white,opacity=.8] (-1,-1,0) -- (1,-1,0) -- (1,1,0) -- (-1,1,0) -- cycle;				
	\draw [red,dashed,line width=1pt] (0,-1,0) -- (0,1,0) -- (-1,1,0) -- (-1,-1,0) -- cycle; 	
	\draw [black,line width=1.5pt] (1,1,0) -- (1,-1,0) -- (0,-1,0) -- (0,1,0) -- cycle;		
	\draw [black,dotted,line width=1pt] (0,-1,1) -- (0,-1,0);							
	\draw [black,dotted,line width=1pt] (1,-1,1) -- (1,-1,0);							
	\draw [black,dotted,line width=1pt] (0,1,0) -- (0,1,1);							
	\draw [black,dotted,line width=1pt] (1,1,1) -- (1,1,0);							
	\fill [white,opacity=.8] (0,-1,1) -- (1,-1,1) -- (1,1,1) -- (0,1,1) -- cycle;				
	\draw [black,dotted,line width=1pt] (0,-1,1) -- (1,-1,1) -- (1,1,1) -- (0,1,1) -- cycle;	
  \draw [color=black,->] (3,1,0) -- (1.5,1,0);
  \draw [color=black,->] (1,2,2.5) -- (1,1,1.4);
  \draw [color=black,->] (3.1,1,2) -- (1.5,1,1);
  \draw (1,2.4,2.8) node [] {\Large$\widehat{\om}$};
  \draw (3.4,1.1,0.2) node [] {\Large$\om$};
  \draw (3.5,1,2.1) node [] {\Large$\gat$};
\end{tikzpicture}
}

\begin{lem}
\label{extdom}
Any bounded strong Lipschitz pair $(\om,\gat)$ can be decomposed 
into a finite union of extendable bounded strong Lipschitz pairs $(\om_{\ell},\ga_{\!t,\ell})$
together with a subordinate partition of unity.
\end{lem}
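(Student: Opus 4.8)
The plan is to argue locally and then glue, which is the standard strategy for passing from the strong Lipschitz category to its more tractable ``extendable'' subcategory. First I would invoke the definition of a bounded strong Lipschitz pair: the boundary $\ga$ of $\om$ is covered by finitely many open sets $U_1,\dots,U_N$ in each of which, after a rigid motion, $\om\cap U_\ell$ lies above the graph of a Lipschitz function, and moreover $\gat\cap U_\ell$ and $\gan\cap U_\ell$ are themselves strong Lipschitz subsets of that graph. To this boundary cover I would add one more open set $U_0$ with $\ol{U_0}\subset\om$ so that $\{U_\ell\}_{\ell=0}^N$ covers $\ol\om$. By shrinking and, if necessary, further subdividing the $U_\ell$ (cutting each graph patch into finitely many pieces whose closures are small), I can arrange that each $\om_\ell:=\om\cap U_\ell$ is itself a bounded strong Lipschitz domain and that each $\ga_{t,\ell}:=\gat\cap U_\ell$ is a strong Lipschitz subset of $\p\om_\ell$, i.e.\ that each $(\om_\ell,\ga_{t,\ell})$ is again a bounded strong Lipschitz pair. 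The interior piece $(\om_0,\emptyset)$ is trivially handled (it does not touch the boundary at all).

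Next I would verify extendability of each boundary piece. By construction $\om_\ell$ is, up to a bi-Lipschitz change of coordinates, a subgraph region over a Lipschitz function on a cube, hence topologically trivial; likewise, after shrinking, $\ga_{t,\ell}$ can be taken to be a topologically trivial (indeed graph-like) subset of the boundary. The key point is the existence of the extension $\widehat{\om}_\ell$ through $\ga_{t,\ell}$: in the flattened coordinates the relevant part of $\p\om_\ell$ carrying $\ga_{t,\ell}$ is a Lipschitz graph, and one simply reflects or extends $\om_\ell$ across that graph by the standard Lipschitz extension construction, obtaining a topologically trivial bounded strong Lipschitz domain $\widehat\om_\ell$ with $\widetilde\om_\ell=\mathrm{int}(\ol{\om_\ell}\cup\ol{\widehat\om_\ell})$ again topologically trivial and strong Lipschitz; this is exactly the construction recorded in \cite[Section~7]{BPS2018a} and \cite[Section~5.4]{BPS2019a}, which I would cite rather than redo. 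Finally, subordinate to the finite cover $\{U_\ell\}$ I would pick a smooth partition of unity $\{\chi_\ell\}$ on a neighbourhood of $\ol\om$ with $\mathrm{supp}\,\chi_\ell\subset U_\ell$ and $\sum_\ell\chi_\ell\equiv1$ on $\ol\om$; restricting the $\chi_\ell$ to $\ol\om$ gives the desired subordinate partition of unity for the decomposition $\ol\om=\bigcup_\ell\ol{\om_\ell}$.

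The main obstacle is not the abstract covering argument but the bookkeeping needed to guarantee that the pieces inherit the \emph{strong} Lipschitz property simultaneously for $\om_\ell$, for $\ga_{t,\ell}$, and for $\ga_{n,\ell}$, and in particular that the newly created interior interface $\p\om_\ell\setminus\ga$ (the part of $\p\om_\ell$ lying inside $U_\ell$) is compatible with the Lipschitz graph structure — one must choose the cutting of each graph patch so that the cut surfaces meet the original graph transversally in a Lipschitz manner, so that $\ga_{t,\ell}$ and its complement in $\p\om_\ell$ are still separated by a strong Lipschitz interface. This is a routine but slightly delicate point about intersecting Lipschitz graphs; it is handled in the references, so in the write-up I would state it carefully and appeal to \cite{BPS2016a,BPS2018a,BPS2019a} for the technical details, keeping the present proof at the level of ``cover, shrink/subdivide, extend piecewise, and glue with a partition of unity.''
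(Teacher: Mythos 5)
The paper itself gives no proof of this lemma; it is imported from \cite{BPS2016a,BPS2018a,BPS2019a}, and your cover--subdivide--extend--glue strategy is exactly the construction carried out there, so the overall approach is the right one. However, there is one substantive defect in how you set up the local pairs, and it matters for everything the lemma is later used for. You take $\ga_{t,\ell}:=\gat\cap U_\ell$ and extend $\om_\ell$ only through the piece of the original Lipschitz graph carrying $\gat$. The construction the paper actually needs (see the proof of Lemma \ref{lem:highorderregdecoedpbcLip}, where the notation $\Sigma_\ell=\p\om_\ell\setminus\ga$ and $\widehat\ga_{t,\ell}=\mathrm{int}(\ga_{t,\ell}\cup\ol{\Sigma}_\ell)$ is introduced) takes as tangential boundary part of $\om_\ell$ the set $\widehat\ga_{t,\ell}$, i.e.\ the union of $\gat\cap U_\ell$ with the \emph{artificial cut surface} $\Sigma_\ell$ created by intersecting $\om$ with $U_\ell$, and the extension domain $\widehat\om_\ell$ is attached through this enlarged set. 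The reason is that in the gluing step one applies the local regular decomposition to $\chi_\ell E$, which vanishes near $\p U_\ell$ and hence satisfies the homogeneous boundary condition on $\Sigma_\ell$ as well; only then can the local pieces $E_\ell,H_\ell\in\H{q,k+1}{\widehat\ga_{t,\ell}}(\om_\ell)$ be extended by zero across $\Sigma_\ell$ to elements of $\H{q,k+1}{\gat}(\om)$. With your choice $(\om_\ell,\gat\cap U_\ell)$ the local potentials carry no control on $\Sigma_\ell$ and the zero extension fails to be in $\H{q,k+1}{}(\om)$, so the decomposition could not be assembled.

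The same issue shows up in your treatment of the interior patch: it is not the pair $(\om_0,\emptyset)$ that is needed but $(U_0,\p U_0)$ with \emph{full} boundary condition (the paper notes $\widehat\ga_{t,\ell}=\Sigma_\ell=\p U_\ell$ for inner indices), extendable through its entire boundary into a surrounding shell. Apart from this, your remaining points are sound: the subdivision to achieve topological triviality of each $\om_\ell$ and of $\widehat\ga_{t,\ell}$, the reflection/extension across the flattened graph, and the care needed so that the cut surfaces meet the original graph in a Lipschitz fashion (so that $\widehat\ga_{t,\ell}$ and its complement in $\p\om_\ell$ form a strong Lipschitz splitting) are precisely the technical content delegated to \cite[Section 4.2, Section 5.4]{BPS2019a} and \cite[Section 7]{BPS2018a}. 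If you correct the definition of the local tangential boundary parts to include $\ol{\Sigma}_\ell$, your write-up matches the intended proof.
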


\subsection{Sobolev Spaces of Scalar, Vector, and Tensor Fields}
\label{sec:defsobolevvecfields}

In this subsection let $d=3$.
The usual Lebesgue and Sobolev Hilbert spaces 
(of scalar, vector, or tensor valued fields)
are denoted by $\L{2}{}(\om)$, $\H{k}{}(\om)$, $\H{}{}(\rot,\om)$, $\H{}{}(\div,\om)$
for $k\in\z$, and by $\H{}{0}(\rot,\om)$ and $\H{}{0}(\div,\om)$ we indicate 
the spaces with vanishing $\rot$ and $\div$, respectively. 
Homogeneous boundary conditions 
for these standard differential operators $\grad$, $\rot$, and $\div$
are introduced in the \emph{strong sense} as closures of respective test fields from
$$\C{\infty}{\gat}(\om)
:=\big\{\phi|_{\om}\,:\,\phi\in\C{\infty}{}(\rdimom),\;
\mathrm{supp}\,\phi\;\mathrm{compact},\;
\mathrm{dist}(\mathrm{supp}\,\phi,\gat)>0\big\},$$
i.e., for $k\in\nat_{0}$
$$\H{k}{\gat}(\om):=\ol{\C{\infty}{\gat}(\om)}^{\H{k}{}(\om)},\quad
\H{}{\gat}(\rot,\om):=\ol{\C{\infty}{\gat}(\om)}^{\H{}{}(\rot,\om)},\quad
\H{}{\gat}(\div,\om):=\ol{\C{\infty}{\gat}(\om)}^{\H{}{}(\div,\om)},$$
and we have $\H{k}{\emptyset}(\om)=\H{k}{}(\om)$, $\H{}{\emptyset}(\rot,\om)=\H{}{}(\rot,\om)$,
and $\H{}{\emptyset}(\div,\om)=\H{}{}(\div,\om)$,
which are well known density results and incorporated into the notation by purpose.
Spaces with vanishing $\rot$ and $\div$
are again denoted by $\H{}{\gat,0}(\rot,\om)$ and $\H{}{\gat,0}(\div,\om)$, respectively. 
Note that for $k=0$ we have $\H{0}{\gat}(\om)=\L{2}{}(\om)$
and for the gradient we can also write $\H{1}{\gat}(\om)=\H{}{\gat}(\grad,\om)$.
Moreover, we introduce for $k\in\nat_{0}$ the non-standard Sobolev spaces
\begin{align*}
\H{k}{}(\rot,\om)
&:=\big\{v\in\H{k}{}(\om):\rot v\in\H{k}{}(\om)\big\},\\
\H{k}{\gat}(\rot,\om)
&:=\big\{v\in\H{k}{\gat}(\om)\cap\H{}{\gat}(\rot,\om):\rot v\in\H{k}{\gat}(\om)\big\},\\
\H{k}{}(\div,\om)
&:=\big\{v\in\H{k}{}(\om):\div v\in\H{k}{}(\om)\big\},\\
\H{k}{\gat}(\div,\om)
&:=\big\{v\in\H{k}{\gat}(\om)\cap\H{}{\gat}(\div,\om):\div v\in\H{k}{\gat}(\om)\big\}.
\end{align*}
We see $\H{k}{\emptyset}(\rot,\om)=\H{k}{}(\rot,\om)$
and for $k=0$ we have
$\H{0}{\emptyset}(\rot,\om)=\H{0}{}(\rot,\om)=\H{}{}(\rot,\om)$
and $\H{0}{\gat}(\rot,\om)=\H{}{\gat}(\rot,\om)$.
Note that for $\gat\neq\emptyset$ and $k\geq1$ it holds
$$\H{k}{\gat}(\rot,\om)
=\big\{v\in\H{k}{\gat}(\om):\rot v\in\H{k}{\gat}(\om)\big\},$$
but for $\gat\neq\emptyset$ and $k=0$ \big(as $\H{0}{\gat}(\om)=\L{2}{}(\om)$\big)
\begin{align*}
\H{0}{\gat}(\rot,\om)
&=\big\{v\in\H{0}{\gat}(\om)\cap\H{}{\gat}(\rot,\om):\rot v\in\H{0}{\gat}(\om)\big\}
=\H{}{\gat}(\rot,\om)\\
&\subsetneq\big\{v\in\H{0}{\gat}(\om):\rot v\in\H{0}{\gat}(\om)\big\}
=\H{0}{\emptyset}(\rot,\om)
=\H{}{}(\rot,\om).
\end{align*}
As before, 
$$\H{k}{\gat,0}(\rot,\om)
:=\H{k}{\gat}(\om)\cap\H{}{\gat,0}(\rot,\om)
=\H{k}{\gat}(\rot,\om)\cap\H{}{0}(\rot,\om)
=\big\{v\in\H{k}{\gat}(\rot,\om):\rot v=0\big\}.$$
The corresponding remarks and definitions extend 
to the $\H{k}{\gat}(\div,\om)$-spaces as well.

At this point, let us note that boundary conditions can also be defined 
in the \emph{weak sense} by
\begin{align*}
\bH{k}{\gat}(\om)
&:=\big\{u\in\H{k}{}(\om):
\scp{\p^{\alpha}u}{\phi}_{\L{2}{}(\om)}=(-1)^{|\alpha|}\scp{u}{\p^{\alpha}\phi}_{\L{2}{}(\om)}
\quad\forall\,\phi\in\C{\infty}{\gan}(\om)\quad\forall\,|\alpha|\leq k\big\},\\
\bH{}{\gat}(\rot,\om)
&:=\big\{v\in\H{}{}(\rot,\om):
\scp{\rot v}{\psi}_{\L{2}{}(\om)}=\scp{v}{\rot\psi}_{\L{2}{}(\om)}
\quad\forall\,\psi\in\C{\infty}{\gan}(\om)\big\},\\
\bH{}{\gat}(\div,\om)
&:=\big\{v\in\H{}{}(\div,\om):
\scp{\div v}{\phi}_{\L{2}{}(\om)}=-\scp{v}{\grad\phi}_{\L{2}{}(\om)}
\quad\forall\,\phi\in\C{\infty}{\gan}(\om)\big\}.
\end{align*}
Analogously, we define the Sobolev spaces $\bH{k}{\gat}(\rot,\om)$, $\bH{k}{\gat}(\div,\om)$
and $\bH{k}{\gat,0}(\rot,\om)$, $\bH{k}{\gat,0}(\div,\om)$
using the respective Sobolev spaces with weak boundary conditions.
Note that ``\emph{strong $\subset$ weak}'' holds, e.g., 
$$\H{k}{\gat}(\om)\subset\bH{k}{\gat}(\om),\qquad
\H{}{\gat}(\rot,\om)\subset\bH{}{\gat}(\rot,\om),\qquad
\H{k}{\gat}(\div,\om)\subset\bH{k}{\gat}(\div,\om),$$
and that the complex properties hold in both the strong and the weak case, e.g.,
$$\grad\H{k+1}{\gat}(\om)\subset\H{k}{\gat,0}(\rot,\om),\qquad
\rot\bH{k}{\gat}(\rot,\om)\subset\bH{k}{\gat,0}(\div,\om),$$
which follows immediately by the definitions.
The next lemma shows that indeed ``\emph{strong $=$ weak}'' holds.

\begin{lem}[{\cite[Theorem 4.5]{BPS2016a}}]
\label{lem:wsbcderhamvec}
The Sobolev spaces defined by weak and strong boundary conditions coincide, e.g.,
$\bH{k}{\gat}(\om)=\H{k}{\gat}(\om)$, $\bH{}{\gat}(\rot,\om)=\H{}{\gat}(\rot,\om)$, 
and $\bH{k}{\gat}(\div,\om)=\H{k}{\gat}(\div,\om)$, cf.~Lemma \ref{lem:wsbcderham}.
\end{lem}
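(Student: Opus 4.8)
The plan is to reduce the general mixed-boundary case to two ingredients that can be glued by a partition of unity: a local result on extendable Lipschitz pairs and the classical full-boundary-condition density theorems on a slightly larger domain. The statement ``strong $=$ weak'' amounts to showing $\bH{k}{\gat}(\om)\subset\H{k}{\gat}(\om)$ (and the analogous inclusions for $\rot$ and $\div$), since the reverse inclusion is immediate from the definitions. So let me fix, say, $v\in\bH{}{\gat}(\rot,\om)$ and show it can be approximated in the $\H{}{}(\rot,\om)$-norm by fields from $\C{\infty}{\gat}(\om)$.

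First I would invoke Lemma \ref{extdom} to obtain a finite cover of $\ol\om$ by open sets $U_\ell$ such that each $(\om\cap U_\ell,\gat\cap U_\ell)$ sits inside an extendable bounded strong Lipschitz pair $(\om_\ell,\ga_{\!t,\ell})$, together with a subordinate partition of unity $\{\chi_\ell\}$ with $\chi_\ell\in\C{\infty}{}(\rdimom)$, $\sum_\ell\chi_\ell=1$ near $\ol\om$. The weak boundary condition is local and stable under multiplication by such cutoffs (product rule for $\rot$), so $\chi_\ell v\in\bH{}{\ga_{\!t,\ell}}(\rot,\om_\ell)$ after extension by zero. It then suffices to approximate each piece $\chi_\ell v$ in the $\H{}{}(\rot,\om_\ell)$-norm by test fields vanishing near $\ga_{\!t,\ell}$, since finite sums of such approximations recover $v$ on $\om$. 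Thus the problem is localized to a single extendable pair.

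On an extendable pair the idea is to extend $\om_\ell$ through $\ga_{\!t,\ell}$ to a topologically trivial bounded strong Lipschitz domain $\widetilde\om_\ell$, with the piece $\ga_{\!t,\ell}$ now an interior Lipschitz hypersurface of $\widetilde\om_\ell$. Because $\chi_\ell v$ already has the weak tangential boundary condition on $\ga_{\!t,\ell}$, extending it by zero across $\ga_{\!t,\ell}$ produces a field in $\bH{}{}(\rot,\widetilde\om_\ell)$ on the enlarged domain — this is the standard ``weak boundary condition $=$ extendability by zero'' argument, where the weak identity against $\C{\infty}{\gan}$-test functions is exactly what kills the jump term across $\ga_{\!t,\ell}$. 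Now one is on a domain with \emph{no} boundary condition on the part of $\p\widetilde\om_\ell$ coming from $\ga_{\!n,\ell}$ and effectively a full (global) picture across the old interface, so the classical density result (Friedrichs/mollification plus standard cutting arguments on a strong Lipschitz domain, as in the references \cite{BPS2016a} and the theory there) applies: $\C{\infty}{}(\rdimom)$-fields are dense in $\H{}{}(\rot,\widetilde\om_\ell)$, and with the usual translation-then-mollify trick along the Lipschitz graph one can arrange the approximants to vanish near $\ga_{\!t,\ell}$, hence restrict to elements of $\C{\infty}{\ga_{\!t,\ell}}(\om_\ell)$ approximating $\chi_\ell v$.

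The main obstacle is the geometric/analytic bookkeeping in this last localized step: one must handle the Lipschitz interface $\ol{\gat}\cap\ol{\gan}$ carefully so that the cutoffs $\chi_\ell$, the zero-extension across $\ga_{\!t,\ell}$, and the graph-direction translations used before mollification are all compatible — in particular that translating ``into'' $\widetilde\om_\ell$ moves the support of $\chi_\ell v$ strictly away from $\ga_{\!t,\ell}$ while keeping it inside $\widetilde\om_\ell$, uniformly over the finite cover. For higher $k$ the same scheme works verbatim, since mollification commutes with $\p^\alpha$ and the weak conditions for all $|\alpha|\le k$ are preserved under zero-extension; and the $\div$ case and the scalar $\H{k}{}$ case are handled identically, only the product rule and the jump term change form. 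I would then remark that everything is already contained in \cite[Theorem 4.5]{BPS2016a}, so in the write-up this ``proof'' is really a pointer to that reference together with the reduction via Lemma \ref{extdom}.
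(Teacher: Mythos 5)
Your outline is correct and, as you say yourself, it is essentially a pointer to the cited reference \cite{BPS2016a}: localize via Lemma \ref{extdom} and a partition of unity, extend by zero through $\gat$ on each extendable pair, then recover strong boundary conditions by the classical translate-then-mollify argument along the Lipschitz graph. The paper, however, gives a genuinely different self-contained proof: after the same localization and zero-extension steps (Lemma \ref{lem:highorderregdecoedpbcLip}, Lemma \ref{highorderregpotedpbc}), it never mollifies. Instead it builds bounded regular potential operators on the topologically trivial extensions $\widetilde{\om}$ and $\widehat{\om}$ (from the Stein extension, the whole-space Helmholtz projection, and whole-space elliptic regularity, Lemmas \ref{stein}--\ref{regpoted}), and thereby writes any weakly constrained field as $E=E_{1}+\ed E_{0}$ with $E_{1},E_{0}$ in full Sobolev spaces $\H{k+1}{\gat}$ of one order higher, where weak and strong boundary conditions coincide by elementary arguments; membership of $E$ in the strong space then follows from $\ed\H{k+1}{\gat}(\om)\subset\H{k}{\gat,0}(\ed,\om)$ and the strong complex property. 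Your route proves exactly the density statement and must confront the delicate geometry near the interface $\ol{\gat}\cap\ol{\gan}$ (uniform translation cones compatible with the cutoffs), which is the technical heart of \cite{BPS2016a}; the paper's route sidesteps mollification entirely and, more importantly, produces the bounded linear regular decomposition operators that the rest of the paper actually needs for the compact embeddings, so that ``weak $=$ strong'' falls out as a by-product rather than being the goal.
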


Finally, we introduce the cohomology space of Dirichlet/Neumann fields (generalised harmonic fields)
$$\Harm{}{\gat,\gan,\eps}(\om)
:=\H{}{\gat,0}(\rot,\om)\cap\eps^{-1}\H{}{\gan,0}(\div,\om).$$
The classical Dirichlet and Neumann fields are then given by
$\Harm{}{\ga,\emptyset,\eps}(\om)$ anf $\Harm{}{\emptyset,\ga,\eps}(\om)$, respectively.
Here, $\eps:\L{2}{}(\om)\to\L{2}{}(\om)$ is a symmetric and positive 
topological isomorphism (symmetric and positive bijective bounded linear operator),
which introduces a new inner product
$$\scp{\,\cdot\,}{\,\cdot\,}_{\L{2}{\eps}(\om)}
:=\scp{\eps\,\cdot\,}{\,\cdot\,}_{\L{2}{}(\om)},$$
where $\L{2}{\eps}(\om):=\L{2}{}(\om)$ (as linear space)
equipped with the inner product $\scp{\,\cdot\,}{\,\cdot\,}_{\L{2}{\eps}(\om)}$.
Such \emph{weights} $\eps$ shall be called \emph{admissible}
and a typical example is given by a 
symmetric, $\L{\infty}{}$-bounded, and uniformly positive definite tensor (matrix) field
$\eps:\om\to\reals^{3\times3}$.

\subsection{Sobolev Spaces of Differential Forms}
\label{sec:defsobolevdforms}

For spaces of differential forms we follow the same rationale.
Instead of the differential operators $\grad$, $\rot$, and $\div$
we now have only the exterior derivative $\ed$ and the co-derivative $\cd=\pm*\ed*$,
given by $\ed$ and the Hodge star operator $*$.
The standard Lebesgue and Sobolev Hilbert spaces 
are denoted by $\L{q,2}{}(\om)$, $\H{q,k}{}(\om)$, $\H{q}{}(\ed,\om)$, $\H{q}{}(\cd,\om)$
for $k\in\z$, and by $\H{q}{0}(\ed,\om)$ and $\H{q}{0}(\cd,\om)$ we indicate 
the spaces with vanishing $\ed$ and $\cd$, respectively. 
Here $q\in\z$ marks the rank of the respective differential forms.
As before, homogeneous boundary conditions 
for $\ed$ and $\cd$ are introduced in the \emph{strong sense} as closures of respective test forms from
$$\C{q,\infty}{\gat}(\om)
:=\big\{\Phi|_{\om}\,:\,\Phi\in\C{q,\infty}{}(\rdimom),\;
\mathrm{supp}\,\Phi\;\mathrm{compact},\;
\mathrm{dist}(\mathrm{supp}\,\Phi,\gat)>0\big\},$$
i.e., for $k\in\nat_{0}$
$$\H{q,k}{\gat}(\om):=\ol{\C{q,\infty}{\gat}(\om)}^{\H{q,k}{}(\om)},\quad
\H{q}{\gat}(\ed,\om):=\ol{\C{q,\infty}{\gat}(\om)}^{\H{q}{}(\ed,\om)},\quad
\H{q}{\gat}(\cd,\om):=\ol{\C{q,\infty}{\gat}(\om)}^{\H{q}{}(\cd,\om)},$$
and we have $\H{q,k}{\emptyset}(\om)=\H{q,k}{}(\om)$, $\H{q}{\emptyset}(\ed,\om)=\H{q}{}(\ed,\om)$,
and $\H{q}{\emptyset}(\cd,\om)=\H{q}{}(\cd,\om)$,
which are well known density results and incorporated into the notation by purpose.
Spaces with vanishing $\ed$ and $\cd$
are again denoted by $\H{q}{\gat,0}(\ed,\om)$ and $\H{q}{\gat,0}(\cd,\om)$, respectively. 
Note that for $k=0$ we have $\H{q,0}{\gat}(\om)=\L{q,2}{}(\om)$
and for $q=0$ we can also write $\H{0,1}{\gat}(\om)=\H{0}{\gat}(\ed,\om)\cong\H{\dimom}{\gat}(\cd,\om)$.
Moreover, we introduce for $k\in\nat_{0}$ the non-standard Sobolev spaces of $q$-forms
\begin{align*}
\H{q,k}{}(\ed,\om)
&:=\big\{E\in\H{q,k}{}(\om):\ed E\in\H{q+1,k}{}(\om)\big\},\\
\H{q,k}{\gat}(\ed,\om)
&:=\big\{E\in\H{q,k}{\gat}(\om)\cap\H{q}{\gat}(\ed,\om):\ed E\in\H{q+1,k}{\gat}(\om)\big\},\\
\H{q,k}{}(\cd,\om)
&:=\big\{E\in\H{q,k}{}(\om):\cd E\in\H{q-1,k}{}(\om)\big\},\\
\H{q,k}{\gat}(\cd,\om)
&:=\big\{E\in\H{q,k}{\gat}(\om)\cap\H{q}{\gat}(\cd,\om):\cd E\in\H{q-1,k}{\gat}(\om)\big\}.
\end{align*}
We see $\H{q,k}{\emptyset}(\ed,\om)=\H{q,k}{}(\ed,\om)$
and for $k=0$ we have
$\H{q,0}{\emptyset}(\ed,\om)=\H{q,0}{}(\ed,\om)=\H{q}{}(\ed,\om)$
and $\H{q,0}{\gat}(\ed,\om)=\H{q}{\gat}(\ed,\om)$.
Note that for $\gat\neq\emptyset$ and $k\geq1$ it holds
$$\H{q,k}{\gat}(\ed,\om)
=\big\{E\in\H{q,k}{\gat}(\om):\ed E\in\H{q+1,k}{\gat}(\om)\big\},$$
but for $\gat\neq\emptyset$ and $k=0$ \big(as $\H{q,0}{\gat}(\om)=\L{q,2}{}(\om)$\big)
\begin{align*}
\H{q,0}{\gat}(\ed,\om)
&=\big\{E\in\H{q,0}{\gat}(\om)\cap\H{q}{\gat}(\ed,\om):\ed E\in\H{q+1,0}{\gat}(\om)\big\}
=\H{q}{\gat}(\ed,\om)\\
&\subsetneq\big\{E\in\H{q,0}{\gat}(\om):\ed E\in\H{q+1,0}{\gat}(\om)\big\}
=\H{q,0}{\emptyset}(\ed,\om)
=\H{q}{}(\ed,\om).
\end{align*}
As before, 
$$\H{q,k}{\gat,0}(\ed,\om)
:=\H{q,k}{\gat}(\om)\cap\H{q}{\gat,0}(\ed,\om)
=\H{q,k}{\gat}(\ed,\om)\cap\H{q}{0}(\ed,\om)
=\big\{E\in\H{q,k}{\gat}(\ed,\om):\ed E=0\big\}.$$
The corresponding remarks hold 
for the $\H{q,k}{\gat}(\cd,\om)$-spaces as well.

Again, let us note that boundary conditions can also be defined 
in the \emph{weak sense} by
\begin{align*}
\bH{q,k}{\gat}(\om)
&:=\big\{E\in\H{q,k}{}(\om):
\scp{\p^{\alpha}E}{\Phi}_{\L{q,2}{}(\om)}=(-1)^{|\alpha|}\scp{E}{\p^{\alpha}\Phi}_{\L{q,2}{}(\om)}
\;\forall\,\Phi\in\C{q,\infty}{\gan}(\om)\;\forall\,|\alpha|\leq k\big\},\\
\bH{q}{\gat}(\ed,\om)
&:=\big\{E\in\H{q}{}(\ed,\om):
\scp{\ed E}{\Phi}_{\L{q+1,2}{}(\om)}=-\scp{E}{\cd\Phi}_{\L{q,2}{}(\om)}
\quad\forall\,\Phi\in\C{q+1,\infty}{\gan}(\om)\big\},\\
\bH{q}{\gat}(\cd,\om)
&:=\big\{E\in\H{q}{}(\cd,\om):
\scp{\cd E}{\Phi}_{\L{q-1,2}{}(\om)}=-\scp{E}{\ed\Phi}_{\L{q,2}{}(\om)}
\quad\forall\,\Phi\in\C{q-1,\infty}{\gan}(\om)\big\}.
\end{align*}
Analogously, we define the Sobolev spaces $\bH{q,k}{\gat}(\ed,\om)$, $\bH{q,k}{\gat}(\cd,\om)$
and $\bH{q,k}{\gat,0}(\ed,\om)$, $\bH{q,k}{\gat,0}(\cd,\om)$
using the respective Sobolev spaces with weak boundary conditions.
Note that ``\emph{strong $\subset$ weak}'' holds, e.g., 
$$\H{q,k}{\gat}(\om)\subset\bH{q,k}{\gat}(\om),\qquad
\H{q}{\gat}(\ed,\om)\subset\bH{q}{\gat}(\ed,\om),\qquad
\H{q,k}{\gat}(\cd,\om)\subset\bH{q,k}{\gat}(\cd,\om),$$
and that the complex properties hold in both the strong and the weak case, e.g.,
$$\ed\H{q,k}{\gat}(\ed,\om)\subset\H{q+1,k}{\gat,0}(\ed,\om),\qquad
\cd\bH{q,k}{\gat}(\cd,\om)\subset\bH{q-1,k}{\gat,0}(\cd,\om),$$
which follows immediately by the definitions.
The next lemma shows that indeed ``\emph{strong $=$ weak}'' holds.

\begin{lem}[{\cite[Theorem 4.7]{BPS2019a}}]
\label{lem:wsbcderham}
The Sobolev spaces defined by weak and strong boundary conditions coincide, e.g.,
$\bH{q,k}{\gat}(\om)=\H{q,k}{\gat}(\om)$, $\bH{q}{\gat}(\ed,\om)=\H{q}{\gat}(\ed,\om)$, 
and $\bH{q,k}{\gat}(\cd,\om)=\H{q,k}{\gat}(\cd,\om)$.
\end{lem}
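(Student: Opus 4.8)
The plan is to reduce, by localisation and the geometry provided by Lemma~\ref{extdom}, to a model situation in which a weak boundary condition on $\gat$ can be turned into a strong one by \emph{extension through $\gat$} followed by \emph{translation and mollification}. Since ``strong $\subset$ weak'' has already been noted, only the reverse inclusions need proof. I would carry out $\bH{q}{\gat}(\ed,\om)\subset\H{q}{\gat}(\ed,\om)$ in detail; the $\cd$--statement then follows by applying the Hodge star operator $*$, which is unitary on $\L{q,2}{}(\om)$, interchanges $\ed$ and $\pm\cd$, and preserves supports, hence maps $\C{q,\infty}{\gat}(\om)$ onto $\C{\dimom-q,\infty}{\gat}(\om)$; and $\bH{q,k}{\gat}(\om)\subset\H{q,k}{\gat}(\om)$ follows from the same scheme with $\ed$ replaced by the family $\{\p^{\alpha}:|\alpha|\le k\}$. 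For the localisation I would cover $\ol\om$ by finitely many open balls and fix a subordinate partition of unity $\{\chi_{\ell}\}\subset\C{\infty}{}(\rdimom)$ so that on each ball the pair $(\om,\gat)$ is a bi-Lipschitz image of an extendable strong Lipschitz pair $(\om_{\ell},\ga_{t,\ell})$, with the balls and cutoffs arranged so that $\chi_{\ell}\Phi\in\C{q+1,\infty}{\gan}(\om)$ for every $\Phi\in\C{q+1,\infty}{\ga_{n,\ell}}(\om_{\ell})$; a Leibniz-rule computation using $\ed(\chi_{\ell}E)=\ed\chi_{\ell}\wedge E+\chi_{\ell}\,\ed E$ and the corresponding identity for $\cd(\chi_{\ell}\Phi)$ then shows $\chi_{\ell}E\in\bH{q}{\ga_{t,\ell}}(\ed,\om_{\ell})$ whenever $E\in\bH{q}{\gat}(\ed,\om)$. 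So it suffices to treat an extendable pair $(\om,\gat)$.

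\emph{Extension through $\gat$.} Let $(\om,\gat)$ be extendable, with extension $\widehat\om$ and $\widetilde\om=\mathrm{int}(\ol\om\cup\ol{\widehat\om})$, so that $\gat$ is interior to $\widetilde\om$ while $\gan\subset\partial\widetilde\om$. Given $E\in\bH{q}{\gat}(\ed,\om)$ let $\widetilde E$ be its extension by $0$ to $\widetilde\om$. For any $(q{+}1)$--form $\Phi$ that is smooth with compact support in the open set $\widetilde\om$, the restriction $\Phi|_{\om}$ vanishes near $\partial\widetilde\om\supset\gan$, hence $\Phi|_{\om}\in\C{q+1,\infty}{\gan}(\om)$, and therefore
\begin{align*}
\scp{\widetilde E}{\cd\Phi}_{\L{q,2}{}(\widetilde\om)}
=\scp{E}{\cd(\Phi|_{\om})}_{\L{q,2}{}(\om)}
=-\scp{\ed E}{\Phi|_{\om}}_{\L{q+1,2}{}(\om)}
=-\scp{\widetilde{\ed E}}{\Phi}_{\L{q+1,2}{}(\widetilde\om)},
\end{align*}
where $\widetilde{\ed E}$ denotes the extension of $\ed E$ by $0$. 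Thus $\widetilde E\in\H{q}{}(\ed,\widetilde\om)$ with $\ed\widetilde E=\widetilde{\ed E}$, both forms have compact support contained in $\ol\om$, and $\widetilde E$ vanishes on the open set $\widehat\om$.

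\emph{Translation and mollification.} On the localised piece $\om$ is, in suitable coordinates, the region above a Lipschitz graph; I would translate $\widetilde E$ a small distance $t>0$ along the fixed coordinate direction pointing into $\om$ through $\gat$ (more generally one composes with a family of bi-Lipschitz self-maps of $\widetilde\om$ realising this). Since $\mathrm{supp}\,\widetilde E\subset\ol\om$ and $\widetilde E$ vanishes across $\gat$, the translate $E_{t}$ is still supported in $\widetilde\om$ and satisfies $\mathrm{dist}(\mathrm{supp}\,E_{t},\ol\gat)>0$, while $\ed E_{t}$ is the corresponding translate of $\widetilde{\ed E}$ (translation commutes with $\ed$; for the bi-Lipschitz variant one uses that pull-back commutes with $\ed$, the transformation Jacobian controlling the error). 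Convolving with a mollifier $\varphi_{\delta}$, $\delta\ll t$, yields $E_{t,\delta}:=\varphi_{\delta}*E_{t}$, a globally smooth compactly supported $q$--form with $\mathrm{dist}(\mathrm{supp}\,E_{t,\delta},\gat)>0$, i.e.\ $E_{t,\delta}|_{\om}\in\C{q,\infty}{\gat}(\om)$; by $\L{2}{}$--continuity of translation and mollification, $E_{t,\delta}|_{\om}\to E$ and $\ed E_{t,\delta}|_{\om}\to\ed E$ in $\L{q,2}{}(\om)$, resp.\ $\L{q+1,2}{}(\om)$, as $t,\delta\to0$ suitably. Hence $E\in\ol{\C{q,\infty}{\gat}(\om)}^{\H{q}{}(\ed,\om)}=\H{q}{\gat}(\ed,\om)$, and reassembling via $\sum_{\ell}\chi_{\ell}$ completes the proof for $\ed$. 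The spaces $\bH{q,k}{\gat}(\om)$ are treated identically, the weak conditions on $\p^{\alpha}$, $|\alpha|<k$, being exactly what guarantees $\widetilde E\in\H{q,k}{}(\widetilde\om)$, and $\bH{q,k}{\gat}(\ed,\om)$, $\bH{q,k}{\gat}(\cd,\om)$ follow by combining the above.

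\emph{Main obstacle.} The delicate point is the translation step near the interface $\ol\gat\cap\ol\gan$: one must arrange the translation (or bi-Lipschitz deformation) so that it \emph{simultaneously} pushes the support off $\gat$ and keeps it inside $\om$ near $\gan$ and the remaining boundary, and one must control the commutator terms produced by the cutoffs $\chi_{\ell}$ and by the derivatives of the deformation in order to obtain convergence in the full graph norm. This is precisely where the strong Lipschitz (local hypograph) structure and the extendability supplied by Lemma~\ref{extdom} enter; the complete argument is carried out in \cite{BPS2019a}.
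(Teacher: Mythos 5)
Your reduction steps are sound and partly coincide with the paper's: only ``weak $\subset$ strong'' needs proof, the $\cd$--case follows by Hodge duality, the localisation to extendable pairs via Lemma~\ref{extdom} is exactly what the paper does in Lemma~\ref{lem:highorderregdecoedpbcLip}, and the extension of $E\in\bH{q}{\gat}(\ed,\om)$ by zero through $\gat$ with $\ed\widetilde E=\widetilde{\ed E}$ is precisely the first move in the proof of Lemma~\ref{highorderregpotedpbc}. From that point on, however, the paper goes a genuinely different way: it never approximates $E$ by test forms. Instead it applies the regular potential operator \emph{without} boundary condition (Lemma~\ref{highorderregpoted}, built from the Stein extension, the whole-space Helmholtz projection, and whole-space regularity) to $\widetilde E$ on the topologically trivial $\widetilde\om$, corrects the resulting potential by a second application on $\widehat\om$ so that it vanishes identically on $\widehat\om$, and concludes that the potential lies in $\H{q-1,k+1}{\gat}(\om)$ using only the elementary weak-equals-strong statement for the plain Sobolev spaces $\H{q,k}{}(\om)$. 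This gives $\bH{q,k}{\gat,0}(\ed,\om)=\ed\H{q-1,k+1}{\gat}(\om)\subset\H{q,k}{\gat,0}(\ed,\om)$, and then the regular decomposition $E=\PotP_{\ed,\gat}^{q+1,k}\ed E+\ed\PotP_{\ed,\gat}^{q,k}(1-\PotP_{\ed,\gat}^{q+1,k}\ed)E$ of Lemma~\ref{highorderregdecoedpbc} exhibits every weak-boundary-condition form as a sum of strong-boundary-condition objects. The approach buys the higher-order statement and the bounded regular decomposition operators in the same stroke, and it completely avoids mollifying $\ed$--fields near the boundary.

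The gap in your version is the translation-and-mollification step, which is exactly the part you defer to \cite{BPS2019a}. As written it fails even away from the interface: translating $\widetilde E$ in the single direction ``into $\om$ through $\gat$'' produces a field that is defined (as an $\H{q}{}(\ed)$--field) only on the correspondingly translated copy of $\widetilde\om$; near the lateral and opposite parts of $\gan$ this translated domain does not contain a neighbourhood of $\ol\om$, so to mollify you must extend $E_{t}$ by zero across a boundary portion on which no tangential trace condition holds, and then $\ed(\varphi_{\delta}*E_{t})\neq\varphi_{\delta}*(\ed E)_{t}$ near $\gan$ and the graph-norm convergence breaks down. The classical remedy is to translate \emph{outward} near $\gan$ and \emph{inward} near $\gat$, and the whole difficulty of the theorem is to interpolate between these two incompatible requirements near $\ol\gat\cap\ol\gan$; your ``main obstacle'' paragraph names this but does not supply the deformation that resolves it. Since that is the mathematical core of the statement, the proposal as it stands establishes only the reduction, not the lemma. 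If you want a self-contained argument within this paper's toolbox, the potential-operator route above is the one to follow.
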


For convenience, a self-contained proof of Lemma \ref{lem:wsbcderham}
(and hence also of Lemma \ref{lem:wsbcderhamvec}) 
is given as a part of Lemma \ref{lem:highorderregdecoedpbcLip}, 
cf.~Lemma \ref{highorderregpotedpbc} and Lemma \ref{highorderregdecoedpbc}.

\begin{lem}[Schwarz' lemma]
\label{lem:edpalpha}
Let $|\alpha|\leq k$.
\begin{itemize}
\item[\bf(i)]
For $E\in\H{q,k}{\gat}(\ed,\om)$ it holds 
$\p^{\alpha}E\in\H{q,0}{\gat}(\ed,\om)$ and $\ed\p^{\alpha}E=\p^{\alpha}\ed E$.
\item[\bf(ii)]
For $H\in\H{q,k}{\gat}(\cd,\om)$ it holds
$\p^{\alpha}H\in\H{q,0}{\gat}(\cd,\om)$ and $\cd\p^{\alpha}H=\p^{\alpha}\cd H$.
\end{itemize}
\end{lem}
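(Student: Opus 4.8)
The plan is to prove Lemma \ref{lem:edpalpha} (Schwarz' lemma) directly from the definitions, exploiting the equality of weak and strong boundary conditions from Lemma \ref{lem:wsbcderham}. Since the two assertions (i) and (ii) are completely analogous (swapping $\ed\leftrightarrow\cd$, $\gat$ and test forms of adjacent rank), I would prove (i) in detail and remark that (ii) follows by the same argument, or alternatively by applying the Hodge star $*$ and using $\cd=\pm*\ed*$ together with the fact that $*$ commutes with $\p^{\alpha}$ and interchanges the relevant boundary conditions.

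For (i), let $E\in\H{q,k}{\gat}(\ed,\om)$ and fix a multi-index $\alpha$ with $|\alpha|\leq k$. First I would note that $E\in\H{q,k}{\gat}(\om)$ with $\ed E\in\H{q+1,k}{\gat}(\om)$. Classical Schwarz/Sobolev theory gives $\p^{\alpha}E\in\L{q,2}{}(\om)$ and $\p^{\alpha}\ed E\in\L{q+1,2}{}(\om)$, and since $\ed$ has constant coefficients, weak differentiation commutes: $\ed\p^{\alpha}E=\p^{\alpha}\ed E$ in the distributional sense, hence $\p^{\alpha}E\in\H{q}{}(\ed,\om)$. It remains to verify the boundary condition, i.e.\ $\p^{\alpha}E\in\H{q,0}{\gat}(\ed,\om)=\H{q}{\gat}(\ed,\om)$. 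Here I would switch to the \emph{weak} characterisation via Lemma \ref{lem:wsbcderham}: it suffices to show that for all $\Phi\in\C{q+1,\infty}{\gan}(\om)$,
\begin{align*}
\scp{\ed\p^{\alpha}E}{\Phi}_{\L{q+1,2}{}(\om)}=-\scp{\p^{\alpha}E}{\cd\Phi}_{\L{q,2}{}(\om)}.
\end{align*}
Since $E\in\H{q,k}{\gat}(\om)=\bH{q,k}{\gat}(\om)$ and $\ed E\in\H{q+1,k}{\gat}(\om)=\bH{q+1,k}{\gat}(\om)$, both $E$ and $\ed E$ satisfy the weak partial-integration identities against $\C{\infty}{\gan}$-test forms for all $|\beta|\leq k$. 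Using $\ed\p^{\alpha}E=\p^{\alpha}\ed E$ and moving $\p^{\alpha}$ onto $\Phi$ (legitimate because $\ed E\in\bH{q+1,k}{\gat}$), then using that $\Phi$ and $\p^{\alpha}\Phi$ are admissible $\C{q+1,\infty}{\gan}$-test forms and that $E\in\bH{q}{\gat}(\ed,\om)$, one moves the derivatives back and arrives at the claimed identity. The key bookkeeping point is that $\p^{\alpha}\Phi\in\C{q+1,\infty}{\gan}(\om)$ whenever $\Phi$ is, so all the weak identities apply.

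The main obstacle — or rather the point requiring care — is the order in which one integrates by parts and which weak boundary condition is invoked at each step: one must combine the weak condition for $E$ with respect to $\ed$, the weak condition for $\ed E$ as an element of $\bH{q+1,k}{\gat}(\om)$, and the Sobolev-order weak conditions for $E\in\bH{q,k}{\gat}(\om)$, all glued together by Lemma \ref{lem:wsbcderham} to pass freely between strong and weak formulations. An alternative, perhaps cleaner route avoids weak conditions altogether: take a sequence $(\Phi_j)\subset\C{q,\infty}{\gat}(\om)$ with $\Phi_j\to E$ in $\H{q,k}{}(\ed,\om)$ (whose existence encodes exactly the strong boundary condition $E\in\H{q,k}{\gat}(\ed,\om)$); then $\p^{\alpha}\Phi_j\in\C{q,\infty}{\gat}(\om)$ with $\ed\p^{\alpha}\Phi_j=\p^{\alpha}\ed\Phi_j$ trivially, and $\p^{\alpha}\Phi_j\to\p^{\alpha}E$ in $\H{q,0}{}(\ed,\om)$ together with $\ed\p^{\alpha}\Phi_j=\p^{\alpha}\ed\Phi_j\to\p^{\alpha}\ed E=\ed\p^{\alpha}E$, so $\p^{\alpha}E\in\ol{\C{q,\infty}{\gat}(\om)}^{\H{q}{}(\ed,\om)}=\H{q}{\gat}(\ed,\om)=\H{q,0}{\gat}(\ed,\om)$ and simultaneously $\ed\p^{\alpha}E=\p^{\alpha}\ed E$. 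I would present this approximation argument as the primary proof, since it is short and self-contained, and only invoke Lemma \ref{lem:wsbcderham} if needed to match the density-of-test-forms statement with the definition of $\H{q,k}{\gat}(\ed,\om)$. Part (ii) is then handled identically with $\C{q,\infty}{\gat}(\om)$, $\cd$, and $\H{q}{}(\cd,\om)$ in place of their $\ed$-counterparts.
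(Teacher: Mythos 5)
Your first, ``weak characterisation'' route is precisely the paper's proof. The paper shows, for $\Phi\in\C{q+1,\infty}{\gan}(\om)$, the identity
$\scp{\p^{\alpha}E}{\cd\Phi}_{\L{q,2}{}(\om)}=-\scp{\p^{\alpha}\ed E}{\Phi}_{\L{q+1,2}{}(\om)}$
by successively using $E\in\H{q,k}{\gat}(\om)=\bH{q,k}{\gat}(\om)$ (to move $\p^{\alpha}$ off $E$ onto $\cd\Phi\in\C{q,\infty}{\gan}(\om)$), then $E\in\H{q}{\gat}(\ed,\om)=\bH{q}{\gat}(\ed,\om)$ tested against $\p^{\alpha}\Phi\in\C{q+1,\infty}{\gan}(\om)$, and finally $\ed E\in\H{q+1,k}{\gat}(\om)=\bH{q+1,k}{\gat}(\om)$ (to move $\p^{\alpha}$ back onto $\ed E$); Lemma \ref{lem:wsbcderham} then gives $\p^{\alpha}E\in\bH{q,0}{\gat}(\ed,\om)=\H{q,0}{\gat}(\ed,\om)$ and $\ed\p^{\alpha}E=\p^{\alpha}\ed E$. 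Your bookkeeping remarks about which weak condition is used where, and that $\p^{\alpha}\Phi$, $\cd\Phi$ remain admissible test forms, are exactly the content of that computation. Part (ii) by analogy or by Hodge duality is also as in the paper.

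The approximation route that you designate as your \emph{primary} proof, however, has a genuine gap. You assert that $E\in\H{q,k}{\gat}(\ed,\om)$ ``encodes exactly'' the existence of a sequence $(\Phi_j)\subset\C{q,\infty}{\gat}(\om)$ converging to $E$ in the graph norm of $\H{q,k}{}(\ed,\om)$. That is not the definition used here: $\H{q,k}{\gat}(\ed,\om)$ is defined as the set of $E\in\H{q,k}{\gat}(\om)\cap\H{q}{\gat}(\ed,\om)$ with $\ed E\in\H{q+1,k}{\gat}(\om)$, i.e., as an intersection of three closures taken in three \emph{different} norms, each coming with its own approximating sequence. The inclusion $\ol{\C{q,\infty}{\gat}(\om)}^{\H{q,k}{}(\ed,\om)}\subset\H{q,k}{\gat}(\ed,\om)$ is trivial, but the converse --- the simultaneous approximability your argument needs --- is a nontrivial density statement of essentially the same depth as the ``weak $=$ strong'' results, and it is not available at this point of the paper (it would only follow later from the regular decompositions of Lemma \ref{lem:highorderregdecoedpbcLip} and Theorem \ref{highorderregpotedpbcLip}, whose proofs do not use the present lemma, so no circularity would arise, but the argument would then be far from ``short and self-contained''). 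You should therefore promote the weak-characterisation argument to be the actual proof and drop, or substantially justify, the density claim.
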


\begin{proof}
(i) can be seen as follows: For $\Phi\in\C{q+1,\infty}{\gan}(\om)$ we have 
\begin{align*}
\scp{\p^{\alpha}E}{\cd\Phi}_{\L{q,2}{}(\om)}
&=(-1)^{|\alpha|}\scp{E}{\cd\p^{\alpha}\Phi}_{\L{q,2}{}(\om)}\\
&=(-1)^{|\alpha|+1}\scp{\ed E}{\p^{\alpha}\Phi}_{\L{q+1,2}{}(\om)}
=-\scp{\p^{\alpha}\ed E}{\Phi}_{\L{q+1,2}{}(\om)}
\end{align*}
as $E\in\H{q,k}{\gat}(\om)\cap\H{q,0}{\gat}(\ed,\om)$ and
$\ed E\in\H{q+1,k}{\gat}(\om)$.
Hence
$\p^{\alpha}E\in\bH{q,0}{\gat}(\ed,\om)=\H{q,0}{\gat}(\ed,\om)$
by Lemma \ref{lem:wsbcderham} and $\ed\p^{\alpha}E=\p^{\alpha}\ed E$.
(ii) follows analogously or by the Hodge $\star$-operator.
\end{proof}

Finally, we introduce the cohomology space of Dirichlet/Neumann forms (generalised harmonic forms)
\begin{align}
\label{dfdirneudef}
\Harm{q}{\gat,\gan,\eps}(\om)
:=\H{q}{\gat,0}(\ed,\om)\cap\eps^{-1}\H{q}{\gan,0}(\cd,\om).
\end{align}
The classical Dirichlet and Neumann fields are then given by
$\Harm{q}{\ga,\emptyset,\eps}(\om)$ anf $\Harm{q}{\emptyset,\ga,\eps}(\om)$, respectively.
Here, $\eps=\eps_{q}:\L{q,2}{}(\om)\to\L{q,2}{}(\om)$ is a symmetric and positive 
topological isomorphism (symmetric and positive bijective bounded linear operator),
which introduces a new inner product
$$\scp{\,\cdot\,}{\,\cdot\,}_{\L{q,2}{\eps}(\om)}
:=\scp{\eps\,\cdot\,}{\,\cdot\,}_{\L{q,2}{}(\om)},$$
where $\L{q,2}{\eps}(\om):=\L{q,2}{}(\om)$ (as linear space)
equipped with the inner product $\scp{\,\cdot\,}{\,\cdot\,}_{\L{q,2}{\eps}(\om)}$.
Such \emph{weights} $\eps$ shall be called \emph{admissible}
and a typical example is given by a 
symmetric, $\L{\infty}{}$-bounded, and uniformly positive definite tensor (matrix) field
$\eps:\om\to\reals^{\binom{N}{q}\times\binom{N}{q}}$.

\subsection{Some Useful and Important Results}

In \cite{HLZ2012a} the existence of a crucial universal extension operator 
for the Sobolev spaces $\H{q,k}{}(\ed,\om)$ has been shown,
which is based on the universal extension operator from Stein's book \cite{S1970a}.

\begin{lem}[universal Stein extension operator {\cite[Theorem 3.6]{HLZ2012a}}, cf.~{\cite[Lemma 2.15]{BPS2019a}}]
\label{stein}
Let $\om\subset\rdimom$ be a bounded strong Lipschitz domain.
For all $k\in\nat_{0}$ and all $q$ there exists 
a (universal) bounded linear extension operator
$$\E=\E^{q,k}:\H{q,k}{}(\ed,\om)\to\H{q,k}{}(\ed,\rdimom).$$
More precisely, there exists $c>0$ such that for all $E\in\H{q,k}{}(\ed,\om)$
it holds $\E E\in\H{q,k}{}(\ed,\rdimom)$ and $\E E=E$ in $\om$ as well as
$\norm{\E E}_{\H{q,k}{}(\ed,\rdimom)}\leq c\norm{E}_{\H{q,k}{}(\ed,\om)}$.
Furthermore, $\E$ can be chosen such that $\E E$ has fixed compact support in $\rdimom$
for all $E\in\H{q,k}{}(\ed,\om)$.
\end{lem}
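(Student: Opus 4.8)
The plan is to \emph{localise} and reduce to two classical ingredients: Stein's scalar universal extension operator \cite{S1970a}, applied componentwise, and a regularised Poincar\'e (smoothed Bogovski\u{\i}-type, Costabel--McIntosh type) homotopy operator on star-shaped pieces. First note that the two extreme ranks are immediate: $\H{0,k}{}(\ed,\om)$ is just the ordinary Sobolev space $\H{k+1}{}(\om)$, and $\H{\dimom,k}{}(\ed,\om)=\H{\dimom,k}{}(\om)$ since $\ed$ annihilates $\dimom$-forms, so the componentwise scalar Stein extension already produces a bounded extension into $\H{0,k}{}(\ed,\rdimom)$ resp.\ $\H{\dimom,k}{}(\ed,\rdimom)$ with fixed compact support. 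Fix now an intermediate rank $q$ with $1\le q\le\dimom-1$, and choose a finite cover of $\ol{\om}$ by balls $U_{j}$ ($j\ge1$) centred at boundary points together with one open set $U_{0}$ with $\ol{U_{0}}\subset\om$, a subordinate partition of unity $\varphi_{j}$, and cut-offs $\theta_{j}\in\C{\infty}{}(\rdimom)$ with $\theta_{j}=1$ on $\mathrm{supp}\,\varphi_{j}$ and $\mathrm{supp}\,\theta_{j}\subset U_{j}$. For the $U_{j}$ sufficiently small, the strong Lipschitz (uniform cone) property makes the pieces $\om_{j}:=U_{j}\cap\om$, $j\ge1$, bounded strong Lipschitz domains that are star-shaped with respect to some ball $\mcB_{j}\subset\om_{j}$. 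Writing $E=\sum_{j}\varphi_{j}E$, each $\varphi_{j}E\in\H{q,k}{}(\ed,\om)$ (use $\ed(\varphi_{j}E)=\varphi_{j}\,\ed E+\ed\varphi_{j}\wedge E$); the interior piece $\varphi_{0}E$ is compactly supported in $\om$ and is extended by zero, yielding $E_{0}\in\H{q,k}{}(\ed,\rdimom)$, so it remains to extend each $\varphi_{j}E$, $j\ge1$.

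On $\om_{j}$ let $R_{j}$ be a regularised Poincar\'e homotopy operator with respect to $\mcB_{j}$, gaining one Sobolev order (bounded $\H{\bullet,s}{}(\om_{j})\to\H{\bullet-1,s+1}{}(\om_{j})$ for the relevant $s\ge0$) and satisfying $\ed R_{j}+R_{j}\ed=\id$ on forms of positive rank. As $\ed(\varphi_{j}E)$ is a closed $(q+1)$-form on $\om_{j}$ (being an exterior derivative), $w_{j}:=R_{j}\big(\ed(\varphi_{j}E)\big)\in\H{q,k+1}{}(\om_{j})$ satisfies $\ed w_{j}=\ed(\varphi_{j}E)$; hence $\varphi_{j}E-w_{j}$ is a closed $q$-form on $\om_{j}$, and $v_{j}:=R_{j}\big(\varphi_{j}E-w_{j}\big)\in\H{q-1,k+1}{}(\om_{j})$ satisfies $\ed v_{j}=\varphi_{j}E-w_{j}$. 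Extend $w_{j}$ and $v_{j}$ componentwise by Stein's scalar operator to $\widetilde w_{j}\in\H{q,k+1}{}(\rdimom)$ and $\widetilde v_{j}\in\H{q-1,k+1}{}(\rdimom)$ with fixed compact support, agreeing with $w_{j},v_{j}$ on $\om_{j}$, and put $\E_{j}:=\widetilde w_{j}+\ed\widetilde v_{j}$. Since $\ed$ is local, $\E_{j}=w_{j}+\ed v_{j}=\varphi_{j}E$ on $\om_{j}$, while $\E_{j}\in\H{q,k}{}(\rdimom)$ and $\ed\E_{j}=\ed\widetilde w_{j}\in\H{q+1,k}{}(\rdimom)$ because $\widetilde w_{j}\in\H{q,k+1}{}(\rdimom)$. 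As $\E_{j}$ and $\ed\E_{j}$ both lie in the $k$-th order spaces, $\theta_{j}\E_{j}\in\H{q,k}{}(\ed,\rdimom)$ (using $\ed(\theta_{j}\E_{j})=\theta_{j}\,\ed\E_{j}+\ed\theta_{j}\wedge\E_{j}$), it has support in $U_{j}$, equals $\varphi_{j}E$ on $\om$, and depends boundedly and linearly on $E$. Setting $\E E:=E_{0}+\sum_{j\ge1}\theta_{j}\E_{j}$ gives a bounded linear operator $\E=\E^{q,k}:\H{q,k}{}(\ed,\om)\to\H{q,k}{}(\ed,\rdimom)$ with $\E E=E$ on $\om$, $\ed\E E\in\H{q+1,k}{}(\rdimom)$, the asserted norm bound, and fixed compact support.

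The part that needs genuine work -- and is essentially the analytic content of \cite{HLZ2012a} -- is the construction of the operator $R_{j}$: for a bounded Lipschitz domain star-shaped with respect to a ball one must build a homotopy operator for the de Rham complex that is bounded $\H{\bullet,s}{}\to\H{\bullet-1,s+1}{}$ \emph{for all} orders $s\ge0$ simultaneously, with uniform constants; a smoothed Poincar\'e path integral of Costabel--McIntosh type achieves this, and this is where the real estimates sit. Everything else is routine bookkeeping: deriving the local star-shapedness from the strong Lipschitz property, handling products with cut-offs via the Leibniz rule for $\ed$, and tracking the finitely many constants. An alternative organisation of the localisation, closer to the spirit of this paper, is to invoke Lemma~\ref{extdom} and work on a finite union of extendable strong Lipschitz pairs, where the relevant pieces are topologically trivial and hence carry such homotopy operators, gluing the local extensions back together with the subordinate partition of unity.
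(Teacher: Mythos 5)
The paper offers no proof of this lemma at all: it is quoted verbatim from \cite[Theorem 3.6]{HLZ2012a}, so there is nothing internal to compare against. Your argument is, however, a correct and essentially complete alternative route, and it differs from the construction in the cited source. Hiptmair--Li--Zou build the extension directly by adapting Stein's reflection-type integral operator to differential forms, replacing translations by a family of bi-Lipschitz maps and using that pullback commutes with $\ed$; commutation with $\ed$ is built into the single integral formula. You instead localise, split each boundary piece as $\varphi_{j}E=w_{j}+\ed v_{j}$ with $w_{j},v_{j}$ one Sobolev order smoother via a regularised Poincar\'e homotopy, and then only ever extend ordinary Sobolev functions componentwise by the scalar Stein operator; the identity $\E_{j}=\widetilde w_{j}+\ed\widetilde v_{j}$ then trivially lands in $\H{q,k}{}(\ed,\rdimom)$. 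The bookkeeping checks out (the homotopy identity is only invoked on forms of rank $\geq1$, the cut-off $\theta_{j}$ correctly reproduces $\varphi_{j}E$ on all of $\om$, and universality follows because none of the operators depend on $k$). What your approach buys is modularity: the only hard analysis is the $\H{s}{}\to\H{s+1}{}$ boundedness of the Costabel--McIntosh operator on star-shaped Lipschitz pieces, which is exactly the reference \cite{CM2010a} this paper already uses elsewhere. Two points deserve slightly more care than you give them: intersecting $\om$ with balls need not produce Lipschitz sets, so one should take cylinders or cubes adapted to the local graph coordinates to guarantee that each $\om_{j}$ is strong Lipschitz and star-shaped with respect to a ball; and one should state explicitly that the zero extension of the compactly supported interior piece $\varphi_{0}E$ lies in $\H{q,k}{}(\ed,\rdimom)$, which holds precisely because its support has positive distance to $\ga$. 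Neither issue is a gap in substance.
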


From \cite[Theorem 5.2]{BPS2019a} we have the following Helmholtz decompositions.

\begin{lem}[Helmholtz decompositions]
\label{helmom}
Let $\om\subset\rdimom$ be a bounded strong Lipschitz domain.
For all $q$ the orthonormal Helmholtz decompositions 
\begin{align*}
\L{q,2}{\eps}(\om)
&=\ed\H{q-1,0}{\gat}(\ed,\om)
\oplus_{\L{q,2}{\eps}(\om)}
\eps^{-1}\H{q,0}{\gan,0}(\cd,\om)\\
&=\H{q,0}{\gat,0}(\ed,\om)
\oplus_{\L{q,2}{\eps}(\om)}
\eps^{-1}\cd\H{q+1,0}{\gan}(\cd,\om)\\
&=\ed\H{q-1,0}{\gat}(\ed,\om)
\oplus_{\L{q,2}{\eps}(\om)}
\Harm{q}{\gat,\gan,\eps}(\om)
\oplus_{\L{q,2}{\eps}(\om)}
\eps^{-1}\cd\H{q+1,0}{\gan}(\cd,\om)
\end{align*}
hold. In particular, the ranges
\begin{align*}
\ed\H{q-1,0}{\gat}(\ed,\om)
&=\H{q,0}{\gat,0}(\ed,\om)
\cap\Harm{q}{\gat,\gan,\eps}(\om)^{\bot_{\L{q,2}{\eps}(\om)}},\\
\cd\H{q+1,0}{\gan}(\cd,\om)
&=\H{q,0}{\gan,0}(\cd,\om)
\cap\Harm{q}{\gat,\gan,\eps}(\om)^{\bot_{\L{q,2}{}(\om)}}
\end{align*}
are closed subspaces of $\L{q,2}{\eps}(\om)$
and the potentials can be chosen such that they depend continuously on the data.
\end{lem}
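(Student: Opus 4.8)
The plan is to obtain all three decompositions from the abstract Helmholtz decompositions of the FA-ToolBox (Section~\ref{sec:FA}, in particular \eqref{helm1} and \eqref{helm3}) applied to the exterior derivative and its Hilbert space adjoint, and only then to upgrade the closures of the ranges to equalities.

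First I would set up the two relevant dual pairs. Realising $\ed$ on $(q-1)$-forms with homogeneous tangential boundary condition on $\gat$ as the densely defined closed operator $\A_{0}$ with $D(\A_{0})=\H{q-1,0}{\gat}(\ed,\om)$, and $\ed$ on $q$-forms likewise as $\A_{1}$ with $D(\A_{1})=\H{q,0}{\gat}(\ed,\om)$, the complex property $\A_{1}\A_{0}\subset0$ holds trivially. Using the \emph{weak} formulation of the boundary conditions together with Lemma~\ref{lem:wsbcderham} (``weak $=$ strong''), a short integration-by-parts computation identifies the $\eps$-weighted Hilbert space adjoints: $\A_{0}^{*}$ is a weighted co-derivative acting on $q$-forms with the complementary normal boundary condition on $\gan$, so that $D(\A_{0}^{*})=\eps^{-1}\H{q,0}{\gan}(\cd,\om)$ and hence $N(\A_{0}^{*})=\eps^{-1}\H{q,0}{\gan,0}(\cd,\om)$; likewise $N(\A_{1})=\H{q,0}{\gat,0}(\ed,\om)$ and $R(\A_{1}^{*})=\eps^{-1}\cd\H{q+1,0}{\gan}(\cd,\om)$.

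Next I would simply read off the decompositions. The projection theorem in the form \eqref{helm1}, applied to $\A_{0}$ resp.\ $\A_{1}$, gives
\begin{align*}
\L{q,2}{\eps}(\om)&=\ol{R(\A_{0})}\oplus_{\L{q,2}{\eps}(\om)}N(\A_{0}^{*})=\ol{\ed\H{q-1,0}{\gat}(\ed,\om)}\oplus_{\L{q,2}{\eps}(\om)}\eps^{-1}\H{q,0}{\gan,0}(\cd,\om),\\
\L{q,2}{\eps}(\om)&=N(\A_{1})\oplus_{\L{q,2}{\eps}(\om)}\ol{R(\A_{1}^{*})}=\H{q,0}{\gat,0}(\ed,\om)\oplus_{\L{q,2}{\eps}(\om)}\ol{\eps^{-1}\cd\H{q+1,0}{\gan}(\cd,\om)}.
\end{align*}
Since $N_{0,1}=N(\A_{1})\cap N(\A_{0}^{*})=\Harm{q}{\gat,\gan,\eps}(\om)$ by \eqref{dfdirneudef}, the refined decomposition \eqref{helm3} of Lemma~\ref{lem:toolboxhelm1} produces the third, three-term decomposition, again with closure bars, and \eqref{helm2} yields the stated representations of the $\ed$- and $\cd$-ranges as the orthogonal complements of $\Harm{q}{\gat,\gan,\eps}(\om)$ inside the respective kernel spaces.

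Finally -- and this is the only substantial step -- I would remove the closure bars, i.e.\ prove that $\ed\H{q-1,0}{\gat}(\ed,\om)$ and $\eps^{-1}\cd\H{q+1,0}{\gan}(\cd,\om)$ are closed. By Lemma~\ref{lem:toolboxcpt1} this is equivalent, in each case, to a Friedrichs/Poincar\'e-type estimate on the corresponding reduced domain, and by Lemma~\ref{lem:toolboxcpt2} such estimates follow from the compact embedding $\H{q}{\gat}(\ed,\om)\cap\eps^{-1}\H{q}{\gan}(\cd,\om)\incl\L{q,2}{}(\om)$ (Weck's/Picard's selection theorem), valid on bounded weak -- hence strong -- Lipschitz domains; alternatively, and in the spirit of the present paper, closedness is immediate from a bounded regular decomposition together with Lemma~\ref{lem:cptembmaintheo} and Theorem~\ref{theo:toolboxgenmain}~(i). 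Once the ranges are closed, Lemma~\ref{lem:toolboxcpt1} shows the reduced inverses $(\A_{0})_{\bot}^{-1}$ and $(\A_{1}^{*})_{\bot}^{-1}$ to be bounded, so that -- choosing the potentials orthogonal to the respective kernels -- they depend continuously on the data. The main obstacle is thus the closedness of the ranges; everything else is soft functional analysis together with the weak-equals-strong Lemma~\ref{lem:wsbcderham}.
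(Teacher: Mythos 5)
Your argument is correct, but it is worth pointing out that the paper does not prove Lemma~\ref{helmom} at all: it is imported verbatim from \cite[Theorem 5.2]{BPS2019a}. What you have written is essentially a reconstruction of the standard proof of that cited result, organised around the paper's own FA-ToolBox: identify the dual pairs $(\A_{0},\A_{0}^{*})$ and $(\A_{1},\A_{1}^{*})$ in the $\eps$-weighted $\L{q,2}{}$-space via Lemma~\ref{lem:wsbcderham}, read off the decompositions with closure bars from \eqref{helm1}--\eqref{helm3}, identify $N_{0,1}$ with $\Harm{q}{\gat,\gan,\eps}(\om)$, and then remove the closures via closed ranges. The one point that genuinely needed checking is that your second route to closedness -- bounded regular decompositions plus Lemma~\ref{lem:cptembmaintheo} and Theorem~\ref{theo:toolboxgenmain}~(i) -- is not circular within the paper; it is not, since the chain Lemma~\ref{regpoted} $\to$ Lemma~\ref{highorderregpoted} $\to$ Lemma~\ref{highorderregpotedpbc} $\to$ Lemma~\ref{lem:highorderregdecoedpbcLip} $\to$ Theorem~\ref{theo:cptemb:derham} nowhere uses Lemma~\ref{helmom}. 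This variant buys a self-contained proof for strong Lipschitz pairs, whereas the citation route (your first alternative, via Weck's selection theorem on weak Lipschitz domains) buys the greater generality recorded after the lemma. Two small remarks: the compact embedding you invoke should be the $\eps$-weighted one (cf.\ Remark~\ref{rem:derhameps}), although for the mere closedness of $\ed\H{q-1,0}{\gat}(\ed,\om)$ and $\cd\H{q+1,0}{\gan}(\cd,\om)$ the weight is irrelevant since the norms are equivalent; and for the second ``in particular'' identity one passes from the $\bot_{\L{q,2}{\eps}(\om)}$-complement of $\Harm{q}{\gat,\gan,\eps}(\om)$ to the unweighted one by multiplying the decomposition of $N(\A_{0}^{*})$ through by $\eps$ -- worth a line in a written-out version.
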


Note that Lemma \ref{helmom} even holds for 
bounded weak Lipschitz domains $\om\subset\rdimom$.
From \cite{P1982a}, cf.~\cite[Lemma 2.19]{BPS2019a}, 
we have the following Helmholtz decompositions for the special case $\om=\rdimom$.

\begin{lem}[Helmholtz decompositions in the whole space]
\label{helmrN}
For all $q$
\begin{align*}
\L{q,2}{}(\rdimom)&=\H{q}{0}(\ed,\rdimom)\oplus_{\L{q,2}{}(\rdimom)}\H{q}{0}(\cd,\rdimom),\\
\H{q}{}(\ed,\rdimom)&=\H{q}{0}(\ed,\rdimom)\oplus_{\L{q,2}{}(\rdimom)}\big(\H{q}{}(\ed,\rdimom)\cap\H{q}{0}(\cd,\rdimom)\big).
\end{align*}
Let $\pi_{q,\rdimom}:\L{q,2}{}(\rdimom)\to\H{q}{0}(\cd,\rdimom)$ denote the orthonormal projector onto
$\H{q}{0}(\cd,\rdimom)$. Then for all $E\in\H{q}{}(\ed,\rdimom)$ it holds
$\pi_{q,\rdimom}E\in\H{q}{}(\ed,\rdimom)\cap\H{q}{0}(\cd,\rdimom)$
and $\ed\pi_{q,\rdimom}E=\ed E$ as well as
$\norm{\pi_{q,\rdimom}E}_{\H{q}{}(\ed,\rdimom)}\leq\norm{E}_{\H{q}{}(\ed,\rdimom)}$.
\end{lem}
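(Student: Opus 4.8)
The plan is to prove Lemma \ref{helmrN} by a Fourier transform argument in the whole space, where $\ed$ and $\cd$ become algebraic operations. First I would recall that for a $q$-form $E=\sum_I E_I\,dx^I$ with $E_I\in\L{2}{}(\rdimom)$, the condition $\ed E\in\L{q+1,2}{}(\rdimom)$ corresponds after Fourier transform to the symbol $\xi\wedge\widehat E(\xi)$ lying in $\L{2}{}$, and $\cd E\in\L{q-1,2}{}(\rdimom)$ to $\xi\lrcorner\,\widehat E(\xi)$ (interior product) lying in $\L{2}{}$. Pointwise in $\xi\neq0$ one has the algebraic orthogonal decomposition of $\Lambda^q$ into $\xi\wedge(\Lambda^{q-1})\oplus(\xi\lrcorner\,\Lambda^{q+1})$ (equivalently, $\ker(\xi\wedge\cdot)=\mathrm{ran}(\xi\lrcorner\,\cdot)$ and vice versa, a consequence of $\xi\wedge(\xi\lrcorner\,\omega)+\xi\lrcorner(\xi\wedge\omega)=|\xi|^2\omega$). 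Hence I would define the projector $\pi_{q,\rdimom}$ on the Fourier side as the pointwise orthogonal projection onto $\ker(\xi\lrcorner\,\cdot)=\mathrm{ran}(\xi\wedge\cdot)$\,---\,wait, I must be careful: $\H{q}{0}(\cd,\rdimom)$ is the kernel of $\cd$, so on the Fourier side it is $\{\widehat E:\xi\lrcorner\,\widehat E=0\}=\mathrm{ran}(\xi\wedge\cdot)$. Call this projection $P(\xi)$. It is a bounded (operator-norm $\le1$) measurable function of $\xi$, homogeneous of degree $0$, so $\pi_{q,\rdimom}:=\mathcal F^{-1}P(\xi)\mathcal F$ is a bounded orthogonal projector on $\L{q,2}{}(\rdimom)$ with range exactly $\H{q}{0}(\cd,\rdimom)$, and $\id-\pi_{q,\rdimom}$ projects onto $\H{q}{0}(\ed,\rdimom)$; this gives the first orthogonal decomposition.

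Next, for the second decomposition $\H{q}{}(\ed,\rdimom)=\H{q}{0}(\ed,\rdimom)\oplus(\H{q}{}(\ed,\rdimom)\cap\H{q}{0}(\cd,\rdimom))$, I would take $E\in\H{q}{}(\ed,\rdimom)$ and apply $\pi_{q,\rdimom}$. On the Fourier side, $\widehat{\pi_{q,\rdimom}E}=P(\xi)\widehat E(\xi)$, and since $P(\xi)$ is the projection onto $\mathrm{ran}(\xi\wedge\cdot)$, one has $\xi\wedge(P(\xi)\widehat E)=\xi\wedge\widehat E$ (because $\xi\wedge$ kills the complementary summand $\mathrm{ran}(\xi\lrcorner\,\cdot)\cap\ker(\xi\wedge\cdot)$\,---\,here one uses $\ker(\xi\wedge\cdot)=\mathrm{ran}(\xi\lrcorner\,\cdot)$ again). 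Thus $\ed\pi_{q,\rdimom}E=\ed E$, so in particular $\pi_{q,\rdimom}E\in\H{q}{}(\ed,\rdimom)$, and it lies in $\H{q}{0}(\cd,\rdimom)$ by construction, giving $\pi_{q,\rdimom}E\in\H{q}{}(\ed,\rdimom)\cap\H{q}{0}(\cd,\rdimom)$. The complementary piece $(\id-\pi_{q,\rdimom})E$ has $\ed\big((\id-\pi_{q,\rdimom})E\big)=\ed E-\ed E=0$, so it lies in $\H{q}{0}(\ed,\rdimom)$; orthogonality and the norm bound $\norm{\pi_{q,\rdimom}E}_{\H{q}{}(\ed,\rdimom)}^2=\norm{\pi_{q,\rdimom}E}^2+\norm{\ed\pi_{q,\rdimom}E}^2=\norm{\pi_{q,\rdimom}E}^2+\norm{\ed E}^2\le\norm{E}^2+\norm{\ed E}^2$ follow immediately from $\norm{P(\xi)}\le1$ and Plancherel.

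The one subtlety\,---\,and the step I expect to require the most care\,---\,is handling the set $\{\xi=0\}$ and making sure the Fourier-multiplier definition of $\pi_{q,\rdimom}$ is literally a \emph{bounded} operator on $\L{2}{}(\rdimom)$ rather than merely densely defined: $P(\xi)$ is discontinuous at the origin but that is a null set, so $E\mapsto\mathcal F^{-1}(P\,\mathcal F E)$ is well-defined and bounded by $1$ on all of $\L{q,2}{}(\rdimom)$ by Plancherel, and the identities above hold a.e.\ in $\xi$, which is all that is needed. Alternatively, and perhaps cleaner to write, one can avoid the Fourier transform entirely and cite or reconstruct the argument from \cite{P1982a}: set $\pi_{q,\rdimom}E:=E-\ed\cd\Delta^{-1}E$ where $\Delta^{-1}$ is the (componentwise) Newton-potential solution operator, check $\cd\pi_{q,\rdimom}E=\cd E-\cd\ed\cd\Delta^{-1}E=\cd E-(\Delta-\ed\cd)\cd\Delta^{-1}E=\cd E-\cd E+\ed\cd(\cd\Delta^{-1}E)$\,---\,which vanishes since $\cd\cd=0$\,---\,so $\pi_{q,\rdimom}E\in\H{q}{0}(\cd,\rdimom)$, and $\ed\pi_{q,\rdimom}E=\ed E$ since $\ed\ed=0$; the mapping properties of the Riesz-type operators $\ed\cd\Delta^{-1}$ on $\L{2}{}$ give boundedness and the norm estimate. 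I would present the Fourier-multiplier version as the main line of argument because the pointwise linear-algebra decomposition of $\Lambda^q T^*\rdimom$ relative to a covector $\xi$ is transparent and makes the orthogonality and the norm bound essentially automatic.
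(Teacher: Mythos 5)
First, note that the paper does not actually prove Lemma \ref{helmrN}: it is quoted from \cite{P1982a}, cf.~\cite[Lemma 2.19]{BPS2019a}, so there is no in-paper argument to compare with. Your Fourier-multiplier strategy (and the alternative $\pi_{q,\rdimom}=\id-\ed\cd\Delta^{-1}$ via the Newton potential) is a standard and perfectly viable route, and the architecture of your argument --- pointwise Koszul decomposition of $\Lambda^{q}$ relative to $\xi$, Plancherel, and the observation that $\id-P(\xi)$ maps into $\ker(\xi\wedge\cdot)$ --- is exactly what is needed.

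However, the algebraic identifications you invoke are systematically wrong, and as written they make the decisive step internally inconsistent. The anticommutator $\xi\wedge(\xi\lrcorner\omega)+\xi\lrcorner(\xi\wedge\omega)=|\xi|^{2}\omega$ yields exactness of each Koszul complex \emph{separately}, i.e.
\[
\ker(\xi\wedge\cdot\,|_{\Lambda^{q}})=\xi\wedge\Lambda^{q-1},
\qquad
\ker(\xi\lrcorner\cdot\,|_{\Lambda^{q}})=\xi\lrcorner\Lambda^{q+1},
\]
while adjointness of $\xi\wedge\cdot$ and $\xi\lrcorner\cdot$ gives $(\xi\wedge\Lambda^{q-1})^{\bot}=\ker(\xi\lrcorner\cdot)$; together these produce the orthogonal splitting $\Lambda^{q}=\xi\wedge\Lambda^{q-1}\oplus\xi\lrcorner\Lambda^{q+1}=\ker(\xi\wedge\cdot)\oplus\ker(\xi\lrcorner\cdot)$. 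You instead assert $\ker(\xi\lrcorner\cdot)=\mathrm{ran}(\xi\wedge\cdot)$ and $\ker(\xi\wedge\cdot)=\mathrm{ran}(\xi\lrcorner\cdot)$, which identifies each kernel with the orthogonal complement of the correct range; already for $q=1$, $d=3$ one has $\ker(\xi\lrcorner\cdot)=\xi^{\bot}$ whereas $\xi\wedge\Lambda^{0}=\reals\xi$. This matters precisely where you need it: you justify $\xi\wedge(P(\xi)\widehat E)=\xi\wedge\widehat E$ by saying that $P(\xi)$ projects onto $\mathrm{ran}(\xi\wedge\cdot)$ --- but $\mathrm{ran}(\xi\wedge\cdot)\subset\ker(\xi\wedge\cdot)$, so that premise would give $\xi\wedge(P(\xi)\widehat E)=0$, the opposite of what you want. (The accompanying remark that ``$\xi\wedge$ kills $\mathrm{ran}(\xi\lrcorner\cdot)\cap\ker(\xi\wedge\cdot)$'' is also vacuous, since that intersection is $\{0\}$ for $\xi\neq0$.) The correct justification is: $P(\xi)$ is the orthogonal projection onto $\ker(\xi\lrcorner\cdot)$, hence $\id-P(\xi)$ projects onto $\ker(\xi\lrcorner\cdot)^{\bot}=\xi\wedge\Lambda^{q-1}=\ker(\xi\wedge\cdot)$, whence $\xi\wedge(\id-P(\xi))=0$. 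With this correction (and the explicit formula $P(\xi)=|\xi|^{-2}\,\xi\lrcorner(\xi\wedge\cdot)$, which shows measurability, homogeneity of degree $0$, and self-adjointness), all of your conclusions --- both orthogonal decompositions, $\ed\pi_{q,\rdimom}E=\ed E$, and the norm bound via Plancherel --- go through.
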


From \cite[Lemma 4.2(i)]{KP2010a}, cf.~\cite[Lemma 2.20]{BPS2019a}, 
we have the following regularity result.

\begin{lem}[regularity in the whole space]
\label{regrN}
For $k\in\nat_{0}$ and all $q$ it holds 
$$\big\{E\in\H{q}{}(\ed,\rdimom)\cap\H{q}{}(\cd,\rdimom):
\ed E\in\H{q+1,k}{}(\rdimom)\,\wedge\,\cd E\in\H{q-1,k}{}(\rdimom)\big\}
=\H{q,k+1}{}(\rdimom).$$
More precisely, $E\in\H{q}{}(\ed,\rdimom)\cap\H{q}{}(\cd,\rdimom)$
with $\ed E\in\H{q+1,k}{}(\rdimom)$ and $\cd E\in\H{q-1,k}{}(\rdimom)$,
if and only if $E\in\H{q,k+1}{}(\rdimom)$ and
$$\frac{1}{c}\norm{E}_{\H{q,k+1}{}(\rdimom)}
\leq\norm{E}_{\L{q,2}{}(\rdimom)}
+\norm{\ed E}_{\H{q+1,k}{}(\rdimom)}
+\norm{\cd E}_{\H{q-1,k}{}(\rdimom)}
\leq c\norm{E}_{\H{q,k+1}{}(\rdimom)}$$
with some $c>0$ independent of $E$.
\end{lem}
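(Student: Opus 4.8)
The plan is to prove the two set inclusions separately. The inclusion ``$\supseteq$'' together with the upper estimate is immediate: $\ed$ and $\cd=\pm*\ed*$ are first-order linear differential operators with constant coefficients, hence bounded as maps $\H{q,k+1}{}(\rdimom)\to\H{q+1,k}{}(\rdimom)$ and $\H{q,k+1}{}(\rdimom)\to\H{q-1,k}{}(\rdimom)$, respectively, while $\norm{E}_{\L{q,2}{}(\rdimom)}\leq\norm{E}_{\H{q,k+1}{}(\rdimom)}$ is trivial. The substance is the reverse inclusion ``$\subseteq$'' with the lower estimate, which is an elliptic regularity statement for the Hodge Laplacian $\ed\cd+\cd\ed=-\Delta$ and which I would establish via the Fourier transform.

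So I would pass to the Fourier transform $\mathcal{F}$, which (up to a fixed constant) is unitary on $\L{q,2}{}(\rdimom)$ when acting componentwise and for which $\norm{u}_{\H{q,m}{}(\rdimom)}^{2}$ is comparable to $\int_{\rdimom}(1+\norm{\xi}^{2})^{m}\norm{\mathcal{F}u(\xi)}^{2}\,d\xi$ for every $m\in\nat_{0}$, the constants depending only on $m$ and $\dimom$. First I would record the Fourier symbols of $\ed$ and $\cd$: from $\mathcal{F}(\p_{j}E)(\xi)=i\xi_{j}\mathcal{F}E(\xi)$ and $\ed E=\sum_{j}dx^{j}\wedge\p_{j}E$ one gets $\mathcal{F}(\ed E)(\xi)=i\,\xi\wedge\mathcal{F}E(\xi)$ (identifying $\xi$ with the covector $\sum_{j}\xi_{j}\,dx^{j}$), and since $\cd$ is, up to sign, the formal adjoint of $\ed$, its symbol equals $\xi\,\lrcorner\,\cdot$ up to a factor $\pm i$, where $\lrcorner$ denotes the interior product, i.e.\ the pointwise Hilbert space adjoint of $\xi\wedge\cdot$ on alternating forms. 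For forms merely in $\L{q,2}{}(\rdimom)$ with $\ed$- and $\cd$-images again square integrable, these symbol identities are to be read distributionally and justified by mollification, using that $\ed$ and $\cd$ commute with convolution.

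The crucial ingredient is the elementary pointwise identity, valid for every $\xi\in\rdimom$ and every alternating $q$-form $\omega$,
\begin{align*}
\norm{\xi\wedge\omega}^{2}+\norm{\xi\,\lrcorner\,\omega}^{2}=\norm{\xi}^{2}\,\norm{\omega}^{2},
\end{align*}
which in operator form reads $(\xi\wedge\cdot)^{*}(\xi\wedge\cdot)+(\xi\wedge\cdot)(\xi\wedge\cdot)^{*}=\norm{\xi}^{2}\id$ and is the symbol version of $\ed\cd+\cd\ed=-\Delta$. For $\xi\neq0$ I would prove it by completing $\xi/\norm{\xi}$ to an orthonormal basis $e^{1},\dots,e^{\dimom}$ of covectors, writing $\omega=e^{1}\wedge\alpha+\beta$ uniquely with $\alpha,\beta$ not involving $e^{1}$, and using $\xi\wedge\omega=\norm{\xi}\,e^{1}\wedge\beta$, $\xi\,\lrcorner\,\omega=\norm{\xi}\,\alpha$, $\norm{e^{1}\wedge\alpha}=\norm{\alpha}$, $\norm{e^{1}\wedge\beta}=\norm{\beta}$, and $\norm{\omega}^{2}=\norm{\alpha}^{2}+\norm{\beta}^{2}$. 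Combining this with the symbols, for $E$ in the left-hand set one gets $\norm{\mathcal{F}(\ed E)(\xi)}^{2}+\norm{\mathcal{F}(\cd E)(\xi)}^{2}=\norm{\xi}^{2}\,\norm{\mathcal{F}E(\xi)}^{2}$ for almost every $\xi$. Then a routine elementary estimate bounding $(1+\norm{\xi}^{2})^{k+1}$ by a constant multiple of $1+(1+\norm{\xi}^{2})^{k}\norm{\xi}^{2}$, followed by integration over $\rdimom$, yields $E\in\H{q,k+1}{}(\rdimom)$ together with the lower estimate. The boundary cases $q\leq0$ or $q\geq\dimom$ need no separate treatment, since then $\cd$ resp.\ $\ed$ acts on the zero space and the corresponding term is simply absent.

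I do not expect a genuinely hard step: the content is the pointwise symbol identity plus Fourier-multiplier bookkeeping. The one point demanding a little care is the distributional justification of the symbol formulas for forms whose exterior and co-derivatives are only square integrable rather than smooth, but this too is standard via mollification.
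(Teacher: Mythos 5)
Your argument is correct and complete: the pointwise symbol identity $\norm{\xi\wedge\omega}^{2}+\norm{\xi\,\lrcorner\,\omega}^{2}=\norm{\xi}^{2}\norm{\omega}^{2}$ is proved correctly, the passage to Fourier symbols for $\ed$ and $\cd$ is valid for tempered $\L{q,2}{}$-forms with square-integrable distributional $\ed E$ and $\cd E$ (no mollification is even needed, since both sides of $\mathcal{F}(\ed E)=i\,\xi\wedge\mathcal{F}E$ are then $\L{2}{}$-functions and the identity holds a.e.), and the multiplier estimate $(1+\norm{\xi}^{2})^{k+1}\leq 2^{k+1}\big(1+(1+\norm{\xi}^{2})^{k}\norm{\xi}^{2}\big)$ closes the lower bound. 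Note that the paper does not prove this lemma at all but cites it from the literature (\cite[Lemma 4.2(i)]{KP2010a}, \cite[Lemma 2.20]{BPS2019a}); your Fourier-transform proof is exactly the standard argument behind that reference, i.e.\ the symbol form of $\ed\cd+\cd\ed=-\Delta$ on the whole space, so nothing further is required.
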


In \cite[Lemma 3.1]{BPS2019a}, see also 
\cite{BPS2016a,BPS2018a} for more details,
the following lemma about the existence of regular potentials without boundary conditions has been shown.

\begin{lem}[regular potential for $\ed$ without boundary condition]
\label{regpoted}
Let $\om\subset\rdimom$ be a bounded strong Lipschitz domain.
For all $q\in\{1,\dots,\dimom\}$ there exists a bounded linear potential operator
$$\PotP_{\ed,\emptyset}^{q,0}:\H{q,0}{\emptyset,0}(\ed,\om)
\cap\Harm{q}{\emptyset,\ga,\id}(\om)^{\bot_{\L{q,2}{}(\om)}}
\To\H{q-1,1}{0}(\cd,\rdimom),$$ 
such that 
$\ed\PotP_{\ed,\emptyset}^{q,0}
=\id|_{\H{q,0}{\emptyset,0}(\ed,\om)
\cap\Harm{q}{\emptyset,\ga,\id}(\om)^{\bot_{\L{q,2}{}(\om)}}}$, 
i.e., for all 
$E\in\H{q,0}{\emptyset,0}(\ed,\om)
\cap\Harm{q}{\emptyset,\ga,\id}(\om)^{\bot_{\L{q,2}{}(\om)}}$ 
$$\ed\PotP_{\ed,\emptyset}^{q,0}E=E\quad\text{in }\om.$$
In particular, 
$$\H{q,0}{\emptyset,0}(\ed,\om)
\cap\Harm{q}{\emptyset,\ga,\id}(\om)^{\bot_{\L{q,2}{}(\om)}}
=\ed\H{q-1,0}{\emptyset}(\cd,\om)
=\ed\H{q-1,1}{\emptyset}(\om)
=\ed\H{q-1,1}{\emptyset,0}(\cd,\om)$$ 
and the potentials can be chosen such that they depend continuously on the data.
Especially, these are closed subspaces of $\L{q,2}{}(\om)$ 
and $\PotP_{\ed,\emptyset}^{q,0}$ is a right inverse to $\ed$.
\end{lem}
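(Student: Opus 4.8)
The plan is to build the regular potential operator $\PotP_{\ed,\emptyset}^{q,0}$ by a standard cut-off-and-extend argument combined with the whole-space results already available. First I would take $E\in\H{q,0}{\emptyset,0}(\ed,\om)\cap\Harm{q}{\emptyset,\ga,\id}(\om)^{\bot_{\L{q,2}{}(\om)}}$. Since no boundary condition is imposed on $E$ and $\ed E=0$, the Stein extension operator from Lemma \ref{stein} (applied with $k=0$, noting $\H{q,0}{}(\ed,\om)=\H{q}{}(\ed,\om)$) produces $\widetilde E:=\E^{q,0}E\in\H{q}{}(\ed,\rdimom)$ with fixed compact support and $\widetilde E=E$ in $\om$, depending continuously on $E$. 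However $\widetilde E$ need not be closed outside $\om$, so I cannot integrate it directly.

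The key step is to correct this: on the whole space the de Rham complex is exact on the relevant kernel, so I would use the Helmholtz/regularity machinery in $\rdimom$. Concretely, applying Lemma \ref{helmrN} to $\widetilde E\in\H{q}{}(\ed,\rdimom)$ yields $\widehat E:=\pi_{q,\rdimom}\widetilde E\in\H{q}{}(\ed,\rdimom)\cap\H{q}{0}(\cd,\rdimom)$ with $\ed\widehat E=\ed\widetilde E$ and $\norm{\widehat E}_{\H{q}{}(\ed,\rdimom)}\leq\norm{\widetilde E}_{\H{q}{}(\ed,\rdimom)}$. Then I would produce a primitive of $\widehat E$. Since $\widehat E$ is co-closed and $\ed\widehat E\in\H{q+1,0}{}(\rdimom)$, Lemma \ref{regrN} (with $k=0$) gives $\widehat E\in\H{q,1}{}(\rdimom)$, hence $\ed\widehat E\in\H{q+1,1}{}(\rdimom)$; a further application, or more directly the exactness of the $\L{2}$ de Rham complex on $\rdimom$ for $1\le q\le\dimom$ together with a regularity step, produces a potential $\Phi\in\H{q-1,1}{}(\rdimom)$ with $\ed\Phi=\widehat E$ in $\rdimom$, chosen continuously dependent on $\widehat E$ and with compact support (standard: solve $\Delta$, use Riesz potentials / the explicit homotopy operator on a ball containing all supports). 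Restricting to $\om$ gives $\PotP_{\ed,\emptyset}^{q,0}E:=\Phi|_{\om}$ with $\ed\PotP_{\ed,\emptyset}^{q,0}E=\widehat E|_{\om}$. The subtle point is that I need $\widehat E|_{\om}=E$, not merely $\ed\widehat E|_{\om}=\ed E=0$; this is where the orthogonality assumption $E\perp\Harm{q}{\emptyset,\ga,\id}(\om)$ enters. I expect this to be the main obstacle: one must verify that the correction $\widetilde E-\widehat E$, which is closed ($\ed$-free) on $\rdimom$ and hence $\ed$ of something, restricts on $\om$ to an element of $\ed\H{q-1,0}{\emptyset}(\om)\subset\H{q,0}{\emptyset,0}(\ed,\om)$ that is $\L{2}(\om)$-orthogonal to the Dirichlet/Neumann forms, so that by the Helmholtz decomposition of Lemma \ref{helmom} the original $E$ and $\widehat E|_{\om}$ lie in the same one-dimensional-quotient-free summand $\ed\H{q-1,0}{\emptyset}(\ed,\om)=\H{q,0}{\emptyset,0}(\ed,\om)\cap\Harm{q}{\emptyset,\ga,\id}(\om)^{\bot}$; combined with matching $\ed$-images this forces equality. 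An alternative, cleaner route is to not insist $\widehat E|_{\om}=E$ but instead define $\PotP$ directly on the range and absorb the harmonic-field ambiguity into the choice of potential — but tracking the boundary behaviour ($\H{q-1}{}(\cd,\om)$ vs.\ $\H{q-1}{\emptyset}(\cd,\om)$) is what the ``$\emptyset$'' subscript is about, and since there is genuinely no boundary condition here, $\H{q-1,1}{}(\rdimom)|_{\om}\subset\H{q-1,1}{\emptyset}(\om)=\H{q-1,1}{\emptyset,0}(\cd,\om)$ trivially, so the target spaces in the three displayed identities are immediate once the potential is constructed.

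Finally, for the displayed chain of equalities: the inclusion $\ed\H{q-1,1}{\emptyset,0}(\cd,\om)\subset\ed\H{q-1,1}{\emptyset}(\om)\subset\ed\H{q-1,0}{\emptyset}(\cd,\om)\subset\H{q,0}{\emptyset,0}(\ed,\om)$ is by the complex property and $\H{q,1}{}\subset\H{q,0}{}$, and containment in $\Harm{q}{\emptyset,\ga,\id}(\om)^{\bot_{\L{q,2}{}(\om)}}$ follows from Lemma \ref{helmom} (first Helmholtz decomposition, with $\eps=\id$, $\gat=\emptyset$, $\gan=\ga$: $\ed\H{q-1,0}{\emptyset}(\ed,\om)\perp\eps^{-1}\H{q,0}{\ga,0}(\cd,\om)\supset\Harm{q}{\emptyset,\ga,\id}(\om)$). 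The reverse inclusion $\H{q,0}{\emptyset,0}(\ed,\om)\cap\Harm{q}{\emptyset,\ga,\id}(\om)^{\bot}\subset\ed\H{q-1,1}{\emptyset,0}(\cd,\om)$ is exactly what the constructed bounded right inverse $\PotP_{\ed,\emptyset}^{q,0}$ (landing in $\H{q-1,1}{0}(\cd,\rdimom)$, restricted to $\om$) provides. Closedness of these subspaces of $\L{q,2}{}(\om)$ is then automatic from the existence of a bounded right inverse (as in Remark \ref{rem:cptembmaindef2rem}(i)), and continuous dependence on the data is inherited from the boundedness of $\E^{q,0}$, $\pi_{q,\rdimom}$, and the whole-space potential operator. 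I would close by remarking that all constructions are linear and the support-localisation in Lemma \ref{stein} and the whole-space potential keep everything within a fixed large ball, which is needed only to invoke Lemma \ref{regrN} and the compact-support version of the homotopy operator.
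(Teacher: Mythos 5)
First, a remark on provenance: the paper does not prove Lemma \ref{regpoted} itself but quotes it from \cite[Lemma 3.1]{BPS2019a}. The intended technique is nevertheless visible in the induction step of Lemma \ref{highorderregpoted}: one first produces an $\L{2}{}$-potential $A$ \emph{inside $\om$} with $\ed A=E$ (for $k=0$ this comes from the Helmholtz decomposition of Lemma \ref{helmom}, which gives $\H{q,0}{\emptyset,0}(\ed,\om)\cap\Harm{q}{\emptyset,\ga,\id}(\om)^{\bot_{\L{q,2}{}(\om)}}=\ed\H{q-1,0}{\emptyset}(\ed,\om)$ with a continuous choice of potential), then Stein-extends the \emph{potential} $A$, projects the extension with $\pi_{q-1,\rdimom}$ onto the co-closed forms --- which preserves $\ed$ --- and finally invokes Lemma \ref{regrN} to gain one order of regularity for the now co-closed potential.

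Your construction instead extends $E$ itself and projects that extension: you set $\widehat E=\pi_{q,\rdimom}\E E$ and then seek a primitive $\Phi$ with $\ed\Phi=\widehat E$ on $\rdimom$. This step fails. The Stein extension only guarantees $\ed\E E\in\L{q+1,2}{}(\rdimom)$; it does \emph{not} preserve closedness, so $\ed\widehat E=\ed\E E$ is in general nonzero outside $\om$. A form that is not closed has no primitive, so the whole-space exactness you appeal to is not applicable and no such $\Phi$ exists. (The side claim that $\widehat E\in\H{q,1}{}(\rdimom)$ implies $\ed\widehat E\in\H{q+1,1}{}(\rdimom)$ is also off by one order.) Your fallback --- that the orthogonality to $\Harm{q}{\emptyset,\ga,\id}(\om)$ forces $\widehat E|_{\om}=E$ --- does not work either: both $E$ and $\widehat E|_{\om}$ have vanishing $\ed$ in $\om$, so ``matching $\ed$-images'' carries no information, and distinct elements of $\ed\H{q-1,0}{\emptyset}(\ed,\om)$ with the same zero derivative certainly exist. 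The genuinely missing ingredient is the solvability of $\ed A=E$ \emph{inside the Lipschitz domain} $\om$, i.e.\ Lemma \ref{helmom}; once $A$ is in hand, extending and projecting $A$ rather than $E$ makes the rest of your machinery (Lemma \ref{stein}, Lemma \ref{helmrN}, Lemma \ref{regrN}) go through essentially as you describe, and your closing arguments for the chain of equalities, closedness, and continuous dependence are then fine.
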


\section{De Rham Complex}
\label{sec:derham}

In this section we shall apply the FA-ToolBox from
Section \ref{sec:FA} to the de Rham complex.

\subsection{Zero Order De Rham Complex}
\label{sec:derhamlow}

Let the exterior derivatives
be realised as densely defined (unbounded) linear operators
$$\mr{\ed}_{\gat}^{q}:D(\mr{\ed}_{\gat}^{q})\subset\L{q,2}{}(\om)\to\L{q+1,2}{}(\om);
\,E\mapsto\ed E,\qquad
D(\mr{\ed}_{\gat}^{q}):=\C{q,\infty}{\gat}(\om),\qquad
q=0,\dots,\dimom-1,$$
satisfying the complex properties
$$\mr{\ed}_{\gat}^{q}\mr{\ed}_{\gat}^{q-1}\subset0.$$
Then the closures $\ed_{\gat}^{q}:=\ol{\mr{\ed}_{\gat}^{q}}$
and Hilbert space adjoints $(\ed_{\gat}^{q})^{*}=(\mr{\ed}_{\gat}^{q})^{*}$ are given by
$$\ed_{\gat}^{q}:D(\ed_{\gat}^{q})\subset\L{q,2}{}(\om)\to\L{q+1,2}{}(\om);
\,E\mapsto\ed E,\qquad
D(\ed_{\gat}^{q})=\H{q,0}{\gat}(\ed,\om),$$
and
$$(\ed_{\gat}^{q})^{*}=-\cd_{\gan}^{q+1}:D(\cd_{\gan}^{q+1})\subset\L{q+1,2}{}(\om)\to\L{q,2}{}(\om);
\,H\mapsto-\cd H,\qquad
D(\cd_{\gan}^{q+1})=\H{q+1,0}{\gan}(\cd,\om),$$
where indeed $D(\cd_{\gan}^{q+1})=\H{q+1,0}{\gan}(\cd,\om)$ 
holds by Lemma \ref{lem:wsbcderham}, cf.~\cite[Section 5.2]{BPS2019a},
(weak and strong boundary conditions coincide).

\begin{rem}
\label{rem:weakeqstrongderham}
Note that by definition the adjoints are given by 
$$(\ed_{\gat}^{q})^{*}=(\mr{\ed}_{\gat}^{q})^{*}=-\bs{\cd}_{\gan}^{q+1}:
D(\bs{\cd}_{\gan}^{q+1})\subset\L{q+1,2}{}(\om)\to\L{q,2}{}(\om);
\,H\mapsto-\cd H,$$
with $D(\bs{\cd}_{\gan}^{q+1})=\bH{q+1,0}{\gan}(\cd,\om)$.
Lemma \ref{lem:wsbcderham} (weak and strong boundary conditions coincide)
shows indeed 
$D(\bs{\cd}_{\gan}^{q+1})
=\bH{q+1,0}{\gan}(\cd,\om)
=\H{q+1,0}{\gan}(\cd,\om)
=D(\cd_{\gan}^{q+1})$, i.e.,
$\bs{\cd}_{\gan}^{q+1}=\cd_{\gan}^{q+1}$.
\end{rem}

By definition the densely defined and closed (unbounded) linear operators
$$\A_{q}:=\ed_{\gat}^{q},\qquad
\A_{q}^{*}=-\cd_{\gan}^{q+1},\qquad
q=0,\dots,\dimom-1,$$
define dual pairs 
$\big(\ed_{\gat}^{q},(\ed_{\gat}^{q})^{*}\big)=(\ed_{\gat}^{q},-\cd_{\gan}^{q+1})$.
Remark \ref{remhilcom} and Remark \ref{remhilcomclosure} show the complex properties 
$R(\ed_{\gat}^{q-1})\subset N(\ed_{\gat}^{q})$ and
$R(\cd_{\gan}^{q+1})\subset N(\cd_{\gan}^{q})$, i.e.,
the complex properties 
$$\ed_{\gat}^{q}\ed_{\gat}^{q-1}\subset0,\qquad
\cd_{\gan}^{q}\cd_{\gan}^{q+1}\subset0.$$
Note that with $\A_{0}=\ed_{\gat}^{0}$ 
and $\A_{\dimom-1}^{*}=(\ed_{\gat}^{\dimom-1})^{*}=-\cd_{\gan}^{\dimom}$ as well as
$$\A_{-1}:=\iota_{N(\A_{0})},\quad
\A_{-1}^{*}=\pi_{N(\A_{0})},\qquad
\A_{\dimom}^{*}:=\iota_{N(\A_{\dimom-1}^{*})},\quad
\A_{\dimom}=\pi_{N(\A_{\dimom-1}^{*})}$$
(actually, $\A_{-1}\A_{-1}^{*}=\pi_{N(\A_{0})}$ and 
$\A_{\dimom}^{*}\A_{\dimom}=\pi_{N(\A_{\dimom-1}^{*})}$,
cf.~Remark \ref{firstadjoints})
we have
$$N(\A_{0})=N(\ed_{\gat}^{0})=\reals_{\gat},\qquad
N(\A_{\dimom-1}^{*})=N(\cd_{\gan}^{\dimom})=*\reals_{\gan},\qquad
\reals_{\Sigma}
:=\begin{cases}
\reals&\text{if }\Sigma=\emptyset,\\
\{0\}&\text{otherwise},
\end{cases}$$
and that the long (here even longer) primal and dual de Rham Hilbert complex \eqref{lhcomplex2} reads
\begin{equation*}
\def\arrowlength{15ex}
\def\arrowdistance{.8}
\begin{tikzcd}[column sep=\arrowlength]
\reals_{\gat}
\arrow[r, rightarrow, shift left=\arrowdistance, "\iota_{\reals_{\gat}}"] 
\arrow[r, leftarrow, shift right=\arrowdistance, "\pi_{\reals_{\gat}}"']
& 
[-3em]
\L{0,2}{}(\om) 
\ar[r, rightarrow, shift left=\arrowdistance, "\ed_{\gat}^{0}"] 
\ar[r, leftarrow, shift right=\arrowdistance, "-\cd_{\gan}^{1}"']
&
[-3em]
\L{1,2}{}(\om) 
\ar[r, rightarrow, shift left=\arrowdistance, "\ed_{\gat}^{1}"] 
\ar[r, leftarrow, shift right=\arrowdistance, "-\cd_{\gan}^{2}"']
& 
[-3em]
\L{2,2}{}(\om) 
\arrow[r, rightarrow, shift left=\arrowdistance, "\cdots"] 
\arrow[r, leftarrow, shift right=\arrowdistance, "\cdots"']
& 
[-4em]
\cdots
\end{tikzcd}
\end{equation*}
\begin{equation}
\label{derhamcomplex1}
\def\arrowlength{15ex}
\def\arrowdistance{.8}
\begin{tikzcd}[column sep=\arrowlength]
\cdots
\arrow[r, rightarrow, shift left=\arrowdistance, "\cdots"] 
\arrow[r, leftarrow, shift right=\arrowdistance, "\cdots"']
& 
[-4em]
\L{q-1,2}{}(\om) 
\ar[r, rightarrow, shift left=\arrowdistance, "\ed_{\gat}^{q-1}"] 
\ar[r, leftarrow, shift right=\arrowdistance, "-\cd_{\gan}^{q}"']
&
[-2em]
\L{q,2}{}(\om) 
\ar[r, rightarrow, shift left=\arrowdistance, "\ed_{\gat}^{q}"] 
\ar[r, leftarrow, shift right=\arrowdistance, "-\cd_{\gan}^{q+1}"']
& 
[-2em]
\L{q+1,2}{}(\om) 
\arrow[r, rightarrow, shift left=\arrowdistance, "\cdots"] 
\arrow[r, leftarrow, shift right=\arrowdistance, "\cdots"']
& 
[-4em]
\cdots
\end{tikzcd}
\end{equation}
\begin{equation*}
\def\arrowlength{15ex}
\def\arrowdistance{.8}
\begin{tikzcd}[column sep=\arrowlength]
\cdots
\arrow[r, rightarrow, shift left=\arrowdistance, "\cdots"] 
\arrow[r, leftarrow, shift right=\arrowdistance, "\cdots"']
& 
[-4em]
\L{\dimom-2,2}{}(\om) 
\ar[r, rightarrow, shift left=\arrowdistance, "\ed_{\gat}^{\dimom-2}"] 
\ar[r, leftarrow, shift right=\arrowdistance, "-\cd_{\gan}^{\dimom-1}"']
&
[-2em]
\L{\dimom-1,2}{}(\om) 
\ar[r, rightarrow, shift left=\arrowdistance, "\ed_{\gat}^{\dimom-1}"] 
\ar[r, leftarrow, shift right=\arrowdistance, "-\cd_{\gan}^{\dimom}"']
&
[-2em]
\L{\dimom,2}{}(\om) 
\arrow[r, rightarrow, shift left=\arrowdistance, "\pi_{*\reals_{\gan}}"] 
\arrow[r, leftarrow, shift right=\arrowdistance, "\iota_{*\reals_{\gan}}"']
&
[-2em]
*\reals_{\gan}
\end{tikzcd}
\end{equation*}
with the complex properties
$$R(\ed_{\gat}^{q-1})\subset N(\ed_{\gat}^{q}),\qquad
R(\cd_{\gan}^{q+1})\subset N(\cd_{\gan}^{q}),\qquad
q=1,\dots,\dimom-1,$$
and
\begin{align*}
R(\iota_{\reals_{\gat}})&=N(\ed_{\gat}^{0})=\reals_{\gat},
&
\ol{R(\ed_{\gat}^{\dimom-1})}&=N(\pi_{*\reals_{\gan}})=(*\reals_{\gan})^{\bot_{\L{\dimom,2}{}(\om)}},\\
\ol{R(\cd_{\gan}^{1})}&=N(\pi_{\reals_{\gat}})=(\reals_{\gat})^{\bot_{\L{0,2}{}(\om)}},
&
R(\iota_{*\reals_{\gan}})&=N(\cd_{\gan}^{\dimom})=*\reals_{\gan}.
\end{align*}
We emphasise that the definition of the Dirichlet/Neumann forms \eqref{dfdirneudef}
is consistent with the definition of the cohomology groups
$N_{q-1,q}=N(\A_{q})\cap N(\A_{q-1}^{*})$ as long as $1\leq q\leq\dimom-1$. 
For $q=0$ and $q=\dimom$ we have the deviations
\begin{align*}
\{0\}
=N_{-1,0}
&\subset N(\A_{0})
=\H{0}{\gat,0}(\ed,\om)
=\Harm{0}{\gat,\gan,\eps}(\om)
=\reals_{\gat},\\
\{0\}
=N_{\dimom-1,\dimom}
&\subset N(\A_{\dimom-1}^{*})
=\eps^{-1}\H{\dimom}{\gan,0}(\cd,\om)
=\Harm{\dimom}{\gat,\gan,\eps}(\om)
=\eps^{-1}*\reals_{\gan},
\end{align*}
cf. \eqref{cohomspecial},
which is intended and usefull for latter formulations.

\subsection{Higher Order De Rham Complex}
\label{sec:derhamhigh}

Similar to \eqref{derhamcomplex1}
we can also investigate the higher Sobolev order primal de Rham complex
\begin{equation*}
\def\arrowlength{5ex}
\def\arrowdistance{0}
\begin{tikzcd}[column sep=\arrowlength]
\cdots 
\arrow[r, rightarrow, shift left=\arrowdistance, "\cdots"] 
& 
\H{q-1,k}{\gat}(\om)
\ar[r, rightarrow, shift left=\arrowdistance, "\ed^{q-1,k}_{\gat}"] 
& 
[1em]
\H{q,k}{\gat}(\om)
\arrow[r, rightarrow, shift left=\arrowdistance, "\ed^{q,k}_{\gat}"] 
& 
\H{q+1,k}{\gat}(\om)
\arrow[r, rightarrow, shift left=\arrowdistance, "\cdots"] 
&
\cdots
\end{tikzcd}
\end{equation*}
together with its \emph{formal} adjoint, the higher Sobolev order dual de Rham complex
\begin{equation*}
\def\arrowlength{5ex}
\def\arrowdistance{0}
\begin{tikzcd}[column sep=\arrowlength]
\cdots 
\arrow[r, rightarrow, shift left=\arrowdistance, "\cdots"] 
& 
\H{q-1,k}{\gan}(\om)
\ar[r, leftarrow, shift left=\arrowdistance, "-\cd^{q,k}_{\gan}"] 
& 
[1em]
\H{q,k}{\gan}(\om)
\arrow[r, leftarrow, shift left=\arrowdistance, "-\cd^{q+1,k}_{\gan}"] 
& 
\H{q+1,k}{\gan}(\om)
\arrow[r, leftarrow, shift left=\arrowdistance, "\cdots"] 
&
\cdots.
\end{tikzcd}
\end{equation*}
More precisely, we consider
\begin{align*}
\ed_{\gat}^{q,k}:D(\ed_{\gat}^{q,k})\subset\H{q,k}{\gat}(\om)&\to\H{q+1,k}{\gat}(\om);
\,E\mapsto\ed E,
&
D(\ed_{\gat}^{q,k})&:=\H{q,k}{\gat}(\ed,\om),
\intertext{with formal adjoints}
-\cd_{\gan}^{q+1,k}:D(\cd_{\gan}^{q+1,k})\subset\H{q+1,k}{\gan}(\om)&\to\H{q,k}{\gan}(\om);
\,H\mapsto-\cd H,
&
D(\cd_{\gan}^{q+1,k})&:=\H{q+1,k}{\gan}(\cd,\om).
\end{align*}
Note that $\ed_{\gat}^{q,k}$ and $\cd_{\gan}^{q+1,k}$ are densely defined and closed as, e.g.,
$$\C{q,\infty}{\gat}(\om)
\subset\H{q,k}{\gat}(\ed,\om)
\subset\H{q,k}{\gat}(\om)
=\ol{\C{q,\infty}{\gat}(\om)}^{\H{q,k}{}(\om)},$$
and that indeed the complex properties 
$R(\ed_{\gat}^{q-1,k})\subset N(\ed_{\gat}^{q,k})$ and
$R(\cd_{\gan}^{q+1,k})\subset N(\cd_{\gan}^{q,k})$ 
hold.

Unfortunately, the respectively adjoints
\begin{align*}
(\ed_{\gat}^{q,k})^{*}:D\big((\ed_{\gat}^{q,k})^{*}\big)\subset\H{q+1,k}{\gat}(\om)
&\to\H{q,k}{\gat}(\om),\\
-(\cd_{\gan}^{q+1,k})^{*}:D\big((\cd_{\gan}^{q+1,k})^{*}\big)\subset\H{q,k}{\gan}(\om)
&\to\H{q+1,k}{\gan}(\om)
\end{align*}
are hard to compute.
Therefore, only some parts of the FA-ToolBox from Section \ref{sec:FA} 
apply to the higher order de Rham complex,
and a few results have to proved in a less general setting.

Note that for $E\in D(\ed_{\gat}^{q,k})$ and for 
$H\in D(\cd_{\ga}^{q+1,k})\subset\H{q+1,k}{\gat}(\cd,\om)\cap\H{q+1,k}{\gan}(\cd,\om)$
we have
\begin{align*}
\scp{\ed E}{H}_{\H{q+1,k}{\gat}(\om)}
&=\sum_{|\alpha|\leq k}^{}\scp{\p^{\alpha}\ed E}{\p^{\alpha}H}_{\L{q+1,2}{}(\om)}
=-\sum_{|\alpha|\leq k}^{}\scp{\p^{\alpha}E}{\p^{\alpha}\cd H}_{\L{q,2}{}(\om)}
=-\scp{E}{\cd H}_{\H{q,k}{\gat}(\om)}
\end{align*}
by Lemma \ref{lem:edpalpha}.

\begin{rem}[higher order adjoints for the de Rham complex]
\label{rem:adjhighorderderham}
It holds $-\cd_{\ga}^{q+1,k}\subset(\ed_{\gat}^{q,k})^{*}$
and $-\ed_{\ga}^{q-1,k}\subset(\cd_{\gan}^{q,k})^{*}$, i.e.,
\begin{align*}
D(\cd_{\ga}^{q+1,k})&\subset D\big((\ed_{\gat}^{q,k})^{*}\big)
&&\text{and}&
(\ed_{\gat}^{q,k})^{*}|_{D(\cd_{\ga}^{q+1,k})}&=-\cd_{\ga}^{q+1,k},\\
D(\ed_{\ga}^{q-1,k})&\subset D\big((\cd_{\gan}^{q,k})^{*}\big)
&&\text{and}&
(\cd_{\gan}^{q,k})^{*}|_{D(\ed_{\ga}^{q-1,k})}&=-\ed_{\ga}^{q-1,k}.
\end{align*}
Note that, here, we identify $-\cd_{\ga}^{q+1,k}$ with 
$-\cd_{\ga}^{q+1,k}:D(\cd_{\ga}^{q+1,k})\subset\H{q+1,k}{\gat}(\om)\to\H{q,k}{\gat}(\om)$,
which is not densely defined. The same holds for $-\ed_{\ga}^{q-1,k}$.
\end{rem}

\subsection{Regular Potentials Without Boundary Conditions}

The next lemma generalises Lemma \ref{regpoted} and
ensures the existence of regular $\H{q,k}{\emptyset}(\om)$-potentials 
without boundary conditions for strong Lipschitz domains. 

\begin{lem}[regular potential for $\ed$ without boundary condition]
\label{highorderregpoted}
Let $\om\subset\rdimom$ be a bounded strong Lipschitz domain and let $k\geq0$
and $q\in\{1,\dots,\dimom\}$.
Then there exists a bounded linear regular potential operator
$$\PotP_{\ed,\emptyset}^{q,k}:\H{q,k}{\emptyset,0}(\ed,\om)\cap\Harm{q}{\emptyset,\ga,\id}(\om)^{\bot_{\L{q,2}{}(\om)}}
\To\H{q-1,k+1}{0}(\cd,\rdimom),$$ 
such that $\ed\PotP_{\ed,\emptyset}^{q,k}=\id|_{\H{q,k}{\emptyset,0}(\ed,\om)\cap\Harm{q}{\emptyset,\ga,\id}(\om)^{\bot_{\L{q,2}{}(\om)}}}$, i.e.,
for all $E\in\H{q,k}{\emptyset,0}(\ed,\om)\cap\Harm{q}{\emptyset,\ga,\id}(\om)^{\bot_{\L{q,2}{}(\om)}}$ 
$$\ed\PotP_{\ed,\emptyset}^{q,k}E=E\quad\text{in }\om.$$
In particular, the bounded regular potential representations
$$R(\ed^{q-1,k}_{\emptyset})
=\H{q,k}{\emptyset,0}(\ed,\om)\cap\Harm{q}{\emptyset,\ga,\id}(\om)^{\bot_{\L{q,2}{}(\om)}}
=\ed\H{q-1,k}{\emptyset}(\ed,\om)
=\ed\H{q-1,k+1}{\emptyset}(\om)
=\ed\H{q-1,k+1}{\emptyset,0}(\cd,\om)$$ 
hold and the potentials can be chosen such that they depend continuously on the data.
Especially, these are closed subspaces of $\H{q,k}{\emptyset}(\om)=\H{q,k}{}(\om)$ 
and $\PotP_{\ed,\emptyset}^{q,k}$ is a right inverse to $\ed$.
By a simple cut-off technique $\PotP_{\ed,\emptyset}^{q,k}$ may be modified to 
$$\PotP_{\ed,\emptyset}^{q,k}:
\H{q,k}{\emptyset,0}(\ed,\om)\cap\Harm{q}{\emptyset,\ga,\id}(\om)^{\bot_{\L{q,2}{}(\om)}}
\To\H{q-1,k+1}{}(\cd,\rdimom)$$ 
such that $\PotP_{\ed,\emptyset}^{q,k}E$ has a fixed compact support in $\rdimom$ 
for all $E\in\H{q,k}{\emptyset,0}(\ed,\om)\cap\Harm{q}{\emptyset,\ga,\id}(\om)^{\bot_{\L{q,2}{}(\om)}}$.
\end{lem}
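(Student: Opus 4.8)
The plan is to reduce the assertion to the already established case $k=0$, namely Lemma \ref{regpoted}, and to produce the extra Sobolev order by transporting the construction to the whole space $\rdimom$, where the universal Stein extension (Lemma \ref{stein}), the Helmholtz projection $\pi_{q,\rdimom}$ (Lemma \ref{helmrN}), and the whole-space regularity result (Lemma \ref{regrN}) are available. Since Lemma \ref{regpoted} already takes care of the topology of $\om$ and of the orthogonality to $\Harm{q}{\emptyset,\ga,\id}(\om)$, for $k\geq1$ only the gain of one order has to be organised; the case $q=\dimom$ is immediate because then $\ed$ of a $q$-form vanishes identically.

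Concretely, given $E\in\H{q,k}{\emptyset,0}(\ed,\om)\cap\Harm{q}{\emptyset,\ga,\id}(\om)^{\bot_{\L{q,2}{}(\om)}}$, I would first put $\widehat E:=\E^{q,k}E\in\H{q,k}{}(\ed,\rdimom)$, with fixed compact support and $\widehat E=E$ in $\om$, so that the ``extension error'' $\ed\widehat E\in\H{q+1,k}{}(\rdimom)$ is compactly supported, closed, and vanishes in $\om$ because $\ed E=0$. The decisive step is to \emph{correct} this error by a bounded linear operation producing $G\in\H{q,k+1}{}(\rdimom)$ with compact support, $\ed G=\ed\widehat E$, and $G=0$ in $\om$. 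Granting such $G$, the form $F:=\widehat E-G\in\H{q,k}{}(\rdimom)$ is compactly supported, closed on all of $\rdimom$, and satisfies $F=E$ in $\om$; a compactly supported closed $q$-form on $\rdimom$ possesses a compactly supported potential, so one obtains (boundedly) $A_{0}\in\H{q-1}{}(\ed,\rdimom)$ with $\ed A_{0}=F$, and $A:=\pi_{q-1,\rdimom}A_{0}$ is co-closed with $\ed A=F$ and $\norm{A}_{\H{q-1}{}(\ed,\rdimom)}\leq c\norm{F}_{\L{q,2}{}(\rdimom)}$. Since $\ed A=F\in\H{q,k}{}(\rdimom)$ and $\cd A=0$, Lemma \ref{regrN} upgrades $A$ to $\H{q-1,k+1}{0}(\cd,\rdimom)$ with $\norm{A}_{\H{q-1,k+1}{}(\rdimom)}\leq c\norm{E}_{\H{q,k}{}(\om)}$. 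Setting $\PotP_{\ed,\emptyset}^{q,k}:=A$ then yields $\ed\PotP_{\ed,\emptyset}^{q,k}E=E$ in $\om$ and the boundedness, and the chain of range identities follows since $R(\ed^{q-1,k}_{\emptyset})\subset\H{q,k}{\emptyset,0}(\ed,\om)\cap\Harm{q}{\emptyset,\ga,\id}(\om)^{\bot_{\L{q,2}{}(\om)}}$ is immediate from the complex property and Lemma \ref{helmom}, while the operator just built, together with the trivial inclusions $\ed\H{q-1,k+1}{\emptyset,0}(\cd,\om)\subset\ed\H{q-1,k+1}{\emptyset}(\om)\subset\ed\H{q-1,k}{\emptyset}(\ed,\om)$, closes the circle. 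Finally, multiplying $\PotP_{\ed,\emptyset}^{q,k}E$ by a fixed cut-off function equal to $1$ near $\ol\om$ does not change $\ed$ in $\om$ and, since this perturbs $\cd A=0$ only by a zeroth-order term in $A\in\H{q-1,k+1}{}(\rdimom)$, keeps the result in $\H{q-1,k+1}{}(\cd,\rdimom)$ with fixed compact support.

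The hard part is precisely the error-correction step: constructing $G$ with $\ed G=\ed\widehat E$, $G=0$ in $\om$, $G\in\H{q,k+1}{}(\rdimom)$ of compact support, and depending boundedly on $E$. Here I would work in a large ball $B\supset\mathrm{supp}\,\widehat E$, regard $\ed\widehat E$ as a closed, compactly supported, $\H{q+1,k}{}$-regular $(q+1)$-form that vanishes on $\om$, and combine the Helmholtz decomposition of Lemma \ref{helmom} on $B$ with the regular potential of Lemma \ref{regpoted} to write it as $\ed$ of a compactly supported potential; the subtle points are (a) arranging this potential to vanish in $\om$ — exploiting that the error is supported in $\rdimom\setminus\om$ and that its periods vanish by Stokes — and (b) recovering the full order $k+1$ rather than the single order supplied by Lemma \ref{regpoted}, by passing to the co-closed representative and invoking Lemma \ref{regrN} once more, while ensuring that these projections do not reintroduce a contribution on $\om$. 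Reconciling the whole-space regularity gain with the constraint that nothing be altered on $\om$ — so that the final potential has $\ed$ equal to $E$ on all of $\om$ and not merely up to a closed remainder — is where the topology of $\om$ and the interplay between the local domain and the ambient space genuinely enter.
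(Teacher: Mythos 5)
There is a genuine gap, and it sits exactly where you locate it yourself: the ``error-correction'' step. Your construction needs a form $G\in\H{q,k+1}{}(\rdimom)$ with $\ed G=\ed\widehat E$ \emph{and} $G=0$ in $\om$, depending boundedly on $E$. This is not a technicality one can defer: if $G$ is merely closed in $\om$ rather than identically zero there, then $F=\widehat E-G$ no longer equals $E$ in $\om$ and the final potential $A$ satisfies $\ed A=F\neq E$ on $\om$, so the whole argument collapses. But producing such a $G$ is essentially a regular potential problem \emph{with a boundary condition} (vanishing on $\om$, i.e.\ a potential on the complement with homogeneous conditions on $\p\om$), which is precisely the harder statement the paper only proves later (Lemma \ref{highorderregpotedpbc}) under extendability/topological-triviality hypotheses and via partitions of unity; for a general bounded strong Lipschitz $\om$ the complement need not cooperate. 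Moreover, the tools you propose for recovering the order $k+1$ --- the Helmholtz projection $\pi_{q,\rdimom}$ and Lemma \ref{regrN} --- are nonlocal, so after projecting to the co-closed representative you lose the property $G=0$ in $\om$; your own caveat ``while ensuring that these projections do not reintroduce a contribution on $\om$'' is exactly the unresolved obstruction. (As a side remark, the case $q=\dimom$ is not ``immediate'': the hypothesis $\ed E=0$ is vacuous there, but one still has to construct the $(\dimom-1)$-form potential.)

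The paper avoids all of this by a different and much lighter mechanism: induction on $k$, extending the \emph{potential} rather than the field. Given $E$ at order $k$, the induction hypothesis supplies $\PotP_{\ed,\emptyset}^{q,k-1}E\in\H{q-1,k}{}(\om)$ with $\ed\PotP_{\ed,\emptyset}^{q,k-1}E=E\in\H{q,k}{}(\om)$, hence $\PotP_{\ed,\emptyset}^{q,k-1}E\in\H{q-1,k}{}(\ed,\om)$. Applying the Stein extension of Lemma \ref{stein} to this potential, then the projection $\pi_{q-1,\rdimom}$ of Lemma \ref{helmrN} (which leaves $\ed$ unchanged everywhere, in particular in $\om$), and finally the whole-space regularity of Lemma \ref{regrN} to the resulting co-closed form yields the $\H{q-1,k+1}{0}(\cd,\rdimom)$ potential with the right bound. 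No correction of the extension error is needed because one only ever requires $\ed$ of the potential to equal $E$ \emph{inside} $\om$. I would recommend restructuring your argument along these lines; as written, the proposal does not constitute a proof.
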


\begin{proof}
Lemma \ref{regpoted} shows the assertions for $k=0$ and $\PotP_{\ed,\emptyset}^{q,0}$.
Moreover, the inclusions
$$\ed\H{q-1,k+1}{\emptyset,0}(\cd,\om)
\subset\ed\H{q-1,k+1}{\emptyset}(\om)
\subset\ed\H{q-1,k}{\emptyset}(\ed,\om)
\subset\H{q,k}{\emptyset,0}(\ed,\om)\cap\Harm{q}{\emptyset,\ga,\id}(\om)^{\bot_{\L{q,2}{}(\om)}}$$
hold. Suppose 
$E\in\H{q,k}{\emptyset,0}(\ed,\om)\cap\Harm{q}{\emptyset,\ga,\id}(\om)^{\bot_{\L{q,2}{}(\om)}}$,
$k\geq1$.
Then $E\in\H{q,k-1}{\emptyset,0}(\ed,\om)\cap\Harm{q}{\emptyset,\ga,\id}(\om)^{\bot_{\L{q,2}{}(\om)}}$.
By assumption of induction there exists $\PotP_{\ed,\emptyset}^{q,k-1}E\in\H{q-1,k}{\emptyset}(\om)$ 
with $\ed\PotP_{\ed,\emptyset}^{q,k-1}E=E$ in $\om$ and 
$$\norm{\PotP_{\ed,\emptyset}^{q,k-1}E}_{\H{q-1,k}{}(\om)}
\leq c\norm{E}_{\H{q,k-1}{}(\om)}.$$
Hence $\PotP_{\ed,\emptyset}^{q,k-1}E\in\H{q-1,k}{\emptyset}(\ed,\om)$
and by Lemma \ref{stein} we have
$\E\PotP_{\ed,\emptyset}^{q,k-1}E\in\H{q-1,k}{}(\ed,\rdimom)$ with compact support and 
$$\norm{\E\PotP_{\ed,\emptyset}^{q,k-1}E}_{\H{q-1,k}{}(\ed,\rdimom)}
\leq c\norm{\PotP_{\ed,\emptyset}^{q,k-1}E}_{\H{q-1,k}{}(\ed,\om)}
\leq c\big(\norm{\PotP_{\ed,\emptyset}^{q,k-1}E}_{\H{q-1,k}{}(\om)}
+\norm{E}_{\H{q,k}{}(\om)}\big).$$
Using Lemma \ref{helmrN} we obtain a uniquely determined
$$\PotP_{\ed,\emptyset}^{q,k}E:=\pi_{q-1,\rdimom}\E\PotP_{\ed,\emptyset}^{q,k-1}E
\in\H{q-1,0}{}(\ed,\rdimom)\cap\H{q-1,0}{0}(\cd,\rdimom)$$
with $\ed\PotP_{\ed,\emptyset}^{q,k}E=\ed\E\PotP_{\ed,\emptyset}^{q,k-1}E\in\H{q,k}{}(\rdimom)$.
Lemma \ref{regrN} shows 
$\PotP_{\ed,\emptyset}^{q,k}E\in\H{q-1,k+1}{}(\rdimom)$ with
\begin{align*}
\norm{\PotP_{\ed,\emptyset}^{q,k}E}_{\H{q-1,k+1}{}(\rdimom)}
\leq c\big(\norm{\PotP_{\ed,\emptyset}^{q,k}E}_{\L{q-1,2}{}(\rdimom)}
+\norm{\ed\E\PotP_{\ed,\emptyset}^{q,k-1}E}_{\H{q,k}{}(\rdimom)}\big)
\leq c\norm{\E\PotP_{\ed,\emptyset}^{q,k-1}E}_{\H{q-1,k}{}(\ed,\rdimom)}.
\end{align*}
Finally, $\PotP_{\ed,\emptyset}^{q,k}E\in\H{q-1,k+1}{0}(\cd,\rdimom)$ meets our needs as it holds
$\norm{\PotP_{\ed,\emptyset}^{q,k}E}_{\H{q-1,k+1}{}(\rdimom)}
\leq c\norm{E}_{\H{q,k}{}(\om)}$
and $\ed\PotP_{\ed,\emptyset}^{q,k}E=\ed\E\PotP_{\ed,\emptyset}^{q,k-1}E=\ed\PotP_{\ed,\emptyset}^{q,k-1}E=E$ in $\om$.
\end{proof}

By Hodge $\star$-duality we get a corresponding result for the $\cd$-operator, cf.~Lemma \ref{highorderregpotcd}.

\subsection{Regular Potentials and Decompositions With Boundary Conditions}

Now we construct regular $\H{q,k}{}(\om)$-potentials with (partial) boundary conditions. 
Recall the definitions of Section \ref{sec:domains} for the different
assumptions on the domain $\om\subset\rdimom$.

\subsubsection{Extendable Domains}

\begin{lem}[regular potential for $\ed$ with partial boundary condition for extendable domains]
\label{highorderregpotedpbc}
Let $(\om,\gat)$ be an extendable bounded strong Lipschitz pair
and let $1\leq q\leq\dimom-1$ as well as $k\geq0$. 
Then there exists a bounded linear regular potential operator
$$\PotP_{\ed,\gat}^{q,k}:\bH{q,k}{\gat,0}(\ed,\om)
\To\H{q-1,k+1}{}(\rdimom)\cap\H{q-1,k+1}{\gat}(\om),$$
such that $\ed\PotP_{\ed,\gat}^{q,k}=\id|_{\bH{q,k}{\gat,0}(\ed,\om)}$, i.e.,
for all $E\in\bH{q,k}{\gat,0}(\ed,\om)$
$$\ed\PotP_{\ed,\gat}^{q,k}E=E\quad\text{in }\om.$$
In particular, the bounded regular potential representation
$$\bH{q,k}{\gat,0}(\ed,\om)
=\H{q,k}{\gat,0}(\ed,\om)
=\ed\H{q-1,k+1}{\gat}(\om)
=\ed\H{q-1,k}{\gat}(\ed,\om)
=R(\ed^{q-1,k}_{\gat})$$
holds and the potentials can be chosen such that they depend continuously on the data.
Especially, these spaces are closed subspaces of 
$\H{q,k}{\emptyset}(\om)=\H{q,k}{}(\om)$ and $\PotP_{\ed,\gat}^{q,k}$ is a right inverse to $\ed$.
Without loss of generality, 
$\PotP_{\ed,\gat}^{q,k}$ maps to forms with a fixed compact support in $\rdimom$.

The results extend literally to the case $q=\dimom$ if $\gat\neq\ga$
and the case $q=0$ is trivial since $\bH{0,k}{\gat,0}(\ed,\om)=\reals_{\gat}$. 
In the special case $q=\dimom$ and $\gat=\ga$ the results still remain valid if
$$\bH{\dimom,k}{\ga,0}(\ed,\om)
=\bH{\dimom,k}{\ga}(\om),\qquad
\H{\dimom,k}{\ga,0}(\ed,\om)
=\H{\dimom,k}{\ga}(\om)$$
are replaced by the slightly smaller spaces
$$\bH{\dimom,k}{\ga}(\om)\cap(*\,\reals)^{\bot_{\L{\dimom,2}{}(\om)}},\qquad
\H{\dimom,k}{\ga}(\om)\cap(*\,\reals)^{\bot_{\L{\dimom,2}{}(\om)}},$$
respectively.
\end{lem}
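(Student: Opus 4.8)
The plan is to build $\PotP_{\ed,\gat}^{q,k}$ from the boundary-condition-free potential of Lemma~\ref{highorderregpoted} by passing to the extension of $\om$ through $\gat$. By extendability fix a topologically trivial bounded strong Lipschitz domain $\widehat\om$ extending $\om$ through $\gat$, and put $\widetilde\om:=\text{int}(\ol\om\cup\ol{\widehat\om})$, again topologically trivial and strong Lipschitz, with $\gan\subset\p\widetilde\om$. For $E\in\bH{q,k}{\gat,0}(\ed,\om)$ let $\widetilde E$ be the extension of $E$ by zero through $\gat$, i.e.\ $\widetilde E=E$ in $\om$ and $\widetilde E=0$ in $\widehat\om$. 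Since $\gan\subset\p\widetilde\om$, every smooth test $q$-form with compact support in the open set $\widetilde\om$ restricts on $\om$ to an element of $\C{q,\infty}{\gan}(\om)$; feeding this into the weak boundary condition defining $\bH{q,k}{\gat,0}(\ed,\om)$ and integrating by parts shows that $\p^{\alpha}\widetilde E$ is the zero extension of $\p^{\alpha}E$ for all $|\alpha|\leq k$ and that $\ed\widetilde E=0$. Hence $\widetilde E\in\H{q,k}{\emptyset,0}(\ed,\widetilde\om)$ with $\norm{\widetilde E}_{\H{q,k}{}(\widetilde\om)}=\norm{E}_{\H{q,k}{}(\om)}$.

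As $\widetilde\om$ is topologically trivial we have $\Harm{q}{\emptyset,\ga,\id}(\widetilde\om)=\{0\}$ for $1\leq q\leq\dimom-1$, so Lemma~\ref{highorderregpoted} applied on $\widetilde\om$ yields $\widetilde A\in\H{q-1,k+1}{}(\rdimom)$ with fixed compact support, $\ed\widetilde A=\widetilde E$ in $\widetilde\om$, and $\norm{\widetilde A}_{\H{q-1,k+1}{}(\rdimom)}\leq c\norm{E}_{\H{q,k}{}(\om)}$. Restricting $\widetilde A$ to $\om$ already gives a regular potential of $E$, but it need not satisfy the boundary condition on $\gat$; to repair this we exploit $\ed\widetilde A=\widetilde E=0$ in $\widehat\om$. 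For $2\leq q\leq\dimom$ we apply Lemma~\ref{highorderregpoted} on the topologically trivial domain $\widehat\om$ (where $\Harm{q-1}{\emptyset,\ga,\id}(\widehat\om)=\{0\}$ since $1\leq q-1\leq\dimom-1$) to the closed form $\widetilde A|_{\widehat\om}$, obtaining $\widehat C\in\H{q-2,k+2}{}(\rdimom)$ with compact support and $\ed\widehat C=\widetilde A$ in $\widehat\om$, and set $A:=\widetilde A-\ed\widehat C$. For $q=1$ the closed $0$-form $\widetilde A|_{\widehat\om}$ equals the constant $c_{\widehat\om}:=|\widehat\om|^{-1}\int_{\widehat\om}\widetilde A$ on the connected set $\widehat\om$ (with $|c_{\widehat\om}|\leq c\norm{E}_{\H{q,k}{}(\om)}$), and we put $A:=\widetilde A-\chi\,c_{\widehat\om}$ with a fixed cut-off $\chi$ equal to $1$ on $\ol{\widetilde\om}$. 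In all cases $A\in\H{q-1,k+1}{}(\rdimom)$ has fixed compact support, $\ed A=\widetilde E$ in $\widetilde\om$ (so $\ed A=E$ in $\om$), and $A=0$ in $\widehat\om$.

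It remains to check that $A|_{\om}\in\H{q-1,k+1}{\gat}(\om)$. Since $A|_{\widetilde\om}\in\H{q-1,k+1}{}(\widetilde\om)$ is exactly the zero extension of $A|_{\om}$ through $\gat$, this is the implication ``zero-extendable across $\gat$ $\Rightarrow$ strong boundary condition'', which is proved by translating $A|_{\om}$ a little way into $\om$ and mollifying: in the extendable situation $\gat$ is, in suitable local coordinates, a Lipschitz graph with $\om$ above it and $\widehat\om$ below, so, using $A=0$ in $\widehat\om$, the translates are smooth, vanish near $\gat$, and converge to $A|_{\om}$ in $\H{q-1,k+1}{}(\om)$; this is at the same time the self-contained proof of the coincidence of weak and strong boundary conditions referred to in the statement. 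Defining $\PotP_{\ed,\gat}^{q,k}E:=A|_{\om}$ therefore gives a bounded linear operator into $\H{q-1,k+1}{}(\rdimom)\cap\H{q-1,k+1}{\gat}(\om)$ (a composition of the bounded maps above) with $\ed\PotP_{\ed,\gat}^{q,k}E=E$ in $\om$; the cut-off used for $q=1$ also enforces the fixed-compact-support normalisation in general. The claimed chain of identities then closes via the trivial inclusions
$$\bH{q,k}{\gat,0}(\ed,\om)=\ed\H{q-1,k+1}{\gat}(\om)\subset\ed\H{q-1,k}{\gat}(\ed,\om)=R(\ed^{q-1,k}_{\gat})\subset\H{q,k}{\gat,0}(\ed,\om)\subset\bH{q,k}{\gat,0}(\ed,\om),$$
so all these spaces coincide; being defined by $\H{q,k}{}(\om)$-continuous linear conditions, they form a closed subspace of $\H{q,k}{}(\om)$. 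The case $q=0$ is trivial ($\bH{0,k}{\gat,0}(\ed,\om)=\reals_{\gat}$, potential $\equiv0$), the case $q=\dimom$ with $\gat\neq\ga$ is covered verbatim, and for $q=\dimom$ with $\gat=\ga$ one additionally projects out the one-dimensional $\Harm{\dimom}{\emptyset,\ga,\id}(\widetilde\om)=*\reals$, which is possible precisely because the hypothesis then reads $E\in(*\reals)^{\bot_{\L{\dimom,2}{}(\om)}}$, i.e.\ $\int_{\om}E=0$; the rest of the argument is unchanged.

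The step I expect to be the main obstacle is this last one: showing that $A|_{\om}$ genuinely lies in the \emph{strong} space $\H{q-1,k+1}{\gat}(\om)$ (equivalently, the nontrivial half ``weak $\subset$ strong'' of Lemma~\ref{lem:wsbcderham}), where the strong Lipschitz structure along $\gat$ is essential. The remaining steps are a routine composition of the already available whole-space and no-boundary-condition tools.
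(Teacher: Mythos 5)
Your construction is essentially the paper's own proof: extend $E$ by zero through $\gat$, apply the boundary-condition-free potential operator of Lemma \ref{highorderregpoted} once on $\widetilde{\om}$ and once on $\widehat{\om}$, subtract the resulting exact form so that the potential vanishes on $\widehat{\om}$, and deduce the strong boundary condition on $\gat$ from that vanishing; your separate treatment of $q=1$ (where the second application would call for a potential of a $0$-form, outside the range $q\in\{1,\dots,\dimom\}$ of Lemma \ref{highorderregpoted}) is a welcome refinement that the paper's proof glosses over. The only imprecision is in the peripheral case $q=\dimom$, $\gat=\ga$: the obstruction is not $\Harm{\dimom}{\emptyset,\ga,\id}(\widetilde{\om})$, which is $\{0\}$ because the full boundary carries the $\cd$-condition there, but rather the nontrivial cohomology space $\Harm{\dimom-1}{\emptyset,\p\widehat{\om},\id}(\widehat{\om})$ of the shell-like $\widehat{\om}$ met in the second application of Lemma \ref{highorderregpoted}; since the resulting compatibility condition is still exactly the orthogonality of $E$ to $*\,\reals$, your conclusion stands.
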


\begin{proof}
The case $\gat=\emptyset$ is done in Lemma \ref{highorderregpoted}. 
For $\gat\neq\emptyset$, suppose $E\in\bH{q,k}{\gat,0}(\ed,\om)$ and define 
$\widetilde{E}\in\L{q,2}{}(\widetilde{\om})$
as extension of $E$ by zero to $\widehat{\om}$.
By definition we see $\widetilde{E}\in\H{q,k}{\emptyset,0}(\ed,\widetilde{\om})$.
Since $\widetilde{\om}$ is bounded, strong Lipschitz,
and topologically trivial, in particular $\Harm{q}{\emptyset,\widetilde{\ga},\id}(\widetilde{\om})=\{0\}$,
Lemma \ref{highorderregpoted} yields a regular potential 
$\PotP_{\ed,\emptyset}^{q,k}\widetilde{E}\in\H{q-1,k+1}{0}(\cd,\rdimom)\subset\H{q-1,k+1}{}(\rdimom)$
with $\ed\PotP_{\ed,\emptyset}^{q,k}\widetilde{E}=\widetilde{E}$ in $\widetilde{\om}$ and 
$$c\norm{\PotP_{\ed,\emptyset}^{q,k}\widetilde{E}}_{\H{q-1,k+1}{}(\rdimom)}
\leq\norm{\widetilde{E}}_{\H{q,k}{}(\widetilde{\om})}
=\norm{E}_{\H{q,k}{}(\om)}.$$
Let $\iota_{\widehat{\om}}$ denote the restriction to $\widehat{\om}$.
Then $\iota_{\widehat{\om}}\PotP_{\ed,\emptyset}^{q,k}\widetilde{E}\in\H{q-1,k+1}{\emptyset}(\widehat{\om})$ 
and $\ed\iota_{\widehat{\om}}\PotP_{\ed,\emptyset}^{q,k}\widetilde{E}=\iota_{\widehat{\om}}\widetilde{E}=0$ in $\widehat{\om}$,
i.e., $\iota_{\widehat{\om}}\PotP_{\ed,\emptyset}^{q,k}\widetilde{E}\in\H{q-1,k+1}{\emptyset,0}(\ed,\widehat\om)$.
Using Lemma \ref{highorderregpoted} again, this time in $\widehat{\om}$,
which is bounded, strong Lipschitz, and topologically trivial as well,
we obtain $\PotP_{\ed,\emptyset}^{q-1,k+1}\iota_{\widehat{\om}}\PotP_{\ed,\emptyset}^{q,k}\widetilde{E}\in\H{q-2,k+2}{}(\rdimom)$ with 
$\ed\PotP_{\ed,\emptyset}^{q-1,k+1}\iota_{\widehat{\om}}\PotP_{\ed,\emptyset}^{q,k}\widetilde{E}
=\iota_{\widehat{\om}}\PotP_{\ed,\emptyset}^{q,k}\widetilde{E}$ in $\widehat{\om}$ and 
$$\norm{\PotP_{\ed,\emptyset}^{q-1,k+1}\iota_{\widehat{\om}}\PotP_{\ed,\emptyset}^{q,k}\widetilde{E}}_{\H{q-2,k+2}{}(\rdimom)}
\leq c\norm{\PotP_{\ed,\emptyset}^{q,k}\widetilde{E}}_{\H{q-1,k+1}{}(\widehat{\om})}.$$
Then
$$\Abb{\PotP_{\ed,\gat}^{q,k}}{\H{q,k}{\gat,0}(\ed,\om)}{\H{q-1,k+1}{}(\rdimom)}
{E}{\PotP_{\ed,\emptyset}^{q,k}\widetilde{E}-\ed(\PotP_{\ed,\emptyset}^{q-1,k+1}\iota_{\widehat{\om}}\PotP_{\ed,\emptyset}^{q,k}\widetilde{E})}$$
is linear and bounded since
$$\norm{\PotP_{\ed,\gat}^{q,k}E}_{\H{q-1,k+1}{}(\rdimom)}
\leq\norm{\PotP_{\ed,\emptyset}^{q,k}\widetilde{E}}_{\H{q-1,k+1}{}(\rdimom)}
+\norm{\PotP_{\ed,\emptyset}^{q-1,k+1}\iota_{\widehat{\om}}\PotP_{\ed,\emptyset}^{q,k}\widetilde{E}}_{\H{q-2,k+2}{}(\rdimom)}
\leq c\norm{E}_{\H{q,k}{}(\om)}.$$
Since $\PotP_{\ed,\gat}^{q,k}E=0$ in $\widehat{\om}$, we obtain by standard arguments for Sobolev spaces
$\PotP_{\ed,\gat}^{q,k}E\in\H{q-1,k+1}{\gat}(\om)$, 
cf.~\cite[Lemma 2.14]{BPS2019a}
\big(weak and strong boundary conditions coincide for $\H{q,k}{}(\om)$\big).
Moreover, it holds $\ed\PotP_{\ed,\gat}^{q,k}E=\ed\PotP_{\ed,\emptyset}^{q,k}\widetilde{E}=\widetilde{E}$ in $\widetilde\om$,
in particular, $\ed\PotP_{\ed,\gat}^{q,k}E=E$ in $\om$.
Finally,
$$\ed\H{q-1,k+1}{\gat}(\om)
\subset\ed\H{q-1,k}{\gat}(\ed,\om)
\subset\H{q,k}{\gat,0}(\ed,\om)
\subset\bH{q,k}{\gat,0}(\ed,\om)
\subset\ed\H{q-1,k+1}{\gat}(\om),$$
completing the proof of the main part.
In the special case $q=\dimom$ and $\gat=\ga$ 
we also have to take care of the constant $\dimom$-forms in $*\,\reals$.
\end{proof}

Hodge $\star$-duality yields a corresponding result 
for the $\cd$-operator, cf.~Lemma \ref{highorderregpotdecocdpbc} (i).

\begin{lem}[regular decompositions for $\ed$ with partial boundary condition for extendable domains]
\label{highorderregdecoedpbc}
Let $(\om,\gat)$ be an extendable bounded strong Lipschitz pair and let $k\geq0$.
Then the bounded regular decompositions
\begin{align*}
\bH{q,k}{\gat}(\ed,\om)
=\H{q,k}{\gat}(\ed,\om)
&=\H{q,k+1}{\gat}(\om)+\ed\H{q-1,k+1}{\gat}(\om)\\
&=\PotQ_{\ed,\gat,1}^{q,k}\H{q,k}{\gat}(\ed,\om)
\dotplus\ed\PotQ_{\ed,\gat,0}^{q,k}\H{q,k}{\gat}(\ed,\om)\\
&=\PotQ_{\ed,\gat,1}^{q,k}\H{q,k}{\gat}(\ed,\om)
\dotplus\ed\H{q-1,k+1}{\gat}(\om)\\
&=\PotQ_{\ed,\gat,1}^{q,k}\H{q,k}{\gat}(\ed,\om)
\dotplus\H{q,k}{\gat,0}(\ed,\om)
\end{align*}
hold with bounded linear regular decomposition operators
\begin{align*}
\PotQ_{\ed,\gat,1}^{q,k}:=\PotP_{\ed,\gat}^{q+1,k}\ed:\H{q,k}{\gat}(\ed,\om)
&\to\H{q,k+1}{\gat}(\om),\\
\PotQ_{\ed,\gat,0}^{q,k}:=\PotP_{\ed,\gat}^{q,k}(1-\PotP_{\ed,\gat}^{q+1,k}\ed):\H{q,k}{\gat}(\ed,\om)
&\to\H{q-1,k+1}{\gat}(\om).
\end{align*}
More precisely,
it holds $\bH{q,k}{\gat}(\ed,\om)=\H{q,k}{\gat}(\ed,\om)$ and
$\PotQ_{\ed,\gat,1}^{q,k}+\ed\PotQ_{\ed,\gat,0}^{q,k}
=\id|_{\H{q,k}{\gat}(\ed,\om)}$, i.e.,
$$E=\PotQ_{\ed,\gat,1}^{q,k}E+\ed\PotQ_{\ed,\gat,0}^{q,k}E
\in\H{q,k+1}{\gat}(\om)+\ed\H{q-1,k+1}{\gat}(\om)$$
for all $E\in\H{q,k}{\gat}(\ed,\om)$.
Moreover, it holds $\ed\PotQ_{\ed,\gat,1}^{q,k}=\ed^{q,k}_{\gat}$
and thus $\H{q,k}{\gat,0}(\ed,\om)$ is invariant under $\PotQ_{\ed,\gat,1}^{q,k}$.
Note that for the ranges  
$\PotQ_{\ed,\gat,1}^{q,k}\H{q,k}{\gat}(\ed,\om)
=R(\PotQ_{\ed,\gat,1}^{q,k})
=R(\PotP_{\ed,\gat}^{q+1,k})$
as well as
$\PotQ_{\ed,\gat,0}^{q,k}\H{q,k}{\gat}(\ed,\om)
=R(\PotQ_{\ed,\gat,0}^{q,k})
=R(\PotP_{\ed,\gat}^{q,k})$ hold.
\end{lem}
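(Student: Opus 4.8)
The plan is to build the two decomposition operators directly from the boundary-conditioned regular potential operators of Lemma~\ref{highorderregpotedpbc}, used once in rank $q+1$ and once in rank $q$. Given $E\in\H{q,k}{\gat}(\ed,\om)$, I would first note that $\ed E\in\H{q+1,k}{\gat}(\om)$ with $\ed\ed E=0$, so $\ed E\in\H{q+1,k}{\gat,0}(\ed,\om)=\bH{q+1,k}{\gat,0}(\ed,\om)$, and Lemma~\ref{highorderregpotedpbc} (applied in rank $q+1$) yields $E_{1}:=\PotP_{\ed,\gat}^{q+1,k}\ed E=\PotQ_{\ed,\gat,1}^{q,k}E\in\H{q,k+1}{\gat}(\om)$ with $\ed E_{1}=\ed E$ and $\norm{E_{1}}_{\H{q,k+1}{}(\om)}\leq c\norm{\ed E}_{\H{q+1,k}{}(\om)}\leq c\norm{E}_{\H{q,k}{}(\ed,\om)}$. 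Then $\widetilde E:=E-E_{1}=(1-\PotP_{\ed,\gat}^{q+1,k}\ed)E\in\H{q,k}{\gat}(\om)$ satisfies $\ed\widetilde E=0$, i.e.\ $\widetilde E\in\H{q,k}{\gat,0}(\ed,\om)=\bH{q,k}{\gat,0}(\ed,\om)$, so a second application of Lemma~\ref{highorderregpotedpbc}, now in rank $q$, gives $z:=\PotP_{\ed,\gat}^{q,k}\widetilde E=\PotQ_{\ed,\gat,0}^{q,k}E\in\H{q-1,k+1}{\gat}(\om)$ with $\ed z=\widetilde E$ and $\norm{z}_{\H{q-1,k+1}{}(\om)}\leq c\norm{E}_{\H{q,k}{}(\ed,\om)}$. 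Boundedness of $\PotQ_{\ed,\gat,1}^{q,k}$ and $\PotQ_{\ed,\gat,0}^{q,k}$ is then automatic, being compositions of the bounded maps $\ed$, $\PotP_{\ed,\gat}^{q+1,k}$, and $\PotP_{\ed,\gat}^{q,k}$, and their ranges lie in the stated spaces.

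Next I would read off the structural assertions. By construction $E=E_{1}+\ed z$, which is precisely $\PotQ_{\ed,\gat,1}^{q,k}+\ed\PotQ_{\ed,\gat,0}^{q,k}=\id|_{\H{q,k}{\gat}(\ed,\om)}$ and gives the decomposition $\H{q,k}{\gat}(\ed,\om)=\H{q,k+1}{\gat}(\om)+\ed\H{q-1,k+1}{\gat}(\om)$. Since $\ed\PotP_{\ed,\gat}^{q+1,k}$ is the identity on its domain, $\ed\PotQ_{\ed,\gat,1}^{q,k}E=\ed E$ for every $E$, i.e.\ $\ed\PotQ_{\ed,\gat,1}^{q,k}=\ed_{\gat}^{q,k}$, and therefore $\H{q,k}{\gat,0}(\ed,\om)$ is invariant under $\PotQ_{\ed,\gat,1}^{q,k}$. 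For the directness, if $x\in R(\PotQ_{\ed,\gat,1}^{q,k})\cap\ed\H{q-1,k+1}{\gat}(\om)$ then $\ed x=0$ (being an exterior derivative), while $x=\PotP_{\ed,\gat}^{q+1,k}\ed\varphi$ for some $\varphi\in\H{q,k}{\gat}(\ed,\om)$ forces $\ed\varphi=\ed x=0$, hence $x=\PotP_{\ed,\gat}^{q+1,k}0=0$; this gives $\H{q,k}{\gat}(\ed,\om)=R(\PotQ_{\ed,\gat,1}^{q,k})\dotplus\ed\H{q-1,k+1}{\gat}(\om)$.

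The range identities come straight from Lemma~\ref{highorderregpotedpbc}: $\ed$ maps $\H{q,k}{\gat}(\ed,\om)$ onto $\H{q+1,k}{\gat,0}(\ed,\om)=R(\ed_{\gat}^{q,k})$, which is the full domain of $\PotP_{\ed,\gat}^{q+1,k}$, so $R(\PotQ_{\ed,\gat,1}^{q,k})=R(\PotP_{\ed,\gat}^{q+1,k})$; and $1-\PotP_{\ed,\gat}^{q+1,k}\ed$ maps $\H{q,k}{\gat}(\ed,\om)$ onto $\H{q,k}{\gat,0}(\ed,\om)$ (the image lies there by the computation of $\widetilde E$ above, and any closed form is fixed by this operator), whence $R(\PotQ_{\ed,\gat,0}^{q,k})=R(\PotP_{\ed,\gat}^{q,k})$ and $\ed\PotQ_{\ed,\gat,0}^{q,k}\H{q,k}{\gat}(\ed,\om)=\ed R(\PotP_{\ed,\gat}^{q,k})=\H{q,k}{\gat,0}(\ed,\om)=\ed\H{q-1,k+1}{\gat}(\om)$, which identifies the last three direct sums in the statement. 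To obtain the opening equality $\bH{q,k}{\gat}(\ed,\om)=\H{q,k}{\gat}(\ed,\om)$ (weak $=$ strong) I would run the same two potential steps on $E\in\bH{q,k}{\gat}(\ed,\om)$: here $\ed E\in\bH{q+1,k}{\gat}(\om)=\H{q+1,k}{\gat}(\om)$ (weak $=$ strong for the plain Sobolev spaces being already available, cf.\ \cite[Lemma 2.14]{BPS2019a}), $\ed\ed E=0$, so $\ed E\in\bH{q+1,k}{\gat,0}(\ed,\om)$ is admissible for $\PotP_{\ed,\gat}^{q+1,k}$, and $E=\PotP_{\ed,\gat}^{q+1,k}\ed E+\ed\PotP_{\ed,\gat}^{q,k}(E-\PotP_{\ed,\gat}^{q+1,k}\ed E)$ exhibits $E$ as a sum of an element of $\H{q,k+1}{\gat}(\om)$ and one of $\ed\H{q-1,k+1}{\gat}(\om)$, both inside $\H{q,k}{\gat}(\om)$, so $E\in\H{q,k}{\gat}(\ed,\om)$.

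There is no real analytic obstacle here: the whole lemma is bookkeeping on top of Lemma~\ref{highorderregpotedpbc}. The points that genuinely need care are the two domain memberships ($\ed E$ closed, so $\PotP_{\ed,\gat}^{q+1,k}$ applies; $E-\PotP_{\ed,\gat}^{q+1,k}\ed E$ closed, so $\PotP_{\ed,\gat}^{q,k}$ applies), and the extremal ranks: $q=0$ is trivial since $\H{0,k}{\gat}(\ed,\om)=\H{0,k+1}{\gat}(\om)$, while for $q=\dimom$ one has $\ed_{\gat}^{\dimom,k}=0$ and must invoke the $q=\dimom$ variant of Lemma~\ref{highorderregpotedpbc}, including its $*\,\reals$-correction when $\gat=\ga$ (there one checks that $\ed E$ automatically lies in the smaller space $\H{\dimom,k}{\ga,0}(\ed,\om)\cap(*\,\reals)^{\bot_{\L{\dimom,2}{}(\om)}}$ by Stokes' theorem).
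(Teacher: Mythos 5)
Your proposal is correct and follows essentially the same route as the paper's own proof: apply the boundary-conditioned potential operator of Lemma~\ref{highorderregpotedpbc} once in rank $q+1$ to $\ed E$ and once in rank $q$ to the closed remainder $E-\PotP_{\ed,\gat}^{q+1,k}\ed E$, yielding $E=\PotP_{\ed,\gat}^{q+1,k}\ed E+\ed\PotP_{\ed,\gat}^{q,k}(1-\PotP_{\ed,\gat}^{q+1,k}\ed)E$, with the same directness argument ($\ed H=\ed E=0$ forces $H=\PotP_{\ed,\gat}^{q+1,k}\ed E=0$). The only cosmetic difference is that the paper runs the construction directly on $E\in\bH{q,k}{\gat}(\ed,\om)$, so that weak $=$ strong falls out in one pass rather than as a separate second run.
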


The proof follows by Corollary \ref{cor:regpotdeco2} and Lemma \ref{highorderregpotedpbc}.
For convenience, we give a self-contained proof here. 

\begin{proof}
Let $E\in\bH{q,k}{\gat}(\ed,\om)$. 
Then $\ed E\in\bH{q+1,k}{\gat,0}(\ed,\om)$
and we see 
$\PotP_{\ed,\gat}^{q+1,k}\ed E\in\H{q,k+1}{\gat}(\om)$ 
with $\ed\PotP_{\ed,\gat}^{q+1,k}\ed E=\ed E$ by Lemma \ref{highorderregpotedpbc}.
Thus $E-\PotP_{\ed,\gat}^{q+1,k}\ed E\in\bH{q,k}{\gat,0}(\ed,\om)=\ed\H{q-1,k+1}{\gat}(\om)$ 
and $\PotP_{\ed,\gat}^{q,k}(E-\PotP_{\ed,\gat}^{q+1,k}\ed E)\in\H{q-1,k+1}{\gat}(\om)$
with $\ed\PotP_{\ed,\gat}^{q,k}(E-\PotP_{\ed,\gat}^{q+1,k}\ed E)=E-\PotP_{\ed,\gat}^{q+1,k}\ed E$
by Lemma \ref{highorderregpotedpbc}. This yields
$$E=\PotP_{\ed,\gat}^{q+1,k}\ed E+\ed\PotP_{\ed,\gat}^{q,k}(1-\PotP_{\ed,\gat}^{q+1,k}\ed)E
\in\H{q,k+1}{\gat}(\om)+\ed\H{q-1,k+1}{\gat}(\om)
\subset\H{q,k}{\gat}(\ed,\om),$$
which proves the regular decompositions and 
also the assertions about the bounded linear regular decomposition operators.
To show the directness of the sums, let
$H=\PotP_{\ed,\gat}^{q+1,k}\ed E\in\H{q,0}{\gat,0}(\ed,\om)$
with some $E\in\H{q,k}{\gat}(\ed,\om)$.
Then $0=\ed H=\ed E$ as $\ed E\in\H{q+1,k}{\gat,0}(\ed,\om)$
and thus $H=0$.
\end{proof}

Again, by Hodge $\star$-duality we get a corresponding result 
for the $\cd$-operator, cf.~Lemma \ref{highorderregpotdecocdpbc} (ii).

\subsubsection{General Lipschitz Domains}

\begin{lem}[regular decompositions for $\ed$ with partial boundary condition]
\label{lem:highorderregdecoedpbcLip}
Let $(\om,\gat)$ be a bounded strong Lipschitz pair and let $k\geq0$. 
Then the bounded regular decompositions
$$\bH{q,k}{\gat}(\ed,\om)
=\H{q,k}{\gat}(\ed,\om)
=\H{q,k+1}{\gat}(\om)
+\ed\H{q-1,k+1}{\gat}(\om)$$
hold with bounded linear regular decomposition operators
\begin{align*}
\PotQ_{\ed,\gat,1}^{q,k}:\H{q,k}{\gat}(\ed,\om)\to\H{q,k+1}{\gat}(\om),\qquad
\PotQ_{\ed,\gat,0}^{q,k}:\H{q,k}{\gat}(\ed,\om)\to\H{q-1,k+1}{\gat}(\om)
\end{align*}
satisfying $\PotQ_{\ed,\gat,1}^{q,k}+\ed\PotQ_{\ed,\gat,0}^{q,k}=\id_{\H{q,k}{\gat}(\ed,\om)}$.
In particular, weak and strong boundary conditions coincide.
Moreover, it holds $\ed\PotQ_{\ed,\gat,1}^{q,k}=\ed^{q,k}_{\gat}$
and thus $\H{q,k}{\gat,0}(\ed,\om)$ is invariant under $\PotQ_{\ed,\gat,1}^{q,k}$.
\end{lem}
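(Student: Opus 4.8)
The plan is to deduce the general strong Lipschitz case from the extendable case already settled in Lemma \ref{highorderregdecoedpbc}, by localising with a subordinate partition of unity. The structural point that makes this work without any loss of Sobolev order is that multiplication by a smooth cut-off function is only a zeroth order perturbation of $\ed$, since $\ed(\varphi E)=\ed\varphi\wedge E+\varphi\,\ed E$.

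First I would apply Lemma \ref{extdom} to fix a finite decomposition of the pair $(\om,\gat)$ into extendable bounded strong Lipschitz pairs $(\om_{\ell},\ga_{\!t,\ell})$, $\ell=1,\dots,L$, with a subordinate partition of unity consisting of smooth functions $\varphi_{\ell}$ on $\rdimom$ with $\sum_{\ell}\varphi_{\ell}=1$ near $\ol\om$, arranged so that on $\om_{\ell}\subset\om$ the field $\varphi_{\ell}E$ vanishes near the ``artificial'' part of $\partial\om_{\ell}$ lying in the interior of $\om$, this part being incorporated into $\ga_{\!t,\ell}$. Given $E\in\bH{q,k}{\gat}(\ed,\om)$, write $E=\sum_{\ell}\varphi_{\ell}E$. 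Using $\ed(\varphi_{\ell}E)=\ed\varphi_{\ell}\wedge E+\varphi_{\ell}\,\ed E$ together with the stability of the Sobolev spaces and of the (weak) boundary condition under multiplication by $\varphi_{\ell}$ and wedging with the smooth form $\ed\varphi_{\ell}$, one obtains $\varphi_{\ell}E\in\bH{q,k}{\ga_{\!t,\ell}}(\ed,\om_{\ell})=\H{q,k}{\ga_{\!t,\ell}}(\ed,\om_{\ell})$, the last equality being part of Lemma \ref{highorderregdecoedpbc}.

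Next I would invoke Lemma \ref{highorderregdecoedpbc} on each extendable pair to split $\varphi_{\ell}E=r_{\ell}+\ed z_{\ell}$ with $r_{\ell}\in\H{q,k+1}{\ga_{\!t,\ell}}(\om_{\ell})$ and $z_{\ell}\in\H{q-1,k+1}{\ga_{\!t,\ell}}(\om_{\ell})$, depending boundedly and linearly on $\varphi_{\ell}E$ and hence on $E$, and with $\ed r_{\ell}=\ed(\varphi_{\ell}E)$. Since $r_{\ell}$ and $z_{\ell}$ carry the boundary condition on $\ga_{\!t,\ell}$, in particular on the artificial interior piece of $\partial\om_{\ell}$, a standard gluing lemma for Sobolev spaces lets me extend them by zero across that piece to $\widetilde r_{\ell}\in\H{q,k+1}{\gat}(\om)$ and $\widetilde z_{\ell}\in\H{q-1,k+1}{\gat}(\om)$, with $\ed\widetilde r_{\ell}=\widetilde{\ed r_{\ell}}$ and $\ed\widetilde z_{\ell}=\widetilde{\ed z_{\ell}}$. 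Setting $\PotQ_{\ed,\gat,1}^{q,k}E:=\sum_{\ell}\widetilde r_{\ell}$ and $\PotQ_{\ed,\gat,0}^{q,k}E:=\sum_{\ell}\widetilde z_{\ell}$ defines bounded linear operators $\H{q,k}{\gat}(\ed,\om)\to\H{q,k+1}{\gat}(\om)$ and $\H{q,k}{\gat}(\ed,\om)\to\H{q-1,k+1}{\gat}(\om)$ with $\PotQ_{\ed,\gat,1}^{q,k}+\ed\PotQ_{\ed,\gat,0}^{q,k}=\id_{\H{q,k}{\gat}(\ed,\om)}$, the asserted bounded regular decomposition. Summing $\ed\widetilde r_{\ell}=\widetilde{\ed r_{\ell}}=\ed(\varphi_{\ell}E)$ over $\ell$ gives $\ed\PotQ_{\ed,\gat,1}^{q,k}=\ed^{q,k}_{\gat}$, so $\H{q,k}{\gat,0}(\ed,\om)$ is invariant under $\PotQ_{\ed,\gat,1}^{q,k}$. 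The coincidence of weak and strong boundary conditions then falls out: the construction started from an arbitrary $E\in\bH{q,k}{\gat}(\ed,\om)$ and represented it as $\PotQ_{\ed,\gat,1}^{q,k}E+\ed\PotQ_{\ed,\gat,0}^{q,k}E$ with $\PotQ_{\ed,\gat,1}^{q,k}E\in\H{q,k+1}{\gat}(\om)\subset\H{q,k}{\gat}(\ed,\om)$ and $\ed\PotQ_{\ed,\gat,0}^{q,k}E\in\H{q,k}{\gat,0}(\ed,\om)$, so $E\in\H{q,k}{\gat}(\ed,\om)$; together with the trivial inclusion ``strong $\subset$ weak'' this yields $\bH{q,k}{\gat}(\ed,\om)=\H{q,k}{\gat}(\ed,\om)$, and via the Hodge $\star$ also the remaining assertions of Lemma \ref{lem:wsbcderham} and Lemma \ref{lem:wsbcderhamvec}.

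I expect the main obstacle to be the localisation bookkeeping: extracting from Lemma \ref{extdom} that $\varphi_{\ell}E$ genuinely lies in $\H{q,k}{\ga_{\!t,\ell}}(\ed,\om_{\ell})$ with the artificial interior piece of $\partial\om_{\ell}$ correctly absorbed into $\ga_{\!t,\ell}$, together with the complementary gluing step that extension by zero across that piece maps $\H{q,k+1}{\ga_{\!t,\ell}}(\om_{\ell})$ and $\H{q-1,k+1}{\ga_{\!t,\ell}}(\om_{\ell})$ back into the corresponding $\gat$-spaces on $\om$ and commutes with $\ed$. Both are routine within the weak Lipschitz framework of \cite{BPS2016a,BPS2018a,BPS2019a}, but that is where the genuine work sits. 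The minor nuisance of the constant $\dimom$-forms in $*\,\reals$ in the case $q=\dimom$, $\gat=\ga$ is handled exactly as in Lemma \ref{highorderregpotedpbc} and Lemma \ref{highorderregdecoedpbc}.
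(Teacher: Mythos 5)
Your proposal is correct and follows essentially the same route as the paper's proof: Lemma \ref{extdom} provides the decomposition into extendable pairs $(\om_{\ell},\widehat\ga_{\!t,\ell})$ with the artificial interior boundary piece $\Sigma_{\ell}=\p\om_{\ell}\setminus\ga$ absorbed into the boundary-condition part, each localised piece $\chi_{\ell}E$ is decomposed via Lemma \ref{highorderregdecoedpbc}, and the regular terms are extended by zero and summed to give the bounded decomposition operators. The only cosmetic difference is that the paper reads off $\ed\PotQ_{\ed,\gat,1}^{q,k}=\ed^{q,k}_{\gat}$ from the identity $\PotQ_{\ed,\gat,1}^{q,k}+\ed\PotQ_{\ed,\gat,0}^{q,k}=\id$ and the complex property, whereas you obtain it by summing the local identities; both are fine.
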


\begin{proof}
According to Lemma \ref{extdom},
let us introduce a partition of unity $(U_{\ell},\chi_{\ell})$
as in \cite[Section 4.2]{BPS2019a}
or \cite[Section 4.2]{BPS2018a},
such that $(\om_{\ell},\widehat\ga_{t,\ell})$ 
is an extendable bounded strong Lipschitz pair
for all $l=1,\dots,L_{+}$. Using the notations from \cite{BPS2019a} we have 
$$\om_{\ell}=\om\cap U_{\ell},\qquad
\Sigma_{\ell}=\p\om_{\ell}\setminus\ga,\qquad
\ga_{\!t,\ell}=\gat\cap U_{\ell},\qquad
\widehat\ga_{\!t,\ell}=\mathrm{int}(\ga_{\!t,\ell}\cup\ol{\Sigma}_{\ell}).$$
Maybe $U_{0}=\om$ has to be replaced by more neighbourhoods $U_{-L_{-}},\dots,U_{0}$ 
to ensure that all pairs $(\om_{\ell},\widehat\ga_{\!t,\ell})$, $\ell=-L_{-},\dots,L_{+}$,
are topologically trivial. Note that for all ``inner'' indices $\ell=-L_{-},\dots,0$
we have $\om_{\ell}=U_{\ell}$ as well as 
$\widehat\ga_{\!t,\ell}=\Sigma_{\ell}=\p\om_{\ell}=\p U_{\ell}$.

Then for $E\in\bH{q,k}{\gat}(\ed,\om)$ we have
$\chi_{\ell}E\in\bH{q,k}{\widehat\ga_{\!t,\ell}}(\ed,\om_{\ell})
=\H{q,k}{\widehat\ga_{\!t,\ell}}(\ed,\om_{\ell})$
for all $\ell$
and Lemma \ref{highorderregdecoedpbc} shows the bounded regular decompositions
\begin{align*}
\chi_{\ell}E=E_{\ell}+\ed H_{\ell}
\in\H{q,k+1}{\widehat\ga_{\!t,\ell}}(\om_{\ell})
+\ed\H{q-1,k+1}{\widehat\ga_{\!t,\ell}}(\om_{\ell})
\end{align*}
with $E_{\ell}$ and $H_{\ell}$ depending continuously on $\chi_{\ell}E$.
Extending $E_{\ell}$ and $H_{\ell}$ by zero to $\om$
yields forms $\widetilde{E}_{\ell}\in\H{q,k+1}{\gat}(\om)$ 
and $\widetilde{H}_{\ell}\in\H{q-1,k+1}{\gat}(\om)$ as well as the representation 
$$\bH{q,k}{\gat}(\ed,\om)\ni E
=\sum_{\ell}\chi_{k}E=\sum_{\ell}\widetilde{E}_{\ell}+\ed\sum_{\ell}\widetilde{H}_{\ell}
\in\H{q,k+1}{\gat}(\om)+\ed\H{q-1,k+1}{\gat}(\om)
\subset\H{q,k}{\gat}(\ed,\om).$$
As all operations have been linear and continuous we set
\begin{align*}
\PotQ_{\ed,\gat,1}^{q,k}E:=\sum_{\ell}\widetilde{E}_{\ell}\in\H{q,k+1}{\gat}(\om),\qquad
\PotQ_{\ed,\gat,0}^{q,k}E:=\sum_{\ell}\widetilde{H}_{\ell}\in\H{q-1,k+1}{\gat}(\om),
\end{align*}
and obtain the assertions.
\end{proof}

Hodge $\star$-duality shows a corresponding result 
for the $\cd$-operator, cf.~Lemma \ref{app:lem:highorderregdecoedpbcLip}.

\begin{cor}[regular decompositions for $\ed$ with partial boundary condition]
\label{cor:highorderregdecoedpbcLip}
Let $(\om,\gat)$ be a bounded strong Lipschitz pair and let $k\geq0$. 
Then the regular potential representations
\begin{align*}
R(\ed^{q-1,k}_{\gat})
=\ed\H{q-1,k}{\gat}(\ed,\om)
=\ed\H{q-1,k+1}{\gat}(\om)
=\H{q,k}{\gat,0}(\ed,\om)
\cap\Harm{q}{\gat,\gan,\eps}(\om)^{\bot_{\L{q,2}{\eps}(\om)}},\\
R(\cd^{q+1,k}_{\gan})
=\cd\H{q+1,k}{\gan}(\cd,\om)
=\cd\H{q+1,k+1}{\gan}(\om)
=\H{q,k}{\gan,0}(\cd,\om)
\cap\Harm{q}{\gat,\gan,\eps}(\om)^{\bot_{\L{q,2}{}(\om)}}
\end{align*}
hold. In particular, these spaces are closed subspaces of 
$\H{q,k}{\emptyset}(\om)=\H{q,k}{}(\om)$.
\end{cor}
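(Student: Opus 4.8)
The plan is to split the first chain into a purely ``algebraic'' part, coming straight out of the regular decomposition of Lemma~\ref{lem:highorderregdecoedpbcLip}, and an ``analytic'' part, the identification of the range with the harmonic‑orthogonal complement, whose only real input is the zeroth order Helmholtz decomposition of Lemma~\ref{helmom}. For the algebraic part I apply Lemma~\ref{lem:highorderregdecoedpbcLip} at form degree $q-1$ and Sobolev order $k$, i.e.
$$\H{q-1,k}{\gat}(\ed,\om)=\H{q-1,k+1}{\gat}(\om)+\ed\H{q-2,k+1}{\gat}(\om),$$
and apply $\ed$ to both sides; since $\ed\ed=0$ and $\H{q-1,k+1}{\gat}(\om)\subset\H{q-1,k}{\gat}(\ed,\om)$ this gives at once
$$R(\ed^{q-1,k}_{\gat})=\ed\H{q-1,k}{\gat}(\ed,\om)=\ed\H{q-1,k+1}{\gat}(\om).$$
Lemma~\ref{lem:highorderregdecoedpbcLip} (or Lemma~\ref{lem:wsbcderham}) also furnishes $\bH{q,k}{\gat,0}(\ed,\om)=\H{q,k}{\gat,0}(\ed,\om)$, so weak and strong boundary conditions may be used interchangeably below.

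For the inclusion $R(\ed^{q-1,k}_{\gat})\subset\H{q,k}{\gat,0}(\ed,\om)\cap\Harm{q}{\gat,\gan,\eps}(\om)^{\bot_{\L{q,2}{\eps}(\om)}}$ I take $E=\ed u$ with $u\in\H{q-1,k}{\gat}(\ed,\om)$. Then $E\in\H{q,k}{\gat}(\om)$ by definition of $\H{q-1,k}{\gat}(\ed,\om)$ and $\ed E=0$ by the complex property, so $E\in\H{q,k}{\gat,0}(\ed,\om)$. For the orthogonality I use $u\in\H{q-1}{\gat}(\ed,\om)=D(\ed_{\gat}^{q-1})$, and for $h\in\Harm{q}{\gat,\gan,\eps}(\om)$ that $\eps h\in\H{q}{\gan,0}(\cd,\om)\subset\H{q}{\gan}(\cd,\om)=D\big((\ed_{\gat}^{q-1})^{*}\big)$ with $(\ed_{\gat}^{q-1})^{*}=-\cd_{\gan}^{q}$; then by symmetry of $\eps$ and $\cd(\eps h)=0$,
$$\scp{E}{h}_{\L{q,2}{\eps}(\om)}=\scp{E}{\eps h}_{\L{q,2}{}(\om)}=\scp{\ed u}{\eps h}_{\L{q,2}{}(\om)}=-\scp{u}{\cd(\eps h)}_{\L{q-1,2}{}(\om)}=0.$$

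For the reverse inclusion let $E\in\H{q,k}{\gat,0}(\ed,\om)\cap\Harm{q}{\gat,\gan,\eps}(\om)^{\bot_{\L{q,2}{\eps}(\om)}}$. Passing to order $0$, $E\in\H{q}{\gat,0}(\ed,\om)\cap\Harm{q}{\gat,\gan,\eps}(\om)^{\bot_{\L{q,2}{\eps}(\om)}}=\ed\H{q-1,0}{\gat}(\ed,\om)$ by Lemma~\ref{helmom}, so $E=\ed u_{0}$ with $u_{0}\in\H{q-1,0}{\gat}(\ed,\om)$. Now I bootstrap the regularity of the potential: for $0\le j\le k$, if $E=\ed u_{j}$ with $u_{j}\in\H{q-1,j}{\gat}(\ed,\om)$, then $\ed u_{j}=E\in\H{q,k}{\gat}(\om)\subset\H{q,j}{\gat}(\om)$, hence $E\in\ed\H{q-1,j}{\gat}(\ed,\om)=R(\ed^{q-1,j}_{\gat})=\ed\H{q-1,j+1}{\gat}(\om)$ by the algebraic identity at order $j$; choosing $u_{j+1}\in\H{q-1,j+1}{\gat}(\om)$ with $\ed u_{j+1}=E$, automatically $u_{j+1}\in\H{q-1,j+1}{\gat}(\ed,\om)$. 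Iterating from $j=0$ to $j=k$ yields $u_{k+1}\in\H{q-1,k+1}{\gat}(\om)$ with $\ed u_{k+1}=E$, i.e.\ $E\in\ed\H{q-1,k+1}{\gat}(\om)=R(\ed^{q-1,k}_{\gat})$. Closedness in $\H{q,k}{}(\om)$ is then immediate, since $\H{q,k}{\gat,0}(\ed,\om)\cap\Harm{q}{\gat,\gan,\eps}(\om)^{\bot_{\L{q,2}{\eps}(\om)}}$ is the intersection of the $\H{q,k}{}(\om)$‑closed space $\H{q,k}{\gat,0}(\ed,\om)$ with the $\L{q,2}{}(\om)$‑closed space $\Harm{q}{\gat,\gan,\eps}(\om)^{\bot_{\L{q,2}{\eps}(\om)}}$.

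The second chain, for $\cd^{q+1,k}_{\gan}$, follows verbatim by Hodge $\star$‑duality, replacing Lemma~\ref{lem:highorderregdecoedpbcLip} by its co‑derivative counterpart Lemma~\ref{app:lem:highorderregdecoedpbcLip} and using the Helmholtz identity $\cd\H{q+1,0}{\gan}(\cd,\om)=\H{q,0}{\gan,0}(\cd,\om)\cap\Harm{q}{\gat,\gan,\eps}(\om)^{\bot_{\L{q,2}{}(\om)}}$ from Lemma~\ref{helmom}; the boundary degrees $q=\dimom$ (resp.\ $q=0$ for $\cd$) require no change, because the constant forms in $*\,\reals$ are exactly the corresponding harmonic forms and are absorbed by the orthogonality condition, cf.\ the special cases in Lemma~\ref{highorderregpotedpbc}. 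The one genuine obstacle is the reverse inclusion: since the higher order adjoints $(\ed^{q,k}_{\gat})^{*}$ are not explicitly available, one cannot run the cohomology/Helmholtz argument directly at order $k$ and must instead descend to order $0$ via Lemma~\ref{helmom} and then lift the potential back up by the finite bootstrap described above.
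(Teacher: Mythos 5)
Your proposal is correct and follows essentially the same route as the paper: the chain $R(\ed^{q-1,k}_{\gat})=\ed\H{q-1,k}{\gat}(\ed,\om)=\ed\H{q-1,k+1}{\gat}(\om)$ from Lemma~\ref{lem:highorderregdecoedpbcLip}, the identification with $\H{q,0}{\gat,0}(\ed,\om)\cap\Harm{q}{\gat,\gan,\eps}(\om)^{\bot_{\L{q,2}{\eps}(\om)}}$ at order zero via Lemma~\ref{helmom}, and the same finite bootstrap lifting the potential from order $0$ up to order $k+1$, with Hodge $\star$-duality for the $\cd$-chain. Your explicit integration-by-parts verification of the orthogonality to $\Harm{q}{\gat,\gan,\eps}(\om)$ is a small, harmless addition that the paper leaves implicit.
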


\begin{proof}
Lemma \ref{lem:highorderregdecoedpbcLip} yields
\begin{align}
\label{regdecogenLip}
R(\ed^{q-1,k}_{\gat})
=\ed\H{q-1,k}{\gat}(\ed,\om)
=\ed\H{q-1,k+1}{\gat}(\om)
&\subset\H{q,k}{\gat,0}(\ed,\om)
\cap\Harm{q}{\gat,\gan,\eps}(\om)^{\bot_{\L{q,2}{\eps}(\om)}}.
\end{align}
For $k=0$ we get by \eqref{regdecogenLip} and Lemma \ref{helmom}
\begin{align}
\label{hemldecospdeco}
\ed\H{q-1,1}{\gat}(\om)
=\ed\H{q-1,0}{\gat}(\ed,\om)
&=\H{q,0}{\gat,0}(\ed,\om)
\cap\Harm{q}{\gat,\gan,\eps}(\om)^{\bot_{\L{q,2}{\eps}(\om)}}.
\end{align}
Let $E\in\H{q,k}{\gat,0}(\ed,\om)\cap\Harm{q}{\gat,\gan,\eps}(\om)^{\bot_{\L{q,2}{\eps}(\om)}}$.
By \eqref{hemldecospdeco} we observe
$E\in\H{q,k}{\gat}(\om)\cap\ed\H{q-1,1}{\gat}(\om)$, i.e.,
$E=\ed E_{1}\in\H{q,k}{\gat}(\om)$ with $E_{1}\in\H{q-1,1}{\gat}(\om)$. 
Thus $E_{1}\in\H{q-1,1}{\gat}(\ed,\om)$ and $E\in\ed\H{q-1,1}{\gat}(\ed,\om)$. 
By \eqref{regdecogenLip} there is 
$E_{2}\in\H{q-1,2}{\gat}(\om)$ with 
$E=\ed E_{2}\in\ed\H{q-1,k}{\gat}(\om)$, i.e.,
$E_{2}\in\H{q-1,2}{\gat}(\ed,\om)$ as well as $E\in\ed\H{q-1,2}{\gat}(\ed,\om)$. 
After $k$ induction steps we obtain $E\in\ed\H{q-1,k}{\gat}(\ed,\om)$.
Hodge $\star$-duality shows the assertions for $\cd$.
\end{proof}

Note that in Corollary \ref{cor:highorderregdecoedpbcLip} we claim
nothing about bounded regular potential operators, leaving 
the question of bounded potentials to the next sections.

\subsection{Zero Order Mini FA-ToolBox}

We shall apply Theorem \ref{theo:cptembmaintheo1} 
from the FA-ToolBox to the zero order de Rham complex.
In Section \ref{sec:derhamlow} we have seen that 
\begin{align*}
\A_{0}:=\ed_{\gat}^{q-1}:\H{q-1,0}{\gat}(\ed,\om)\subset\L{q-1,2}{}(\om)
&\to\L{q,2}{}(\om),\\
\A_{1}:=\ed_{\gat}^{q}:\H{q,0}{\gat}(\ed,\om)\subset\L{q,2}{}(\om)
&\to\L{q+1,2}{}(\om),\\
\A_{0}^{*}=-\cd_{\gan}^{q}:\H{q,0}{\gan}(\cd,\om)\subset\L{q,2}{}(\om)
&\to\L{q-1,2}{}(\om),\\
\A_{1}^{*}=-\cd_{\gan}^{q+1}:\H{q+1,0}{\gan}(\cd,\om)\subset\L{q+1,2}{}(\om)
&\to\L{q,2}{}(\om)
\end{align*}
are densely defined and closed and
form a Hilbert complex of dual pairs, i.e., 
the long primal and dual Hilbert complex \eqref{derhamcomplex1}.
Recall also \eqref{lhcomplex2} and Definition \ref{defihilcom2}
are well as Remark \ref{remhilcom3}.

Lemma \ref{lem:highorderregdecoedpbcLip} for $k=0$ 
yields the bounded regular decomposition
\begin{align*}
D(\A_{1})
=\H{q,0}{\gat}(\ed,\om)
=\H{q,1}{\gat}(\om)
+\ed\H{q-1,1}{\gat}(\om)
=\H{+}{1}
+\A_{0}\H{+}{0}
\end{align*}
with $\H{+}{1}:=\H{q,1}{\gat}(\om)$ and $\H{+}{0}:=\H{q-1,1}{\gat}(\om)$
and $\H{}{1}:=\L{q,2}{}(\om)$ and $\H{}{0}:=\L{q-1,2}{}(\om)$.
Rellich's selection theorem shows that the assumptions 
of Lemma \ref{lem:cptembmaintheo} (i) and Theorem \ref{theo:cptembmaintheo1} as satisfied.
Note that it holds $D(\ed_{\gat}^{0})=\H{0,1}{\gat}(\om)$ and $D(\cd_{\gan}^{\dimom})=\H{\dimom,1}{\gan}(\om)$.

\begin{theo}[compact embedding for the de Rham complex]
\label{theo:cptemb:derham}
Let $(\om,\gat)$ be a bounded strong Lipschitz pair. 
Then for all $q$ the embedding 
$$D(\A_{1})\cap D(\A_{0}^{*})
=D(\ed_{\gat}^{q})\cap D(\cd_{\gan}^{q})
=\H{q,0}{\gat}(\ed,\om)\cap\H{q,0}{\gan}(\cd,\om)
\incl\L{q,2}{}(\om)$$
is compact. Moreover, the long primal and dual de Rham Hilbert complex \eqref{derhamcomplex1}
is compact. In particular, the complex is closed.
\end{theo}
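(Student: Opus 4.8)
The plan is to combine the abstract compact-embedding machinery from Section~\ref{sec:FA4} with the concrete regular decomposition just established. Specifically, I would invoke Lemma~\ref{lem:cptembmaintheo}~(i) (equivalently Theorem~\ref{theo:cptembmaintheo1}) with the identifications $\A_{0}=\ed_{\gat}^{q-1}$, $\A_{1}=\ed_{\gat}^{q}$ and the Hilbert spaces $\H{}{0}=\L{q-1,2}{}(\om)$, $\H{}{1}=\L{q,2}{}(\om)$, $\H{}{2}=\L{q+1,2}{}(\om)$. These are densely defined, closed, and satisfy the complex property $R(\ed_{\gat}^{q-1})\subset N(\ed_{\gat}^{q})$ by Section~\ref{sec:derhamlow}. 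The bounded regular decomposition required by hypothesis~(i),
$$D(\A_{1})=\H{q,0}{\gat}(\ed,\om)=\H{q,1}{\gat}(\om)+\ed\H{q-1,1}{\gat}(\om)=\H{+}{1}+\A_{0}\H{+}{0},$$
is exactly Lemma~\ref{lem:highorderregdecoedpbcLip} for $k=0$, with regular subspaces $\H{+}{1}:=\H{q,1}{\gat}(\om)$ and $\H{+}{0}:=\H{q-1,1}{\gat}(\om)$ and the bounded linear regular decomposition operators $\PotQ_{\ed,\gat,1}^{q,0}$, $\PotQ_{\ed,\gat,0}^{q,0}$ from that lemma. The missing ingredient for Lemma~\ref{lem:cptembmaintheo} is the compactness of the embeddings $\H{+}{0}\incl\H{}{0}$ and $\H{+}{1}\incl\H{}{1}$, i.e.\ $\H{q-1,1}{\gat}(\om)\incl\L{q-1,2}{}(\om)$ and $\H{q,1}{\gat}(\om)\incl\L{q,2}{}(\om)$; since $\om$ is bounded (strong Lipschitz), these are compact by Rellich's selection theorem applied componentwise to the Sobolev spaces $\H{q,1}{}(\om)\incl\L{q,2}{}(\om)$. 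Lemma~\ref{lem:cptembmaintheo} then delivers compactness of
$$D(\A_{1})\cap D(\A_{0}^{*})=D(\ed_{\gat}^{q})\cap D(\cd_{\gan}^{q})=\H{q,0}{\gat}(\ed,\om)\cap\H{q,0}{\gan}(\cd,\om)\incl\L{q,2}{}(\om),$$
where I use that $\A_{0}^{*}=-\cd_{\gan}^{q}$ has domain $\H{q,0}{\gan}(\cd,\om)$, as recorded in Section~\ref{sec:derhamlow}.

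For the cases $q=0$ and $q=\dimom$ one has to be slightly careful, since then one of the two differential operators degenerates to an embedding of a kernel ($\A_{-1}=\iota_{\reals_{\gat}}$ resp.\ $\A_{\dimom}^{*}=\iota_{*\reals_{\gan}}$). For $q=0$, $D(\A_{0})\cap D(\A_{-1}^{*})=D(\A_{0})=\H{0,1}{\gat}(\om)\incl\L{0,2}{}(\om)$ is compact directly by Rellich, and analogously for $q=\dimom$, $D(\A_{\dimom})\cap D(\A_{\dimom-1}^{*})=D(\cd_{\gan}^{\dimom})=\H{\dimom,1}{\gan}(\om)\incl\L{\dimom,2}{}(\om)$ is compact; this matches the compactness requirements in Definition~\ref{defihilcom2}.

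Having established the compact embeddings $D(\A_{q})\cap D(\A_{q-1}^{*})\incl\L{q,2}{}(\om)$ for every $q$, the closedness of the long primal and dual de Rham Hilbert complex \eqref{derhamcomplex1} follows from Remark~\ref{remhilcom3} (a compact long Hilbert complex is closed), which itself rests on Theorem~\ref{theo:toolboxgenmain}~(i) applied at each spot of the complex. Thus "compact $\Rightarrow$ closed" is automatic and requires no extra work beyond bookkeeping over the index $q$.

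I do not expect any genuine obstacle here: the theorem is essentially an assembly result. The only point demanding a little care is the identification of the domains of the adjoints --- that $D(\cd_{\gan}^{q})=\H{q,0}{\gan}(\cd,\om)$ with strong (rather than merely weak) boundary conditions --- but this is already supplied by Lemma~\ref{lem:wsbcderham} and recorded in Section~\ref{sec:derhamlow}, so it can simply be cited. The verification that the regular subspaces embed compactly is the crux conceptually, yet it is just Rellich on a bounded Lipschitz domain. Everything else is a direct application of Lemma~\ref{lem:cptembmaintheo}, Lemma~\ref{lem:highorderregdecoedpbcLip}, and Remark~\ref{remhilcom3}.
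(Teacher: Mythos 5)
Your proposal is correct and follows exactly the paper's route: the paper's proof is the one-line ``Apply Theorem \ref{theo:cptembmaintheo1} (i)'', with the identifications $\H{+}{1}=\H{q,1}{\gat}(\om)$, $\H{+}{0}=\H{q-1,1}{\gat}(\om)$, the regular decomposition from Lemma \ref{lem:highorderregdecoedpbcLip} for $k=0$, Rellich for the compactness of the regular embeddings, and the remark on $D(\ed_{\gat}^{0})$ and $D(\cd_{\gan}^{\dimom})$ for the end cases all stated in the paragraph immediately preceding the theorem. Your treatment of the degenerate indices $q=0$, $q=\dimom$ and the appeal to Remark \ref{remhilcom3} for closedness match the paper's intent precisely.
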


\begin{proof}
Apply Theorem \ref{theo:cptembmaintheo1} (i).
\end{proof}

\begin{theo}[mini FA-ToolBox for the de Rham complex]
\label{theo:minifatb:derham}
Let $(\om,\gat)$ be a bounded strong Lipschitz pair. 
Then for all $q$ 
\begin{itemize}
\item[\bf(i)]
the ranges $R(\ed_{\gat}^{q})$ and $R(\cd_{\gan}^{q})$ are closed,
\item[\bf(ii)]
the inverse operators $(\ed_{\gat}^{q})_{\bot}^{-1}$
and $(\cd_{\gan}^{q})_{\bot}^{-1}$ are compact,
\item[\bf(iii)]
the cohomology group $\Harm{q}{\gat,\gan,\id}(\om)
=\H{q}{\gat,0}(\ed,\om)\cap\H{q}{\gan,0}(\cd,\om)$ has finite dimension,
\item[\bf(iv)]
the orthogonal Helmholtz-type decomposition
$$\L{q,2}{}(\om)
=\ed\H{q-1,0}{\gat}(\ed,\om)
\oplus_{\L{q,2}{}(\om)}
\Harm{q}{\gat,\gan,\id}(\om)
\oplus_{\L{q,2}{}(\om)}
\cd\H{q+1,0}{\gan}(\cd,\om)$$
holds,
\item[\bf(v)]
there exists $c_{q}>0$ such that
\begin{align*}
\forall\,E&\in D\big((\ed_{\gat}^{q})_{\bot}\big)
&
\norm{E}_{\L{q,2}{}(\om)}&\leq c_{q}\norm{\ed E}_{\L{q+1,2}{}(\om)},\\
\forall\,H&\in D\big((\cd_{\gan}^{q+1})_{\bot}\big)
&
\norm{H}_{\L{q+1,2}{}(\om)}&\leq c_{q}\norm{\cd H}_{\L{q,2}{}(\om)},
\end{align*}
where 
\begin{align*}
D\big((\ed_{\gat}^{q})_{\bot}\big)
&=D(\ed_{\gat}^{q})\cap N(\ed_{\gat}^{q})^{\bot_{\L{q,2}{}(\om)}}=D(\ed_{\gat}^{q})\cap R(\cd_{\gan}^{q+1}),\\
D\big((\cd_{\gan}^{q+1})_{\bot}\big)
&=D(\cd_{\gan}^{q+1})\cap N(\cd_{\gan}^{q+1})^{\bot_{\L{q+1,2}{}(\om)}}=D(\cd_{\gan}^{q+1})\cap R(\ed_{\gat}^{q}),
\end{align*}
\item[\bf(v')]
with $c_{q}$ from (v) it holds for all
$E\in D(\ed_{\gat}^{q})\cap D(\cd_{\gan}^{q})\cap\Harm{q}{\gat,\gan,\id}(\om)^{\bot_{\L{q,2}{}(\om)}}$
$$\norm{E}_{\L{q,2}{}(\om)}^2
\leq c_{q}^2\norm{\ed E}_{\L{q+1,2}{}(\om)}^2
+c_{q-1}^2\norm{\cd E}_{\L{q-1,2}{}(\om)}^2,$$
\item[\bf(vi)]
$\Harm{q}{\gat,\gan,\id}(\om)=\{0\}$, if $(\om,\gat)$ is additionally extendable.
\end{itemize}
\end{theo}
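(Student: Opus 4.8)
The plan is to obtain (i)--(v') directly from the mini FA-ToolBox, Theorem \ref{theo:toolboxgenmain}, which applies because the relevant compact embedding has just been established in Theorem \ref{theo:cptemb:derham}, and to deduce (vi) from the exactness of the de Rham complex on extendable pairs provided by Lemma \ref{highorderregpotedpbc}. No new estimate is required; the work lies entirely in translating the abstract objects of Theorem \ref{theo:toolboxgenmain} into their concrete de Rham meaning and in treating the end degrees $q\in\{0,\dimom\}$ separately.

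First I fix $q$ and set $\A_{0}:=\ed_{\gat}^{q-1}$ and $\A_{1}:=\ed_{\gat}^{q}$, so that $\A_{0}^{*}=-\cd_{\gan}^{q}$ and $\A_{1}^{*}=-\cd_{\gan}^{q+1}$; by Section \ref{sec:derhamlow} these are densely defined, closed, and form a Hilbert complex, and Theorem \ref{theo:cptemb:derham} gives that $D(\A_{1})\cap D(\A_{0}^{*})\incl\L{q,2}{}(\om)$ is compact, so every assertion of Theorem \ref{theo:toolboxgenmain} holds. Items (i) and (ii) here are items (i) and (ii) there. For (iii) and (iv) I would rewrite $N(\A_{1})=\H{q,0}{\gat,0}(\ed,\om)$, $N(\A_{0}^{*})=\H{q,0}{\gan,0}(\cd,\om)$, $R(\A_{0})=\ed\H{q-1,0}{\gat}(\ed,\om)$, and $R(\A_{1}^{*})=\cd\H{q+1,0}{\gan}(\cd,\om)$; then the cohomology group $N_{0,1}=N(\A_{1})\cap N(\A_{0}^{*})$ equals $\Harm{q}{\gat,\gan,\id}(\om)$, which is finite-dimensional by Theorem \ref{theo:toolboxgenmain} (iii), and Theorem \ref{theo:toolboxgenmain} (iv) becomes exactly the orthogonal Helmholtz decomposition claimed in (iv), the ranges being closed by (i). For (v) I would take $c_{q}$ to be the common best constant of $\ed_{\gat}^{q}$ and $-\cd_{\gan}^{q+1}$ furnished by Theorem \ref{theo:toolboxgenmain} (v) -- these two constants coincide by Lemma \ref{lem:toolboxcpt1} -- and recall that Theorem \ref{theo:toolboxgenmain} (v) already provides the identifications $D\big((\ed_{\gat}^{q})_{\bot}\big)=D(\ed_{\gat}^{q})\cap R(\cd_{\gan}^{q+1})$ and $D\big((\cd_{\gan}^{q+1})_{\bot}\big)=D(\cd_{\gan}^{q+1})\cap R(\ed_{\gat}^{q})$; then (v) is immediate and (v') is Theorem \ref{theo:toolboxgenmain} (v') with $c_{\A_{0}}=c_{q-1}$, $c_{\A_{1}}=c_{q}$, $\norm{\A_{0}^{*}E}_{\H{}{0}}=\norm{\cd E}_{\L{q-1,2}{}(\om)}$, and $\norm{\A_{1}E}_{\H{}{2}}=\norm{\ed E}_{\L{q+1,2}{}(\om)}$. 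The end degrees $q\in\{0,\dimom\}$ are read off from the conventions of \eqref{derhamcomplex1} and the deviations recorded there; in particular $\Harm{0}{\gat,\gan,\id}(\om)=\reals_{\gat}$ and $\Harm{\dimom}{\gat,\gan,\id}(\om)=*\reals_{\gan}$ are still finite-dimensional.

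For (vi) I assume in addition that $(\om,\gat)$ is extendable. Then Lemma \ref{highorderregpotedpbc} with $k=0$ yields the bounded regular potential representation $\H{q,0}{\gat,0}(\ed,\om)=\ed\H{q-1,0}{\gat}(\ed,\om)=R(\ed_{\gat}^{q-1})$, i.e., the complex \eqref{derhamcomplex1} is exact at $\L{q,2}{}(\om)$ for $1\leq q\leq\dimom-1$ (and also at $q=\dimom$ if $\gat\neq\ga$, the case $q=0$ being trivial). Hence
$$\Harm{q}{\gat,\gan,\id}(\om)
=N(\ed_{\gat}^{q})\cap N(\cd_{\gan}^{q})
=R(\ed_{\gat}^{q-1})\cap N\big((\ed_{\gat}^{q-1})^{*}\big)
=R(\ed_{\gat}^{q-1})\cap\ol{R(\ed_{\gat}^{q-1})}^{\bot_{\L{q,2}{}(\om)}}
=\{0\},$$
and the remaining end cases follow from the corresponding statements in Lemma \ref{highorderregpotedpbc}. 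The main obstacle, such as it is, is purely a matter of bookkeeping: matching $N_{0,1}$, the ranges $R(\A_{0})$, $R(\A_{1}^{*})$, and the best constants with their concrete de Rham counterparts, and keeping track of the slightly anomalous boundary degrees $q\in\{0,\dimom\}$; the substantive inputs -- the compact embedding and the regular potential on extendable domains -- are already available from Theorem \ref{theo:cptemb:derham} and Lemma \ref{highorderregpotedpbc}.
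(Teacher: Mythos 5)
Your proposal is correct and follows essentially the same route as the paper: (i)--(v') are obtained by feeding the compact embedding of Theorem \ref{theo:cptemb:derham} into the abstract mini FA-ToolBox (Theorem \ref{theo:toolboxgenmain}) and translating the abstract objects, and (vi) rests on the exactness $\H{q,0}{\gat,0}(\ed,\om)=\ed\H{q-1,0}{\gat}(\ed,\om)$ from Lemma \ref{highorderregpotedpbc}. The only cosmetic difference is that for (vi) the paper intersects this with the Helmholtz decomposition of Lemma \ref{helmom}, whereas you use the equivalent identity $N(\cd_{\gan}^{q})=R(\ed_{\gat}^{q-1})^{\bot_{\L{q,2}{}(\om)}}$ directly.
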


\begin{proof}
Apply Theorem \ref{theo:cptembmaintheo1} (ii), i.e.,
Theoren \ref{theo:cptemb:derham} and Theorem \ref{theo:toolboxgenmain} show (i)-(v').
For $k=0$ Lemma \ref{highorderregpotedpbc} and Lemma \ref{helmom} imply
$\ed\H{q-1,0}{\gat}(\ed,\om)
=\H{q,0}{\gat,0}(\ed,\om)
=\ed\H{q-1,0}{\gat}(\ed,\om)
\oplus_{\L{q,2}{}(\om)}
\Harm{q}{\gat,\gan,\id}(\om)$, 
i.e., (vi).
\end{proof}

\begin{rem}[mini FA-ToolBox for the de Rham complex]
\label{rem:derhameps}
Recall the admissible weights $\eps$ from Section \ref{sec:defsobolevdforms}.
In \cite[Lemma 5.1, Lemma 5.2]{PW2020a} we have shown that
the compactness in Theoren \ref{theo:cptemb:derham} and 
the dimensions of the cohomology groups do not depend on the particular $\eps$.
Hence, for all $q$
\begin{itemize}
\item[\bf(i)]
the embedding $\H{q,0}{\gat}(\ed,\om)\cap\eps^{-1}\H{q,0}{\gan}(\cd,\om)\incl\L{q,2}{}(\om)$
is compact,
\item[\bf(ii)]
$d_{\om,\gat}^{q}:=\dim\Harm{q}{\gat,\gan,\eps}(\om)=\dim\Harm{q}{\gat,\gan,\id}(\om)$.
\item[\bf(iii)]
Theorem \ref{theo:minifatb:derham} holds with appropriate modifications for including $\eps$.
\end{itemize}
Compare to the more explicit formulations from Section \ref{sec:vecderham}
for the vector de Rham complex. All these results carry over literally. 
In particular, cf.~Theorem \ref{theo:minifatb:derham} (v'), we have with 
$c_{q}$ (now depending also on $\eps$ and $\mu$) for all
$E\in D(\mu^{-1}\ed_{\gat}^{q})\cap D(\cd_{\gan}^{q}\eps)\cap\Harm{q}{\gat,\gan,\eps}(\om)^{\bot_{\L{q,2}{\eps}(\om)}}$
$$\norm{E}_{\L{q,2}{\eps}(\om)}^2
\leq c_{q}^2\norm{\mu^{-1}\ed E}_{\L{q+1,2}{\mu}(\om)}^2
+c_{q-1}^2\norm{\cd\eps E}_{\L{q-1,2}{}(\om)}^2.$$
Moreover,
\begin{itemize}
\item[\bf(iv)]
Theorem \ref{theo:cptemb:derham} and hence Theorem \ref{theo:minifatb:derham}
and (i)-(iii) of this remark hold more generally
for bounded weak Lipschitz pairs $(\om,\gat)$, see \cite{BPS2018a,BPS2019a}. 
\end{itemize}
\end{rem}

\begin{theo}[bounded regular potentials for the de Rham complex]
\label{theo:regpotderhamzero}
Let $(\om,\gat)$ be a bounded strong Lipschitz pair
and let $\PotQ_{\ed,\gat,1}^{q,0}$ be given from Lemma \ref{lem:highorderregdecoedpbcLip}. 
Then for all $q\in\{1,\dots,\dimom\}$
there exists a bounded linear regular potential operator
$$\PotP_{\ed,\gat}^{q,0}:=\PotQ_{\ed,\gat,1}^{q-1,0}(\ed_{\gat}^{q-1})_{\bot}^{-1}:
\H{q,0}{\gat,0}(\ed,\om)
\cap\Harm{q}{\gat,\gan,\eps}(\om)^{\bot_{\L{q,2}{\eps}(\om)}}
\To\H{q-1,1}{\gat}(\om),$$
such that 
$\ed\PotP_{\ed,\gat}^{q,0}=\id|_{\H{q,0}{\gat,0}(\ed,\om)\cap\Harm{q}{\gat,\gan,\eps}(\om)^{\bot_{\L{q,2}{\eps}(\om)}}}$.
In particular, the bounded regular potential representations
$$R(\ed_{\gat}^{q-1})
=\H{q,0}{\gat,0}(\ed,\om)
\cap\Harm{q}{\gat,\gan,\eps}(\om)^{\bot_{\L{q,2}{\eps}(\om)}}
=\ed\H{q-1,0}{\gat}(\ed,\om)
=\ed\H{q-1,1}{\gat}(\om)$$
hold and the potentials can be chosen such that they depend continuously on the data.
\end{theo}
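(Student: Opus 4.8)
The plan is to apply the abstract machinery of Lemma \ref{lem:regpotdeco1} (bounded regular potentials by weak bounded regular decompositions) to the concrete setting $\A_{1}=\ed_{\gat}^{q-1}$. First I would record that all hypotheses of that lemma are met: by Theorem \ref{theo:minifatb:derham} (i) the range $R(\ed_{\gat}^{q-1})$ is closed, and by Lemma \ref{lem:highorderregdecoedpbcLip} (with $k=0$, replacing $q$ by $q-1$) we have the bounded regular decomposition
$$D(\ed_{\gat}^{q-1})
=\H{q-1,0}{\gat}(\ed,\om)
=\H{q-1,1}{\gat}(\om)+\ed\H{q-2,1}{\gat}(\om)
=\H{+}{1}+\A_{0}\H{+}{0},$$
with bounded linear regular decomposition operator $\PotQ_{\ed,\gat,1}^{q-1,0}:\H{q-1,0}{\gat}(\ed,\om)\to\H{q-1,1}{\gat}(\om)$ satisfying $\ed\PotQ_{\ed,\gat,1}^{q-1,0}=\ed_{\gat}^{q-1}$. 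By Remark \ref{rem:cptembmaindef1} (ii) this induces a weak bounded regular decomposition $D(\ed_{\gat}^{q-1})=\H{q-1,1}{\gat}(\om)+N(\ed_{\gat}^{q-1})$ in the sense of Definition \ref{def:cptembmaindef1weak}.

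Next I would invoke Lemma \ref{lem:regpotdeco1} directly with these data: since $R(\ed_{\gat}^{q-1})$ is closed, Lemma \ref{lem:toolboxcpt1} gives that $(\ed_{\gat}^{q-1})_{\bot}^{-1}:R(\ed_{\gat}^{q-1})\to D(\ed_{\gat}^{q-1})$ is bounded, hence
$$\PotP_{\ed,\gat}^{q,0}:=\PotQ_{\ed,\gat,1}^{q-1,0}(\ed_{\gat}^{q-1})_{\bot}^{-1}:R(\ed_{\gat}^{q-1})\to\H{q-1,1}{\gat}(\om)$$
is bounded, and $\ed\PotP_{\ed,\gat}^{q,0}=\ed_{\gat}^{q-1}\PotQ_{\ed,\gat,1}^{q-1,0}(\ed_{\gat}^{q-1})_{\bot}^{-1}=\ed_{\gat}^{q-1}(\ed_{\gat}^{q-1})_{\bot}^{-1}=\id_{R(\ed_{\gat}^{q-1})}$, using $\ed_{\gat}^{q-1}\PotQ_{\ed,\gat,1}^{q-1,0}=\ed_{\gat}^{q-1}$ from the last assertion of Lemma \ref{lem:highorderregdecoedpbcLip}. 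To finish, I would identify the domain $R(\ed_{\gat}^{q-1})$ with the space stated in the theorem: by Theorem \ref{theo:minifatb:derham} (i) the range is closed, so $R(\ed_{\gat}^{q-1})=\ol{R(\ed_{\gat}^{q-1})}$, and by the Helmholtz-type decomposition in Lemma \ref{helmom} (or equivalently Corollary \ref{cor:highorderregdecoedpbcLip} with $k=0$) this equals $\H{q,0}{\gat,0}(\ed,\om)\cap\Harm{q}{\gat,\gan,\eps}(\om)^{\bot_{\L{q,2}{\eps}(\om)}}=\ed\H{q-1,0}{\gat}(\ed,\om)=\ed\H{q-1,1}{\gat}(\om)$. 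Continuous dependence on the data is immediate from boundedness of $\PotP_{\ed,\gat}^{q,0}$.

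I do not expect a serious obstacle here: the theorem is essentially a packaging result, a direct specialization of the abstract Lemma \ref{lem:regpotdeco1} once closedness of the range (Theorem \ref{theo:minifatb:derham}) and the zero-order regular decomposition (Lemma \ref{lem:highorderregdecoedpbcLip}) are in hand. The one point requiring a little care is the bookkeeping of the $\eps$-dependent orthogonal complement: one must note that $\Harm{q}{\gat,\gan,\eps}(\om)^{\bot_{\L{q,2}{\eps}(\om)}}$ is the same closed subspace as appears in Lemma \ref{helmom}, so that the range identification is literally the closed-range Helmholtz decomposition; this is exactly the content of Corollary \ref{cor:highorderregdecoedpbcLip}, and also compatible with Remark \ref{rem:derhameps} which guarantees the compactness (and hence closed ranges) is independent of the admissible weight. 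The case $q=\dimom$, where $\H{\dimom,0}{\gat,0}(\ed,\om)$ must be understood with the constant $\dimom$-forms handled as in Lemma \ref{highorderregpotedpbc}, is covered by the same argument since Lemma \ref{lem:highorderregdecoedpbcLip} and Corollary \ref{cor:highorderregdecoedpbcLip} already incorporate that adjustment.
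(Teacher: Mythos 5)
Your argument is correct and is essentially the paper's own proof: the paper simply cites Theorem \ref{theo:cptembmaintheo1} (iii) (which is exactly Lemma \ref{lem:regpotdeco1} applied to the zero-order setup built from Lemma \ref{lem:highorderregdecoedpbcLip}) together with closedness of $R(\ed_{\gat}^{q-1})$ from Theorem \ref{theo:minifatb:derham} and the range identification via Lemma \ref{helmom} — you have merely unwound that citation into its constituent steps.
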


\begin{proof}
Apply Theorem \ref{theo:cptembmaintheo1} (iii).
Note that $R(\ed_{\gat}^{q-1})$ is closed by Theorem \ref{theo:minifatb:derham} and hence
$$R(\ed_{\gat}^{q-1})
=\ed\H{q-1,0}{\gat}(\ed,\om)
=\H{q,0}{\gat,0}(\ed,\om)
\cap\Harm{q}{\gat,\gan,\eps}(\om)^{\bot_{\L{q,2}{\eps}(\om)}}$$
holds by Lemma \ref{helmom}.
\end{proof}

\begin{rem}[Dirichlet/Neumann forms]
\label{highorderregpotedpbcLipremdirneu}
Note that 
$\Harm{\dimom}{\gat,\gan,\eps}(\om)
=\eps^{-1}\H{\dimom}{\gan,0}(\cd,\om)
=\eps^{-1}*\reals_{\gan}$
and 
$\Harm{\dimom}{\gat,\gan,\eps}(\om)^{\bot_{\L{\dimom,2}{\eps}(\om)}}
=(*\reals_{\gan})^{\bot_{\L{\dimom,2}{}(\om)}}$
holds in the special case $q=\dimom$.
\end{rem}

\begin{theo}[bounded regular decompositions for the de Rham complex]
\label{theo:highorderregdecoedpbcLip2}
Let $(\om,\gat)$ be a bounded strong Lipschitz pair
and let $\PotP_{\ed,\gat}^{q,0}$ and $\PotQ_{\ed,\gat,1}^{q,0}$
be given from Theorem \ref{theo:regpotderhamzero}
and from Lemma \ref{lem:highorderregdecoedpbcLip}, respectively.
Then the bounded regular decompositions
\begin{align*}
\H{q}{\gat}(\ed,\om)
=\H{q,0}{\gat}(\ed,\om)
&=\H{q,1}{\gat}(\om)
+\H{q,0}{\gat,0}(\ed,\om)
=\H{q,1}{\gat}(\om)
+\ed\H{q-1,1}{\gat}(\om)\\
&=R(\widetilde\PotQ_{\ed,\gat,1}^{q,0})
\dotplus\H{q,0}{\gat,0}(\ed,\om)
=R(\widetilde\PotQ_{\ed,\gat,1}^{q,0})
\dotplus R(\widetilde\PotN_{\ed,\gat}^{q,0})
\end{align*}
hold with bounded linear regular decomposition operators
\begin{align*}
\widetilde\PotQ_{\ed,\gat,1}^{q,0}:=\PotP_{\ed,\gat}^{q+1,0}\ed^{q}_{\gat}:\H{q,0}{\gat}(\ed,\om)\to\H{q,1}{\gat}(\om),\qquad
\widetilde\PotN_{\ed,\gat}^{q,0}:\H{q,0}{\gat}(\ed,\om)\to\H{q,0}{\gat,0}(\ed,\om)
\end{align*}
satisfying $\widetilde\PotQ_{\ed,\gat,1}^{q,0}+\widetilde\PotN_{\ed,\gat}^{q,0}=\id_{\H{q,0}{\gat}(\ed,\om)}$.
Moreover, it holds $\ed\widetilde\PotQ_{\ed,\gat,1}^{q,0}=\ed\PotQ_{\ed,\gat,1}^{q,0}=\ed^{q}_{\gat}$
and thus $\H{q,0}{\gat,0}(\ed,\om)$ is invariant under 
$\PotQ_{\ed,\gat,1}^{q,0}$ and $\widetilde\PotQ_{\ed,\gat,1}^{q,0}$.
Furthermore, $R(\widetilde\PotQ_{\ed,\gat,1}^{q,0})=R(\PotP_{\ed,\gat}^{q+1,0})$ and 
$\widetilde\PotQ_{\ed,\gat,1}^{q,0}
=\PotP_{\ed,\gat}^{q+1,0}\ed^{q}_{\gat}
=\PotQ_{\ed,\gat,1}^{q,0}(\ed_{\gat}^{q})_{\bot}^{-1}\ed^{q}_{\gat}$.
Hence 
$\widetilde\PotQ_{\ed,\gat,1}^{q,0}|_{D((\ed_{\gat}^{q})_{\bot})}
=\PotQ_{\ed,\gat,1}^{q,0}|_{D((\ed_{\gat}^{q})_{\bot})}$
and thus $\widetilde\PotQ_{\ed,\gat,1}^{q,0}$
may differ from $\PotQ_{\ed,\gat,1}^{q,0}$ 
only on $\H{q,0}{\gat,0}(\ed,\om)$.
\end{theo}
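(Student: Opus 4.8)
The plan is to read the statement off from Theorem~\ref{theo:cptembmaintheo1}~(iv) and (iv'), applied to the zero order de Rham complex in the concrete setting already fixed before Theorem~\ref{theo:cptemb:derham}. First I would recall that with
$$\A_{0}:=\ed_{\gat}^{q-1},\quad\A_{1}:=\ed_{\gat}^{q},\quad\H{}{1}:=\L{q,2}{}(\om),\quad\H{+}{1}:=\H{q,1}{\gat}(\om),\quad\H{+}{0}:=\H{q-1,1}{\gat}(\om)$$
the bounded regular decomposition $D(\A_{1})=\H{+}{1}+\A_{0}\H{+}{0}$ holds by Lemma~\ref{lem:highorderregdecoedpbcLip} (for $k=0$), with bounded linear regular decomposition operators $\PotQ_{\A_{1},1}=\PotQ_{\ed,\gat,1}^{q,0}$ and $\PotQ_{\A_{1},0}=\PotQ_{\ed,\gat,0}^{q,0}$, while $\H{+}{1}\incl\H{}{1}$ and $\H{+}{0}\incl\H{}{0}$ are compact by Rellich's selection theorem. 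Thus the hypotheses of Lemma~\ref{lem:cptembmaintheo}~(i), hence of Theorem~\ref{theo:cptembmaintheo1}, are in force, and everything below is a transcription of that theorem.

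Next I would match up the abstract objects with the de Rham ones. Since $R(\ed_{\gat}^{q})$ is closed by Theorem~\ref{theo:minifatb:derham}, Lemma~\ref{helmom} gives $R(\ed_{\gat}^{q})=\H{q+1,0}{\gat,0}(\ed,\om)\cap\Harm{q+1}{\gat,\gan,\eps}(\om)^{\bot_{\L{q+1,2}{\eps}(\om)}}$, so the operator $\PotP_{\A_{1}}=\PotQ_{\A_{1},1}(\A_{1})_{\bot}^{-1}$ from Theorem~\ref{theo:cptembmaintheo1}~(iii) is, after the index shift $q\mapsto q+1$, precisely the operator $\PotP_{\ed,\gat}^{q+1,0}$ of Theorem~\ref{theo:regpotderhamzero}; in particular $\ed^{q}_{\gat}E\in R(\ed_{\gat}^{q})=D(\PotP_{\ed,\gat}^{q+1,0})$ for every $E\in D(\ed_{\gat}^{q})$, so $\widetilde\PotQ_{\ed,\gat,1}^{q,0}:=\PotP_{\ed,\gat}^{q+1,0}\ed^{q}_{\gat}=\PotP_{\A_{1}}\A_{1}=\PotQ_{\ed,\gat,1}^{q,0}(\ed_{\gat}^{q})_{\bot}^{-1}\ed^{q}_{\gat}$ is a well-defined bounded operator on $\H{q,0}{\gat}(\ed,\om)$, and I would set $\widetilde\PotN_{\ed,\gat}^{q,0}:=\id_{\H{q,0}{\gat}(\ed,\om)}-\widetilde\PotQ_{\ed,\gat,1}^{q,0}$. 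Using $N(\A_{1})=N(\ed_{\gat}^{q})=\H{q,0}{\gat,0}(\ed,\om)$ and $\A_{0}\H{+}{0}=\ed\H{q-1,1}{\gat}(\om)$, the chain of decompositions of $D(\A_{1})$ in Theorem~\ref{theo:cptembmaintheo1}~(iv), together with $R(\widetilde\PotQ_{\A_{1},1})=R(\PotP_{\A_{1}})$, then translates verbatim into the displayed decompositions and into $R(\widetilde\PotQ_{\ed,\gat,1}^{q,0})=R(\PotP_{\ed,\gat}^{q+1,0})$. I would note that the middle summand may equally be written $\ed\H{q-1,1}{\gat}(\om)$ or $\H{q,0}{\gat,0}(\ed,\om)$, since $\ed\H{q-1,1}{\gat}(\om)\subset\H{q,0}{\gat,0}(\ed,\om)$ holds by the complex property while the reverse inclusion of the resulting sums is Lemma~\ref{lem:highorderregdecoedpbcLip} once more.

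Finally I would transcribe Theorem~\ref{theo:cptembmaintheo1}~(iv'): $\ed\widetilde\PotQ_{\ed,\gat,1}^{q,0}=\ed\PotQ_{\ed,\gat,1}^{q,0}=\ed^{q}_{\gat}$, the second equality being the complex property (Remark~\ref{rem:cptembmaindef1}~(i)), so that $\H{q,0}{\gat,0}(\ed,\om)=N(\ed_{\gat}^{q})$ is invariant under both $\PotQ_{\ed,\gat,1}^{q,0}$ and $\widetilde\PotQ_{\ed,\gat,1}^{q,0}$; and restricting the identity $\widetilde\PotQ_{\ed,\gat,1}^{q,0}=\PotQ_{\ed,\gat,1}^{q,0}(\ed_{\gat}^{q})_{\bot}^{-1}\ed^{q}_{\gat}$ to $D((\ed_{\gat}^{q})_{\bot})$ collapses it to $\PotQ_{\ed,\gat,1}^{q,0}|_{D((\ed_{\gat}^{q})_{\bot})}$, exactly as in the proof of Theorem~\ref{theo:cptembmaintheo1}, whence the two operators can differ only on $\H{q,0}{\gat,0}(\ed,\om)$. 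The opening identity $\H{q}{\gat}(\ed,\om)=\H{q,0}{\gat}(\ed,\om)$ is just the notational convention of Section~\ref{sec:defsobolevdforms}. I do not expect a genuine obstacle here: the proof is pure bookkeeping, and the only point that needs a moment's care is the index shift identifying $\PotP_{\ed,\gat}^{q+1,0}$ with the abstract $\PotP_{\A_{1}}$, together with the closedness of $R(\ed_{\gat}^{q})$ (Theorem~\ref{theo:minifatb:derham}) which guarantees that $\PotP_{\ed,\gat}^{q+1,0}\ed^{q}_{\gat}$ is defined on all of $D(\ed_{\gat}^{q})$.
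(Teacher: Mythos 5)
Your proof is correct and follows exactly the paper's route: the paper's proof is simply ``Apply Theorem \ref{theo:cptembmaintheo1} (iv) and (iv')'', and your proposal is a careful transcription of that application, with the identification $\PotP_{\A_{1}}=\PotP_{\ed,\gat}^{q+1,0}$ (index shift included) handled correctly. Nothing is missing.
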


\begin{proof}
Apply Theorem \ref{theo:cptembmaintheo1} (iv) and (iv').
\end{proof}

Again, Theorem \ref{theo:regpotderhamzero} and Theorem \ref{theo:highorderregdecoedpbcLip2}
have dual versions for the $\cd$-operator by Hodge $\star$-duality, 
cf.~Theorem \ref{highorderregpotcdpbcLip} for $k=0$.

\subsection{Higher Order Mini FA-ToolBox}

Some results from the latter section hold even for higher Sobolev orders.
As pointed out in Section \ref{sec:derhamhigh}, the adjoints are much more complicated.
Hence Lemma \ref{lem:cptembmaintheo} and Theorem \ref{theo:cptembmaintheo1} from the FA-ToolBox
are not directly applicable, so that some detours and modifications are needed.

In Section \ref{sec:derhamhigh} we have introduced
the higher order primal and dual de Rham Hilbert complex composed of
the densely defined and closed linear operators
\begin{align*}
\ed_{\gat}^{q,k}:D(\ed_{\gat}^{q,k})\subset\H{q,k}{\gat}(\om)
&\to\H{q+1,k}{\gat}(\om),
&
D(\ed_{\gat}^{q,k})&=\H{q,k}{\gat}(\ed,\om),\\
\cd_{\gan}^{q,k}:D(\cd_{\gan}^{q,k})\subset\H{q,k}{\gan}(\om)
&\to\H{q-1,k}{\gan}(\om),
&
D(\cd_{\gan}^{q,k})&=\H{q,k}{\gan}(\cd,\om).
\end{align*}

By Corollary \ref{cor:highorderregdecoedpbcLip} be see:

\begin{theo}[higher order closed ranges for the de Rham complex]
\label{theo:clran:derhamk}
Let $(\om,\gat)$ be a bounded strong Lipschitz pair. 
Then for all $q$ and for all $k\in\nat_{0}$ the ranges
\begin{align*}
R(\ed^{q-1,k}_{\gat})
&=\ed\H{q-1,k}{\gat}(\ed,\om)
=\ed\H{q-1,k+1}{\gat}(\om)
=\H{q,k}{\gat,0}(\ed,\om)
\cap\Harm{q}{\gat,\gan,\id}(\om)^{\bot_{\L{q,2}{}(\om)}},\\
R(\cd^{q+1,k}_{\gan})
&=\cd\H{q+1,k}{\gan}(\cd,\om)
=\cd\H{q+1,k+1}{\gan}(\om)
=\H{q,k}{\gan,0}(\cd,\om)
\cap\Harm{q}{\gat,\gan,\id}(\om)^{\bot_{\L{q,2}{}(\om)}}
\end{align*}
are closed, i.e., closed subspaces of $\H{q,k}{}(\om)$.
In particular, the higher order long primal and dual de Rham complex 
from Section \ref{sec:derhamhigh} is closed.
\end{theo}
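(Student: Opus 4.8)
The plan is to read the full chain of identities straight off Corollary~\ref{cor:highorderregdecoedpbcLip}, specialised to the admissible weight $\eps=\id$, and then to verify explicitly that the space appearing on the right-hand side is a \emph{closed} subspace of $\H{q,k}{}(\om)$. Indeed, Corollary~\ref{cor:highorderregdecoedpbcLip} already supplies
$$R(\ed^{q-1,k}_{\gat})=\ed\H{q-1,k}{\gat}(\ed,\om)=\ed\H{q-1,k+1}{\gat}(\om)=\H{q,k}{\gat,0}(\ed,\om)\cap\Harm{q}{\gat,\gan,\id}(\om)^{\bot_{\L{q,2}{}(\om)}}$$
together with the $\cd^{q+1,k}_{\gan}$-analogue, so that only the topological claim has to be argued.

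For closedness I would proceed as follows. First, $\H{q,k}{\gat,0}(\ed,\om)$ is closed in $\H{q,k}{}(\om)$: on the kernel of $\ed$ the graph norm of $\H{q,k}{\gat}(\ed,\om)$ coincides with the $\H{q,k}{}(\om)$-norm, and if $E_{n}\to E$ in $\H{q,k}{}(\om)$ with $E_{n}\in\H{q,k}{\gat,0}(\ed,\om)$, then $E\in\H{q,k}{\gat}(\om)$ (as $\H{q,k}{\gat}(\om)$ is a closed subspace of $\H{q,k}{}(\om)$ by definition), $\ed E=0$ in the distributional sense, and $E\in\H{q}{\gat}(\ed,\om)$ (as $\H{q}{\gat}(\ed,\om)$ is closed in $\H{q}{}(\ed,\om)$ and the convergence there is trivial, all exterior derivatives vanishing); hence $E\in\H{q,k}{\gat,0}(\ed,\om)$. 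Second, by Theorem~\ref{theo:minifatb:derham} (iii) the cohomology space $\Harm{q}{\gat,\gan,\id}(\om)$ has finite dimension, so its orthogonal complement in $\L{q,2}{}(\om)$ is closed, and its preimage under the continuous embedding $\H{q,k}{}(\om)\incl\L{q,2}{}(\om)$ is a closed subspace of $\H{q,k}{}(\om)$. Intersecting the two closed subspaces shows $R(\ed^{q-1,k}_{\gat})$ is closed in $\H{q,k}{}(\om)$; the statement for $R(\cd^{q+1,k}_{\gan})$ then follows verbatim, or from the $\ed$-version via Hodge $\star$-duality. In particular all ranges $R(\ed^{q,k}_{\gat})$ and $R(\cd^{q,k}_{\gan})$ are closed for every $q$ and every $k\in\nat_{0}$, which is exactly the assertion that the higher order long primal and dual de Rham complex is closed.

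I do not expect a genuine obstacle here, since Corollary~\ref{cor:highorderregdecoedpbcLip} (resting on the partition-of-unity and Stein-extension arguments of Lemma~\ref{lem:highorderregdecoedpbcLip} and on the Helmholtz decomposition Lemma~\ref{helmom}) carries all the weight. The only point requiring a word of care is distinguishing closedness of $\H{q,k}{\gat,0}(\ed,\om)$ in the full space $\H{q,k}{}(\om)$ from closedness in the graph space $\H{q,k}{\gat}(\ed,\om)$, which is harmless because on $N(\ed)$ the two norms agree; the one input of real depth, finiteness of the cohomology, is imported from the zero order compact embedding through Theorem~\ref{theo:minifatb:derham}.
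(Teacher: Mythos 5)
Your proposal is correct and follows the same route as the paper, which derives the theorem directly from Corollary~\ref{cor:highorderregdecoedpbcLip} (with $\eps=\id$); you merely make explicit the closedness of $\H{q,k}{\gat,0}(\ed,\om)\cap\Harm{q}{\gat,\gan,\id}(\om)^{\bot_{\L{q,2}{}(\om)}}$ in $\H{q,k}{}(\om)$, which the paper treats as immediate from that representation. (A small remark: the appeal to finite-dimensionality of $\Harm{q}{\gat,\gan,\id}(\om)$ is superfluous, since any orthogonal complement in $\L{q,2}{}(\om)$ is closed.)
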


The corresponding reduced operators read
\begin{align*}
(\ed_{\gat}^{q,k})_{\bot}:D\big((\ed_{\gat}^{q,k})_{\bot}\big)
&\subset\H{q,k}{\gat,0}(\ed,\om)^{\bot_{\H{q,k}{\gat}(\om)}}
\to\ed\H{q,k}{\gat}(\ed,\om),
&
N(\ed_{\gat}^{q,k})&=\H{q,k}{\gat,0}(\ed,\om),\\
-(\cd_{\gan}^{q,k})_{\bot}:D\big((\cd_{\gan}^{q,k})_{\bot}\big)
&\subset\H{q,k}{\gan,0}(\cd,\om)^{\bot_{\H{q,k}{\gan}(\om)}}
\to\cd\H{q,k}{\gan}(\cd,\om),
&
N(\cd_{\gan}^{q,k})&=\H{q,k}{\gan,0}(\cd,\om),
\end{align*}
with
\begin{align*}
D\big((\ed_{\gat}^{q,k})_{\bot}\big)
&=\H{q,k}{\gat}(\ed,\om)\cap\H{q,k}{\gat,0}(\ed,\om)^{\bot_{\H{q,k}{\gat}(\om)}}
=\H{q,k}{\gat}(\ed,\om)\cap R\big((\ed_{\gat}^{q,k})^{*}\big),\\
D\big((\cd_{\gan}^{q,k})_{\bot}\big)
&=\H{q,k}{\gan}(\cd,\om)\cap\H{q,k}{\gan,0}(\cd,\om)^{\bot_{\H{q,k}{\gan}(\om)}}
=\H{q,k}{\gan}(\cd,\om)\cap R\big((\cd_{\gan}^{q,k})^{*}\big),
\end{align*}
and we have by Lemma \ref{lem:toolboxcpt1} and Theorem \ref{theo:clran:derhamk}:

\begin{theo}[higher order fundamental lemma 1 for the de Rham complex]
\label{theo:highorderFAT1}
Let $(\om,\gat)$ be a bounded strong Lipschitz pair. 
Then for all $q$ and for all $k\in\nat_{0}$
the following assertions hold and are equivalent:
\begin{itemize}
\item[\bf(i)]
$\exists\;c>0\quad\forall\,E\in D\big((\ed_{\gat}^{q,k})_{\bot}\big)\qquad
\norm{E}_{\H{q,k}{}(\om)}\leq c\norm{\ed E}_{\H{q+1,k}{}(\om)}$
\item[\bf(ii)]
$R(\ed_{\gat}^{q,k})=R\big((\ed_{\gat}^{q,k})_{\bot}\big)=\ed\H{q,k}{\gat}(\ed,\om)$ is closed.
\item[\bf(iii)]
$(\ed_{\gat}^{q,k})_{\bot}^{-1}:R(\ed_{\gat}^{q,k})\to D\big((\ed_{\gat}^{q,k})_{\bot}\big)$ is bounded.
\item[\bf(iii')]
$(\ed_{\gat}^{q,k})_{\bot}^{-1}:R(\ed_{\gat}^{q,k})\to D(\ed_{\gat}^{q,k})$ is bounded.
\end{itemize}
The corresponding results hold for the $\cd_{\gan}^{q,k}$ as well.
\end{theo}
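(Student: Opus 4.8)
The statement collects four equivalent assertions (i), (ii), (iii), (iii') about the reduced operator $(\ed_{\gat}^{q,k})_{\bot}$, plus the claim that they actually hold. The cleanest route is to first establish that one of them holds unconditionally, and then derive the equivalences as an instance of the abstract fundamental lemma. Concretely, I would first invoke Theorem \ref{theo:clran:derhamk}, which already asserts that $R(\ed_{\gat}^{q,k})=\ed\H{q,k}{\gat}(\ed,\om)$ is closed; this is precisely assertion (ii). Since $\ed_{\gat}^{q,k}$ is densely defined and closed (as noted in Section \ref{sec:derhamhigh}), Lemma \ref{lem:toolboxcpt1} applies verbatim with $\A=\ed_{\gat}^{q,k}$, $\H{}{0}=\H{q,k}{\gat}(\om)$, and $\H{}{1}=\H{q+1,k}{\gat}(\om)$. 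That lemma gives the equivalence of (i) [its (i)], (ii) [its (ii)], and (iii) [its (iii)], together with the identification of best constants; since we already know (ii) holds, all of them hold.

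The only item not literally contained in Lemma \ref{lem:toolboxcpt1} is (iii'), which replaces the graph-norm target space $D\big((\ed_{\gat}^{q,k})_{\bot}\big)$ by the weaker norm of $D(\ed_{\gat}^{q,k})$, i.e.\ the graph norm $\norm{E}_{\H{q,k}{}(\om)}^2+\norm{\ed E}_{\H{q+1,k}{}(\om)}^2$. The implication (iii)$\Rightarrow$(iii') is trivial because the embedding $D\big((\ed_{\gat}^{q,k})_{\bot}\big)\incl D(\ed_{\gat}^{q,k})$ is continuous (the former carries a finer norm). For the converse, (iii')$\Rightarrow$(i): boundedness of $(\ed_{\gat}^{q,k})_{\bot}^{-1}$ into $D(\ed_{\gat}^{q,k})$ means there is $c>0$ with $\norm{E}_{\H{q,k}{}(\om)}^2+\norm{\ed E}_{\H{q+1,k}{}(\om)}^2\leq c^2\norm{\ed E}_{\H{q+1,k}{}(\om)}^2$ for all $E\in D\big((\ed_{\gat}^{q,k})_{\bot}\big)$, which in particular yields $\norm{E}_{\H{q,k}{}(\om)}\leq c\norm{\ed E}_{\H{q+1,k}{}(\om)}$, i.e.\ (i). Thus (iii') slots into the equivalence cycle, and since (i)--(iii) already hold, so does (iii').

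Finally, the assertion ``the corresponding results hold for $\cd_{\gan}^{q,k}$ as well'' follows either by the same argument applied to $\cd_{\gan}^{q,k}$ (which is likewise densely defined and closed, with closed range by the second half of Theorem \ref{theo:clran:derhamk}), or simply by Hodge $\star$-duality, exchanging $\ed\leftrightarrow\cd$, $\gat\leftrightarrow\gan$, and $q\leftrightarrow\dimom-q$, as already used repeatedly in this section. I do not expect any genuine obstacle here: the substantive input — closedness of the ranges on the higher-order scale — has been done in Corollary \ref{cor:highorderregdecoedpbcLip} and recorded in Theorem \ref{theo:clran:derhamk}, and everything else is the abstract functional-analytic machinery of Lemma \ref{lem:toolboxcpt1} plus the one-line observation handling the change of target norm for (iii'). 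The mild care needed is only to confirm that $D\big((\ed_{\gat}^{q,k})_{\bot}\big)$ is described correctly as $\H{q,k}{\gat}(\ed,\om)\cap\H{q,k}{\gat,0}(\ed,\om)^{\bot_{\H{q,k}{\gat}(\om)}}=\H{q,k}{\gat}(\ed,\om)\cap R\big((\ed_{\gat}^{q,k})^{*}\big)$, which is just the Helmholtz decomposition \eqref{helm1} for the closed densely defined operator $\ed_{\gat}^{q,k}$ together with $N(\ed_{\gat}^{q,k})=\H{q,k}{\gat,0}(\ed,\om)$.
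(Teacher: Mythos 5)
Your proposal is correct and follows essentially the same route as the paper, which states the theorem as an immediate consequence of the abstract fundamental Lemma \ref{lem:toolboxcpt1} applied to the densely defined closed operator $\ed_{\gat}^{q,k}$ combined with the closed-range result of Theorem \ref{theo:clran:derhamk}. Your additional remarks on the harmless change of target space in (iii') and on the Hodge $\star$-duality for $\cd_{\gan}^{q,k}$ are exactly the omitted routine details.
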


The higher order version of Theorem \ref{theo:cptemb:derham} reads as follows:

\begin{theo}[higher order compact embedding for the de Rham complex]
\label{theo:cptemb:derhamk}
Let $(\om,\gat)$ be a bounded strong Lipschitz pair. 
Then for all $q$ and for all $k\in\nat_{0}$ the embedding 
$$D(\ed_{\gat}^{q,k})\cap D(\cd_{\gan}^{q,k})
=\H{q,k}{\gat}(\ed,\om)\cap\H{q,k}{\gan}(\cd,\om)
\incl\H{q,k}{\ga}(\om)$$
is compact. 
\end{theo}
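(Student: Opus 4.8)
The plan is to reduce the higher order compact embedding to the zero order case (Theorem~\ref{theo:cptemb:derham}) by exploiting the bounded regular decomposition of Lemma~\ref{lem:highorderregdecoedpbcLip}, in exactly the same spirit as the abstract argument in the proof of Lemma~\ref{lem:cptembmaintheo}. However, since the adjoint $(\ed_{\gat}^{q,k})^{*}$ is not explicitly known, I cannot invoke Lemma~\ref{lem:cptembmaintheo} directly; instead I will argue with a bounded sequence by hand. So first I would take a bounded sequence $(E_{n})\subset\H{q,k}{\gat}(\ed,\om)\cap\H{q,k}{\gan}(\cd,\om)$, i.e.\ $\norm{E_{n}}_{\H{q,k}{}(\om)}+\norm{\ed E_{n}}_{\H{q+1,k}{}(\om)}+\norm{\cd E_{n}}_{\H{q-1,k}{}(\om)}\leq c$, and aim to extract a subsequence converging in $\H{q,k}{\ga}(\om)$.

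The key step is to apply Lemma~\ref{lem:highorderregdecoedpbcLip} (with $\gat$) and its Hodge-$\star$ dual analogue (with $\gan$, cf.~Lemma~\ref{app:lem:highorderregdecoedpbcLip}) to split each $E_{n}$. From the $\ed_{\gat}$-decomposition we get $E_{n}=\PotQ_{\ed,\gat,1}^{q,k}E_{n}+\ed\PotQ_{\ed,\gat,0}^{q,k}E_{n}$ with $\PotQ_{\ed,\gat,1}^{q,k}E_{n}\in\H{q,k+1}{\gat}(\om)$ bounded; from the $\cd_{\gan}$-decomposition we get $E_{n}=\PotQ_{\cd,\gan,1}^{q,k}E_{n}+\cd(\dots)$ with $\PotQ_{\cd,\gan,1}^{q,k}E_{n}\in\H{q,k+1}{\gan}(\om)$ bounded. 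The point is that all the ``regular'' pieces, together with their derivatives $\p^{\alpha}$ for $|\alpha|\leq k$, land in zero order spaces on which Theorem~\ref{theo:cptemb:derham} and Rellich apply: differentiating the $\ed_{\gat}$-decomposition and using Lemma~\ref{lem:edpalpha} (Schwarz), $\p^{\alpha}E_{n}=\p^{\alpha}\PotQ_{\ed,\gat,1}^{q,k}E_{n}+\ed\p^{\alpha}\PotQ_{\ed,\gat,0}^{q,k}E_{n}$ with $\p^{\alpha}\PotQ_{\ed,\gat,1}^{q,k}E_{n}\in\H{q,1}{\gat}(\om)\incl\L{q,2}{}(\om)$ compactly, and $\p^{\alpha}\PotQ_{\ed,\gat,0}^{q,k}E_{n}\in\H{q-1,1}{\gat}(\om)$ bounded. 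The cross term is controlled because $\ed\PotQ_{\ed,\gat,0}^{q,k}E_{n}=E_{n}-\PotQ_{\ed,\gat,1}^{q,k}E_{n}$ inherits an $\H{q,k}{\gan}(\cd,\om)$-bound, so on the orthogonal complement of the (finite-dimensional, by Theorem~\ref{theo:minifatb:derham}) harmonic forms one may use the zero order compactness.

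Concretely I would proceed componentwise in $|\alpha|\leq k$: for each multi-index write, using both decompositions, an estimate of the form
\begin{align*}
\norm{\p^{\alpha}E_{n,m}}_{\L{q,2}{}(\om)}^{2}
&=\scp{\p^{\alpha}E_{n,m}}{\p^{\alpha}\PotQ_{\ed,\gat,1}^{q,k}E_{n,m}}_{\L{q,2}{}(\om)}
+\scp{\ed\p^{\alpha}\PotQ_{\ed,\gat,0}^{q,k}E_{n,m}}{\p^{\alpha}\PotQ_{\cd,\gan,1}^{q,k}E_{n,m}}_{\L{q,2}{}(\om)}\\
&\quad+\scp{\p^{\alpha}E_{n,m}}{\ed\p^{\alpha}(\dots)}_{\L{q,2}{}(\om)},
\end{align*}
where $E_{n,m}:=E_{n}-E_{m}$, integrate by parts moving $\ed$ onto the other (boundary-condition carrying) factor to turn it into $\cd$ acting on the bounded regular piece, and thereby bound $\norm{\p^{\alpha}E_{n,m}}_{\L{q,2}{}(\om)}^{2}$ by $c$ times a sum of $\L{2}{}$-norms of the regular pieces $\p^{\alpha}\PotQ_{\ed,\gat,1}^{q,k}E_{n,m}$, $\p^{\alpha}\PotQ_{\cd,\gan,1}^{q,k}E_{n,m}$, each of which converges to $0$ along a common subsequence by Rellich. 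Summing over $|\alpha|\leq k$ shows $(E_{n})$ is Cauchy in $\H{q,k}{}(\om)$, hence in $\H{q,k}{\ga}(\om)$. The main obstacle is organising the boundary terms so that every time an $\ed$ is integrated by parts it pairs against a factor lying in the appropriate $\H{q,0}{\gan}(\cd,\om)$ (or $\H{q,0}{\gat}(\ed,\om)$) space — this requires using \emph{both} regular decompositions simultaneously and the Schwarz lemma (Lemma~\ref{lem:edpalpha}) to commute $\p^{\alpha}$ with $\ed$ and $\cd$ while preserving the partial boundary conditions, exactly as set up in Remark~\ref{rem:adjhighorderderham}; handling the finite-dimensional harmonic-form obstruction via Theorem~\ref{theo:minifatb:derham}~(iii) is then routine.
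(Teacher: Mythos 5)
Your strategy can be made to work, but it is a genuinely different and considerably longer route than the paper's, and as written it has loose ends. The paper's proof is a short induction on $k$: by Schwarz' lemma (Lemma \ref{lem:edpalpha}), for $|\alpha|=k$ the derivatives $\p^{\alpha}E_{n}$ already form a \emph{bounded sequence in the zero order intersection} $\H{q,0}{\gat}(\ed,\om)\cap\H{q,0}{\gan}(\cd,\om)$ (with $\ed\p^{\alpha}E_{n}=\p^{\alpha}\ed E_{n}$ and $\cd\p^{\alpha}E_{n}=\p^{\alpha}\cd E_{n}$), so Theorem \ref{theo:cptemb:derham} applies directly to $(\p^{\alpha}E_{n})$, while the induction hypothesis handles $|\alpha|\leq k-1$; no regular decomposition and no duality pairing are needed at the higher order level. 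What you propose is essentially to unfold the zero order compactness proof: differentiate the bounded regular decomposition of Lemma \ref{lem:highorderregdecoedpbcLip} and rerun the argument of Lemma \ref{lem:cptembmaintheo} for each $\p^{\alpha}E_{n,m}$. This is sound in principle, but three points need correcting. First, only \emph{one} decomposition is needed: from $\p^{\alpha}E_{n,m}=\p^{\alpha}\PotQ_{\ed,\gat,1}^{q,k}E_{n,m}+\ed\p^{\alpha}\PotQ_{\ed,\gat,0}^{q,k}E_{n,m}$ one gets
\begin{align*}
\norm{\p^{\alpha}E_{n,m}}_{\L{q,2}{}(\om)}^{2}
=\scp{\p^{\alpha}E_{n,m}}{\p^{\alpha}\PotQ_{\ed,\gat,1}^{q,k}E_{n,m}}_{\L{q,2}{}(\om)}
-\scp{\cd\p^{\alpha}E_{n,m}}{\p^{\alpha}\PotQ_{\ed,\gat,0}^{q,k}E_{n,m}}_{\L{q-1,2}{}(\om)},
\end{align*}
since $\p^{\alpha}E_{n,m}\in\H{q,0}{\gan}(\cd,\om)$ by Lemma \ref{lem:edpalpha} and $\p^{\alpha}\PotQ_{\ed,\gat,0}^{q,k}E_{n,m}\in\H{q-1,1}{\gat}(\om)\subset\H{q-1,0}{\gat}(\ed,\om)$; both right-hand factors converge along a common subsequence by Rellich, exactly as in Lemma \ref{lem:cptembmaintheo}\,(i), where the $\A_{0}^{*}$-bound on the sequence, not a second decomposition, absorbs the potential part. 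Second, the three-term identity you display is not the decomposition identity, and its last summand is undefined. Third, the cohomology group plays no role in the compactness statement (it only enters the Friedrichs/Poincar\'e estimates), so the appeal to Theorem \ref{theo:minifatb:derham}\,(iii) is superfluous.
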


\begin{proof}
We follow in close lines the proof of \cite[Theorem 4.11]{PZ2020b} using induction.
The case $k=0$ is given by Theorem \ref{theo:cptemb:derham}.
Let $k\geq1$ and let $(E_{n})$ be a bounded sequence in 
$\H{q,k}{\gat}(\ed,\om)\cap\H{q,k}{\gan}(\cd,\om)$.
Note that 
$$\H{q,k}{\gat}(\ed,\om)\cap\H{q,k}{\gan}(\cd,\om)
\subset\H{q,k}{\gat}(\om)\cap\H{q,k}{\gan}(\om)
=\H{q,k}{\ga}(\om).$$
By assumption and w.l.o.g. we have that $(E_{n})$ 
is a Cauchy sequence in $\H{q,k-1}{\ga}(\om)$.
Moreover, for all $|\alpha|=k$ we have
$\p^{\alpha}E_{n}\in\H{q,0}{\gat}(\ed,\om)\cap\H{q,0}{\gan}(\cd,\om)$
with $\ed\p^{\alpha}E_{n}=\p^{\alpha}\ed E_{n}$
and $\cd\p^{\alpha}E_{n}=\p^{\alpha}\cd E_{n}$
by Lemma \ref{lem:edpalpha}.
Hence $(\p^{\alpha}E_{n})$ is a bounded sequence 
in $\H{q,0}{\gat}(\ed,\om)\cap\H{q,0}{\gan}(\cd,\om)$.
Thus, w.l.o.g. $(\p^{\alpha}E_{n})$ is a Cauchy sequence in $\L{q,2}{}(\om)$
by Theorem \ref{theo:cptemb:derham}.
Finally, $(E_{n})$ is a Cauchy sequence in $\H{q,k}{\ga}(\om)$,
finishing the proof.
\end{proof}

Higher order analogues of Theorem \ref{theo:minifatb:derham} and Remark \ref{rem:derhameps} hold.
Some of these results are formulated in the following theorem.

\begin{theo}[higher order Friedrichs/Poincar\'e type estimates for the de Rham complex]
\label{theo:cptemb:derhamk2}
Let $(\om,\gat)$ be a bounded strong Lipschitz pair. 
Then for all $q$ and for all $k\geq0$ there exists $\widetilde{c}_{q,k}>0$ such that for all
$E\in\H{q,k}{\gat}(\ed,\om)\cap\H{q,k}{\gan}(\cd,\om)\cap\Harm{q}{\gat,\gan,\id}(\om)^{\bot_{\L{q,2}{}(\om)}}$
$$\norm{E}_{\H{q,k}{}(\om)}
\leq\widetilde{c}_{q,k}\big(\norm{\ed E}_{\H{q+1,k}{}(\om)}
+\norm{\cd E}_{\H{q-1,k}{}(\om)}\big).$$
The condition $\Harm{q}{\gat,\gan,\id}(\om)^{\bot_{\L{q,2}{}(\om)}}$ can be replaced by the weaker conditions
$\Harm{q,k}{\gat,\gan,\id}(\om)^{\bot_{\L{q,2}{}(\om)}}$
or $\Harm{q,k}{\gat,\gan,\id}(\om)^{\bot_{\H{q,k}{}(\om)}}$.
In particular, it holds
\begin{align*}
\forall\;E&\in\H{q,k}{\gat}(\ed,\om)\cap R(\cd^{q+1,k}_{\gan})
&
\norm{E}_{\H{q,k}{}(\om)}
&\leq\widetilde{c}_{q,k}\norm{\ed E}_{\H{q+1,k}{}(\om)},\\
\forall\;E&\in\H{q,k}{\gan}(\cd,\om)\cap R(\ed^{q-1,k}_{\gat})
&
\norm{E}_{\H{q,k}{}(\om)}
&\leq\widetilde{c}_{q,k}\norm{\cd E}_{\H{q-1,k}{}(\om)}
\end{align*}
with
\begin{align*}
R(\cd^{q+1,k}_{\gan})
&=\H{q,k}{\gan,0}(\cd,\om)
\cap\Harm{q}{\gat,\gan,\id}(\om)^{\bot_{\L{q,2}{}(\om)}},\\
R(\ed^{q-1,k}_{\gat})
&=\H{q,k}{\gat,0}(\ed,\om)
\cap\Harm{q}{\gat,\gan,\id}(\om)^{\bot_{\L{q,2}{}(\om)}}.
\end{align*}
\end{theo}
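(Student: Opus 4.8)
The plan is to prove this by the classical contradiction argument in the style of the Weck/Picard--Weber--Weck selection theorem, built on the higher order compact embedding of Theorem~\ref{theo:cptemb:derhamk}. Assume the estimate fails. Then for every $n\in\nat$ there is a form
$$E_{n}\in\H{q,k}{\gat}(\ed,\om)\cap\H{q,k}{\gan}(\cd,\om)\cap\Harm{q}{\gat,\gan,\id}(\om)^{\bot_{\L{q,2}{}(\om)}}$$
with $\norm{E_{n}}_{\H{q,k}{}(\om)}=1$ and $\norm{\ed E_{n}}_{\H{q+1,k}{}(\om)}+\norm{\cd E_{n}}_{\H{q-1,k}{}(\om)}\to0$. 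Since $(E_{n})$ is then bounded in $\H{q,k}{\gat}(\ed,\om)\cap\H{q,k}{\gan}(\cd,\om)$, Theorem~\ref{theo:cptemb:derhamk} yields a subsequence (not relabeled) converging in $\H{q,k}{\ga}(\om)$, hence in $\H{q,k}{}(\om)$, to some $E$ with $\norm{E}_{\H{q,k}{}(\om)}=1$.

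The next step is to identify the limit $E$. As $\ed_{\gat}^{q,k}$ and $\cd_{\gan}^{q,k}$ are closed and $\ed E_{n}\to0$, $\cd E_{n}\to0$ in $\H{q+1,k}{}(\om)$ and $\H{q-1,k}{}(\om)$, passing to the limit gives $E\in\H{q,k}{\gat,0}(\ed,\om)\cap\H{q,k}{\gan,0}(\cd,\om)=\Harm{q,k}{\gat,\gan,\id}(\om)\subset\Harm{q}{\gat,\gan,\id}(\om)$. On the other hand, by continuity of the embedding $\H{q,k}{}(\om)\incl\L{q,2}{}(\om)$ we have $E_{n}\to E$ in $\L{q,2}{}(\om)$, and since $\Harm{q}{\gat,\gan,\id}(\om)^{\bot_{\L{q,2}{}(\om)}}$ is closed in $\L{q,2}{}(\om)$, also $E\in\Harm{q}{\gat,\gan,\id}(\om)^{\bot_{\L{q,2}{}(\om)}}$. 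Hence $E=0$, contradicting $\norm{E}_{\H{q,k}{}(\om)}=1$, which establishes the estimate with orthogonality in $\Harm{q}{\gat,\gan,\id}(\om)^{\bot_{\L{q,2}{}(\om)}}$.

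For the two weaker orthogonality conditions the same argument goes through verbatim: recalling $\Harm{q,k}{\gat,\gan,\id}(\om)=\H{q,k}{\gat,0}(\ed,\om)\cap\H{q,k}{\gan,0}(\cd,\om)$, this is a subspace of $\Harm{q}{\gat,\gan,\id}(\om)$, which is finite-dimensional by Theorem~\ref{theo:minifatb:derham}~(iii); thus $\Harm{q,k}{\gat,\gan,\id}(\om)$ is finite-dimensional and therefore closed in both $\L{q,2}{}(\om)$ and $\H{q,k}{}(\om)$, so whenever the $E_{n}$ are $\L{q,2}{}(\om)$- respectively $\H{q,k}{}(\om)$-orthogonal to $\Harm{q,k}{\gat,\gan,\id}(\om)$, so is the limit $E\in\Harm{q,k}{\gat,\gan,\id}(\om)$, forcing $E=0$ once more. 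Finally, the two ``in particular'' estimates follow by inserting the range identities $R(\cd^{q+1,k}_{\gan})=\H{q,k}{\gan,0}(\cd,\om)\cap\Harm{q}{\gat,\gan,\id}(\om)^{\bot_{\L{q,2}{}(\om)}}$ and $R(\ed^{q-1,k}_{\gat})=\H{q,k}{\gat,0}(\ed,\om)\cap\Harm{q}{\gat,\gan,\id}(\om)^{\bot_{\L{q,2}{}(\om)}}$ from Theorem~\ref{theo:clran:derhamk} into the main estimate: for $E$ in the first set one has $\cd E=0$, for $E$ in the second set $\ed E=0$, which removes one term on the right-hand side. I expect the argument to be essentially routine once Theorem~\ref{theo:cptemb:derhamk} is in place; the only point deserving care is the limit identification, i.e.\ checking that $E$ inherits the homogeneous boundary conditions carried by the subscripts $\gat$, $\gan$ and that $\ed E=0$, $\cd E=0$ --- which is exactly closedness of $\ed_{\gat}^{q,k}$, $\cd_{\gan}^{q,k}$ combined with convergence in the relevant graph norms --- together with the finite-dimensionality of $\Harm{q,k}{\gat,\gan,\id}(\om)$ needed for the weak-orthogonality complements to be closed.
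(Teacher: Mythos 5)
Your proposal is correct and follows essentially the same route as the paper: a contradiction argument based on the higher order compact embedding of Theorem~\ref{theo:cptemb:derhamk}, with the limit identified as an element of $\Harm{q}{\gat,\gan,\id}(\om)\cap\Harm{q}{\gat,\gan,\id}(\om)^{\bot_{\L{q,2}{}(\om)}}=\{0\}$, and the two restricted estimates obtained from the range identities of Theorem~\ref{theo:clran:derhamk}. The only (immaterial) difference is that the paper first passes to a weak limit and then upgrades to strong convergence, whereas you extract the strongly convergent subsequence directly and then identify the limit via closedness of $\ed_{\gat}^{q,k}$ and $\cd_{\gan}^{q,k}$.
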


\begin{proof}
To show the first estimate, we use a standard strategy and assume the contrary. Then there is a sequence 
$$(E_{n})\subset\H{q,k}{\gat}(\ed,\om)\cap\H{q,k}{\gan}(\cd,\om)\cap\Harm{q}{\gat,\gan,\id}(\om)^{\bot_{\L{q,2}{}(\om)}}$$
with $\norm{E_{n}}_{\H{q,k}{}(\om)}=1$ and 
$\norm{\ed E_{n}}_{\H{q+1,k}{}(\om)}+\norm{\cd E_{n}}_{\H{q-1,k}{}(\om)}\to0$.
Hence we may assume that $E_{n}$ converges weakly to some $E$ in
$\H{q,k}{}(\om)\cap\Harm{q}{\gat,\gan,\id}(\om)\cap\Harm{q}{\gat,\gan,\id}(\om)^{\bot_{\L{q,2}{}(\om)}}$.
Thus $E=0$. By Theorem \ref{theo:cptemb:derhamk} $(E_{n})$ converges strongly to $0$ in $\H{q,k}{}(\om)$,
in contradiction to $\norm{E_{n}}_{\H{q,k}{}(\om)}=1$.

The other two estimates follow with Theorem \ref{theo:clran:derhamk} by restriction.
\end{proof}

Note that by Theorem \ref{theo:highorderFAT1}
$$(\ed_{\gat}^{q,k})_{\bot}^{-1}:R(\ed_{\gat}^{q,k})\to D(\ed_{\gat}^{q,k}),\qquad
(\cd_{\gan}^{q,k})_{\bot}^{-1}:R(\cd_{\gan}^{q,k})\to D(\cd_{\gan}^{q,k})$$ 
are bounded.
The higher order versions of Theorem \ref{theo:regpotderhamzero}
and Theorem \ref{theo:highorderregdecoedpbcLip2} read as follows:

\begin{theo}[higher order bounded regular potentials and decompositions for the de Rham complex]
\label{highorderregpotedpbcLip}
Let $(\om,\gat)$ be a bounded strong Lipschitz pair and let $k\geq0$. 
Moreover, let $\PotQ_{\ed,\gat,1}^{q,k}$ be given from Lemma \ref{lem:highorderregdecoedpbcLip}. Then:
\begin{itemize}
\item[\bf(i)]
For all $q\in\{1,\dots,\dimom\}$
there exists a bounded linear regular potential operator
$$\PotP_{\ed,\gat}^{q,k}:=\PotQ_{\ed,\gat,1}^{q-1,k}(\ed_{\gat}^{q-1,k})_{\bot}^{-1}:
\H{q,k}{\gat,0}(\ed,\om)
\cap\Harm{q}{\gat,\gan,\eps}(\om)^{\bot_{\L{q,2}{\eps}(\om)}}
\To\H{q-1,k+1}{\gat}(\om),$$
such that 
$\ed\PotP_{\ed,\gat}^{q,k}=\id|_{\H{q,k}{\gat,0}(\ed,\om)\cap\Harm{q}{\gat,\gan,\eps}(\om)^{\bot_{\L{q,2}{\eps}(\om)}}}$.
In particular, the bounded regular representations
\begin{align*}
R(\ed_{\gat}^{q-1,k})
&=\H{q,k}{\gat,0}(\ed,\om)
\cap\Harm{q}{\gat,\gan,\eps}(\om)^{\bot_{\L{q,2}{\eps}(\om)}}\\
&=\H{q,k}{\gat}(\om)
\cap\ed\H{q-1}{\gat}(\ed,\om)
=\ed\H{q-1,k}{\gat}(\ed,\om)
=\ed\H{q-1,k+1}{\gat}(\om)
\end{align*}
hold and the potentials can be chosen such that they depend continuously on the data.
\item[\bf(ii)]
The bounded regular decompositions
\begin{align*}
\H{q,k}{\gat}(\ed,\om)
&=\H{q,k+1}{\gat}(\om)
+\H{q,k}{\gat,0}(\ed,\om)
=\H{q,k+1}{\gat}(\om)
+\ed\H{q-1,k+1}{\gat}(\om)\\
&=R(\widetilde\PotQ_{\ed,\gat,1}^{q,k})
\dotplus\H{q,k}{\gat,0}(\ed,\om)
=R(\widetilde\PotQ_{\ed,\gat,1}^{q,k})
\dotplus R(\widetilde\PotN_{\ed,\gat}^{q,k})
\end{align*}
hold with bounded linear regular decomposition operators
\begin{align*}
\widetilde\PotQ_{\ed,\gat,1}^{q,k}:=\PotP_{\ed,\gat}^{q+1,k}\ed^{q,k}_{\gat}:\H{q,k}{\gat}(\ed,\om)\to\H{q,k+1}{\gat}(\om),\qquad
\widetilde\PotN_{\ed,\gat}^{q,k}:\H{q,k}{\gat}(\ed,\om)\to\H{q,k}{\gat,0}(\ed,\om)
\end{align*}
satisfying $\widetilde\PotQ_{\ed,\gat,1}^{q,k}+\widetilde\PotN_{\ed,\gat}^{q,k}=\id_{\H{q,k}{\gat}(\ed,\om)}$.
Moreover, $\ed\widetilde\PotQ_{\ed,\gat,1}^{q,k}=\ed\PotQ_{\ed,\gat,1}^{q,k}=\ed^{q,k}_{\gat}$
and thus $\H{q,k}{\gat,0}(\ed,\om)$ is invariant under 
$\PotQ_{\ed,\gat,1}^{q,k}$ and $\widetilde\PotQ_{\ed,\gat,1}^{q,k}$.
It holds $R(\widetilde\PotQ_{\ed,\gat,1}^{q,k})=R(\PotP_{\ed,\gat}^{q+1,k})$ and 
$\widetilde\PotQ_{\ed,\gat,1}^{q,k}
=\PotP_{\ed,\gat}^{q+1,k}\ed^{q,k}_{\gat}
=\PotQ_{\ed,\gat,1}^{q,k}(\ed_{\gat}^{q,k})_{\bot}^{-1}\ed^{q,k}_{\gat}$.
Hence 
$\widetilde\PotQ_{\ed,\gat,1}^{q,k}|_{D((\ed_{\gat}^{q,k})_{\bot})}
=\PotQ_{\ed,\gat,1}^{q,k}|_{D((\ed_{\gat}^{q,k})_{\bot})}$
and thus $\widetilde\PotQ_{\ed,\gat,1}^{q,k}$
may differ from $\PotQ_{\ed,\gat,1}^{q,k}$ 
only on $\H{q,k}{\gat,0}(\ed,\om)$.
\item[\bf(ii')]
The bounded regular kernel decomposition 
$\H{q,k}{\gat,0}(\ed,\om)=\H{q,k+1}{\gat,0}(\ed,\om)+\ed\H{q-1,k+1}{\gat}(\om)$
holds.
\end{itemize}
\end{theo}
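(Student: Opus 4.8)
The plan is to derive (i)--(ii') from the abstract machinery of Subsection~\ref{sec:FA3} fed with the concrete inputs already proved in this section: the closed ranges from Theorem~\ref{theo:clran:derhamk}, the bounded reduced inverses from Theorem~\ref{theo:highorderFAT1}, the bounded regular decomposition operators $\PotQ_{\ed,\gat,1}^{q,k},\PotQ_{\ed,\gat,0}^{q,k}$ from Lemma~\ref{lem:highorderregdecoedpbcLip}, and the identification of the ranges with harmonic complements from Corollary~\ref{cor:highorderregdecoedpbcLip}. The key observation is that, although the higher order adjoints $(\ed_{\gat}^{q,k})^{*}$ are intractable, the only facts about $\ed_{\gat}^{q,k}$ (viewed as a densely defined closed operator in the Hilbert space $\H{q,k}{\gat}(\om)$) entering Lemma~\ref{lem:regpotdeco1} and Lemma~\ref{lem:regpotdeco2} are ``closed range $\Rightarrow$ reduced inverse bounded'' (Lemma~\ref{lem:toolboxcpt1}, i.e.\ Theorem~\ref{theo:highorderFAT1}) and the complex property; hence these two lemmas apply verbatim, with $\A_{1}=\ed_{\gat}^{q-1,k}$ for (i) and $\A_{1}=\ed_{\gat}^{q,k}$ for (ii).

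For (i): Theorem~\ref{theo:clran:derhamk} gives that $R(\ed_{\gat}^{q-1,k})$ is closed, so by Theorem~\ref{theo:highorderFAT1} the reduced inverse $(\ed_{\gat}^{q-1,k})_{\bot}^{-1}\colon R(\ed_{\gat}^{q-1,k})\to\H{q-1,k}{\gat}(\ed,\om)$ is bounded; composing with the bounded operator $\PotQ_{\ed,\gat,1}^{q-1,k}\colon\H{q-1,k}{\gat}(\ed,\om)\to\H{q-1,k+1}{\gat}(\om)$ from Lemma~\ref{lem:highorderregdecoedpbcLip} shows that $\PotP_{\ed,\gat}^{q,k}$ is bounded into $\H{q-1,k+1}{\gat}(\om)$. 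By Corollary~\ref{cor:highorderregdecoedpbcLip} its domain $\H{q,k}{\gat,0}(\ed,\om)\cap\Harm{q}{\gat,\gan,\eps}(\om)^{\bot_{\L{q,2}{\eps}(\om)}}$ equals $R(\ed_{\gat}^{q-1,k})=\ed\H{q-1,k}{\gat}(\ed,\om)=\ed\H{q-1,k+1}{\gat}(\om)$, giving the stated representations and their closedness in $\H{q,k}{}(\om)$. Since $\ed\PotQ_{\ed,\gat,1}^{q-1,k}=\ed_{\gat}^{q-1,k}$ by Lemma~\ref{lem:highorderregdecoedpbcLip}, we get $\ed\PotP_{\ed,\gat}^{q,k}E=\ed_{\gat}^{q-1,k}(\ed_{\gat}^{q-1,k})_{\bot}^{-1}E=E$ for all $E$ in the domain, so $\PotP_{\ed,\gat}^{q,k}$ is a bounded linear regular potential operator, and a cut-off as in Lemma~\ref{highorderregpoted} produces the fixed compact support. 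Equivalently, this is the instance of Lemma~\ref{lem:regpotdeco1} for the weak bounded regular decomposition $\H{q-1,k}{\gat}(\ed,\om)=\H{q-1,k+1}{\gat}(\om)+\H{q-1,k}{\gat,0}(\ed,\om)$ extracted from Lemma~\ref{lem:highorderregdecoedpbcLip}.

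For (ii): part~(i) applied one degree higher provides a bounded regular potential operator $\PotP_{\ed,\gat}^{q+1,k}\colon R(\ed_{\gat}^{q,k})\to\H{q,k+1}{\gat}(\om)$ of $\ed_{\gat}^{q,k}$, where Corollary~\ref{cor:highorderregdecoedpbcLip} ensures that the a priori $\eps$-dependent domain of $\PotP_{\ed,\gat}^{q+1,k}$ is the fixed range $R(\ed_{\gat}^{q,k})$, so $\widetilde\PotQ_{\ed,\gat,1}^{q,k}=\PotP_{\ed,\gat}^{q+1,k}\ed_{\gat}^{q,k}$ is well defined and bounded on $\H{q,k}{\gat}(\ed,\om)$. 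Now Lemma~\ref{lem:regpotdeco2} and Remark~\ref{rem:regpotdeco2} yield $\widetilde\PotQ_{\ed,\gat,1}^{q,k}+\widetilde\PotN_{\ed,\gat}^{q,k}=\id$ with $\widetilde\PotN_{\ed,\gat}^{q,k}=\id-\widetilde\PotQ_{\ed,\gat,1}^{q,k}$ mapping onto $\H{q,k}{\gat,0}(\ed,\om)=N(\ed_{\gat}^{q,k})$ (it is the identity there because $\widetilde\PotQ_{\ed,\gat,1}^{q,k}$ vanishes on $N(\ed_{\gat}^{q,k})$), the directness of the sums, $\ed\widetilde\PotQ_{\ed,\gat,1}^{q,k}=\ed_{\gat}^{q,k}$ together with the invariance of $\H{q,k}{\gat,0}(\ed,\om)$, and $R(\widetilde\PotQ_{\ed,\gat,1}^{q,k})=R(\PotP_{\ed,\gat}^{q+1,k})$. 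The three displayed forms of the decomposition of $\H{q,k}{\gat}(\ed,\om)$ coincide: the last one is Lemma~\ref{lem:highorderregdecoedpbcLip}, and the others follow from $\H{q,k+1}{\gat}(\om)\subset\H{q,k}{\gat}(\ed,\om)$ and $\ed\H{q-1,k+1}{\gat}(\om)\subset\H{q,k}{\gat,0}(\ed,\om)\subset\H{q,k}{\gat}(\ed,\om)$. Finally $\widetilde\PotQ_{\ed,\gat,1}^{q,k}=\PotQ_{\ed,\gat,1}^{q,k}(\ed_{\gat}^{q,k})_{\bot}^{-1}\ed_{\gat}^{q,k}$ by unfolding the definition of $\PotP_{\ed,\gat}^{q+1,k}$, and on $D((\ed_{\gat}^{q,k})_{\bot})$, where $(\ed_{\gat}^{q,k})_{\bot}^{-1}\ed_{\gat}^{q,k}=\id$, this reduces to $\widetilde\PotQ_{\ed,\gat,1}^{q,k}|_{D((\ed_{\gat}^{q,k})_{\bot})}=\PotQ_{\ed,\gat,1}^{q,k}|_{D((\ed_{\gat}^{q,k})_{\bot})}$, exactly as in the proof of Theorem~\ref{theo:cptembmaintheo1}(iv').

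For (ii'): if $E\in\H{q,k}{\gat,0}(\ed,\om)\subset\H{q,k}{\gat}(\ed,\om)$, then Lemma~\ref{lem:highorderregdecoedpbcLip} gives $E=\PotQ_{\ed,\gat,1}^{q,k}E+\ed\PotQ_{\ed,\gat,0}^{q,k}E$ with $\PotQ_{\ed,\gat,1}^{q,k}E\in\H{q,k+1}{\gat}(\om)$ and $\PotQ_{\ed,\gat,0}^{q,k}E\in\H{q-1,k+1}{\gat}(\om)$; since $\ed\PotQ_{\ed,\gat,1}^{q,k}E=\ed E=0$, the first summand lies in $\H{q,k+1}{\gat,0}(\ed,\om)$ and the second in $\ed\H{q-1,k+1}{\gat}(\om)$, and the reverse inclusion is trivial, which is the claimed bounded regular kernel decomposition. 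I expect the only point needing genuine care to be the $\eps$-bookkeeping: one must use Corollary~\ref{cor:highorderregdecoedpbcLip} in its $\eps$-dependent form to see that all the $\eps$-dependent domains of the potential operators collapse to the single $\eps$-independent ranges $R(\ed_{\gat}^{q-1,k})$, $R(\ed_{\gat}^{q,k})$, so that the compositions defining $\PotP_{\ed,\gat}^{q,k}$ and $\widetilde\PotQ_{\ed,\gat,1}^{q,k}$ are meaningful; everything else is a transcription of Subsection~\ref{sec:FA3}. The degenerate degrees $q\in\{0,\dimom\}$ (where $\H{0,k}{\gat,0}(\ed,\om)=\reals_{\gat}$, $\ed$ vanishes on $\dimom$-forms, and the constant top-degree forms in $*\reals$ have to be split off) are handled exactly as in Lemma~\ref{highorderregpoted} and Remark~\ref{highorderregpotedpbcLipremdirneu}.
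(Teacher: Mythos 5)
Your proposal is correct and follows essentially the same route as the paper: the paper verifies the hypotheses of Theorem~\ref{theo:cptembmaintheo1} (via Lemma~\ref{lem:highorderregdecoedpbcLip} and Rellich) and cites its conclusions (iii)--(iv') together with Theorem~\ref{theo:clran:derhamk}, which is exactly the content of the Lemma~\ref{lem:regpotdeco1}/Lemma~\ref{lem:regpotdeco2}/Remark~\ref{rem:regpotdeco2} machinery you unfold by hand. The only cosmetic difference is that you obtain boundedness of $(\ed_{\gat}^{q-1,k})_{\bot}^{-1}$ directly from Theorem~\ref{theo:clran:derhamk} and Theorem~\ref{theo:highorderFAT1} rather than routing through the compact embedding, and your direct argument for (ii') matches the paper's ``(ii') follows directly by (ii)''.
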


\begin{proof}
Lemma \ref{lem:highorderregdecoedpbcLip}
yields the bounded regular decomposition
\begin{align*}
D(\ed_{\gat}^{q,k})
=\H{q,k}{\gat}(\ed,\om)
=\H{q,k+1}{\gat}(\om)
+\ed\H{q-1,k+1}{\gat}(\om)
=\H{+}{1}
+\ed_{\gat}^{q-1,k}\H{+}{0}
\end{align*}
with $\H{+}{1}:=\H{q,k+1}{\gat}(\om)$ and $\H{+}{0}:=\H{q-1,k+1}{\gat}(\om)$
and $\H{}{1}:=\H{q,k}{\gat}(\om)$ and $\H{}{0}:=\H{q-1,k}{\gat}(\om)$.
Rellich's selection theorem shows that the assumptions 
of Lemma \ref{lem:cptembmaintheo} (i) and Theorem \ref{theo:cptembmaintheo1} as satisfied.
Note that it holds $D(\ed_{\gat}^{0,k})=\H{0,k+1}{\gat}(\om)$ and $D(\cd_{\gan}^{\dimom,k})=\H{\dimom,k+1}{\gan}(\om)$.
Theorem \ref{theo:cptembmaintheo1} (iii)-(iv') 
and Theorem \ref{theo:clran:derhamk} show the assertions (i) and (ii).
(ii') follows directly by (ii).
\end{proof}

Hodge $\star$-duality yields the corresponding results for the co-derivative as well, 
cf.~Theorem \ref{highorderregpotcdpbcLip}.

\begin{rem}
\label{highorderregpotedpbcLiprem}
Let us recall the bounded regular decompositions
from Theorem \ref{highorderregpotedpbcLip} (ii), e.g.,
\begin{align*}
\H{q,k}{\gat}(\ed,\om)
&=R(\widetilde\PotQ_{\ed,\gat,1}^{q,k})
\dotplus R(\widetilde\PotN_{\ed,\gat}^{q,k}).
\end{align*}
By Remark \ref{rem:regpotdeco2} we emphasise:
\begin{itemize}
\item[\bf(i)]
$\widetilde\PotQ_{\ed,\gat,1}^{q,k}$ 
and $\widetilde\PotN_{\ed,\gat}^{q,k}=1-\widetilde\PotQ_{\ed,\gat,1}^{q,k}$
are projections with 
$\widetilde\PotQ_{\ed,\gat,1}^{q,k}\widetilde\PotN_{\ed,\gat}^{q,k}
=\widetilde\PotN_{\ed,\gat}^{q,k}\widetilde\PotQ_{\ed,\gat,1}^{q,k}=0$.
\item[\bf(ii)]
For $I_{\pm}:=\widetilde\PotQ_{\ed,\gat,1}^{q,k}\pm\widetilde\PotN_{\ed,\gat}^{q,k}$
it holds $I_{+}=I_{-}^{2}=\id_{\H{q,k}{\gat}(\ed,\om)}$.
Therefore, $I_{+}$, $I_{-}^{2}$, as well as
$I_{-}=2\widetilde\PotQ_{\ed,\gat,1}^{q,k}-\id_{\H{q,k}{\gat}(\ed,\om)}$
are topological isomorphisms on $\H{q,k}{\gat}(\ed,\om)$.
\item[\bf(iii)]
There exists $c>0$ such that for all $E\in\H{q,k}{\gat}(\ed,\om)$
\begin{align*}
c\norm{\widetilde\PotQ_{\ed,\gat,1}^{q,k}E}_{\H{q,k+1}{}(\om)}
&\leq\norm{\ed E}_{\H{q+1,k}{}(\om)}
\leq\norm{E}_{\H{q,k}{}(\ed,\om)},\\
\norm{\widetilde\PotN_{\ed,\gat}^{q,k}E}_{\H{q,k}{}(\om)}
&\leq\norm{E}_{\H{q,k}{}(\om)}
+\norm{\widetilde\PotQ_{\ed,\gat,1}^{q,k}E}_{\H{q,k}{}(\om)}.
\end{align*}
\item[\bf(iii')]
For $E\in\H{q,k}{\gat,0}(\ed,\om)$ we have $\widetilde\PotQ_{\ed,\gat,1}^{q,k}E=0$
and $\widetilde\PotN_{\ed,\gat}^{q,k}E=E$, i.e.,
$\widetilde\PotQ_{\ed,\gat,1}^{q,k}|_{\H{q,k}{\gat,0}(\ed,\om)}=0$
and $\widetilde\PotN_{\ed,\gat}^{q,k}|_{\H{q,k}{\gat,0}(\ed,\om)}=\id_{\H{q,k}{\gat,0}(\ed,\om)}$.
In particular, $\widetilde\PotN_{\ed,\gat}^{q,k}$ is onto.
\end{itemize}
\end{rem}

Theorem \ref{highorderregpotedpbcLip} (ii') shows by induction
and by Hodge $\star$-duality:

\begin{cor}[higher order kernels for the de Rham complex]
\label{cor:highorderkernelderham}
Let $(\om,\gat)$ be a bounded strong Lipschitz pair and let $k,\ell\geq0$.
Then the bounded regular kernel decompositions
\begin{align*}
\H{q,k}{\gat,0}(\ed,\om)&=\H{q,\ell}{\gat,0}(\ed,\om)+\ed\H{q-1,k+1}{\gat}(\om),
&
\H{q,k}{\gan,0}(\cd,\om)&=\H{q,\ell}{\gan,0}(\cd,\om)+\cd\H{q+1,k+1}{\gan}(\om)
\intertext{hold. In particular, for $k=0$ and all $\ell\geq0$}
\H{q,0}{\gat,0}(\ed,\om)&=\H{q,\ell}{\gat,0}(\ed,\om)+\ed\H{q-1,1}{\gat}(\om),
&
\H{q,0}{\gan,0}(\cd,\om)&=\H{q,\ell}{\gan,0}(\cd,\om)+\cd\H{q+1,1}{\gan}(\om).
\end{align*}
\end{cor}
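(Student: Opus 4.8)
The plan is to derive Corollary \ref{cor:highorderkernelderham} from Theorem \ref{highorderregpotedpbcLip} (ii') by a straightforward induction on $k$, using Hodge $\star$-duality to transfer the $\ed$-statement to the $\cd$-statement. Fix $q$ and the two Sobolev orders $k,\ell\geq0$. The base case is $k=\ell$ (or more generally $k\geq\ell$), where the asserted decomposition is trivial since $\H{q,k}{\gat,0}(\ed,\om)\subset\H{q,\ell}{\gat,0}(\ed,\om)$ already and we may take the second summand to be $\{0\}\subset\ed\H{q-1,k+1}{\gat}(\om)$. So only the case $k<\ell$ requires work, and I would induct on $\ell-k$.

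For the inductive step, start from Theorem \ref{highorderregpotedpbcLip} (ii'), which gives the bounded regular kernel decomposition
\[
\H{q,k}{\gat,0}(\ed,\om)=\H{q,k+1}{\gat,0}(\ed,\om)+\ed\H{q-1,k+1}{\gat}(\om).
\]
By the induction hypothesis applied with $k+1$ in place of $k$ (note $\ell-(k+1)<\ell-k$), we have
\[
\H{q,k+1}{\gat,0}(\ed,\om)=\H{q,\ell}{\gat,0}(\ed,\om)+\ed\H{q-1,k+2}{\gat}(\om).
\]
Substituting and using $\ed\H{q-1,k+2}{\gat}(\om)\subset\ed\H{q-1,k+1}{\gat}(\om)$ (since $\H{q-1,k+2}{\gat}(\om)\subset\H{q-1,k+1}{\gat}(\om)$) collapses the two exterior-derivative terms into $\ed\H{q-1,k+1}{\gat}(\om)$, yielding
\[
\H{q,k}{\gat,0}(\ed,\om)=\H{q,\ell}{\gat,0}(\ed,\om)+\ed\H{q-1,k+1}{\gat}(\om),
\]
which is the claim. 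Boundedness of the associated decomposition operators is inherited by composing the bounded operators from Theorem \ref{highorderregpotedpbcLip} (ii') with those from the induction hypothesis; I would only remark on this rather than spell out the constants. The specialization to $k=0$ is immediate by setting $k=0$ in the general statement (recall $\H{q-1,1}{\gat}(\om)=\H{q-1,0+1}{\gat}(\om)$).

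For the $\cd$-version, apply the Hodge $\star$-operator, which is an isometric isomorphism intertwining $\ed$ on $q$-forms with $\pm\cd$ on $(\dimom-q)$-forms and swapping the boundary parts $\gat\leftrightarrow\gan$ in the appropriate way (as used repeatedly in the preceding lemmas, e.g.\ Lemma \ref{app:lem:highorderregdecoedpbcLip} and Theorem \ref{highorderregpotcdpbcLip}); this turns the displayed $\ed$-decomposition into $\H{q,k}{\gan,0}(\cd,\om)=\H{q,\ell}{\gan,0}(\cd,\om)+\cd\H{q+1,k+1}{\gan}(\om)$, and the $k=0$ case follows as before. I do not expect any genuine obstacle here: the only mild subtlety is keeping the Sobolev indices straight across the induction and checking that the nesting $\H{q-1,k+2}{\gat}(\om)\subset\H{q-1,k+1}{\gat}(\om)$ is exactly what makes the two $\ed$-images merge — but this is bookkeeping rather than a real difficulty, since all the analytic content (boundedness, the potential operators, $\bH{}{}=\H{}{}$) is already packaged in Theorem \ref{highorderregpotedpbcLip}.
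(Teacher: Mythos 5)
Your argument is exactly the paper's (the paper offers only the one-line justification ``Theorem \ref{highorderregpotedpbcLip} (ii') shows by induction and by Hodge $\star$-duality''), and the inductive step is correct: substituting the induction hypothesis at level $k+1$ into the decomposition of (ii') and absorbing $\ed\H{q-1,k+2}{\gat}(\om)\subset\ed\H{q-1,k+1}{\gat}(\om)$ is precisely the intended bookkeeping, and the $\cd$-statement follows by $\star$-duality as you say. One caveat: your claim that the case $k>\ell$ is ``trivial'' only yields the inclusion ``$\subset$''; for $k>\ell$ the asserted \emph{equality} in fact fails, since the right-hand side contains $\H{q,\ell}{\gat,0}(\ed,\om)\not\subset\H{q,k}{}(\om)$. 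The corollary must therefore be read with $\ell\geq k$ (consistent with its ``in particular, for $k=0$'' specialisation), and with that reading your base case $k=\ell$ together with the induction on $\ell-k$ is all that is needed.
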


\subsection{Dirichlet/Neumann Forms}

By Lemma \ref{helmom} we recall the orthonormal Helmholtz decompositions
\begin{align}
\begin{aligned}
\label{helmcoho1}
\L{q,2}{\eps}(\om)
&=\ed\H{q-1,0}{\gat}(\ed,\om)
\oplus_{\L{q,2}{\eps}(\om)}
\eps^{-1}\H{q,0}{\gan,0}(\cd,\om)\\
&=\H{q,0}{\gat,0}(\ed,\om)
\oplus_{\L{q,2}{\eps}(\om)}
\eps^{-1}\cd\H{q+1,0}{\gan}(\cd,\om)\\
&=\ed\H{q-1,0}{\gat}(\ed,\om)
\oplus_{\L{q,2}{\eps}(\om)}
\Harm{q}{\gat,\gan,\eps}(\om)
\oplus_{\L{q,2}{\eps}(\om)}
\eps^{-1}\cd\H{q+1,0}{\gan}(\cd,\om),\\
\H{q,0}{\gat,0}(\ed,\om)
&=\ed\H{q-1,0}{\gat}(\ed,\om)
\oplus_{\L{q,2}{\eps}(\om)}
\Harm{q}{\gat,\gan,\eps}(\om),\\
\eps^{-1}\H{q,0}{\gan,0}(\cd,\om)
&=\Harm{q}{\gat,\gan,\eps}(\om)
\oplus_{\L{q,2}{\eps}(\om)}
\eps^{-1}\cd\H{q+1,0}{\gan}(\cd,\om).
\end{aligned}
\end{align}
Let us denote the $\L{q,2}{\eps}(\om)$-orthonormal projector onto 
$\eps^{-1}\H{q,0}{\gan,0}(\cd,\om)$ and $\H{q,0}{\gat,0}(\ed,\om)$ by
$$\pi_{\cd}:\L{q,2}{\eps}(\om)\to\eps^{-1}\H{q,0}{\gan,0}(\cd,\om),\qquad
\pi_{\ed}:\L{q,2}{\eps}(\om)\to\H{q,0}{\gat,0}(\ed,\om),$$
respectively. Then
$$\pi_{\cd}|_{\H{q,0}{\gat,0}(\ed,\om)}:\H{q,0}{\gat,0}(\ed,\om)\to\Harm{q}{\gat,\gan,\eps}(\om),\qquad
\pi_{\ed}|_{\eps^{-1}\H{q,0}{\gan,0}(\cd,\om)}:\eps^{-1}\H{q,0}{\gan,0}(\cd,\om)\to\Harm{q}{\gat,\gan,\eps}(\om)$$
are onto. Moreover, 
\begin{align*}
\pi_{\cd}|_{\ed\H{q-1,0}{\gat}(\ed,\om)}&=0,
&
\pi_{\ed}|_{\eps^{-1}\cd\H{q+1,0}{\gan}(\cd,\om)}&=0,\\
\pi_{\cd}|_{\Harm{q}{\gat,\gan,\eps}(\om)}&=\id_{\Harm{q}{\gat,\gan,\eps}(\om)},
&
\pi_{\ed}|_{\Harm{q}{\gat,\gan,\eps}(\om)}&=\id_{\Harm{q}{\gat,\gan,\eps}(\om)}.
\end{align*}
Therefore, by Corollary \ref{cor:highorderkernelderham} and for all $\ell\geq0$
\begin{align*}
\Harm{q}{\gat,\gan,\eps}(\om)
&=\pi_{\cd}\H{q,0}{\gat,0}(\ed,\om)
=\pi_{\cd}\H{q,\ell}{\gat,0}(\ed,\om),\\
\Harm{q}{\gat,\gan,\eps}(\om)
&=\pi_{\ed}\eps^{-1}\H{q,0}{\gan,0}(\cd,\om)
=\pi_{\ed}\eps^{-1}\H{q,\ell}{\gan,0}(\cd,\om).
\end{align*}
Hence with 
$$\H{q,\infty}{\gat,0}(\ed,\om):=\bigcap_{\ell\geq0}\H{q,\ell}{\gat,0}(\ed,\om),\qquad
\H{q,\infty}{\gan,0}(\cd,\om):=\bigcap_{\ell\geq0}\H{q,\ell}{\gan,0}(\cd,\om)$$
we get by the monotonicity of the Sobolev spaces the following result:

\begin{theo}[smooth pre-bases of Dirichlet/Neumann forms for the de Rham complex]
\label{theo:cohomologyinfty}
Let $(\om,\gat)$ be a bounded strong Lipschitz pair
and recall $d_{\om,\gat}^{q}$ from Remark \ref{rem:derhameps}. Then 
$$\pi_{\cd}\H{q,\infty}{\gat,0}(\ed,\om)
=\Harm{q}{\gat,\gan,\eps}(\om)
=\pi_{\ed}\eps^{-1}\H{q,\infty}{\gan,0}(\cd,\om).$$
Moreover, there exists a smooth $\ed$-\emph{pre-basis}
and a smooth $\cd$-\emph{pre-basis} of $\Harm{q}{\gat,\gan,\eps}(\om)$, i.e.,
there are linear independent smooth forms 
\begin{align*}
\B{q}{\ed,\gat}(\om)
&:=\{B_{\ed,\gat,\ell}^{q}\}_{\ell=1}^{d_{\om,\gat}^{q}}
\subset\H{q,\infty}{\gat,0}(\ed,\om),
&
\B{q}{\cd,\gan}(\om)
&:=\{B_{\cd,\gan,\ell}^{q}\}_{\ell=1}^{d_{\om,\gat}^{q}}
\subset\H{q,\infty}{\gan,0}(\cd,\om)
\end{align*}
such that $\pi_{\cd}\B{q}{\ed,\gat}(\om)$ and $\pi_{\ed}\eps^{-1}\B{q}{\cd,\gan}(\om)$
are both bases of $\Harm{q}{\gat,\gan,\eps}(\om)$. In particular,
$$\Lin\pi_{\cd}\B{q}{\ed,\gat}(\om)
=\Harm{q}{\gat,\gan,\eps}(\om)
=\Lin\pi_{\ed}\eps^{-1}\B{q}{\cd,\gan}(\om).$$
\end{theo}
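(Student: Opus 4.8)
The plan is to reduce the whole statement to the single identity
$\pi_{\cd}\H{q,\infty}{\gat,0}(\ed,\om)=\Harm{q}{\gat,\gan,\eps}(\om)$ (together with its Hodge $\star$-dual), and then read off the pre-bases. Once this identity is available, one fixes any basis $\{h_{\ell}\}_{\ell=1}^{d_{\om,\gat}^{q}}$ of $\Harm{q}{\gat,\gan,\eps}(\om)$ and chooses, for each $\ell$, a form $B_{\ed,\gat,\ell}^{q}\in\H{q,\infty}{\gat,0}(\ed,\om)$ with $\pi_{\cd}B_{\ed,\gat,\ell}^{q}=h_{\ell}$ — this is possible precisely because $\pi_{\cd}$ maps $\H{q,\infty}{\gat,0}(\ed,\om)$ onto $\Harm{q}{\gat,\gan,\eps}(\om)$. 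The forms $B_{\ed,\gat,\ell}^{q}$ lie in $\bigcap_{\ell\geq0}\H{q,\ell}{}(\om)$, hence are smooth by Sobolev embedding on bounded strong Lipschitz domains, and they are linearly independent since their $\pi_{\cd}$-images are; moreover $\pi_{\cd}\B{q}{\ed,\gat}(\om)=\{h_{\ell}\}_{\ell}$ is a basis of $\Harm{q}{\gat,\gan,\eps}(\om)$, so in particular $\Lin\pi_{\cd}\B{q}{\ed,\gat}(\om)=\Harm{q}{\gat,\gan,\eps}(\om)$. The $\cd$-pre-basis $\B{q}{\cd,\gan}(\om)$ and $\Lin\pi_{\ed}\eps^{-1}\B{q}{\cd,\gan}(\om)=\Harm{q}{\gat,\gan,\eps}(\om)$ are obtained in the same way from the dual identity $\pi_{\ed}\eps^{-1}\H{q,\infty}{\gan,0}(\cd,\om)=\Harm{q}{\gat,\gan,\eps}(\om)$, which follows by Hodge $\star$-duality.

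The inclusion $\pi_{\cd}\H{q,\infty}{\gat,0}(\ed,\om)\subseteq\Harm{q}{\gat,\gan,\eps}(\om)$ is immediate from $\H{q,\infty}{\gat,0}(\ed,\om)\subseteq\H{q,\ell}{\gat,0}(\ed,\om)$ together with $\pi_{\cd}\H{q,\ell}{\gat,0}(\ed,\om)=\Harm{q}{\gat,\gan,\eps}(\om)$, as recorded before the theorem. For the reverse inclusion the plan is to fix $h\in\Harm{q}{\gat,\gan,\eps}(\om)$ and build a form $E_{\infty}\in\H{q,\infty}{\gat,0}(\ed,\om)$ with $\pi_{\cd}E_{\infty}=h$ by a telescoping (Mittag--Leffler type) construction. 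Recall that for every $m$ the map $\pi_{\cd}\colon\H{q,m}{\gat,0}(\ed,\om)\to\Harm{q}{\gat,\gan,\eps}(\om)$ is onto with kernel $R(\ed_{\gat}^{q-1,m})$ — indeed on $\H{q,m}{\gat,0}(\ed,\om)\subseteq\H{q,0}{\gat,0}(\ed,\om)$ the operator $\pi_{\cd}$ acts as the $\L{q,2}{\eps}(\om)$-orthogonal projection onto $\Harm{q}{\gat,\gan,\eps}(\om)$, and $\H{q,m}{\gat,0}(\ed,\om)\cap\Harm{q}{\gat,\gan,\eps}(\om)^{\bot_{\L{q,2}{\eps}(\om)}}=R(\ed_{\gat}^{q-1,m})$ by Corollary \ref{cor:highorderregdecoedpbcLip}. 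Granting for the moment that $R(\ed_{\gat}^{q-1,m+1})$ is dense in $R(\ed_{\gat}^{q-1,m})$ for the $\H{q,m}{}(\om)$-norm, one chooses recursively $E_{m}\in\H{q,m}{\gat,0}(\ed,\om)$ with $\pi_{\cd}E_{m}=h$ and $\norm{E_{m+1}-E_{m}}_{\H{q,m}{}(\om)}\leq2^{-m}$: starting from $E_{0}:=h$, given $E_{m}$ one picks any $E'\in\H{q,m+1}{\gat,0}(\ed,\om)$ with $\pi_{\cd}E'=h$, observes $E_{m}-E'\in R(\ed_{\gat}^{q-1,m})$, and sets $E_{m+1}:=E'+w$ with $w\in R(\ed_{\gat}^{q-1,m+1})$ an $\H{q,m}{}(\om)$-approximation of $E_{m}-E'$. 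By construction $(E_{m})_{m\geq\ell}$ is Cauchy in $\H{q,\ell}{}(\om)$ for every $\ell$; since each $\H{q,\ell}{\gat,0}(\ed,\om)$ is closed in $\H{q,\ell}{}(\om)$, the common limit $E_{\infty}$ lies in $\bigcap_{\ell}\H{q,\ell}{\gat,0}(\ed,\om)=\H{q,\infty}{\gat,0}(\ed,\om)$, and $\pi_{\cd}E_{\infty}=\lim_{m}\pi_{\cd}E_{m}=h$ by continuity of $\pi_{\cd}$ on $\L{q,2}{\eps}(\om)$.

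The remaining — and main — point is thus the density of $R(\ed_{\gat}^{q-1,m+1})$ in $R(\ed_{\gat}^{q-1,m})$ in the $\H{q,m}{}(\om)$-norm. By Theorem \ref{theo:clran:derhamk} the range $R(\ed_{\gat}^{q-1,m})=\ed\H{q-1,m}{\gat}(\ed,\om)$ is a closed subspace of $\H{q,m}{}(\om)$, and $\ed_{\gat}^{q-1,m}$ is a closed operator for which $\C{q-1,\infty}{\gat}(\om)$ is a core in the graph norm of $\H{q-1,m}{\gat}(\ed,\om)$ — this core property is exactly where the bounded regular decompositions of Lemma \ref{lem:highorderregdecoedpbcLip} genuinely enter, since writing $\varphi=\varphi_{1}+\ed\psi\in\H{q-1,m}{\gat}(\ed,\om)=\H{q-1,m+1}{\gat}(\om)+\ed\H{q-2,m+1}{\gat}(\om)$ and approximating $\varphi_{1}$ in $\H{q-1,m+1}{}(\om)$ and $\psi$ in $\H{q-2,m+1}{}(\om)$ by $\C{\infty}{\gat}$-forms yields graph-norm approximants of $\varphi$. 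Consequently $\ed\C{q-1,\infty}{\gat}(\om)$ is $\H{q,m}{}(\om)$-dense in the closed range $R(\ed_{\gat}^{q-1,m})$; since smooth forms lie in every Sobolev space, $\ed\C{q-1,\infty}{\gat}(\om)\subseteq R(\ed_{\gat}^{q-1,m+1})$, and the required density follows. The corresponding statements for the co-derivative, needed for $\B{q}{\cd,\gan}(\om)$, follow by Hodge $\star$-duality. I expect the verification that $\C{q-1,\infty}{\gat}(\om)$ is a core for $\ed_{\gat}^{q-1,m}$ — the graph-norm density of smooth forms with partial boundary condition — to be the only step that is not purely formal; everything else is bookkeeping around the results already established above.
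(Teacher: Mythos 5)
Your proof is correct, and at the decisive point it is genuinely more than what the paper records. The paper obtains $\Harm{q}{\gat,\gan,\eps}(\om)=\pi_{\cd}\H{q,\ell}{\gat,0}(\ed,\om)$ for every \emph{finite} $\ell$ from the bounded regular kernel decompositions (Corollary \ref{cor:highorderkernelderham}) — exactly as you do — and then passes to the intersection $\H{q,\infty}{\gat,0}(\ed,\om)$ with the single phrase ``by the monotonicity of the Sobolev spaces''. You rightly treat this passage as the crux: for a nested decreasing family of closed subspaces $V_{\ell}$ with $\pi(V_{\ell})=H$ for all $\ell$, it is in general \emph{false} that $\pi\big(\bigcap_{\ell}V_{\ell}\big)=H$ (take $V_{\ell}=\{x\in\ell^{2}:x_{1}=\dots=x_{\ell}\}$ and $\pi$ the projection onto the first coordinate), so monotonicity alone does not suffice. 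Your Mittag--Leffler construction supplies the missing structural input: the kernel of $\pi_{\cd}|_{\H{q,m}{\gat,0}(\ed,\om)}$ is $R(\ed_{\gat}^{q-1,m})$ by Corollary \ref{cor:highorderregdecoedpbcLip}, and the graph-norm core property of $\C{q-1,\infty}{\gat}(\om)$ for $\ed_{\gat}^{q-1,m}$ — correctly extracted from the bounded regular decomposition of Lemma \ref{lem:highorderregdecoedpbcLip} — yields the $\H{q,m}{}(\om)$-density of $R(\ed_{\gat}^{q-1,m+1})$ in $R(\ed_{\gat}^{q-1,m})$ that makes the telescoping sequence converge in every $\H{q,\ell}{}(\om)$. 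The remaining steps (closedness of $\H{q,\ell}{\gat,0}(\ed,\om)$ in $\H{q,\ell}{}(\om)$, $\L{q,2}{\eps}(\om)$-continuity of $\pi_{\cd}$, lifting a basis of the finite-dimensional space $\Harm{q}{\gat,\gan,\eps}(\om)$, smoothness via Sobolev embedding, Hodge $\star$-duality for the $\cd$-side) all check out. In short: the two arguments coincide up to the surjectivity at each finite order, but where the paper compresses the limit passage into one word, you give the argument that actually justifies it.
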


Note that $(1-\pi_{\cd})$ and $(1-\pi_{\ed})$ are the $\L{q,2}{\eps}(\om)$-orthonormal projectors onto 
$\ed\H{q-1,0}{\gat}(\ed,\om)$ and $\eps^{-1}\cd\H{q+1,0}{\gan}(\cd,\om)$, respectively, i.e.,
\begin{align*}
(1-\pi_{\cd}):\L{q,2}{\eps}(\om)&\to\ed\H{q-1,0}{\gat}(\ed,\om),
&
(1-\pi_{\ed}):\L{q,2}{\eps}(\om)&\to\eps^{-1}\cd\H{q+1,0}{\gan}(\cd,\om).
\end{align*}
Then by \eqref{helmcoho1} and Corollary \ref{cor:highorderregdecoedpbcLip},
cf.~Theorem \ref{highorderregpotedpbcLip} (i), we have
\begin{align}
\begin{aligned}
\label{helmcoho2}
\H{q,0}{\gat,0}(\ed,\om)
&=\ed\H{q-1,0}{\gat}(\ed,\om)
\oplus_{\L{q,2}{\eps}(\om)}
\Harm{q}{\gat,\gan,\eps}(\om)\\
&=\ed\H{q-1,0}{\gat}(\ed,\om)
\oplus_{\L{q,2}{\eps}(\om)}
\Lin\pi_{\cd}\B{q}{\ed,\gat}(\om)\\
&=\ed\H{q-1,0}{\gat}(\ed,\om)
+(\pi_{\cd}-1)\Lin\B{q}{\ed,\gat}(\om)
+\Lin\B{q}{\ed,\gat}(\om)\\
&=\ed\H{q-1,0}{\gat}(\ed,\om)
+\Lin\B{q}{\ed,\gat}(\om),\\
\H{q,k}{\gat,0}(\ed,\om)
&=\ed\H{q-1,0}{\gat}(\ed,\om)\cap\H{q,k}{\gat,0}(\ed,\om)
+\Lin\B{q}{\ed,\gat}(\om),\\
&=\ed\H{q-1,k+1}{\gat}(\om)
+\Lin\B{q}{\ed,\gat}(\om).
\end{aligned}
\end{align}

\begin{theo}[higher order bounded regular direct decompositions for the de Rham complex]
\label{highorderregdecopbcLipinfty}
Let $(\om,\gat)$ be a bounded strong Lipschitz pair and let $k\geq0$. 
Then the bounded regular direct decompositions
\begin{align*}
\H{q,k}{\gat}(\ed,\om)
&=R(\widetilde\PotQ_{\ed,\gat,1}^{q,k})
\dotplus\H{q,k}{\gat,0}(\ed,\om),
&
\H{q,k}{\gat,0}(\ed,\om)
&=\ed\H{q-1,k+1}{\gat}(\om)
\dotplus\Lin\B{q}{\ed,\gat}(\om),\\
\H{q,k}{\gan}(\cd,\om)
&=R(\widetilde\PotQ_{\cd,\gan,1}^{q,k})
\dotplus\H{q,k}{\gan,0}(\cd,\om),
&
\H{q,k}{\gan,0}(\cd,\om)
&=\cd\H{q+1,k+1}{\gan}(\om)
\dotplus\Lin\B{q}{\cd,\gan}(\om)
\end{align*}
hold. Note that 
$R(\widetilde\PotQ_{\ed,\gat,1}^{q,k})\subset\H{q,k+1}{\gat}(\om)$
and $R(\widetilde\PotQ_{\cd,\gan,1}^{q,k})\subset\H{q,k+1}{\gan}(\om)$.
In particular, for $k=0$
\begin{align*}
\H{q,0}{\gat}(\ed,\om)
&=R(\widetilde\PotQ_{\ed,\gat,1}^{q,0})
\dotplus\H{q,0}{\gat,0}(\ed,\om),
&
\H{q,0}{\gat,0}(\ed,\om)
&=\ed\H{q-1,1}{\gat}(\om)
\dotplus\Lin\B{q}{\ed,\gat}(\om)\\
&&
&=\ed\H{q-1,1}{\gat}(\om)
\oplus_{\L{q,2}{\eps}(\om)}
\Harm{q}{\gat,\gan,\eps}(\om),\\
\H{q,0}{\gan}(\cd,\om)
&=R(\widetilde\PotQ_{\cd,\gan,1}^{q,0})
\dotplus\H{q,0}{\gan,0}(\cd,\om),
&
\eps^{-1}\H{q,0}{\gan,0}(\cd,\om)
&=\eps^{-1}\cd\H{q+1,1}{\gan}(\om)
\dotplus\eps^{-1}\Lin\B{q}{\cd,\gan}(\om)\\
&&
&=\eps^{-1}\cd\H{q+1,1}{\gan}(\om)
\oplus_{\L{q,2}{\eps}(\om)}
\Harm{q}{\gat,\gan,\eps}(\om)
\end{align*}
as well as
\begin{align*}
\L{q,2}{\eps}(\om)
&=\H{q,0}{\gat,0}(\ed,\om)
\oplus_{\L{q,2}{\eps}(\om)}
\eps^{-1}\cd\H{q+1,1}{\gan}(\om)\\
&=\ed\H{q-1,1}{\gat}(\om)
\oplus_{\L{q,2}{\eps}(\om)}
\eps^{-1}\H{q,0}{\gan,0}(\cd,\om).
\end{align*}
\end{theo}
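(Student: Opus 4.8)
The plan is to read the first-column splittings straight off results already in place and to reduce the kernel decompositions to the orthonormal Helmholtz decompositions together with the smooth pre-bases from Theorem \ref{theo:cohomologyinfty}. First I would observe that $\H{q,k}{\gat}(\ed,\om)=R(\widetilde\PotQ_{\ed,\gat,1}^{q,k})\dotplus\H{q,k}{\gat,0}(\ed,\om)$ and $\H{q,k}{\gan}(\cd,\om)=R(\widetilde\PotQ_{\cd,\gan,1}^{q,k})\dotplus\H{q,k}{\gan,0}(\cd,\om)$, together with the inclusions $R(\widetilde\PotQ_{\ed,\gat,1}^{q,k})\subset\H{q,k+1}{\gat}(\om)$ and $R(\widetilde\PotQ_{\cd,\gan,1}^{q,k})\subset\H{q,k+1}{\gan}(\om)$, are exactly Theorem \ref{highorderregpotedpbcLip} (ii) and its Hodge $\star$-dual Theorem \ref{highorderregpotcdpbcLip}, the bounded linear decomposition operators being $\widetilde\PotQ_{\ed,\gat,1}^{q,k}$, $\widetilde\PotN_{\ed,\gat}^{q,k}$ and their co-derivative analogues; so nothing new is needed there.

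Next I would treat the kernel decomposition $\H{q,k}{\gat,0}(\ed,\om)=\ed\H{q-1,k+1}{\gat}(\om)\dotplus\Lin\B{q}{\ed,\gat}(\om)$. The sum representation is already the last line of \eqref{helmcoho2}, so only directness and boundedness of the associated projectors remain. The key tool is the $\L{q,2}{\eps}(\om)$-orthonormal projector $\pi_{\cd}$: one has $\pi_{\cd}|_{\ed\H{q-1,k+1}{\gat}(\om)}=0$ because $\ed\H{q-1,k+1}{\gat}(\om)\subset\ed\H{q-1,0}{\gat}(\ed,\om)$, whereas by Theorem \ref{theo:cohomologyinfty} the restriction $\pi_{\cd}|_{\Lin\B{q}{\ed,\gat}(\om)}$ is a linear isomorphism onto $\Harm{q}{\gat,\gan,\eps}(\om)$ (it sends the linearly independent family $\B{q}{\ed,\gat}(\om)$ to the basis $\pi_{\cd}\B{q}{\ed,\gat}(\om)$, both spaces having dimension $d_{\om,\gat}^{q}$). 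Hence an element of $\ed\H{q-1,k+1}{\gat}(\om)\cap\Lin\B{q}{\ed,\gat}(\om)$ is killed by $\pi_{\cd}$ while lying in $\Lin\B{q}{\ed,\gat}(\om)$, so it vanishes, and the projector onto $\Lin\B{q}{\ed,\gat}(\om)$ is $(\pi_{\cd}|_{\Lin\B{q}{\ed,\gat}(\om)})^{-1}\pi_{\cd}$, which is bounded on $\H{q,k}{}(\om)$ since $\pi_{\cd}$ is a bounded orthonormal projector on $\L{q,2}{}(\om)=\L{q,2}{\eps}(\om)$ and its inverse takes values in the finite-dimensional space $\Lin\B{q}{\ed,\gat}(\om)\subset\H{q,\infty}{\gat,0}(\ed,\om)$, on which all norms are equivalent; the $\ed\H{q-1,k+1}{\gat}(\om)$-projector is the complementary one. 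The statements for $\cd$, $\gan$, $\B{q}{\cd,\gan}(\om)$ follow by Hodge $\star$-duality, and multiplying by the topological isomorphism $\eps^{-1}$ turns them into the decompositions of $\eps^{-1}\H{q,0}{\gan,0}(\cd,\om)$.

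Finally, for the $k=0$ refinements I would use the identities $\ed\H{q-1,0}{\gat}(\ed,\om)=\ed\H{q-1,1}{\gat}(\om)$ and, dually, $\cd\H{q+1,0}{\gan}(\cd,\om)=\cd\H{q+1,1}{\gan}(\om)$, which are contained in Theorem \ref{highorderregpotedpbcLip} (i), equivalently Corollary \ref{cor:highorderregdecoedpbcLip}. Substituting these into the orthonormal Helmholtz decompositions \eqref{helmcoho1} immediately gives $\H{q,0}{\gat,0}(\ed,\om)=\ed\H{q-1,1}{\gat}(\om)\oplus_{\L{q,2}{\eps}(\om)}\Harm{q}{\gat,\gan,\eps}(\om)$, the corresponding identity for $\eps^{-1}\H{q,0}{\gan,0}(\cd,\om)$, and the two displayed $\L{q,2}{\eps}(\om)$-decompositions, while the remaining $k=0$ direct sums are just the case $k=0$ of the decompositions already obtained. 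I expect the only step requiring genuine care is the directness and boundedness of the finite-dimensional $\Lin\B$-complement; everything else is bookkeeping, and $\pi_{\cd}$ (respectively $\pi_{\ed}$) together with Theorem \ref{theo:cohomologyinfty} is exactly the instrument for handling it.
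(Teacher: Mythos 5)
Your proposal is correct and follows the paper's own route: the first-column splittings are read off Theorem \ref{highorderregpotedpbcLip} (ii) and its Hodge dual, the sum representation of the kernel comes from \eqref{helmcoho2}, directness is obtained because $\pi_{\cd}$ annihilates $\ed\H{q-1,k+1}{\gat}(\om)$ while restricting to an isomorphism of $\Lin\B{q}{\ed,\gat}(\om)$ onto $\Harm{q}{\gat,\gan,\eps}(\om)$ (your $(\pi_{\cd}|_{\Lin\B{q}{\ed,\gat}(\om)})^{-1}$ is exactly the paper's $I_{\mcH}^{-1}$), boundedness of the finite-dimensional component uses norm equivalence, and the $\cd$- and $k=0$-statements follow by duality and the Helmholtz decompositions. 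The only detail the paper makes explicit that you leave implicit is upgrading the bounded projection onto $\ed\H{q-1,k+1}{\gat}(\om)$ to a bounded choice of potential in $\H{q-1,k+1}{\gat}(\om)$ by composing with $\PotP_{\ed,\gat}^{q,k}$ from Theorem \ref{highorderregpotedpbcLip} (i), which is a one-line step with tools you already cite.
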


\begin{proof}
Theorem \ref{highorderregpotedpbcLip} (ii) and \eqref{helmcoho2} show
$$\H{q,k}{\gat}(\ed,\om)
=R(\widetilde\PotQ_{\ed,\gat,1}^{q,k})
\dotplus\H{q,k}{\gat,0}(\ed,\om),\qquad
\H{q,k}{\gat,0}(\ed,\om)
=\ed\H{q-1,k+1}{\gat}(\om)
+\Lin\B{q}{\ed,\gat}(\om).$$
To prove the directness, let
$$\sum_{\ell=1}^{d_{\om,\gat}^{q}}\lambda_{\ell}B_{\ed,\gat,\ell}^{q}
\in\ed\H{q-1,k+1}{\gat}(\om)\cap\Lin\B{q}{\ed,\gat}(\om).$$
Then $0=\sum_{\ell}\lambda_{\ell}\pi_{\cd}B_{\ed,\gat,\ell}^{q}\in\Lin\pi_{\cd}\B{q}{\ed,\gat}(\om)$
and hence $\lambda_{\ell}=0$ for all $\ell$
as $\pi_{\cd}\B{q}{\ed,\gat}(\om)$ is a basis of 
$\Harm{q}{\gat,\gan,\eps}(\om)$
by Theorem \ref{theo:cohomologyinfty}.
Concerning the boundedness of the decompositions, let 
$$\H{q,k}{\gat,0}(\ed,\om)\ni E=\ed H+B,\qquad
H\in\H{q-1,k+1}{\gat}(\om),\quad
B\in\Lin\B{q}{\ed,\gat}(\om).$$
Then we have by Theorem \ref{highorderregpotedpbcLip} (i)
$\ed H\in R(\ed_{\gat}^{q-1,k})$ and $E_{\ed}:=\PotP_{\ed,\gat}^{q,k}\ed H\in\H{q-1,k+1}{\gat}(\om)$ solves
$\ed E_{\ed}=\ed H$ with $\norm{E_{\ed}}_{\H{q-1,k+1}{}(\om)}\leq c\norm{\ed H}_{\H{q,k}{}(\om)}$.
Therefore, 
$$\norm{E_{\ed}}_{\H{q-1,k+1}{}(\om)}
+\norm{B}_{\H{q,k}{}(\om)}
\leq c\big(\norm{\ed H}_{\H{q,k}{}(\om)}
+\norm{B}_{\H{q,k}{}(\om)}\big)
\leq c\big(\norm{E}_{\H{q,k}{}(\om)}
+\norm{B}_{\H{q,k}{}(\om)}\big).$$
Note that the mapping
$$I_{\mcH}:\Lin\B{q}{\ed,\gat}(\om)\to\Lin\pi_{\cd}\B{q}{\ed,\gat}(\om)=\Harm{q}{\gat,\gan,\eps}(\om);
B_{\ed,\gat,\ell}^{q}\mapsto\pi_{\cd}B_{\ed,\gat,\ell}^{q}$$
is a topological isomorphism 
(between finite dimensional spaces and with arbitrary norms). Thus 
$$\norm{B}_{\H{q,k}{}(\om)}
\leq c\norm{B}_{\L{q,2}{}(\om)}
\leq c\norm{\pi_{\cd}B}_{\L{q,2}{}(\om)}
=c\norm{\pi_{\cd}E}_{\L{q,2}{}(\om)}
\leq c\norm{E}_{\L{q,2}{}(\om)}
\leq c\norm{E}_{\H{q,k}{}(\om)}.$$
Finally, we see $E=\ed E_{\ed}+B\in\ed\H{q-1,k+1}{\gat}(\om)+\Lin\B{q}{\ed,\gat}(\om)$
and 
$$\norm{E_{\ed}}_{\H{q-1,k+1}{}(\om)}
+\norm{B}_{\H{q,k}{}(\om)}
\leq c\norm{E}_{\H{q,k}{}(\om)}.$$
Hodge $\star$-duality yields the other assertions.
\end{proof}

\begin{rem}[higher order bounded regular direct decompositions for the de Rham complex]
\label{rem:highorderregdecopbcLipinfty}
Note that by Theorem \ref{highorderregdecopbcLipinfty} we have, e.g.,
\begin{align*}
\H{q,k}{\gat}(\ed,\om)
&=R(\widetilde\PotQ_{\ed,\gat,1}^{q,k})
\dotplus\Lin\B{q}{\ed,\gat}(\om)
\dotplus\ed\H{q-1,k+1}{\gat}(\om)
=\H{q,k+1}{\gat}(\om)
+\ed\H{q-1,k+1}{\gat}(\om)
\end{align*}
with bounded linear regular direct decomposition operators
\begin{align*}
\widehat\PotQ_{\ed,\gat,1}^{q,k}:\H{q,k}{\gat}(\ed,\om)&\to R(\widetilde\PotQ_{\ed,\gat,1}^{q,k}),
&
R(\widetilde\PotQ_{\ed,\gat,1}^{q,k})&\subset\H{q,k+1}{\gat}(\om),\\
\widehat\PotQ_{\ed,\gat,\infty}^{q,k}:\H{q,k}{\gat}(\ed,\om)&\to\Lin\B{q}{\ed,\gat}(\om),
&
\B{q}{\ed,\gat}(\om)\subset\H{q,\infty}{\gat,0}(\ed,\om)&\subset\H{q,k+1}{\gat}(\om),\\
\widehat\PotQ_{\ed,\gat,0}^{q,k}:\H{q,k}{\gat}(\ed,\om)&\to
\H{q-1,k+1}{\gat}(\om)
\end{align*}
satisfying 
$\widehat\PotQ_{\ed,\gat,1}^{q,k}
+\widehat\PotQ_{\ed,\gat,\infty}^{q,k}
+\ed\widehat\PotQ_{\ed,\gat,0}^{q,k}
=\id_{\H{q,k}{\gat}(\ed,\om)}$.
A closer inspection of the latter proof 
allows for a more precise description of these bounded decomposition operators.

For this, let $E\in\H{q,k}{\gat}(\ed,\om)$. 
According to Theorem \ref{highorderregpotedpbcLip} and 
Remark \ref{highorderregpotedpbcLiprem} we decompose 
\begin{align*}
E&=E_{R}+E_{N}
\in R(\widetilde\PotQ_{\ed,\gat,1}^{q,k})\dotplus R(\widetilde\PotN_{\ed,\gat}^{q,k}),
&
R(\widetilde\PotN_{\ed,\gat}^{q,k})=\H{q,k}{\gat,0}(\ed,\om)=N(\ed_{\gat}^{q,k}),
\end{align*}
with $E_{R}=\widetilde\PotQ_{\ed,\gat,1}^{q,k}E$
and $E_{N}=\widetilde\PotN_{\ed,\gat}^{q,k}E$.
By Theorem \ref{highorderregdecopbcLipinfty} we further decompose
\begin{align*}
\H{q,k}{\gat,0}(\ed,\om)\ni E_{N}
=\ed E_{\ed}+B
\in\ed\H{q-1,k+1}{\gat}(\om)
\dotplus\Lin\B{q}{\ed,\gat}(\om).
\end{align*}
Then 
$\pi_{\cd}E_{N}
=\pi_{\cd}B\in\Harm{q}{\gat,\gan,\eps}(\om)$
and thus 
$B=I_{\mcH}^{-1}\pi_{\cd}B
=I_{\mcH}^{-1}\pi_{\cd}E_{N}\in\Lin\B{q}{\ed,\gat}(\om)$.
Therefore, 
$E_{\ed}=\PotP_{\ed,\gat}^{q,k}\ed E_{\ed}
=\PotP_{\ed,\gat}^{q,k}(E_{N}-B)
=\PotP_{\ed,\gat}^{q,k}(1-I_{\mcH}^{-1}\pi_{\cd})E_{N}$.
Finally, we see
\begin{align*}
\widehat\PotQ_{\ed,\gat,1}^{q,k}
&=\widetilde\PotQ_{\ed,\gat,1}^{q,k}
=\PotP_{\ed,\gat}^{q+1,k}\ed^{q,k}_{\gat}
=\PotQ_{\ed,\gat,1}^{q,k}(\ed_{\gat}^{q,k})_{\bot}^{-1}\ed^{q,k}_{\gat},\\
\widehat\PotQ_{\ed,\gat,\infty}^{q,k}
&=I_{\mcH}^{-1}\pi_{\cd}\widetilde\PotN_{\ed,\gat}^{q,k}
=I_{\mcH}^{-1}\pi_{\cd}(1-\widetilde\PotQ_{\ed,\gat,1}^{q,k}),\\
\widehat\PotQ_{\ed,\gat,0}^{q,k}
&=\PotP_{\ed,\gat}^{q,k}(1-I_{\mcH}^{-1}\pi_{\cd})\widetilde\PotN_{\ed,\gat}^{q,k}
=\PotP_{\ed,\gat}^{q,k}(1-I_{\mcH}^{-1}\pi_{\cd})(1-\widetilde\PotQ_{\ed,\gat,1}^{q,k}).
\end{align*}
\end{rem}

\begin{theo}[alternative Dirichlet/Neumann projections for the de Rham complex]
\label{Bcoho1}
Let $(\om,\gat)$ be a bounded strong Lipschitz pair. Then
\begin{align*}
\Harm{q}{\gat,\gan,\eps}(\om)\cap\B{q}{\ed,\gat}(\om)^{\bot_{\L{q,2}{\eps}(\om)}}
&=\{0\},
&
\eps^{-1}\H{q,0}{\gan,0}(\cd,\om)\cap\B{q}{\ed,\gat}(\om)^{\bot_{\L{q,2}{\eps}(\om)}}
&=\eps^{-1}\cd\H{q+1,0}{\gan}(\cd,\om),\\
\Harm{q}{\gat,\gan,\eps}(\om)\cap\B{q}{\cd,\gan}(\om)^{\bot_{\L{q,2}{}(\om)}}
&=\{0\},
&
\H{q,0}{\gat,0}(\ed,\om)\cap\B{q}{\cd,\gan}(\om)^{\bot_{\L{q,2}{}(\om)}}
&=\ed\H{q-1,0}{\gat}(\ed,\om).
\end{align*}
\end{theo}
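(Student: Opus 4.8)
The plan is to prove the two ``kernel'' identities in the left column first and then to deduce the two ``range'' identities in the right column from them, in both cases combining them with the orthonormal Helmholtz decompositions \eqref{helmcoho1} and Theorem \ref{theo:cohomologyinfty}. Throughout I shall use that $\B{q}{\ed,\gat}(\om)\subset\H{q,0}{\gat,0}(\ed,\om)$ and $\B{q}{\cd,\gan}(\om)\subset\H{q,0}{\gan,0}(\cd,\om)$, that $\pi_{\cd}\B{q}{\ed,\gat}(\om)$ and $\pi_{\ed}\eps^{-1}\B{q}{\cd,\gan}(\om)$ are bases of the finite-dimensional cohomology space $\Harm{q}{\gat,\gan,\eps}(\om)$, and that $1-\pi_{\cd}$, $1-\pi_{\ed}$ are the $\L{q,2}{\eps}(\om)$-orthonormal projectors onto $\ed\H{q-1,0}{\gat}(\ed,\om)$ and $\eps^{-1}\cd\H{q+1,0}{\gan}(\cd,\om)$, respectively.

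\textbf{Kernel identities.} Let $h\in\Harm{q}{\gat,\gan,\eps}(\om)$ be $\L{q,2}{\eps}(\om)$-orthogonal to $\B{q}{\ed,\gat}(\om)$. Since $h\in\eps^{-1}\H{q,0}{\gan,0}(\cd,\om)$ is $\L{q,2}{\eps}(\om)$-orthogonal to $\ed\H{q-1,0}{\gat}(\ed,\om)$, every $B\in\B{q}{\ed,\gat}(\om)$ satisfies $\scp{h}{B}_{\L{q,2}{\eps}(\om)}=\scp{h}{\pi_{\cd}B}_{\L{q,2}{\eps}(\om)}+\scp{h}{(1-\pi_{\cd})B}_{\L{q,2}{\eps}(\om)}=\scp{h}{\pi_{\cd}B}_{\L{q,2}{\eps}(\om)}$; hence $h\perp_{\L{q,2}{\eps}(\om)}\Lin\pi_{\cd}\B{q}{\ed,\gat}(\om)=\Harm{q}{\gat,\gan,\eps}(\om)$, so $h=0$, which is the first identity. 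For the third identity, let $h\in\Harm{q}{\gat,\gan,\eps}(\om)$ be $\L{q,2}{}(\om)$-orthogonal to $\B{q}{\cd,\gan}(\om)$. By symmetry of $\eps$ we have $\scp{h}{B}_{\L{q,2}{}(\om)}=\scp{h}{\eps^{-1}B}_{\L{q,2}{\eps}(\om)}$ for all $B\in\B{q}{\cd,\gan}(\om)$, and since $h\in\H{q,0}{\gat,0}(\ed,\om)$ is $\L{q,2}{\eps}(\om)$-orthogonal to $\eps^{-1}\cd\H{q+1,0}{\gan}(\cd,\om)$, the splitting $\eps^{-1}B=\pi_{\ed}\eps^{-1}B+(1-\pi_{\ed})\eps^{-1}B$ gives $\scp{h}{B}_{\L{q,2}{}(\om)}=\scp{h}{\pi_{\ed}\eps^{-1}B}_{\L{q,2}{\eps}(\om)}$; hence $h\perp_{\L{q,2}{\eps}(\om)}\Lin\pi_{\ed}\eps^{-1}\B{q}{\cd,\gan}(\om)=\Harm{q}{\gat,\gan,\eps}(\om)$, so $h=0$.

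\textbf{Range identities.} For the second identity the inclusion ``$\supseteq$'' holds because $\eps^{-1}\cd\H{q+1,0}{\gan}(\cd,\om)\subset\eps^{-1}\H{q,0}{\gan,0}(\cd,\om)$ by the complex property and is $\L{q,2}{\eps}(\om)$-orthogonal to $\H{q,0}{\gat,0}(\ed,\om)\supset\B{q}{\ed,\gat}(\om)$ by \eqref{helmcoho1}. For ``$\subseteq$'' I decompose $v\in\eps^{-1}\H{q,0}{\gan,0}(\cd,\om)$ according to \eqref{helmcoho1} as $v=h+w$ with $h\in\Harm{q}{\gat,\gan,\eps}(\om)$ and $w\in\eps^{-1}\cd\H{q+1,0}{\gan}(\cd,\om)$; if $v\perp_{\L{q,2}{\eps}(\om)}\B{q}{\ed,\gat}(\om)$ then also $h=v-w\perp_{\L{q,2}{\eps}(\om)}\B{q}{\ed,\gat}(\om)$ by ``$\supseteq$'', so $h=0$ by the first identity and $v=w\in\eps^{-1}\cd\H{q+1,0}{\gan}(\cd,\om)$. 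The fourth identity is analogous: ``$\supseteq$'' holds since $\ed\H{q-1,0}{\gat}(\ed,\om)\subset\H{q,0}{\gat,0}(\ed,\om)$ is $\L{q,2}{}(\om)$-orthogonal to $\H{q,0}{\gan,0}(\cd,\om)\supset\B{q}{\cd,\gan}(\om)$ by \eqref{helmcoho1}; for ``$\subseteq$'' I decompose $u\in\H{q,0}{\gat,0}(\ed,\om)$ as $u=\ed w+h$ with $h\in\Harm{q}{\gat,\gan,\eps}(\om)$ via \eqref{helmcoho1}, so that $h=u-\ed w$ is $\L{q,2}{}(\om)$-orthogonal to $\B{q}{\cd,\gan}(\om)$ (as $\ed w$ is), whence $h=0$ by the third identity and $u=\ed w\in\ed\H{q-1,0}{\gat}(\ed,\om)$.

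The one point that needs care is handling the two inner products $\scp{\cdot}{\cdot}_{\L{q,2}{}(\om)}$ and $\scp{\cdot}{\cdot}_{\L{q,2}{\eps}(\om)}$ side by side: the elementary identity $\scp{h}{B}_{\L{q,2}{}(\om)}=\scp{h}{\eps^{-1}B}_{\L{q,2}{\eps}(\om)}$, valid by symmetry of $\eps$, is what reduces the $\cd$-pre-basis statements (phrased with $\L{q,2}{}(\om)$) to the $\L{q,2}{\eps}(\om)$-geometry in which $\Harm{q}{\gat,\gan,\eps}(\om)$ is characterised, and one must remember that the range orthogonality $\ed\H{q-1,0}{\gat}(\ed,\om)\perp\H{q,0}{\gan,0}(\cd,\om)$ holds in both inner products, being nothing but integration by parts. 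The rest is direct bookkeeping with \eqref{helmcoho1} and Theorem \ref{theo:cohomologyinfty}.
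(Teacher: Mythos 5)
Your proof is correct and follows essentially the same route as the paper's: the kernel identities are obtained by moving the projector $\pi_{\cd}$ (resp.\ $\pi_{\ed}\eps^{-1}$) onto the pre-basis elements and invoking Theorem \ref{theo:cohomologyinfty}, and the range identities follow from the orthogonality \eqref{Borthoedcd} combined with the Helmholtz splittings \eqref{helmcoho1} exactly as in the paper. The only cosmetic difference is that you justify $\scp{h}{B}=\scp{h}{\pi_{\cd}B}$ via orthogonality of $h$ to $R(1-\pi_{\cd})$ rather than via self-adjointness of $\pi_{\cd}$, which is the same fact.
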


\begin{proof}
For $H\in\Harm{q}{\gat,\gan,\eps}(\om)\cap\B{q}{\ed,\gat}(\om)^{\bot_{\L{q,2}{\eps}(\om)}}$
we have
\begin{align*}
0=\scp{H}{B_{\ed,\gat,\ell}^{q}}_{\L{q,2}{\eps}(\om)}
=\scp{\pi_{\cd}H}{B_{\ed,\gat,\ell}^{q}}_{\L{q,2}{\eps}(\om)}
=\scp{H}{\pi_{\cd}B_{\ed,\gat,\ell}^{q}}_{\L{q,2}{\eps}(\om)}
\end{align*}
and hence $H=0$ by Theorem \ref{theo:cohomologyinfty}.
Analogously, we see for 
$H\in\Harm{q}{\gat,\gan,\eps}(\om)\cap\B{q}{\cd,\gan}(\om)^{\bot_{\L{q,2}{}(\om)}}$
\begin{align*}
0=\scp{H}{B_{\cd,\gan,\ell}^{q}}_{\L{q,2}{}(\om)}
=\scp{\pi_{\ed}H}{\eps^{-1}B_{\cd,\gan,\ell}^{q}}_{\L{q,2}{\eps}(\om)}
=\scp{H}{\pi_{\ed}\eps^{-1}B_{\cd,\gan,\ell}^{q}}_{\L{q,2}{\eps}(\om)}
\end{align*}
and thus $H=0$. It holds
\begin{align}
\label{Borthoedcd}
\eps^{-1}\cd\H{q+1,0}{\gan}(\cd,\om)\bot_{\L{q,2}{\eps}(\om)}\B{q}{\ed,\gat}(\om),\qquad
\ed\H{q-1,0}{\gat}(\ed,\om)\bot_{\L{q,2}{}(\om)}\B{q}{\cd,\gan}(\om).
\end{align}
According to \eqref{helmcoho1} we can decompose
\begin{align*}
\eps^{-1}\H{q,0}{\gan,0}(\cd,\om)
&=\eps^{-1}\cd\H{q+1,0}{\gan}(\cd,\om)
\oplus_{\L{q,2}{\eps}(\om)}
\Harm{q}{\gat,\gan,\eps}(\om),\\
\H{q,0}{\gat,0}(\ed,\om)
&=\ed\H{q-1,0}{\gat}(\ed,\om)
\oplus_{\L{q,2}{\eps}(\om)}
\Harm{q}{\gat,\gan,\eps}(\om),
\end{align*}
which shows by \eqref{Borthoedcd} the other two assertions.
\end{proof}

\begin{cor}[alternative Dirichlet/Neumann projections for the de Rham complex]
\label{cor:Bcoho1}
Let $(\om,\gat)$ be a bounded strong Lipschitz pair and let $k\geq0$. Then
\begin{align*}
\eps^{-1}\H{q,k}{\gan,0}(\cd,\om)\cap\B{q}{\ed,\gat}(\om)^{\bot_{\L{q,2}{\eps}(\om)}}
&=\eps^{-1}\cd\H{q+1,k}{\gan}(\cd,\om)
=\eps^{-1}\cd\H{q+1,k+1}{\gan}(\om),\\
\H{q,k}{\gat,0}(\ed,\om)\cap\B{q}{\cd,\gan}(\om)^{\bot_{\L{q,2}{}(\om)}}
&=\ed\H{q-1,k}{\gat}(\ed,\om)
=\ed\H{q-1,k+1}{\gat}(\om).
\end{align*}
\end{cor}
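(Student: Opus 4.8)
The plan is to deduce the corollary from its zero order version, Theorem~\ref{Bcoho1}, by lifting the regularity with the help of Theorem~\ref{highorderregpotedpbcLip}~(i) and its Hodge $\star$-dual, Theorem~\ref{highorderregpotcdpbcLip}. I would carry out the argument in detail only for the second identity,
$$\H{q,k}{\gat,0}(\ed,\om)\cap\B{q}{\cd,\gan}(\om)^{\bot_{\L{q,2}{}(\om)}}=\ed\H{q-1,k}{\gat}(\ed,\om)=\ed\H{q-1,k+1}{\gat}(\om),$$
since the first identity follows in exactly the same manner after interchanging the roles of $\ed,\gat$ and $\cd,\gan$ and performing the substitution $E\mapsto\eps^{-1}E$, precisely as in the proof of Theorem~\ref{Bcoho1} (the equalities $\ed\H{q-1,k}{\gat}(\ed,\om)=\ed\H{q-1,k+1}{\gat}(\om)$ and $\cd\H{q+1,k}{\gan}(\cd,\om)=\cd\H{q+1,k+1}{\gan}(\om)$ being Corollary~\ref{cor:highorderregdecoedpbcLip}).

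For the inclusion ``$\supseteq$'' I would take $F\in\H{q-1,k}{\gat}(\ed,\om)\subset\H{q-1,0}{\gat}(\ed,\om)$ and note that $\ed F\in\H{q,k}{\gat}(\om)$ with $\ed\ed F=0$, hence $\ed F\in\H{q,k}{\gat,0}(\ed,\om)$, while $\ed F\in\ed\H{q-1,0}{\gat}(\ed,\om)$ is $\L{q,2}{}(\om)$-orthogonal to $\B{q}{\cd,\gan}(\om)$ by \eqref{Borthoedcd}; this shows that $\ed\H{q-1,k}{\gat}(\ed,\om)$ lies in the left-hand side. For ``$\subseteq$'' I would start with $E\in\H{q,k}{\gat,0}(\ed,\om)\cap\B{q}{\cd,\gan}(\om)^{\bot_{\L{q,2}{}(\om)}}$. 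Since $\H{q,k}{\gat,0}(\ed,\om)\subset\H{q,0}{\gat,0}(\ed,\om)$, Theorem~\ref{Bcoho1} already yields $E\in\ed\H{q-1,0}{\gat}(\ed,\om)$. On the other hand $E\in\H{q,k}{\gat}(\om)$, so $E\in\H{q,k}{\gat}(\om)\cap\ed\H{q-1,0}{\gat}(\ed,\om)$, and Theorem~\ref{highorderregpotedpbcLip}~(i) identifies this intersection with $\ed\H{q-1,k}{\gat}(\ed,\om)=\ed\H{q-1,k+1}{\gat}(\om)$, which completes the proof.

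I do not expect a genuine obstacle here; the corollary is short. The only point requiring a little care is the reduction itself: one should combine the \emph{non-regular} zero order statement of Theorem~\ref{Bcoho1} with the \emph{separate} regularity-lifting statement of Theorem~\ref{highorderregpotedpbcLip}~(i), rather than attempting to rerun the Helmholtz/pre-basis argument of Theorem~\ref{Bcoho1} directly at Sobolev level $k$. The latter route would force one to propagate the $\eps$-orthogonality together with the $\L{q,2}{}(\om)$-orthogonality (and the finite-dimensional isomorphism $I_{\mcH}$) through the higher order spaces, which is exactly the bookkeeping the two-step argument avoids. Finally, the passage from the $\ed$-statement to the $\cd$-statement is the standard Hodge $\star$-duality used throughout the paper, with the $\eps$-weight handled by the substitution $E\mapsto\eps^{-1}E$ already employed in the proof of Theorem~\ref{Bcoho1}.
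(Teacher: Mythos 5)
Your proposal is correct and follows essentially the same route as the paper: the paper's proof is exactly the chain $\H{q,k}{\gat,0}(\ed,\om)\cap\B{q}{\cd,\gan}(\om)^{\bot_{\L{q,2}{}(\om)}}=\H{q,k}{\gat}(\om)\cap\ed\H{q-1,0}{\gat}(\ed,\om)=\ed\H{q-1,k}{\gat}(\ed,\om)$, using Theorem~\ref{Bcoho1} for the zero order identification and Theorem~\ref{highorderregpotedpbcLip}~(i) for the regularity lift, with the $\cd$-statement obtained analogously. Your splitting into two inclusions merely makes explicit what the paper's chain of equalities contains implicitly.
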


\begin{proof}
We have by Theorem \ref{Bcoho1} and Theorem \ref{highorderregpotedpbcLip} (i)
\begin{align*}
\H{q,k}{\gat,0}(\ed,\om)\cap\B{q}{\cd,\gan}(\om)^{\bot_{\L{q,2}{}(\om)}}
&=\H{q,k}{\gat}(\om)\cap\H{q,0}{\gat,0}(\ed,\om)\cap\B{q}{\cd,\gan}(\om)^{\bot_{\L{q,2}{}(\om)}}\\
&=\H{q,k}{\gat}(\om)\cap\ed\H{q-1,0}{\gat}(\ed,\om)\\
&=\ed\H{q-1,k}{\gat}(\ed,\om)
=\ed\H{q-1,k+1}{\gat}(\om).
\end{align*}
Analogously,
\begin{align*}
\eps^{-1}\H{q,k}{\gan,0}(\cd,\om)\cap\B{q}{\ed,\gat}(\om)^{\bot_{\L{q,2}{\eps}(\om)}}
&=\eps^{-1}\H{q,k}{\gan}(\om)\cap\eps^{-1}\H{q,0}{\gan,0}(\cd,\om)\cap\B{q}{\ed,\gat}(\om)^{\bot_{\L{q,2}{\eps}(\om)}}\\
&=\eps^{-1}\H{q,k}{\gan}(\om)\cap\eps^{-1}\cd\H{q+1,0}{\gan}(\cd,\om)\\
&=\eps^{-1}\cd\H{q+1,k}{\gan}(\cd,\om)
=\eps^{-1}\cd\H{q+1,k+1}{\gan}(\om),
\end{align*}
completing the proof.
\end{proof}

Theorem \ref{highorderregdecopbcLipinfty} and 
$\star\Harm{q}{\gat,\gan,\id}(\om)=\Harm{\dimom-q}{\gan,\gat,\id}(\om)$
shows the following result:

\begin{theo}[cohomology groups of the de Rham complex]
\label{Bcoho2}
Let $(\om,\gat)$ be a bounded strong Lipschitz pair. 
Then ($\cong$ means isomorphic)
\begin{align*}
N(\ed_{\gat}^{q,k})/R(\ed_{\gat}^{q-1,k})
\cong\Lin\B{q}{\ed,\gat}(\om)
\cong\Harm{q}{\gat,\gan,\eps}(\om)
\cong\Lin\B{q}{\cd,\gan}(\om)
\cong N(\cd_{\gan}^{q,k})/R(\cd_{\gan}^{q+1,k}).
\end{align*}
In particular, the dimensions of the cohomology groups
(Dirichlet/Neumann forms) are independent of $k$ and $\eps$ and it holds
\begin{align*}
d_{\om,\gat}^{q}
=\dim\big(N(\ed_{\gat}^{q,k})/R(\ed_{\gat}^{q-1,k})\big)
=\dim\big(N(\cd_{\gan}^{q,k})/R(\cd_{\gan}^{q+1,k})\big).
\end{align*}
Moroever, $d_{\om,\gat}^{q}=d_{\om,\gan}^{\dimom-q}$.
\end{theo}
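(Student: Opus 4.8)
The plan is to extract every isomorphism directly from the bounded regular \emph{direct} decompositions of Theorem~\ref{highorderregdecopbcLipinfty}, combined with the identification of kernels and ranges from Theorem~\ref{theo:clran:derhamk}, and to obtain the final duality statement $d_{\om,\gat}^{q}=d_{\om,\gan}^{\dimom-q}$ via the Hodge $\star$-operator.

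First I would recall that Theorem~\ref{theo:clran:derhamk} gives $N(\ed_{\gat}^{q,k})=\H{q,k}{\gat,0}(\ed,\om)$ and $R(\ed_{\gat}^{q-1,k})=\ed\H{q-1,k+1}{\gat}(\om)$, the latter closed in $\H{q,k}{}(\om)$. Theorem~\ref{highorderregdecopbcLipinfty} provides the bounded regular direct decomposition
$$\H{q,k}{\gat,0}(\ed,\om)=\ed\H{q-1,k+1}{\gat}(\om)\dotplus\Lin\B{q}{\ed,\gat}(\om),$$
and a (topological) direct sum $V=A\dotplus B$ always induces the canonical isomorphism $V/A\cong B$ by restricting the quotient map to $B$. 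Hence $N(\ed_{\gat}^{q,k})/R(\ed_{\gat}^{q-1,k})\cong\Lin\B{q}{\ed,\gat}(\om)$. Next, by Theorem~\ref{theo:cohomologyinfty} the set $\pi_{\cd}\B{q}{\ed,\gat}(\om)$ is a basis of $\Harm{q}{\gat,\gan,\eps}(\om)$, so the linear map $\pi_{\cd}|_{\Lin\B{q}{\ed,\gat}(\om)}$ (the map $I_{\mcH}$ of Remark~\ref{rem:highorderregdecopbcLipinfty}) is an isomorphism onto $\Harm{q}{\gat,\gan,\eps}(\om)$, giving $\Lin\B{q}{\ed,\gat}(\om)\cong\Harm{q}{\gat,\gan,\eps}(\om)$.

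The co-derivative side runs completely parallel (it is in fact the Hodge $\star$-image of the above): Theorem~\ref{theo:clran:derhamk} gives $N(\cd_{\gan}^{q,k})=\H{q,k}{\gan,0}(\cd,\om)$ and $R(\cd_{\gan}^{q+1,k})=\cd\H{q+1,k+1}{\gan}(\om)$, Theorem~\ref{highorderregdecopbcLipinfty} gives $\H{q,k}{\gan,0}(\cd,\om)=\cd\H{q+1,k+1}{\gan}(\om)\dotplus\Lin\B{q}{\cd,\gan}(\om)$ and hence $N(\cd_{\gan}^{q,k})/R(\cd_{\gan}^{q+1,k})\cong\Lin\B{q}{\cd,\gan}(\om)$, and Theorem~\ref{theo:cohomologyinfty} gives that $E\mapsto\pi_{\ed}\eps^{-1}E$ maps $\Lin\B{q}{\cd,\gan}(\om)$ isomorphically onto $\Harm{q}{\gat,\gan,\eps}(\om)$. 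Concatenating the five isomorphisms yields the displayed chain; since $\dim\Harm{q}{\gat,\gan,\eps}(\om)=\dim\Harm{q}{\gat,\gan,\id}(\om)=d_{\om,\gat}^{q}$ by Remark~\ref{rem:derhameps}, the two dimension formulas follow, and in particular the cohomology dimension is independent of $k$ (and of $\eps$).

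For $d_{\om,\gat}^{q}=d_{\om,\gan}^{\dimom-q}$ I would invoke the Hodge $\star$-operator, an isometric isomorphism $\L{q,2}{}(\om)\to\L{\dimom-q,2}{}(\om)$, which through $\cd=\pm\star\ed\star$ maps $\H{q}{\gat,0}(\ed,\om)$ onto $\H{\dimom-q}{\gat,0}(\cd,\om)$ and $\H{q}{\gan,0}(\cd,\om)$ onto $\H{\dimom-q}{\gan,0}(\ed,\om)$. Consequently
$$\star\Harm{q}{\gat,\gan,\id}(\om)=\H{\dimom-q}{\gan,0}(\ed,\om)\cap\H{\dimom-q}{\gat,0}(\cd,\om)=\Harm{\dimom-q}{\gan,\gat,\id}(\om),$$
so these finite-dimensional spaces are isomorphic and $d_{\om,\gat}^{q}=d_{\om,\gan}^{\dimom-q}$. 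The whole argument is essentially bookkeeping; the only points requiring a little attention are that the ``$\dotplus$'' decompositions really do present $R(\ed_{\gat}^{q-1,k})$ (resp.\ $R(\cd_{\gan}^{q+1,k})$) as a complement inside the kernel --- guaranteed by Theorem~\ref{highorderregdecopbcLipinfty} --- and that $\pi_{\cd}$ and $\pi_{\ed}\eps^{-1}$ are injective on the spanned finite-dimensional spaces, which is exactly Theorem~\ref{theo:cohomologyinfty}.
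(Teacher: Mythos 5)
Your proposal is correct and follows exactly the route the paper intends: the paper derives Theorem \ref{Bcoho2} from the direct decompositions of Theorem \ref{highorderregdecopbcLipinfty} together with $\star\Harm{q}{\gat,\gan,\id}(\om)=\Harm{\dimom-q}{\gan,\gat,\id}(\om)$, and you have simply written out those steps (quotient by a direct complement, injectivity of $\pi_{\cd}$ and $\pi_{\ed}\eps^{-1}$ on the pre-bases via Theorem \ref{theo:cohomologyinfty}, and $\eps$-independence via Remark \ref{rem:derhameps}) in full.
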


\begin{rem}
\label{rem:CM2}
For the case of either no or full boundary conditions, i.e.,
$\gat=\emptyset$ or $\gat=\ga$,
related results on regular potentials, regular decompositions,
as well as cohomology groups and their dimensions,
even for real Sobolev exponents $k\in\reals$, have been proved in \cite{CM2010a}
using integral equation representations and methods.
In particular, we refer to \cite[Theorem 1.1, Theorem 4.9]{CM2010a}.
\end{rem}

\section{Vector De Rham Complex}
\label{sec:vecderham}

We reformulate the results from Section \ref{sec:derham}
in the special case $\dimom=3$ and $q\in\{0,1,2,3\}$
using vector proxies.
Recall Section \ref{sec:defsobolevvecfields}
and let $\eps$ and $\mu$ be admissible weights.
To apply the FA-ToolBox from Section \ref{sec:FA}
for the vector de Rham complex, let $\grad$, $\rot$, and $\div$
be realised as densely defined (unbounded) linear operators
\begin{align*}
\mr{\grad}_{\gat}:D(\mr{\grad}_{\gat})\subset\L{2}{}(\om)&\to\L{2}{\eps}(\om);
&
u&\mapsto\grad u,\\
\mu^{-1}\mr{\rot}_{\gat}:D(\mu^{-1}\mr{\rot}_{\gat})\subset\L{2}{\eps}(\om)&\to\L{2}{\mu}(\om);
&
E&\mapsto\mu^{-1}\rot E,\\
\mr{\div}_{\gat}\mu:D(\mr{\div}_{\gat}\mu)\subset\L{2}{\mu}(\om)&\to\L{2}{}(\om);
&
H&\mapsto\div\mu H
\end{align*}
with domains of definition
$$D(\mr{\grad}_{\gat}):=\C{\infty}{\gat}(\om),\qquad
D(\mu^{-1}\mr{\rot}_{\gat}):=\C{\infty}{\gat}(\om),\qquad
D(\mr{\div}_{\gat}\mu):=\mu^{-1}\C{\infty}{\gat}(\om)$$
satisfying the complex properties
$$\mu^{-1}\mr{\rot}_{\gat}\mr{\grad}_{\gat}\subset0,\qquad
\mr{\div}_{\gat}\mu\,\mu^{-1}\mr{\rot}_{\gat}
=\mr{\div}_{\gat}\mr{\rot}_{\gat}\subset0.$$
Then the closures 
$$\grad_{\gat}:=\ol{\mr{\grad}_{\gat}},\qquad
\mu^{-1}\rot_{\gat}:=\ol{\mu^{-1}\mr{\rot}_{\gat}},\qquad
\div_{\gat}\mu:=\ol{\mr{\div}_{\gat}\mu}$$
and Hilbert space adjoints 
$$\grad_{\gat}^{*}=\mr{\grad}_{\gat}^{*},\qquad
(\mu^{-1}\rot_{\gat})^{*}=(\mu^{-1}\mr{\rot}_{\gat})^{*},\qquad
(\div_{\gat}\mu)^{*}=(\mr{\div}_{\gat}\mu)^{*}$$
are given by
\begin{align*}
\A_{0}:=\grad_{\gat}:D(\grad_{\gat})\subset\L{2}{}(\om)&\to\L{2}{\eps}(\om);
&
u&\mapsto\grad u,\\
\A_{1}:=\mu^{-1}\rot_{\gat}:D(\mu^{-1}\rot_{\gat})\subset\L{2}{\eps}(\om)&\to\L{2}{\mu}(\om);
&
E&\mapsto\mu^{-1}\rot E,\\
\A_{2}:=\div_{\gat}\mu:D(\div_{\gat}\mu)\subset\L{2}{\mu}(\om)&\to\L{2}{}(\om);
&
H&\mapsto\div\mu H,\\
\A_{0}^{*}=\grad_{\gat}^{*}=-\div_{\gan}\eps:D(\div_{\gan}\eps)\subset\L{2}{\eps}(\om)&\to\L{2}{}(\om);
&
E&\mapsto-\div\eps E,\\
\A_{1}^{*}=(\mu^{-1}\rot_{\gat})^{*}=\eps^{-1}\rot_{\gan}:D(\eps^{-1}\rot_{\gan})\subset\L{2}{\mu}(\om)&\to\L{2}{\eps}(\om);
&
H&\mapsto\eps^{-1}\rot H,\\
\A_{2}^{*}=(\div_{\gat}\mu)^{*}=-\grad_{\gan}:D(\grad_{\gan})\subset\L{2}{}(\om)&\to\L{2}{\mu}(\om);
&
u&\mapsto-\grad u
\end{align*}
with domains of definition
\begin{align*}
D(\A_{0})=D(\grad_{\gat})&=\H{1}{\gat}(\om),
&
D(\A_{0}^{*})=D(\div_{\gan}\eps)&=\eps^{-1}\H{}{\gan}(\div,\om),\\
D(\A_{1})=D(\mu^{-1}\rot_{\gat})&=\H{}{\gat}(\rot,\om),
&
D(\A_{1}^{*})=D(\eps^{-1}\rot_{\gan})&=\H{}{\gan}(\rot,\om),\\
D(\A_{2})=D(\div_{\gat}\mu)&=\mu^{-1}\H{}{\gat}(\div,\om),
&
D(\A_{2}^{*})=D(\grad_{\gan})&=\H{1}{\gan}(\om).
\end{align*}
As in Section \ref{sec:derham}, indeed 
the domains of definition of the adjoints are given as stated.

\begin{rem}
\label{rem:weakeqstrongderhamvec}
Note that by definition the adjoints are given by 
\begin{align*}
\grad_{\gat}^{*}=\mr{\grad}_{\gat}^{*}=-\bs{\div}_{\gan}\eps:
D(\bs{\div}_{\gan}\eps)\subset\L{2}{\eps}(\om)&\to\L{2}{}(\om),\\
(\mu^{-1}\rot_{\gat})^{*}=(\mu^{-1}\mr{\rot}_{\gat})^{*}=\eps^{-1}\bs{\rot}_{\gan}:
D(\eps^{-1}\bs{\rot}_{\gan})\subset\L{2}{\mu}(\om)&\to\L{2}{\eps}(\om),\\
(\div_{\gat}\mu)^{*}=(\mr{\div}_{\gat}\mu)^{*}=-\bs{\grad}_{\gan}:
D(\bs{\grad}_{\gan})\subset\L{2}{}(\om)&\to\L{2}{\mu}(\om)
\end{align*}
with domains of definition
\begin{align*}
D(\bs{\div}_{\gan}\eps)&=\eps^{-1}\bH{}{\gan}(\div,\om),\qquad
D(\eps^{-1}\bs{\rot}_{\gan})&=\bH{}{\gan}(\rot,\om),\qquad
D(\bs{\grad}_{\gan})&=\bH{1}{\gan}(\om).
\end{align*}
Lemma \ref{lem:wsbcderhamvec} (weak and strong boundary conditions coincide)
shows indeed that $\bs{\div}_{\gan}\eps=\div_{\gan}\eps$, 
$\eps^{-1}\bs{\rot}_{\gan}=\eps^{-1}\rot_{\gan}$, and
$\bs{\grad}_{\gan}=\grad_{\gan}$, in particular 
\begin{align*}
D(\bs{\div}_{\gan}\eps)
=\eps^{-1}\bH{}{\gan}(\div,\om)
&=\eps^{-1}\H{}{\gan}(\div,\om)
=D(\div_{\gan}\eps),\\
D(\eps^{-1}\bs{\rot}_{\gan})
=\bH{}{\gan}(\rot,\om)
&=\H{}{\gan}(\rot,\om)
=D(\eps^{-1}\rot_{\gan}),\\
D(\bs{\grad}_{\gan})
=\bH{1}{\gan}(\om)
&=\H{1}{\gan}(\om)
=D(\grad_{\gan}).
\end{align*}
\end{rem}

By definition we have densely defined and closed (unbounded) linear operators 
defining three dual pairs 
\begin{align*}
\big(\grad_{\gat},(\grad_{\gat})^{*}\big)
&=(\grad_{\gat},-\div_{\gan}\eps),\\
\big(\mu^{-1}\rot_{\gat},(\mu^{-1}\rot_{\gat})^{*}\big)
&=(\mu^{-1}\rot_{\gat},\eps^{-1}\rot_{\gan}),\\
\big(\div_{\gat}\mu,(\div_{\gat}\mu)^{*}\big)
&=(\div_{\gat}\mu,-\grad_{\gan}).
\end{align*}
Remark \ref{remhilcom} and Remark \ref{remhilcomclosure} show the complex properties 
\begin{align*}
\mu^{-1}\rot_{\gat}\grad_{\gat}&\subset0,
&
\div_{\gat}\mu\,\mu^{-1}\rot_{\gat}
=\div_{\gat}\rot_{\gat}&\subset0,\\
-\div_{\gan}\eps\,\eps^{-1}\rot_{\gan}
=-\div_{\gan}\rot_{\gan}&\subset0,
&
-\eps^{-1}\rot_{\gan}\grad_{\gan}&\subset0.
\end{align*}
The long primal and dual vector de Rham Hilbert complex \eqref{lhcomplex2},
cf.~\eqref{derhamcomplex1}, reads
\begin{equation}
\label{vecderhamcomplex1}
\footnotesize
\def\arrowlength{15ex}
\def\arrowdistance{.8}
\begin{tikzcd}[column sep=\arrowlength]
\reals_{\gat}
\arrow[r, rightarrow, shift left=\arrowdistance, "\iota_{\reals_{\gat}}"] 
\arrow[r, leftarrow, shift right=\arrowdistance, "\pi_{\reals_{\gat}}"']
& 
[-2em]
\L{2}{}(\om) 
\ar[r, rightarrow, shift left=\arrowdistance, "\grad_{\gat}"] 
\ar[r, leftarrow, shift right=\arrowdistance, "-\div_{\gan}\eps"']
&
[-1em]
\L{2}{\eps}(\om) 
\ar[r, rightarrow, shift left=\arrowdistance, "\mu^{-1}\rot_{\gat}"] 
\ar[r, leftarrow, shift right=\arrowdistance, "\eps^{-1}\rot_{\gan}"']
& 
[0em]
\L{2}{\mu}(\om) 
\arrow[r, rightarrow, shift left=\arrowdistance, "\div_{\gat}\mu"] 
\arrow[r, leftarrow, shift right=\arrowdistance, "-\grad_{\gan}"']
& 
[-1em]
\L{2}{}(\om) 
\arrow[r, rightarrow, shift left=\arrowdistance, "\pi_{\reals_{\gan}}"] 
\arrow[r, leftarrow, shift right=\arrowdistance, "\iota_{\reals_{\gan}}"']
&
[-2em]
\reals_{\gan}
\end{tikzcd}
\end{equation}
with the complex properties 
\begin{align*}
R(\iota_{\reals_{\gat}})&=N(\grad_{\gat})=\reals_{\gat},
&
\ol{R(\div_{\gan}\eps)}&=(\reals_{\gat})^{\bot_{\L{2}{}(\om)}},\\
R(\grad_{\gat})&\subset N(\mu^{-1}\rot_{\gat}),
&
R(\eps^{-1}\rot_{\gan})&\subset N(\div_{\gan}\eps),\\
R(\mu^{-1}\rot_{\gat})&\subset N(\div_{\gat}\mu),
&
R(\grad_{\gan})&\subset N(\eps^{-1}\rot_{\gan}),\\
\ol{R(\div_{\gat}\mu)}&=(\reals_{\gan})^{\bot_{\L{2}{}(\om)}},
&
R(\iota_{\reals_{\gan}})&=N(\grad_{\gan})=\reals_{\gan}.
\end{align*}
Recalling Remark \ref{firstadjoints}, we note that actually
$\iota_{\reals_{\gat}}\iota_{\reals_{\gat}}^{*}=\pi_{\reals_{\gat}}$ and 
$\iota_{\reals_{\gan}}\iota_{\reals_{\gan}}^{*}=\pi_{\reals_{\gan}}$
as self-adjoint projections on $\L{2}{}(\om)$.

Similar to \eqref{vecderhamcomplex1} (for simplicity let $\eps=\mu=1$)
we investigate the higher order de Rham complex
\begin{equation*}
\def\arrowlength{7ex}
\def\arrowdistance{0}
\begin{tikzcd}[column sep=\arrowlength]
\reals_{\gat}
\arrow[r, rightarrow, shift left=\arrowdistance, "\iota_{\reals_{\gat}}"] 
& 
\H{k}{\gat}(\om)
\arrow[r, rightarrow, shift left=\arrowdistance, "\grad^{k}_{\gat}"] 
& 
\H{k}{\gat}(\om)
\ar[r, rightarrow, shift left=\arrowdistance, "\rot^{k}_{\gat}"] 
& 
\H{k}{\gat}(\om)
\arrow[r, rightarrow, shift left=\arrowdistance, "\div^{k}_{\gat}"] 
& 
\H{k}{\gat}(\om)
\arrow[r, rightarrow, shift left=\arrowdistance, "\pi_{\reals_{\gan}}"] 
&
\reals_{\gan}
\end{tikzcd}
\end{equation*}
as well. More precisely, we consider the densely defined and closed linear operators
\begin{align*}
\grad_{\gat}^{k}:D(\grad_{\gat}^{k})\subset\H{k}{\gat}(\om)&\to\H{k}{\gat}(\om);
\,u\mapsto\grad u,
&
D(\grad_{\gat}^{k})&:=\H{k}{\gat}(\grad,\om)=\H{k+1}{\gat}(\om),\\
\rot_{\gat}^{k}:D(\rot_{\gat}^{k})\subset\H{k}{\gat}(\om)&\to\H{k}{\gat}(\om);
\,E\mapsto\rot E,
&
D(\rot_{\gat}^{k})&:=\H{k}{\gat}(\rot,\om),\\
\div_{\gat}^{k}:D(\div_{\gat}^{k})\subset\H{k}{\gat}(\om)&\to\H{k}{\gat}(\om);
\,H\mapsto\div H,
&
D(\div_{\gat}^{k})&:=\H{k}{\gat}(\div,\om).
\end{align*}
Note that the complex properties 
$R(\grad_{\gat}^{k})\subset N(\rot_{\gat}^{k})$ and
$R(\rot_{\gat}^{k})\subset N(\div_{\gat}^{k})$ 
hold.

\subsection{Regular Potentials and Decompositions}

For $\ed\in\{\grad,\rot,\div\}$
Lemma \ref{lem:highorderregdecoedpbcLip}, Corollary \ref{cor:highorderregdecoedpbcLip},
Theorem \ref{highorderregpotedpbcLip}, and Remark \ref{highorderregpotedpbcLiprem}
read as follows.

\begin{theo}[higher order bounded regular potentials and decompositions 
for the vector de Rham complex with partial boundary condition]
\label{highorderregpotpbcLipderhamvec}
Let $(\om,\gat)$ be a bounded strong Lipschitz pair and let $k\geq0$. Then:
\begin{itemize}
\item[\bf(i)]
The bounded regular decompositions
\begin{align*}
\bH{k}{\gat}(\rot,\om)
=\H{k}{\gat}(\rot,\om)
&=\H{k+1}{\gat}(\om)
+\grad\H{k+1}{\gat}(\om),\\
\bH{k}{\gat}(\div,\om)
=\H{k}{\gat}(\div,\om)
&=\H{k+1}{\gat}(\om)
+\rot\H{k+1}{\gat}(\om)
\end{align*}
hold with bounded linear regular decomposition operators
\begin{align*}
\PotQ_{\rot,\gat,1}^{k}:\H{k}{\gat}(\rot,\om)&\to\H{k+1}{\gat}(\om),
&
\PotQ_{\rot,\gat,0}^{k}:\H{k}{\gat}(\rot,\om)&\to\H{k+1}{\gat}(\om),\\
\PotQ_{\div,\gat,1}^{k}:\H{k}{\gat}(\div,\om)&\to\H{k+1}{\gat}(\om),
&
\PotQ_{\div,\gat,0}^{k}:\H{k}{\gat}(\div,\om)&\to\H{k+1}{\gat}(\om)
\end{align*}
satisfying $\PotQ_{\rot,\gat,1}^{k}+\grad\PotQ_{\rot,\gat,0}^{k}=\id_{\H{k}{\gat}(\rot,\om)}$
and $\PotQ_{\div,\gat,1}^{k}+\rot\PotQ_{\div,\gat,0}^{k}=\id_{\H{k}{\gat}(\div,\om)}$.
In particular, weak and strong boundary conditions coincide.
It holds $\rot\PotQ_{\rot,\gat,1}^{k}=\rot^{k}_{\gat}$
and thus $\H{k}{\gat,0}(\rot,\om)$ is invariant under $\PotQ_{\rot,\gat,1}^{k}$.
Analogously, $\div\PotQ_{\div,\gat,1}^{k}=\div^{k}_{\gat}$
and thus $\H{k}{\gat,0}(\div,\om)$ is invariant under $\PotQ_{\div,\gat,1}^{k}$.
\item[\bf(ii)]
The regular potential representations
{\small
\begin{align*}
R(\grad^{k}_{\gat})
=\grad\H{k+1}{\gat}(\om)
&=\H{k}{\gat,0}(\rot,\om)
\cap\Harm{}{\gat,\gan,\eps}(\om)^{\bot_{\L{2}{\eps}(\om)}}
=\H{k}{\gat}(\om)\cap R(\grad_{\gat}),\\
R(\rot^{k}_{\gat})
=\rot\H{k}{\gat}(\rot,\om)
=\rot\H{k+1}{\gat}(\om)
&=\H{k}{\gat,0}(\div,\om)
\cap\Harm{}{\gan,\gat,\eps}(\om)^{\bot_{\L{2}{}(\om)}}
=\H{k}{\gat}(\om)\cap R(\rot_{\gat}),\\
R(\div^{k}_{\gat})
=\div\H{k}{\gat}(\div,\om)
=\div\H{k+1}{\gat}(\om)
&=\H{k}{\gat}(\om)
\cap(\reals_{\gan})^{\bot_{\L{2}{}(\om)}}
=\H{k}{\gat}(\om)\cap R(\div_{\gat})
\end{align*}
}
hold. In particular, these spaces are closed subspaces of 
$\H{k}{\emptyset}(\om)=\H{k}{}(\om)$.
\item[\bf(iii)]
There exist bounded linear regular potential operators
\begin{align*}
\PotP_{\grad,\gat}^{k}:=(\grad^{k}_{\gat})_{\bot}^{-1}:
\H{k}{\gat,0}(\rot,\om)
\cap\Harm{}{\gat,\gan,\eps}(\om)^{\bot_{\L{2}{\eps}(\om)}}
&\To\H{k+1}{\gat}(\om),\\
\PotP_{\rot,\gat}^{k}:=\PotQ_{\rot,\gat,1}^{k}(\rot^{k}_{\gat})_{\bot}^{-1}:
\H{k}{\gat,0}(\div,\om)
\cap\Harm{}{\gan,\gat,\eps}(\om)^{\bot_{\L{2}{}(\om)}}
&\To\H{k+1}{\gat}(\om),\\
\PotP_{\div,\gat}^{k}:=\PotQ_{\div,\gat,1}^{k}(\div^{k}_{\gat})_{\bot}^{-1}:
\H{k}{\gat}(\om)
\cap(\reals_{\gan})^{\bot_{\L{2}{}(\om)}}
&\To\H{k+1}{\gat}(\om),
\end{align*}
such that 
\begin{align*}
\grad\PotP_{\grad,\gat}^{k}&=\id|_{\H{k}{\gat,0}(\rot,\om)\cap\Harm{}{\gat,\gan,\eps}(\om)^{\bot_{\L{2}{\eps}(\om)}}},\\
\rot\PotP_{\rot,\gat}^{k}&=\id|_{\H{k}{\gat,0}(\div,\om)\cap\Harm{}{\gan,\gat,\eps}(\om)^{\bot_{\L{2}{}(\om)}}},\\
\div\PotP_{\div,\gat}^{k}&=\id|_{\H{k}{\gat}(\om)\cap(\reals_{\gan})^{\bot_{\L{2}{}(\om)}}}.
\end{align*}
In particular, all potentials in (ii)
can be chosen such that they depend continuously on the data.
$\PotP_{\grad,\gat}^{k}$, $\PotP_{\rot,\gat}^{k}$, and $\PotP_{\div,\gat}^{k}$
are right inverses of $\grad$, $\rot$, and $\div$, respectively.
\item[\bf(iv)]
The bounded regular decompositions
\begin{align*}
\H{k}{\gat}(\rot,\om)
&=\H{k+1}{\gat}(\om)
+\H{k}{\gat,0}(\rot,\om)
=\H{k+1}{\gat}(\om)
+\grad\H{k+1}{\gat}(\om)\\
&=R(\widetilde\PotQ_{\rot,\gat,1}^{k})
\dotplus\H{k}{\gat,0}(\rot,\om)
=R(\widetilde\PotQ_{\rot,\gat,1}^{k})
\dotplus R(\widetilde\PotN_{\rot,\gat}^{k}),\\
\H{k}{\gat}(\div,\om)
&=\H{k+1}{\gat}(\om)
+\H{k}{\gat,0}(\div,\om)
=\H{k+1}{\gat}(\om)
+\rot\H{k+1}{\gat}(\om)\\
&=R(\widetilde\PotQ_{\div,\gat,1}^{k})
\dotplus\H{k}{\gat,0}(\div,\om)
=R(\widetilde\PotQ_{\div,\gat,1}^{k})
\dotplus R(\widetilde\PotN_{\div,\gat}^{k})
\end{align*}
hold with bounded linear regular decomposition operators
\begin{align*}
\widetilde\PotQ_{\rot,\gat,1}^{k}:=\PotP_{\rot,\gat}^{k}\rot^{k}_{\gat}:
\H{k}{\gat}(\rot,\om)&\to\H{k+1}{\gat}(\om),
&
\widetilde\PotN_{\rot,\gat}^{k}:\H{k}{\gat}(\rot,\om)&\to\H{k}{\gat,0}(\rot,\om),\\
\widetilde\PotQ_{\div,\gat,1}^{k}:=\PotP_{\div,\gat}^{k}\div^{k}_{\gat}:
\H{k}{\gat}(\div,\om)&\to\H{k+1}{\gat}(\om),
&
\widetilde\PotN_{\div,\gat}^{k}:\H{k}{\gat}(\div,\om)&\to\H{k}{\gat,0}(\div,\om)
\end{align*}
satisfying $\widetilde\PotQ_{\rot,\gat,1}^{k}+\widetilde\PotN_{\rot,\gat}^{k}=\id_{\H{k}{\gat}(\rot,\om)}$
and $\widetilde\PotQ_{\div,\gat,1}^{k}+\widetilde\PotN_{\div,\gat}^{k}=\id_{\H{k}{\gat}(\div,\om)}$.
It holds $\rot\widetilde\PotQ_{\rot,\gat,1}^{k}=\rot\PotQ_{\rot,\gat,1}^{k}=\rot^{k}_{\gat}$
and thus $\H{k}{\gat,0}(\rot,\om)$ is invariant under $\PotQ_{\rot,\gat,1}^{k}$ 
and $\widetilde\PotQ_{\rot,\gat,1}^{k}$.
Analogously, $\div\widetilde\PotQ_{\div,\gat,1}^{k}=\div\PotQ_{\div,\gat,1}^{k}=\div^{k}_{\gat}$
and thus $\H{k}{\gat,0}(\div,\om)$ is invariant under $\PotQ_{\div,\gat,1}^{k}$
and $\widetilde\PotQ_{\div,\gat,1}^{k}$. 
Moreover, we have $R(\widetilde\PotQ_{\rot,\gat,1}^{k})=R(\PotP_{\rot,\gat}^{k})$ and 
$\widetilde\PotQ_{\rot,\gat,1}^{k}
=\PotQ_{\rot,\gat,1}^{k}(\rot^{k}_{\gat})_{\bot}^{-1}\rot^{k}_{\gat}$.
Hence $\widetilde\PotQ_{\rot,\gat,1}^{k}|_{D((\rot^{k}_{\gat})_{\bot})}
=\PotQ_{\rot,\gat,1}^{k}|_{D((\rot^{k}_{\gat})_{\bot})}$
and thus $\widetilde\PotQ_{\rot,\gat,1}^{k}$ may differ from $\PotQ_{\rot,\gat,1}^{k}$
only on $\H{k}{\gat,0}(\rot,\om)$.
Analogously, it holds $R(\widetilde\PotQ_{\div,\gat,1}^{k})=R(\PotP_{\div,\gat}^{k})$ and 
$\widetilde\PotQ_{\div,\gat,1}^{k}
=\PotQ_{\div,\gat,1}^{k}(\div^{k}_{\gat})_{\bot}^{-1}\div^{k}_{\gat}$.
Hence we have that $\widetilde\PotQ_{\div,\gat,1}^{k}|_{D((\div^{k}_{\gat})_{\bot})}
=\PotQ_{\div,\gat,1}^{k}|_{D((\div^{k}_{\gat})_{\bot})}$
and thus $\widetilde\PotQ_{\div,\gat,1}^{k}$ may differ from $\PotQ_{\div,\gat,1}^{k}$
only on $\H{k}{\gat,0}(\div,\om)$.
\item[\bf(iv')]
The bounded regular kernel decompositions
$\H{k}{\gat,0}(\rot,\om)
=\H{k+1}{\gat,0}(\rot,\om)
+\grad\H{k+1}{\gat}(\om)$
and
$\H{k}{\gat,0}(\div,\om)
=\H{k+1}{\gat,0}(\div,\om)
+\rot\H{k+1}{\gat}(\om)$
hold.
\end{itemize}
\end{theo}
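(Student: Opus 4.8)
The plan is to obtain Theorem \ref{highorderregpotpbcLipderhamvec} as the vector-proxy reformulation, in dimension $\dimom=3$ and for $q\in\{0,1,2,3\}$, of the already established differential-form statements: Lemma \ref{lem:highorderregdecoedpbcLip} (bounded regular decompositions of $\H{q,k}{\gat}(\ed,\om)$), Corollary \ref{cor:highorderregdecoedpbcLip} (regular potential representations), Theorem \ref{highorderregpotedpbcLip} (bounded regular potentials, decompositions and kernel decompositions, together with the operators $\PotP_{\ed,\gat}^{q,k}$, $\widetilde\PotQ_{\ed,\gat,1}^{q,k}$, $\widetilde\PotN_{\ed,\gat}^{q,k}$), and Remark \ref{highorderregpotedpbcLiprem} (projection and topological-isomorphism properties of these operators). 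Accordingly, the first step is to fix, for $\dimom=3$, the standard Hodge-star/vector-proxy identifications $\L{q,2}{}(\om)\cong\L{2}{}(\om)$ (scalar fields for $q\in\{0,\dimom\}$, vector fields for $q\in\{1,2\}$) together with $\ed^{0}\cong\grad$, $\ed^{1}\cong\rot$, $\ed^{2}\cong\div$, $\ed^{\dimom}=0$. Under these identifications $\H{q,k}{\gat}(\ed,\om)$ becomes, for $q=0,1,2,\dimom$, the spaces $\H{k+1}{\gat}(\om)$, $\H{k}{\gat}(\rot,\om)$, $\H{k}{\gat}(\div,\om)$, $\H{k}{\gat}(\om)$, while $\H{q-1,k+1}{\gat}(\om)$ becomes $\H{k+1}{\gat}(\om)$ (scalar for $q=1$, vector for $q=2,\dimom$); the higher-order operators $\grad_{\gat}^{k},\rot_{\gat}^{k},\div_{\gat}^{k}$ of Section \ref{sec:vecderham} correspond to $\ed_{\gat}^{0,k},\ed_{\gat}^{1,k},\ed_{\gat}^{2,k}$.

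The second step is the case-by-case transcription. Specialising Lemma \ref{lem:highorderregdecoedpbcLip}, Theorem \ref{highorderregpotedpbcLip}\,(ii), (ii') and Remark \ref{highorderregpotedpbcLiprem} to $q=1$ produces the decomposition $\H{k}{\gat}(\rot,\om)=\H{k+1}{\gat}(\om)+\grad\H{k+1}{\gat}(\om)$, the operators $\PotQ_{\rot,\gat,1}^{k}$, $\PotQ_{\rot,\gat,0}^{k}$, $\widetilde\PotQ_{\rot,\gat,1}^{k}$, $\widetilde\PotN_{\rot,\gat}^{k}$ with all their stated identities and invariance/projection properties, and the kernel decomposition $\H{k}{\gat,0}(\rot,\om)=\H{k+1}{\gat,0}(\rot,\om)+\grad\H{k+1}{\gat}(\om)$; specialising to $q=2$ gives the corresponding statements with $\grad$ replaced by $\rot$ and $\rot$ replaced by $\div$. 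This yields (i), (iv), (iv'), and in particular the coincidence of weak and strong boundary conditions for $\H{k}{\gat}(\rot,\om)$ and $\H{k}{\gat}(\div,\om)$ (equivalently Lemma \ref{lem:wsbcderhamvec}). For (ii) and (iii), Corollary \ref{cor:highorderregdecoedpbcLip} and Theorem \ref{highorderregpotedpbcLip}\,(i) with $q=1,2,\dimom$ give the three potential representations and the bounded right inverses: for $q=1$ the form potential $\PotP_{\ed,\gat}^{1,k}=\PotQ_{\ed,\gat,1}^{0,k}(\ed_{\gat}^{0,k})_{\bot}^{-1}$ reduces to $(\grad_{\gat}^{k})_{\bot}^{-1}=:\PotP_{\grad,\gat}^{k}$, since $\H{0,k}{\gat}(\ed,\om)=\H{k+1}{\gat}(\om)$ carries no $\ed$-potential part and thus $\PotQ_{\ed,\gat,1}^{0,k}=\id$; for $q=2,\dimom$ one reads off $\PotP_{\rot,\gat}^{k}=\PotQ_{\rot,\gat,1}^{k}(\rot_{\gat}^{k})_{\bot}^{-1}$ and $\PotP_{\div,\gat}^{k}=\PotQ_{\div,\gat,1}^{k}(\div_{\gat}^{k})_{\bot}^{-1}$.

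The one point requiring care is matching the middle cohomology spaces and their annihilators occurring in (ii)--(iii). For $q=1$ one has $\Harm{1}{\gat,\gan,\eps}(\om)=\Harm{}{\gat,\gan,\eps}(\om)$ straight from \eqref{dfdirneudef}. For $q=2$ the form cohomology $\Harm{2}{\gat,\gan,\eps}(\om)$ corresponds under the Hodge star to a ``$\gan,\gat$'' object, so here I would use the duality $\star\Harm{q}{\gat,\gan,\id}(\om)=\Harm{\dimom-q}{\gan,\gat,\id}(\om)$ together with the $\eps$-independence of $d_{\om,\gat}^{q}$ and of the Helmholtz decompositions of Lemma \ref{helmom} (cf.~Remark \ref{rem:derhameps}) to rewrite $\H{k}{\gat,0}(\div,\om)\cap\Harm{2}{\gat,\gan,\eps}(\om)^{\bot_{\L{2,2}{\eps}(\om)}}$ in the form $\H{k}{\gat,0}(\div,\om)\cap\Harm{}{\gan,\gat,\eps}(\om)^{\bot_{\L{2}{}(\om)}}$ appearing in (ii). For $q=\dimom$ the cohomology collapses to $\eps^{-1}*\reals_{\gan}$, whose $\L{2}{}(\om)$-annihilator is $(\reals_{\gan})^{\bot_{\L{2}{}(\om)}}$, giving the last line of (ii) and absorbing the $\gat=\ga$ caveat already present in Lemma \ref{highorderregpotedpbcLip}; the endpoint $q=0$ contributes nothing since $\H{0,k}{\gat,0}(\ed,\om)=\reals_{\gat}$, which is why no separate $\grad$-representation appears beyond what is in (ii)--(iii). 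I expect the genuine obstacle to be precisely this bookkeeping --- the weight placement ($\eps$ against the trivial weight of the higher-order complex of Section \ref{sec:vecderham}) and the $\gat\leftrightarrow\gan$ swap under duality --- rather than any new analytic content; once the dictionary is set up consistently, every assertion of Theorem \ref{highorderregpotpbcLipderhamvec} is a verbatim rephrasing of the cited form results, and the bounded Hodge $\star$-dual ($\cd$-) versions of those results could equally well be invoked for the $\rot$- and $\div$-parts.
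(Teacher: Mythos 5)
Your proposal is correct and matches the paper exactly: the paper gives no separate proof of Theorem \ref{highorderregpotpbcLipderhamvec}, but introduces it with the remark that for $\ed\in\{\grad,\rot,\div\}$ it is simply the vector-proxy reading of Lemma \ref{lem:highorderregdecoedpbcLip}, Corollary \ref{cor:highorderregdecoedpbcLip}, Theorem \ref{highorderregpotedpbcLip}, and Remark \ref{highorderregpotedpbcLiprem} for $\dimom=3$, $q\in\{0,1,2,3\}$. Your case-by-case transcription, including the treatment of $\PotQ_{\ed,\gat,1}^{0,k}=\id$ for $q=1$ and the $\gat\leftrightarrow\gan$ and weight bookkeeping for the $q=2$ cohomology space via Hodge duality, is precisely this identification, carried out in more detail than the paper itself provides.
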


\begin{rem}
\label{highorderregpotpbcLipderhamvecrem}
Let us recall the bounded regular decompositions
from Theorem \ref{highorderregpotpbcLipderhamvec} (iv), e.g.,
\begin{align*}
\H{k}{\gat}(\rot,\om)
&=R(\widetilde\PotQ_{\rot,\gat,1}^{k})
\dotplus R(\widetilde\PotN_{\rot,\gat}^{k}).
\end{align*}
\begin{itemize}
\item[\bf(i)]
$\widetilde\PotQ_{\rot,\gat,1}^{k}$,
$\widetilde\PotN_{\rot,\gat}^{k}=1-\widetilde\PotQ_{\rot,\gat,1}^{k}$
are projections with 
$\widetilde\PotQ_{\rot,\gat,1}^{k}\widetilde\PotN_{\rot,\gat}^{k}
=\widetilde\PotN_{\rot,\gat}^{k}\widetilde\PotQ_{\rot,\gat,1}^{k}=0$.
\item[\bf(ii)]
For $I_{\pm}:=\widetilde\PotQ_{\rot,\gat,1}^{k}\pm\widetilde\PotN_{\rot,\gat}^{k}$
it holds $I_{+}=I_{-}^{2}=\id_{\H{k}{\gat}(\rot,\om)}$.
Therefore, $I_{+}$, $I_{-}^{2}$, as well as
$I_{-}=2\widetilde\PotQ_{\rot,\gat,1}^{k}-\id_{\H{k}{\gat}(\rot,\om)}$
are topological isomorphisms on $\H{k}{\gat}(\rot,\om)$.
\item[\bf(iii)]
There exists $c>0$ such that for all $E\in\H{k}{\gat}(\rot,\om)$
\begin{align*}
c\norm{\widetilde\PotQ_{\rot,\gat,1}^{k}E}_{\H{k+1}{}(\om)}
&\leq\norm{\rot E}_{\H{k}{}(\om)}
\leq\norm{E}_{\H{k}{}(\rot,\om)},\\
\norm{\widetilde\PotN_{\rot,\gat}^{k}E}_{\H{k}{}(\om)}
&\leq\norm{E}_{\H{k}{}(\om)}
+\norm{\widetilde\PotQ_{\rot,\gat,1}^{k}E}_{\H{k}{}(\om)}.
\end{align*}
\item[\bf(iii')]
For $E\in\H{k}{\gat,0}(\rot,\om)$ we have $\widetilde\PotQ_{\rot,\gat,1}^{k}E=0$
and $\widetilde\PotN_{\rot,\gat}^{k}E=E$.
In particular, $\widetilde\PotN_{\rot,\gat}^{k}$ is onto.
\item[\bf(iv)]
Literally, (i)-(iii') hold for $\div$ as well.
\end{itemize}
\end{rem}

\subsection{Zero Order Mini FA-ToolBox}

Theorem \ref{theo:cptemb:derham}, Theorem \ref{theo:minifatb:derham},
and Remark \ref{rem:derhameps} translate to the following results, 
cf.~\eqref{lhcomplex2} and Definition \ref{defihilcom2}
as well as \cite[Lemma 5.1, Lemma 5.2]{PW2020a}.

\begin{theo}[compact embedding for the vector de Rham complex]
\label{theo:cptemb:derhamvec}
Let $(\om,\gat)$ be a bounded strong Lipschitz pair. 
Then the embeddings 
\begin{align*}
D(\A_{0})
=\H{1}{\gat}(\om)
&\incl\L{2}{}(\om),\\
D(\A_{1})\cap D(\A_{0}^{*})
=\H{}{\gat}(\rot,\om)\cap\eps^{-1}\H{}{\gan}(\div,\om)
&\incl\L{2}{\eps}(\om),\\
D(\A_{2})\cap D(\A_{1}^{*})
=\mu^{-1}\H{}{\gat}(\div,\om)\cap\H{}{\gan}(\rot,\om)
&\incl\L{2}{\mu}(\om),\\
D(\A_{2}^{*})
=\H{1}{\gan}(\om)
&\incl\L{2}{}(\om)
\end{align*}
are compact, i.e., the long primal and dual vector de Rham Hilbert complex is compact. 
In particular, the complex is closed.
Moreover, the compactness of the embeddings 
is independent of $\eps$ and $\mu$.
\end{theo}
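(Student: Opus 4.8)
The plan is to transcribe into vector proxies the reduction already carried out for the zero order de Rham complex of forms: each of the four embeddings corresponds to one index $q\in\{0,1,2,3\}$ of the long primal and dual complex \eqref{vecderhamcomplex1}, and its compactness follows either directly from Rellich's selection theorem (for $q=0$ and $q=3$) or from Lemma \ref{lem:cptembmaintheo} applied to the relevant dual pair together with the bounded regular decompositions of Theorem \ref{highorderregpotpbcLipderhamvec}(i) at $k=0$ (for $q=1$ and $q=2$). Once all four embeddings are shown to be compact, Definition \ref{defihilcom2} gives that the long vector de Rham Hilbert complex is compact, and Remark \ref{remhilcom3} gives that it is closed. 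The two ``end'' embeddings $D(\A_0)=\H{1}{\gat}(\om)\incl\L{2}{}(\om)$ and $D(\A_2^{*})=\H{1}{\gan}(\om)\incl\L{2}{}(\om)$ are nothing but Rellich's selection theorem on the bounded domain $\om$; recall from Definition \ref{defihilcom2} that here $D(\A_{-1}^{*})=\H{}{0}$ and $D(\A_3)=\H{}{3}$, so no intersection is needed, and these embeddings visibly do not involve $\eps$ or $\mu$.

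For $q=1$ I would apply Lemma \ref{lem:cptembmaintheo}(i) to the dual pair $(\A_0,\A_1)=(\grad_{\gat},\mu^{-1}\rot_{\gat})$. By Theorem \ref{highorderregpotpbcLipderhamvec}(i) with $k=0$ the bounded regular decomposition
\[
D(\A_1)=\H{}{\gat}(\rot,\om)=\H{1}{\gat}(\om)+\grad\H{1}{\gat}(\om)=\H{+}{1}+\A_0\H{+}{0}
\]
holds with $\H{+}{1}=\H{+}{0}=\H{1}{\gat}(\om)$, and both embeddings $\H{+}{0}\incl\H{}{0}=\L{2}{}(\om)$ and $\H{+}{1}\incl\H{}{1}=\L{2}{\eps}(\om)$ are compact by Rellich's selection theorem (for the latter, followed by the bounded identity $\L{2}{}(\om)\to\L{2}{\eps}(\om)$). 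Lemma \ref{lem:cptembmaintheo}(i) then yields the compactness of $D(\A_1)\cap D(\A_0^{*})=\H{}{\gat}(\rot,\om)\cap\eps^{-1}\H{}{\gan}(\div,\om)\incl\L{2}{\eps}(\om)$. For $q=2$ it is most convenient to apply Lemma \ref{lem:cptembmaintheo}(ii) to the dual pair $(\A_1,\A_2)=(\mu^{-1}\rot_{\gat},\div_{\gat}\mu)$, using the $\eps$- and $\mu$-free decomposition
\[
D(\A_1^{*})=D(\eps^{-1}\rot_{\gan})=\H{}{\gan}(\rot,\om)=\H{1}{\gan}(\om)+\grad\H{1}{\gan}(\om)
\]
of Theorem \ref{highorderregpotpbcLipderhamvec}(i) applied to the pair $(\om,\gan)$, whose two regular subspaces $\H{1}{\gan}(\om)$ embed compactly into $\L{2}{\mu}(\om)$ (again by Rellich followed by a bounded identity) and into $\L{2}{}(\om)$. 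This gives the compactness of $D(\A_2)\cap D(\A_1^{*})=\mu^{-1}\H{}{\gat}(\div,\om)\cap\H{}{\gan}(\rot,\om)\incl\L{2}{\mu}(\om)$.

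Since the regular decompositions used live in the weight-free spaces $\H{1}{\gat}(\om)$ and $\H{1}{\gan}(\om)$, and the $\L{2}{\eps}(\om)$- and $\L{2}{\mu}(\om)$-norms are equivalent to the $\L{2}{}(\om)$-norm, the argument runs verbatim for every admissible choice of $\eps$ and $\mu$, which is the claimed independence; alternatively one may invoke \cite[Lemma 5.1, Lemma 5.2]{PW2020a}. I do not expect a genuine obstacle here, since the substance has already been placed in Lemma \ref{lem:cptembmaintheo} and in the regular decompositions of Theorem \ref{highorderregpotpbcLipderhamvec}; what remains is bookkeeping, namely matching vector proxies to differential forms, choosing for $q=1$ versus $q=2$ the side of the complex on which the $\eps,\mu$-independent regular decomposition is available, and correctly identifying the adjoint domains $D(\A_0^{*})=\eps^{-1}\H{}{\gan}(\div,\om)$, $D(\A_1^{*})=\H{}{\gan}(\rot,\om)$, and $D(\A_2)=\mu^{-1}\H{}{\gat}(\div,\om)$ as in Remark \ref{rem:weakeqstrongderhamvec}.
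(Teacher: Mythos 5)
Your proposal is correct and follows essentially the same route as the paper: the paper obtains this theorem by translating Theorem \ref{theo:cptemb:derham} into vector proxies, and that theorem is itself proved exactly as you describe, namely by feeding the $k=0$ bounded regular decompositions into Lemma \ref{lem:cptembmaintheo} together with Rellich's selection theorem, with the end cases $q=0,3$ being Rellich directly and the $\eps,\mu$-independence following from the equivalence of the weighted and unweighted $\sfL^{2}$-norms (or \cite[Lemma 5.1, Lemma 5.2]{PW2020a}). Your choice of Lemma \ref{lem:cptembmaintheo}(ii) with the decomposition of $D(\A_{1}^{*})=\H{}{\gan}(\rot,\om)$ for the $q=2$ case is a harmless variant that cleanly sidesteps the weights; the paper's form-level argument uses option (i) throughout.
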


\begin{theo}[mini FA-ToolBox for the vector de Rham complex]
\label{theo:minifatb:derhamvec}
Let $(\om,\gat)$ be a bounded strong Lipschitz pair. Then
\begin{itemize}
\item[\bf(i)]
the ranges $R(\grad_{\gat})$, $R(\rot_{\gat})$, 
and $R(\div_{\gat})=(\reals_{\gan})^{\bot_{\L{2}{}(\om)}}$ are closed,
\item[\bf(ii)]
the inverse operators $(\grad_{\gat})_{\bot}^{-1}$, $(\mu^{-1}\rot_{\gat})_{\bot}^{-1}$
and $(\div_{\gat}\mu)_{\bot}^{-1}$ are compact,
\item[\bf(iii)]
the cohomology group $\Harm{}{\gat,\gan,\eps}(\om)
=\H{}{\gat,0}(\rot,\om)\cap\eps^{-1}\H{}{\gan,0}(\div,\om)$ has finite dimension,
which is independent of $\eps$,
\item[\bf(iv)]
the orthogonal Helmholtz-type decomposition
$$\L{2}{\eps}(\om)
=\grad\H{1}{\gat}(\om)
\oplus_{\L{2}{\eps}(\om)}
\Harm{}{\gat,\gan,\eps}(\om)
\oplus_{\L{2}{\eps}(\om)}
\eps^{-1}\rot\H{}{\gan}(\rot,\om)$$
holds,
\item[\bf(v)]
there exist $c_{\grad,\gat},c_{\rot,\gat},c_{\div,\gat}>0$ such that
\begin{align*}
\forall\,u&\in D\big((\grad_{\gat})_{\bot}\big)
&
\norm{u}_{\L{2}{}(\om)}&\leq c_{\grad,\gat}\norm{\grad u}_{\L{2}{\eps}(\om)},\\
\forall\,E&\in D\big((\div_{\gan}\eps)_{\bot}\big)
&
\norm{E}_{\L{2}{\eps}(\om)}&\leq c_{\grad,\gat}\norm{\div\eps E}_{\L{2}{}(\om)},\\
\forall\,E&\in D\big((\mu^{-1}\rot_{\gat})_{\bot}\big)
&
\norm{E}_{\L{2}{\eps}(\om)}&\leq c_{\rot,\gat}\norm{\mu^{-1}\rot E}_{\L{2}{\mu}(\om)},\\
\forall\,H&\in D\big((\eps^{-1}\rot_{\gan})_{\bot}\big)
&
\norm{H}_{\L{2}{\mu}(\om)}&\leq c_{\rot,\gat}\norm{\eps^{-1}\rot E}_{\L{2}{\eps}(\om)},\\
\forall\,H&\in D\big((\div_{\gat}\mu)_{\bot}\big)
&
\norm{H}_{\L{2}{\mu}(\om)}&\leq c_{\div,\gat}\norm{\div\mu H}_{\L{2}{}(\om)},\\
\forall\,u&\in D\big((\grad_{\gan})_{\bot}\big)
&
\norm{u}_{\L{2}{}(\om)}&\leq c_{\div,\gat}\norm{\grad u}_{\L{2}{\mu}(\om)},\\
\end{align*}
where 
\begin{align*}
D\big((\grad_{\gat})_{\bot}\big)
&=D(\grad_{\gat})\cap N(\grad_{\gat})^{\bot_{\L{2}{}(\om)}}
=D(\grad_{\gat})\cap R(\div_{\gan}\eps),\\
D\big((\div_{\gan}\eps)_{\bot}\big)
&=D(\div_{\gan}\eps)\cap N(\div_{\gan}\eps)^{\bot_{\L{2}{\eps}(\om)}}
=D(\div_{\gan}\eps)\cap R(\grad_{\gat}),\\
D\big((\mu^{-1}\rot_{\gat})_{\bot}\big)
&=D(\mu^{-1}\rot_{\gat})\cap N(\mu^{-1}\rot_{\gat})^{\bot_{\L{2}{\eps}(\om)}}
=D(\mu^{-1}\rot_{\gat})\cap R(\eps^{-1}\rot_{\gan}),
\end{align*}
which also gives $D\big((\eps^{-1}\rot_{\gan})_{\bot}\big)$,
$D\big((\div_{\gat}\mu)_{\bot}\big)$, and
$D\big((\grad_{\gan})_{\bot}\big)$
by interchanging $\eps$, $\mu$ and $\gat$, $\gan$,
\item[\bf(v')]
it holds for all
$E\in D(\mu^{-1}\rot_{\gat})\cap D(\div_{\gan}\eps)\cap\Harm{}{\gat,\gan,\eps}(\om)^{\bot_{\L{2}{\eps}(\om)}}$
$$\norm{E}_{\L{2}{\eps}(\om)}^2
\leq c_{\rot,\gat}^2\norm{\mu^{-1}\rot E}_{\L{2}{\mu}(\om)}^2
+c_{\grad,\gat}^2\norm{\div\eps E}_{\L{2}{}(\om)}^2,$$
\item[\bf(vi)]
$\Harm{}{\gat,\gan,\eps}(\om)=\{0\}$, if $\om$ is additionally extendable.
\end{itemize}
\end{theo}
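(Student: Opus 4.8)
The plan is to obtain items (i)–(v') as the vector-proxy translation of the abstract mini FAT, Theorem \ref{theo:toolboxgenmain}, applied to the long primal and dual vector de Rham Hilbert complex \eqref{vecderhamcomplex1}, whose compactness is already available. Concretely, I would first cite Theorem \ref{theo:cptemb:derhamvec}: it supplies the compact embeddings $D(\grad_{\gat})\incl\L{2}{}(\om)$, $\H{}{\gat}(\rot,\om)\cap\eps^{-1}\H{}{\gan}(\div,\om)\incl\L{2}{\eps}(\om)$, $\mu^{-1}\H{}{\gat}(\div,\om)\cap\H{}{\gan}(\rot,\om)\incl\L{2}{\mu}(\om)$, and $D(\grad_{\gan})\incl\L{2}{}(\om)$ for the dual pairs $\A_{0}=\grad_{\gat}$, $\A_{1}=\mu^{-1}\rot_{\gat}$, $\A_{2}=\div_{\gat}\mu$ together with their adjoints $\A_{0}^{*}=-\div_{\gan}\eps$, $\A_{1}^{*}=\eps^{-1}\rot_{\gan}$, $\A_{2}^{*}=-\grad_{\gan}$, as listed above.

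Next I would apply Theorem \ref{theo:toolboxgenmain} at the two interior nodes of the complex: at $\H{}{1}=\L{2}{\eps}(\om)$ with the pair $(\grad_{\gat},\mu^{-1}\rot_{\gat})$, and at $\H{}{1}=\L{2}{\mu}(\om)$ with the pair $(\mu^{-1}\rot_{\gat},\div_{\gat}\mu)$; the outer embeddings $D(\grad_{\gat})\incl\L{2}{}(\om)$ and $D(\grad_{\gan})\incl\L{2}{}(\om)$ together with the trivial end cohomologies $N_{-1,0}=N_{2,3}=\{0\}$ from \eqref{cohomspecial} take care of the outermost spots. This delivers directly: the closed ranges in (i), with $R(\div_{\gat})=(\reals_{\gan})^{\bot_{\L{2}{}(\om)}}$ read off from \eqref{vecderhamcomplex1}; the compact reduced inverses in (ii); the finite-dimensional cohomology $\Harm{}{\gat,\gan,\eps}(\om)=\H{}{\gat,0}(\rot,\om)\cap\eps^{-1}\H{}{\gan,0}(\div,\om)$ in (iii); the orthogonal Helmholtz-type decomposition in (iv); and the Friedrichs/Poincar\'e estimates in (v),(v'). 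For the constants I would set $c_{\grad,\gat}:=c_{\A_{0}}$ at the first node, $c_{\div,\gat}:=c_{\A_{1}}$ at the second node, and note that the "$\A_{1}$-constant" of the first node and the "$\A_{0}$-constant" of the second node coincide — both equal $\bnorm{(\mu^{-1}\rot_{\gat})_{\bot}^{-1}}$ — so they may be named $c_{\rot,\gat}$; the identity $c_{\A}=c_{\A^{*}}$ of Lemma \ref{lem:toolboxcpt1} then pairs each estimate for $\grad_{\gat}$, $\mu^{-1}\rot_{\gat}$, $\div_{\gat}\mu$ with its adjoint counterpart for $\div_{\gan}\eps$, $\eps^{-1}\rot_{\gan}$, $\grad_{\gan}$. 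The $\eps,\mu$-independence of the compactness and of $\dim\Harm{}{\gat,\gan,\eps}(\om)$ is then exactly Remark \ref{rem:derhameps}, cf.~\cite{PW2020a}.

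For (vi) I would transcribe the proof of Theorem \ref{theo:minifatb:derham}(vi): if $(\om,\gat)$ is additionally extendable, the extendable-domain regular potential lemma, Lemma \ref{highorderregpotedpbc} at $k=0$ in vector-proxy form, gives $R(\grad_{\gat})=\grad\H{1}{\gat}(\om)=\H{}{\gat,0}(\rot,\om)$, and comparing this with the Helmholtz decomposition $\H{}{\gat,0}(\rot,\om)=\grad\H{1}{\gat}(\om)\oplus_{\L{2}{\eps}(\om)}\Harm{}{\gat,\gan,\eps}(\om)$ from Lemma \ref{helmom} forces $\Harm{}{\gat,\gan,\eps}(\om)=\{0\}$ for every admissible $\eps$.

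I do not expect a genuine obstacle: the whole statement is the abstract mini FAT evaluated on a complex already known to be compact. The only point that demands care is the bookkeeping — matching the abstract data $(\A_{0},\A_{1})$, their adjoints, the constants $c_{\A_{0}},c_{\A_{1}}$, and the single cohomology space $N_{0,1}=\Harm{}{\gat,\gan,\eps}(\om)$ at the two interior nodes of the long complex against the six concrete first-order operators, and correctly extracting $R(\div_{\gat})$ and the vanishing end cohomologies from \eqref{vecderhamcomplex1} and \eqref{cohomspecial}.
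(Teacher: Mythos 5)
Your proposal is correct and follows essentially the same route as the paper: the paper obtains this theorem as the vector-proxy translation of Theorem \ref{theo:minifatb:derham}, whose proof is precisely "apply Theorem \ref{theo:cptembmaintheo1}(ii), i.e., the compact embedding plus Theorem \ref{theo:toolboxgenmain}, for (i)--(v'), and combine the extendable-domain regular potential lemma with Lemma \ref{helmom} for (vi)." Your bookkeeping of the constants at the two interior nodes and the identification of the end cohomologies via \eqref{cohomspecial} match the paper's intended argument.
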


\begin{rem}
Theorem \ref{theo:cptemb:derhamvec} and Theorem \ref{theo:minifatb:derhamvec}
hold more generally for bounded weak Lipschitz pairs $(\om,\gat)$, 
see \cite{BPS2016a,BPS2018a,BPS2019a}. 
\end{rem}

\subsection{Higher Order Mini FA-ToolBox and Dirichlet/Neumann Fields}

Theorem \ref{theo:cptemb:derhamvec} holds even for higher Sobolev orders,
cf.~Theorem \ref{theo:cptemb:derhamk}.

\begin{theo}[higher order compact embedding for the vector de Rham complex]
\label{theo:cptemb:derhamveck}
Let $(\om,\gat)$ be a bounded strong Lipschitz pair. 
Then for all $k\in\nat_{0}$ the embeddings
\begin{align*}
\H{k+1}{\gat}(\om)\cap\H{k}{\gan}(\om)
&\incl\H{k}{\ga}(\om),\\
\H{k}{\gat}(\rot,\om)\cap\H{k}{\gan}(\div,\om)
&\incl\H{k}{\ga}(\om),\\
\H{k}{\gat}(\div,\om)\cap\H{k}{\gan}(\rot,\om)
&\incl\H{k}{\ga}(\om),\\
\H{k}{\gat}(\om)\cap\H{k+1}{\gan}(\om)
&\incl\H{k}{\ga}(\om)
\end{align*}
are compact.
\end{theo}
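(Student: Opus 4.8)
The plan is to reduce the assertion to its differential-forms counterpart, Theorem~\ref{theo:cptemb:derhamk}, since for $\dimom=3$ and $q\in\{0,1,2,3\}$ the four embeddings in the statement are precisely the vector-proxy translations of the single embedding $D(\ed_{\gat}^{q,k})\cap D(\cd_{\gan}^{q,k})=\H{q,k}{\gat}(\ed,\om)\cap\H{q,k}{\gan}(\cd,\om)\incl\H{q,k}{\ga}(\om)$.

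First I would fix the standard identification of $q$-forms on $\om\subset\reals^{3}$ with their vector proxies: $\L{0,2}{}(\om)$ and $\L{3,2}{}(\om)$ with scalar fields in $\L{2}{}(\om)$, and $\L{1,2}{}(\om)$ and $\L{2,2}{}(\om)$ with vector fields in $\L{2}{}(\om)$. Under this identification the exterior derivatives $\ed^{0},\ed^{1},\ed^{2}$ become $\grad,\rot,\div$ (and $\ed^{3}$ the trivial operator), while the co-derivatives $\cd^{1},\cd^{2},\cd^{3}$ become, up to sign, $\div,\rot,\grad$ (and $\cd^{0}$ the trivial operator), and the tangential/normal boundary conditions are respected; this is exactly the passage from the complex \eqref{derhamcomplex1} to \eqref{vecderhamcomplex1}. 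Reading off $q=0,1,2,3$ in turn, $\H{q,k}{\gat}(\ed,\om)$ is identified with $\H{k+1}{\gat}(\om)$, $\H{k}{\gat}(\rot,\om)$, $\H{k}{\gat}(\div,\om)$, $\H{k}{\gat}(\om)$, while $\H{q,k}{\gan}(\cd,\om)$ is identified with $\H{k}{\gan}(\om)$, $\H{k}{\gan}(\div,\om)$, $\H{k}{\gan}(\rot,\om)$, $\H{k+1}{\gan}(\om)$, and $\H{q,k}{\ga}(\om)$ is in each case $\H{k}{\ga}(\om)$ (with $\H{k+1}{\ga}(\om)\subset\H{k}{\ga}(\om)$ at the two endpoints $q=0,3$). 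Then I would invoke Theorem~\ref{theo:cptemb:derhamk} for each $q\in\{0,1,2,3\}$ and translate the resulting compact embedding back through this dictionary, which yields exactly the four claimed embeddings.

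Alternatively, a self-contained proof can be given by induction on $k$ entirely in vector-proxy language, mirroring the proof of Theorem~\ref{theo:cptemb:derhamk}: the base case $k=0$ is Theorem~\ref{theo:cptemb:derhamvec}; for $k\geq1$, any bounded sequence in one of the four intersection spaces lies in $\H{k}{\ga}(\om)$, is (after passing to a subsequence) a Cauchy sequence in $\H{k-1}{\ga}(\om)$ by the induction hypothesis, and for each multi-index $|\alpha|=k$ its $\p^{\alpha}$-derivatives form a bounded sequence in the corresponding zero-order intersection space --- using that $\p^{\alpha}$ commutes with $\grad,\rot,\div$ and preserves the boundary conditions, the vector-proxy form of Lemma~\ref{lem:edpalpha} --- hence, after a further subsequence, a Cauchy sequence in $\L{2}{}(\om)$ by Theorem~\ref{theo:cptemb:derhamvec}; combining the two gives a Cauchy sequence in $\H{k}{\ga}(\om)$. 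I do not expect a genuine obstacle: the entire content already resides in Theorems~\ref{theo:cptemb:derhamk} and \ref{theo:cptemb:derhamvec}, and the only point requiring care is the bookkeeping of the proxy/degree/boundary-condition dictionary, especially the endpoint cases $q=0$ and $q=3$, where one of $\ed$, $\cd$ is trivial so that one factor of the intersection is the unconstrained space $\H{k}{\gat}(\om)$ (resp.\ $\H{k}{\gan}(\om)$) and the other a gradient space $\H{k+1}{\gat}(\om)$ (resp.\ $\H{k+1}{\gan}(\om)$).
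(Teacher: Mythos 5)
Your proposal is correct and matches the paper's route: the paper introduces Theorem \ref{theo:cptemb:derhamveck} with only the remark that Theorem \ref{theo:cptemb:derhamvec} "holds even for higher Sobolev orders, cf.~Theorem \ref{theo:cptemb:derhamk}", i.e.\ exactly your vector-proxy translation of the differential-forms result for $q=0,1,2,3$, including the endpoint identifications $D(\ed_{\gat}^{0,k})=\H{k+1}{\gat}(\om)$ and $D(\cd_{\gan}^{\dimom,k})=\H{k+1}{\gan}(\om)$. Your alternative self-contained induction is likewise just the proof of Theorem \ref{theo:cptemb:derhamk} rewritten in proxy language, so there is nothing genuinely different here.
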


\begin{rem}[higher order Friedrichs/Poincar\'e type estimates for the vector de Rham complex]
\label{rem:cptemb:derhamveck}
Analogues of Theorem \ref{theo:highorderFAT1} and 
Theorem \ref{theo:cptemb:derhamk2} hold.
In particular, for all $k\geq0$ there exists $\widetilde{c}_{k}>0$ such that for all
$E\in\H{k}{\gat}(\rot,\om)\cap\H{k}{\gan}(\div,\om)\cap\Harm{}{\gat,\gan,\id}(\om)^{\bot_{\L{2}{}(\om)}}$
$$\norm{E}_{\H{k}{}(\om)}^2
\leq\widetilde{c}_{k}^{2}\big(\norm{\rot E}_{\H{k}{}(\om)}^2
+\norm{\div E}_{\H{k}{}(\om)}^2\big).$$
\end{rem}

Theorem \ref{highorderregpotpbcLipderhamvec} (iv'), cf.~Corollary \ref{cor:highorderkernelderham},
shows by induction for all $k,\ell\geq0$
\begin{align}
\label{highorderkernelderhamvecell}
\H{k}{\gat,0}(\rot,\om)&=\H{\ell}{\gat,0}(\rot,\om)+\grad\H{k+1}{\gat}(\om),
&
\H{k}{\gat,0}(\div,\om)&=\H{\ell}{\gat,0}(\div,\om)+\rot\H{k+1}{\gat}(\om).
\end{align}
By Theorem \ref{theo:minifatb:derhamvec} (iv) we have the orthonormal Helmholtz decompositions
\begin{align}
\begin{aligned}
\label{helmcoho1vec}
\L{2}{\eps}(\om)
&=\grad\H{1}{\gat}(\om)
\oplus_{\L{2}{\eps}(\om)}
\eps^{-1}\H{}{\gan,0}(\div,\om)\\
&=\H{}{\gat,0}(\rot,\om)
\oplus_{\L{2}{\eps}(\om)}
\eps^{-1}\rot\H{}{\gan}(\rot,\om)\\
&=\grad\H{1}{\gat}(\om)
\oplus_{\L{2}{\eps}(\om)}
\Harm{}{\gat,\gan,\eps}(\om)
\oplus_{\L{2}{\eps}(\om)}
\eps^{-1}\rot\H{}{\gan}(\rot,\om),\\
\H{}{\gat,0}(\rot,\om)
&=\grad\H{1}{\gat}(\om)
\oplus_{\L{2}{\eps}(\om)}
\Harm{}{\gat,\gan,\eps}(\om),\\
\eps^{-1}\H{}{\gan,0}(\div,\om)
&=\Harm{}{\gat,\gan,\eps}(\om)
\oplus_{\L{2}{\eps}(\om)}
\eps^{-1}\rot\H{}{\gan}(\rot,\om).
\end{aligned}
\end{align}
Let us denote the $\L{2}{\eps}(\om)$-orthonormal projector onto 
$\eps^{-1}\H{}{\gan,0}(\div,\om)$ and $\H{}{\gat,0}(\rot,\om)$ by
$$\pi_{\div}:\L{2}{\eps}(\om)\to\eps^{-1}\H{}{\gan,0}(\div,\om),\qquad
\pi_{\rot}:\L{2}{\eps}(\om)\to\H{}{\gat,0}(\rot,\om)$$
respectively. Then
\begin{align*}
\pi_{\div}|_{\H{}{\gat,0}(\rot,\om)}:\H{}{\gat,0}(\rot,\om)&\to\Harm{}{\gat,\gan,\eps}(\om),\\
\pi_{\rot}|_{\eps^{-1}\H{}{\gan,0}(\div,\om)}:\eps^{-1}\H{}{\gan,0}(\div,\om)&\to\Harm{}{\gat,\gan,\eps}(\om)
\end{align*}
are onto. Moreover, 
\begin{align*}
\pi_{\div}|_{\grad\H{1}{\gat}(\om)}&=0,
&
\pi_{\rot}|_{\eps^{-1}\rot\H{}{\gan}(\rot,\om)}&=0,\\
\pi_{\div}|_{\Harm{}{\gat,\gan,\eps}(\om)}&=\id_{\Harm{}{\gat,\gan,\eps}(\om)},
&
\pi_{\rot}|_{\Harm{}{\gat,\gan,\eps}(\om)}&=\id_{\Harm{}{\gat,\gan,\eps}(\om)}.
\end{align*}
Therefore, by \eqref{highorderkernelderhamvecell} and for all $\ell\geq0$
\begin{align*}
\Harm{}{\gat,\gan,\eps}(\om)
&=\pi_{\div}\H{}{\gat,0}(\rot,\om)
=\pi_{\div}\H{\ell}{\gat,0}(\rot,\om),\\
\Harm{}{\gat,\gan,\eps}(\om)
&=\pi_{\rot}\eps^{-1}\H{}{\gan,0}(\div,\om)
=\pi_{\rot}\eps^{-1}\H{\ell}{\gan,0}(\div,\om).
\end{align*}
Hence with
$$\H{\infty}{\gat,0}(\rot,\om):=\bigcap_{k\geq0}\H{k}{\gat,0}(\rot,\om),\qquad
\H{\infty}{\gat,0}(\div,\om):=\bigcap_{k\geq0}\H{k}{\gat,0}(\div,\om)$$
we have the following result:

\begin{theo}[smooth pre-bases of Dirichlet/Neumann fields for the vector de Rham complex]
\label{theo:cohomologyinftyvec}
Let $(\om,\gat)$ be a bounded strong Lipschitz pair
and let $d_{\om,\gat}:=\dim\Harm{}{\gat,\gan,\eps}(\om)$. Then 
$$\pi_{\div}\H{\infty}{\gat,0}(\rot,\om)
=\Harm{}{\gat,\gan,\eps}(\om)
=\pi_{\rot}\eps^{-1}\H{\infty}{\gan,0}(\div,\om).$$
Moreover, there exists a smooth $\rot$-\emph{pre-basis}
and a smooth $\div$-\emph{pre-basis} of $\Harm{}{\gat,\gan,\eps}(\om)$, i.e.,
there are linear independent smooth fields 
\begin{align*}
\B{}{\rot,\gat}(\om)
&:=\{B_{\rot,\gat,\ell}\}_{\ell=1}^{d_{\om,\gat}}
\subset\H{\infty}{\gat,0}(\rot,\om),
&
\B{}{\div,\gan}(\om)
&:=\{B_{\div,\gan,\ell}\}_{\ell=1}^{d_{\om,\gat}}
\subset\H{\infty}{\gan,0}(\div,\om)
\end{align*}
such that $\pi_{\div}\B{}{\rot,\gat}(\om)$ and $\pi_{\rot}\eps^{-1}\B{}{\div,\gan}(\om)$
are both bases of $\Harm{}{\gat,\gan,\eps}(\om)$. In particular,
$$\Lin\pi_{\div}\B{}{\rot,\gat}(\om)
=\Harm{}{\gat,\gan,\eps}(\om)
=\Lin\pi_{\rot}\eps^{-1}\B{}{\div,\gan}(\om).$$
\end{theo}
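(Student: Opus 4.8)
The plan is to carry over, via the scalar/vector proxies of Section~\ref{sec:vecderham}, the argument underlying Theorem~\ref{theo:cohomologyinfty}; in fact all of the analytic substance has already been assembled in the discussion culminating in \eqref{helmcoho1vec} and \eqref{highorderkernelderhamvecell}, so that what is left is a short passage to the limit $k\to\infty$ together with Sobolev embedding. Concretely, from the orthonormal Helmholtz decompositions \eqref{helmcoho1vec} the projectors $\pi_{\div}$ and $\pi_{\rot}$ restrict to surjections $\pi_{\div}\colon\H{}{\gat,0}(\rot,\om)\to\Harm{}{\gat,\gan,\eps}(\om)$ and $\pi_{\rot}\colon\eps^{-1}\H{}{\gan,0}(\div,\om)\to\Harm{}{\gat,\gan,\eps}(\om)$ that act as the identity on $\Harm{}{\gat,\gan,\eps}(\om)$ and annihilate $\grad\H{1}{\gat}(\om)$ and $\eps^{-1}\rot\H{}{\gan}(\rot,\om)$, respectively. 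Since the higher order kernel decompositions \eqref{highorderkernelderhamvecell} read $\H{k}{\gat,0}(\rot,\om)=\H{\ell}{\gat,0}(\rot,\om)+\grad\H{k+1}{\gat}(\om)$ for all $k,\ell\ge0$ (and analogously for $\div$, $\gan$), and the gradient summand lies in $\ker\pi_{\div}$, one obtains $\pi_{\div}\H{\ell}{\gat,0}(\rot,\om)=\Harm{}{\gat,\gan,\eps}(\om)$ and $\pi_{\rot}\eps^{-1}\H{\ell}{\gan,0}(\div,\om)=\Harm{}{\gat,\gan,\eps}(\om)$ for every $\ell\ge0$.

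The essential point is then the passage $\pi_{\div}\H{\infty}{\gat,0}(\rot,\om)=\Harm{}{\gat,\gan,\eps}(\om)$ (and its $\div$-analogue), which exploits the monotonicity of the spaces $\H{\ell}{\gat,0}(\rot,\om)$ in $\ell$. Here I would fix a basis of the finite-dimensional space $\Harm{}{\gat,\gan,\eps}(\om)$ and lift it, starting from $\H{1}{\gat,0}(\rot,\om)$, successively into $\H{2}{\gat,0}(\rot,\om),\H{3}{\gat,0}(\rot,\om),\dots$: at each step the decomposition $\H{k}{\gat,0}(\rot,\om)=\H{k+1}{\gat,0}(\rot,\om)+\grad\H{k+1}{\gat}(\om)$ from \eqref{highorderkernelderhamvecell} allows one to replace the current lift by a lift of one order higher \emph{without changing its $\pi_{\div}$-image}, because the correction lies in $\grad\H{k+1}{\gat}(\om)\subset\ker\pi_{\div}$. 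The bounded linear regular potential and decomposition operators from Theorem~\ref{highorderregpotpbcLipderhamvec} provide the uniform norm bounds needed to make this iteration converge and to place the limit in $\H{\infty}{\gat,0}(\rot,\om)=\bigcap_{\ell\ge0}\H{\ell}{\gat,0}(\rot,\om)$; its $\pi_{\div}$-image is then the whole basis, hence all of $\Harm{}{\gat,\gan,\eps}(\om)$. I expect this convergence/uniformity step to be the main obstacle; the remainder is formal.

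Granting $\pi_{\div}\H{\infty}{\gat,0}(\rot,\om)=\Harm{}{\gat,\gan,\eps}(\om)$, I would fix a basis of $\Harm{}{\gat,\gan,\eps}(\om)$ and, using this surjectivity, choose $B_{\rot,\gat,\ell}\in\H{\infty}{\gat,0}(\rot,\om)$ for $\ell=1,\dots,d_{\om,\gat}$ with $\pi_{\div}B_{\rot,\gat,\ell}$ equal to this basis; the $B_{\rot,\gat,\ell}$ are then linearly independent, since their $\pi_{\div}$-images are, and by Sobolev embedding $\H{\infty}{\gat,0}(\rot,\om)\subset\bigcap_{k\ge0}\H{k}{}(\om)\subset\C{\infty}{}(\ol\om)$ they are smooth, so $\B{}{\rot,\gat}(\om)$ is a smooth $\rot$-pre-basis of $\Harm{}{\gat,\gan,\eps}(\om)$ with $\Lin\pi_{\div}\B{}{\rot,\gat}(\om)=\Harm{}{\gat,\gan,\eps}(\om)$. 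The smooth $\div$-pre-basis $\B{}{\div,\gan}(\om)$ is obtained verbatim from $\pi_{\rot}\eps^{-1}\H{\infty}{\gan,0}(\div,\om)=\Harm{}{\gat,\gan,\eps}(\om)$. As all the operations here are merely the $\dimom=3$ proxy translations of those in Section~\ref{sec:derham}, an alternative is to deduce the statement directly from Theorem~\ref{theo:cohomologyinfty} via the usual identification of $q$-forms with scalar and vector fields.
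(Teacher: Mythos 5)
Your reduction of the theorem to the single identity $\pi_{\div}\H{\infty}{\gat,0}(\rot,\om)=\Harm{}{\gat,\gan,\eps}(\om)$ (surjectivity of $\pi_{\div}$ on each $\H{\ell}{\gat,0}(\rot,\om)$ via \eqref{helmcoho1vec} and \eqref{highorderkernelderhamvecell}, then choice of preimages of a basis, linear independence through $\pi_{\div}$, smoothness of $\H{\infty}{\gat,0}(\rot,\om)$ by Sobolev embedding, and the option of transferring Theorem \ref{theo:cohomologyinfty} by vector proxies) is exactly the paper's route. The problem sits precisely where you flag it: your mechanism for passing from ``$\pi_{\div}\H{\ell}{\gat,0}(\rot,\om)=\Harm{}{\gat,\gan,\eps}(\om)$ for every finite $\ell$'' to the same statement for the intersection $\H{\infty}{\gat,0}(\rot,\om)=\bigcap_{\ell}\H{\ell}{\gat,0}(\rot,\om)$ does not work as described. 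The iteration $B^{(\ell)}=B^{(\ell+1)}+\grad u_{\ell+1}$ produces lifts of the same cohomology class in ever smaller spaces, but the decomposition operators of Theorem \ref{highorderregpotpbcLipderhamvec} are bounded with constants depending on $k$, the iterates are controlled in different norms $\H{\ell}{}(\om)$, and there is no norm in which the sequence is Cauchy; even an $\L{2}{}(\om)$-limit would be $B^{(1)}$ plus an infinite sum of gradients, with no reason to lie in every $\H{\ell}{}(\om)$. The abstract statement you are implicitly using is false: for a decreasing family of closed subspaces $V_{\ell}$ of a Hilbert space and a continuous linear map $\pi$ onto a finite-dimensional space, $\pi(V_{\ell})=H$ for all $\ell$ does not imply $\pi\big(\bigcap_{\ell}V_{\ell}\big)=H$ (take $V_{\ell}=\ol{\Lin}\{e_{\ell},e_{\ell+1},\dots\}$ in $\ell^{2}$ and $\pi=\scp{\,\cdot\,}{y}$ with all components of $y$ nonzero); equivalently, a decreasing family of non-empty closed affine subspaces may have empty intersection. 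So ``lifts exist at every finite order'' plus ``bounded operators at each fixed order'' is not enough to produce a single lift in $\H{\infty}{\gat,0}(\rot,\om)$.

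To be fair, the paper itself disposes of this step with the phrase ``by the monotonicity of the Sobolev spaces'' and performs no limiting construction, so you have not overlooked a detailed argument in the text; but your proposal commits to a concrete convergence mechanism that fails, rather than supplying the missing structural input. Closing the gap requires producing, for a given cohomology class, a single $\rot$-free representative whose $\H{k}{}$-regularity is obtained \emph{simultaneously} for all $k$ -- for instance an order-independent complement of $\grad\H{k+1}{\gat}(\om)$ in $\H{k}{\gat,0}(\rot,\om)$, or a \v{C}ech/partition-of-unity argument carried out at the level of cohomology classes rather than at the level of the regular decompositions -- not a limit of order-dependent lifts.
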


Note that $(1-\pi_{\div})$ and $(1-\pi_{\rot})$ are the $\L{2}{\eps}(\om)$-orthonormal projectors onto 
$\grad\H{1}{\gat}(\om)$ and $\eps^{-1}\rot\H{}{\gan}(\rot,\om)$, respectively, i.e.,
\begin{align*}
(1-\pi_{\div}):\L{2}{\eps}(\om)&\to\grad\H{1}{\gat}(\om),
&
(1-\pi_{\rot}):\L{2}{\eps}(\om)&\to\eps^{-1}\rot\H{}{\gan}(\rot,\om).
\end{align*}
Then by \eqref{helmcoho1vec} and Theorem \ref{highorderregpotpbcLipderhamvec} (ii) we have, e.g.,
\begin{align}
\begin{aligned}
\label{helmcoho2vec}
\H{}{\gat,0}(\rot,\om)
&=\grad\H{1}{\gat}(\om)
\oplus_{\L{2}{\eps}(\om)}
\Harm{}{\gat,\gan,\eps}(\om)\\
&=\grad\H{1}{\gat}(\om)
\oplus_{\L{2}{\eps}(\om)}
\Lin\pi_{\div}\B{}{\rot,\gat}(\om)\\
&=\grad\H{1}{\gat}(\om)
+(\pi_{\div}-1)\Lin\B{}{\rot,\gat}(\om)
+\Lin\B{}{\rot,\gat}(\om)\\
&=\grad\H{1}{\gat}(\om)
+\Lin\B{}{\rot,\gat}(\om),\\
\H{k}{\gat,0}(\rot,\om)
&=\grad\H{1}{\gat}(\om)\cap\H{k}{\gat,0}(\rot,\om)
+\Lin\B{}{\rot,\gat}(\om),\\
&=\grad\H{k+1}{\gat}(\om)
+\Lin\B{}{\rot,\gat}(\om).
\end{aligned}
\end{align}

Similar to Theorem \ref{highorderregdecopbcLipinfty} we get:

\begin{theo}[higher order bounded regular direct decompositions for the vector de Rham complex]
\label{highorderregdecopbcLipinftyvec}
Let $(\om,\gat)$ be a bounded strong Lipschitz pair and let $k\geq0$. 
Then the bounded regular direct decompositions
\begin{align*}
\H{k}{\gat}(\rot,\om)
&=R(\widetilde\PotQ_{\rot,\gat,1}^{k})
\dotplus\H{k}{\gat,0}(\rot,\om),
&
\H{k}{\gat,0}(\rot,\om)
&=\grad\H{k+1}{\gat}(\om)
\dotplus\Lin\B{}{\rot,\gat}(\om),\\
\H{k}{\gan}(\div,\om)
&=R(\widetilde\PotQ_{\div,\gan,1}^{k})
\dotplus\H{k}{\gan,0}(\div,\om),
&
\H{k}{\gan,0}(\div,\om)
&=\rot\H{k+1}{\gan}(\om)
\dotplus\Lin\B{}{\div,\gan}(\om)
\end{align*}
hold. Note that 
$R(\widetilde\PotQ_{\rot,\gat,1}^{k})\subset\H{k+1}{\gat}(\om)$
and $R(\widetilde\PotQ_{\div,\gan,1}^{k})\subset\H{k+1}{\gan}(\om)$.
In particular, for $k=0$
{\small
\begin{align*}
\H{}{\gat}(\rot,\om)
&=R(\widetilde\PotQ_{\rot,\gat,1}^{0})
\dotplus\H{}{\gat,0}(\rot,\om),
&
\H{}{\gat,0}(\rot,\om)
&=\grad\H{1}{\gat}(\om)
\dotplus\Lin\B{}{\rot,\gat}(\om),\\
&&
&=\grad\H{1}{\gat}(\om)
\oplus_{\L{2}{\eps}(\om)}
\Harm{}{\gat,\gan,\eps}(\om),\\
\H{}{\gan}(\div,\om)
&=R(\widetilde\PotQ_{\div,\gan,1}^{0})
\dotplus\H{}{\gan,0}(\div,\om),
&
\eps^{-1}\H{}{\gan,0}(\div,\om)
&=\eps^{-1}\rot\H{1}{\gan}(\om)
\dotplus\eps^{-1}\Lin\B{}{\div,\gan}(\om)\\
&&
&=\eps^{-1}\rot\H{1}{\gan}(\om)
\oplus_{\L{2}{\eps}(\om)}
\Harm{}{\gat,\gan,\eps}(\om)
\end{align*}
}
as well as
\begin{align*}
\L{2}{\eps}(\om)
&=\H{}{\gat,0}(\rot,\om)
\oplus_{\L{2}{\eps}(\om)}
\eps^{-1}\rot\H{1}{\gan}(\om)
=\grad\H{1}{\gat}(\om)
\oplus_{\L{2}{\eps}(\om)}
\eps^{-1}\H{}{\gan,0}(\div,\om).
\end{align*}
\end{theo}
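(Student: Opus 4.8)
The plan is to assemble the four decompositions from ingredients already in hand, reproducing the proof of Theorem~\ref{highorderregdecopbcLipinfty} under the vector proxies $\ed\in\{\grad,\rot,\div\}$; the quickest route is in fact simply to read Theorem~\ref{highorderregdecopbcLipinfty} and its Hodge $\star$-dual through the identifications of Section~\ref{sec:vecderham}. To keep the vector picture self-contained I would argue directly, treating the $\rot$-column in full and leaving the $\div$-column to Hodge $\star$-duality exactly as indicated after Theorem~\ref{highorderregpotpbcLipderhamvec}. The first step is a reduction: Theorem~\ref{highorderregpotpbcLipderhamvec}~(iv) already supplies the bounded direct decomposition $\H{k}{\gat}(\rot,\om)=R(\widetilde\PotQ_{\rot,\gat,1}^{k})\dotplus\H{k}{\gat,0}(\rot,\om)$ with $R(\widetilde\PotQ_{\rot,\gat,1}^{k})\subset\H{k+1}{\gat}(\om)$, so only the kernel decomposition $\H{k}{\gat,0}(\rot,\om)=\grad\H{k+1}{\gat}(\om)\dotplus\Lin\B{}{\rot,\gat}(\om)$ needs a proof; its underlying set-equality $\H{k}{\gat,0}(\rot,\om)=\grad\H{k+1}{\gat}(\om)+\Lin\B{}{\rot,\gat}(\om)$ is precisely \eqref{helmcoho2vec}.

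For directness of that sum I would suppose $\sum_{\ell}\lambda_{\ell}B_{\rot,\gat,\ell}\in\grad\H{k+1}{\gat}(\om)\cap\Lin\B{}{\rot,\gat}(\om)$, apply the projector $\pi_{\div}$, and use $\pi_{\div}|_{\grad\H{1}{\gat}(\om)}=0$ to get $0=\sum_{\ell}\lambda_{\ell}\pi_{\div}B_{\rot,\gat,\ell}$; since $\pi_{\div}\B{}{\rot,\gat}(\om)$ is a basis of $\Harm{}{\gat,\gan,\eps}(\om)$ by Theorem~\ref{theo:cohomologyinftyvec}, all $\lambda_{\ell}$ vanish. For boundedness, given $E=\grad H+B$ with $H\in\H{k+1}{\gat}(\om)$ and $B\in\Lin\B{}{\rot,\gat}(\om)$, I introduce the topological isomorphism $I_{\mcH}:\Lin\B{}{\rot,\gat}(\om)\to\Harm{}{\gat,\gan,\eps}(\om)$, $B_{\rot,\gat,\ell}\mapsto\pi_{\div}B_{\rot,\gat,\ell}$ (finite-dimensional spaces, arbitrary norms). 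Then $\pi_{\div}E=\pi_{\div}B=I_{\mcH}B$, hence $\norm{B}_{\H{k}{}(\om)}\le c\norm{B}_{\L{2}{}(\om)}\le c\norm{\pi_{\div}E}_{\L{2}{}(\om)}\le c\norm{E}_{\H{k}{}(\om)}$, using the $\L{2}{}$-versus-$\H{k}{}$ norm equivalence on the fixed finite-dimensional span of smooth fields. Finally $\grad H=E-B\in R(\grad_{\gat}^{k})$, so Theorem~\ref{highorderregpotpbcLipderhamvec}~(iii) gives $\PotP_{\grad,\gat}^{k}(E-B)\in\H{k+1}{\gat}(\om)$ with $\grad\PotP_{\grad,\gat}^{k}(E-B)=E-B$ and $\norm{\PotP_{\grad,\gat}^{k}(E-B)}_{\H{k+1}{}(\om)}\le c\norm{E}_{\H{k}{}(\om)}$; this replaces $H$ by a bounded potential and identifies the bounded regular direct decomposition operators.

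For the $k=0$ refinements I would pass from direct to $\L{2}{\eps}(\om)$-orthogonal sums via \eqref{helmcoho1vec} (equivalently Theorem~\ref{theo:minifatb:derhamvec}~(iv)): the identity $\H{}{\gat,0}(\rot,\om)=\grad\H{1}{\gat}(\om)\oplus_{\L{2}{\eps}(\om)}\Harm{}{\gat,\gan,\eps}(\om)$ and its $\div$-dual are already in \eqref{helmcoho1vec}, and substituting $\rot\H{}{\gan}(\rot,\om)=\rot\H{1}{\gan}(\om)$ from Theorem~\ref{highorderregpotpbcLipderhamvec}~(ii) into \eqref{helmcoho1vec} yields the two stated decompositions of $\L{2}{\eps}(\om)$. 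The whole $\div$-column of all four statements then follows by Hodge $\star$-duality, using $\star\Harm{}{\gat,\gan,\id}(\om)=\Harm{}{\gan,\gat,\id}(\om)$ and the $\star$-dual of Theorem~\ref{highorderregpotpbcLipderhamvec}.

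I do not expect a substantive obstacle: the conceptual content is already packed into Theorem~\ref{theo:cohomologyinftyvec} (smooth pre-bases) and Theorem~\ref{highorderregpotpbcLipderhamvec} (regular potentials and decompositions). The one point that needs care is the boundedness of the $\Lin\B{}{\rot,\gat}(\om)$-summand in the argument above, which is \emph{not} $\L{2}{\eps}(\om)$-orthogonal to $\grad\H{1}{\gat}(\om)$, so its coefficient cannot be read off an orthogonal projection and must be recovered through $\pi_{\div}$ and the finite-dimensional isomorphism $I_{\mcH}$, together with the norm equivalence on the span of the smooth pre-basis fields.
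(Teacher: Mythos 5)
Your proposal is correct and follows essentially the same route as the paper, which states this theorem as the vector-proxy translation of Theorem~\ref{highorderregdecopbcLipinfty} and whose proof you reproduce faithfully: directness via $\pi_{\div}$ and the basis property from Theorem~\ref{theo:cohomologyinftyvec}, boundedness via the finite-dimensional isomorphism $I_{\mcH}$ together with the regular potential operator, and Hodge $\star$-duality for the $\div$-column. No gaps.
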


Remark \ref{rem:highorderregdecopbcLipinfty} holds here as well.
Noting
\begin{align}
\label{Borthorotdiv}
\eps^{-1}\rot\H{}{\gan}(\rot,\om)\bot_{\L{2}{\eps}(\om)}\B{}{\rot,\gat}(\om),\qquad
\grad\H{1}{\gat}(\om)\bot_{\L{2}{}(\om)}\B{}{\div,\gan}(\om)
\end{align}
we see:

\begin{theo}[alternative Dirichlet/Neumann projections for the vector de Rham complex]
\label{Bcoho1vec}
Let $(\om,\gat)$ be a bounded strong Lipschitz pair. Then
\begin{align*}
\Harm{}{\gat,\gan,\eps}(\om)\cap\B{}{\rot,\gat}(\om)^{\bot_{\L{2}{\eps}(\om)}}
&=\{0\},
&
\eps^{-1}\H{}{\gan,0}(\div,\om)\cap\B{}{\rot,\gat}(\om)^{\bot_{\L{2}{\eps}(\om)}}
&=\eps^{-1}\rot\H{}{\gan}(\rot,\om),\\
\Harm{}{\gat,\gan,\eps}(\om)\cap\B{}{\div,\gan}(\om)^{\bot_{\L{2}{}(\om)}}
&=\{0\},
&
\H{}{\gat,0}(\rot,\om)\cap\B{}{\div,\gan}(\om)^{\bot_{\L{2}{}(\om)}}
&=\grad\H{1}{\gat}(\om).
\end{align*}
Moreover, for all $k\geq0$
\begin{align*}
\eps^{-1}\H{k}{\gan,0}(\div,\om)\cap\B{}{\rot,\gat}(\om)^{\bot_{\L{2}{\eps}(\om)}}
&=\eps^{-1}\rot\H{k}{\gan}(\rot,\om)
=\eps^{-1}\rot\H{k+1}{\gan}(\om),\\
\H{k}{\gat,0}(\rot,\om)\cap\B{}{\div,\gan}(\om)^{\bot_{\L{2}{}(\om)}}
&=\grad\H{k+1}{\gat}(\om).
\end{align*}
\end{theo}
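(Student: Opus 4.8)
The plan is to follow, in vector proxies, the proof of Theorem~\ref{Bcoho1} and Corollary~\ref{cor:Bcoho1}; equivalently, all assertions are obtained from those two results by the Hodge~$\star$/vector-proxy identification of the vector de~Rham complex with the de~Rham complex of differential forms in $\reals^{3}$. I sketch the direct argument. First I would record the two orthogonality relations already noted in \eqref{Borthorotdiv},
$$\eps^{-1}\rot\H{}{\gan}(\rot,\om)\bot_{\L{2}{\eps}(\om)}\B{}{\rot,\gat}(\om),\qquad
\grad\H{1}{\gat}(\om)\bot_{\L{2}{}(\om)}\B{}{\div,\gan}(\om),$$
which hold because $\B{}{\rot,\gat}(\om)\subset\H{}{\gat,0}(\rot,\om)$ and $\B{}{\div,\gan}(\om)\subset\H{}{\gan,0}(\div,\om)$, together with the Helmholtz decompositions \eqref{helmcoho1vec} (for the second relation, $\scp{\grad u}{v}_{\L{2}{}(\om)}=-\scp{u}{\div v}_{\L{2}{}(\om)}=0$ for $u\in\H{1}{\gat}(\om)$, $v\in\H{}{\gan,0}(\div,\om)$).

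Next I would treat the trivial-intersection statements. For $H\in\Harm{}{\gat,\gan,\eps}(\om)\cap\B{}{\rot,\gat}(\om)^{\bot_{\L{2}{\eps}(\om)}}$, using that $\pi_{\div}$ is the $\L{2}{\eps}(\om)$-orthonormal (hence self-adjoint) projector and $\pi_{\div}H=H$,
$$0=\scp{H}{B_{\rot,\gat,\ell}}_{\L{2}{\eps}(\om)}
=\scp{\pi_{\div}H}{B_{\rot,\gat,\ell}}_{\L{2}{\eps}(\om)}
=\scp{H}{\pi_{\div}B_{\rot,\gat,\ell}}_{\L{2}{\eps}(\om)}$$
for all $\ell$, so $H=0$ since $\pi_{\div}\B{}{\rot,\gat}(\om)$ is a basis of $\Harm{}{\gat,\gan,\eps}(\om)$ by Theorem~\ref{theo:cohomologyinftyvec}. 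The statement for $\B{}{\div,\gan}(\om)$ follows analogously with $\pi_{\rot}$, using $\scp{H}{B_{\div,\gan,\ell}}_{\L{2}{}(\om)}=\scp{H}{\eps^{-1}B_{\div,\gan,\ell}}_{\L{2}{\eps}(\om)}$ and that $\pi_{\rot}\eps^{-1}\B{}{\div,\gan}(\om)$ is a basis of $\Harm{}{\gat,\gan,\eps}(\om)$. The identities for $\eps^{-1}\H{}{\gan,0}(\div,\om)$ and $\H{}{\gat,0}(\rot,\om)$ then drop out by splitting these spaces via \eqref{helmcoho1vec},
$$\eps^{-1}\H{}{\gan,0}(\div,\om)=\eps^{-1}\rot\H{}{\gan}(\rot,\om)\oplus_{\L{2}{\eps}(\om)}\Harm{}{\gat,\gan,\eps}(\om),\qquad
\H{}{\gat,0}(\rot,\om)=\grad\H{1}{\gat}(\om)\oplus_{\L{2}{\eps}(\om)}\Harm{}{\gat,\gan,\eps}(\om),$$
and intersecting with the respective annihilators: the first summands survive by the orthogonality relations, the harmonic summands are killed by the previous step.

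Finally, the higher-order statements follow by intersecting with $\eps^{-1}\H{k}{\gan}(\om)$ resp.\ $\H{k}{\gat}(\om)$ and using $\eps^{-1}\H{k}{\gan,0}(\div,\om)=\eps^{-1}\H{k}{\gan}(\om)\cap\eps^{-1}\H{}{\gan,0}(\div,\om)$ and $\H{k}{\gat,0}(\rot,\om)=\H{k}{\gat}(\om)\cap\H{}{\gat,0}(\rot,\om)$, followed by Theorem~\ref{highorderregpotpbcLipderhamvec}~(ii) to rewrite $\eps^{-1}\H{k}{\gan}(\om)\cap\eps^{-1}\rot\H{}{\gan}(\rot,\om)=\eps^{-1}\rot\H{k}{\gan}(\rot,\om)=\eps^{-1}\rot\H{k+1}{\gan}(\om)$ and $\H{k}{\gat}(\om)\cap\grad\H{1}{\gat}(\om)=\grad\H{k+1}{\gat}(\om)$. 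The only care needed is consistent bookkeeping of the weight $\eps$ in the two inner products $\scp{\,\cdot\,}{\,\cdot\,}_{\L{2}{}(\om)}$ and $\scp{\,\cdot\,}{\,\cdot\,}_{\L{2}{\eps}(\om)}$ and of the boundary parts $\gat$, $\gan$; no genuine analytic obstacle arises beyond what is already packaged in Theorem~\ref{theo:cohomologyinftyvec} and Theorem~\ref{highorderregpotpbcLipderhamvec}.
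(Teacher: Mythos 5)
Your proposal is correct and follows exactly the route the paper intends: the paper gives no separate proof for Theorem \ref{Bcoho1vec} but derives it by noting \eqref{Borthorotdiv} and transcribing the proofs of Theorem \ref{Bcoho1} and Corollary \ref{cor:Bcoho1} into vector proxies, which is precisely what you do (self-adjointness of $\pi_{\div}$, $\pi_{\rot}$ plus Theorem \ref{theo:cohomologyinftyvec} for the trivial intersections, the Helmholtz splittings \eqref{helmcoho1vec} for the zero-order identities, and Theorem \ref{highorderregpotpbcLipderhamvec} (ii) for the higher-order case).
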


\begin{theo}[cohomology groups of the vector de Rham complex]
\label{Bcoho2vec}
Let $(\om,\gat)$ be a bounded strong Lipschitz pair. Then
\begin{align*}
N(\rot_{\gat}^{k})/R(\grad_{\gat}^{k})
\cong\Lin\B{}{\rot,\gat}(\om)
\cong\Harm{}{\gat,\gan,\eps}(\om)
\cong\Lin\B{}{\div,\gan}(\om)
\cong N(\div_{\gan}^{k})/R(\rot_{\gan}^{k}).
\end{align*}
In particular, the dimensions of the cohomology groups
(Dirichlet/Neumann fields) are independent of $k$ and $\eps$ and it holds
\begin{align*}
d_{\om,\gat}^{}
=\dim\big(N(\rot_{\gat}^{k})/R(\grad_{\gat}^{k})\big)
=\dim\big(N(\div_{\gan}^{k})/R(\rot_{\gan}^{k})\big).
\end{align*}
\end{theo}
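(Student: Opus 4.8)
The plan is to deduce the whole chain of isomorphisms from the bounded regular direct decompositions of Theorem~\ref{highorderregdecopbcLipinftyvec} together with the identification of the smooth pre-bases with the harmonic fields in Theorem~\ref{theo:cohomologyinftyvec}; essentially no new analysis is needed, since all the hard work (compact embeddings, closed ranges, regular potentials and decompositions, smooth pre-bases) is already available.

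First I would pin down the numerators and denominators of the two cohomology quotients. By definition $N(\rot_{\gat}^{k})=\H{k}{\gat,0}(\rot,\om)$ and $N(\div_{\gan}^{k})=\H{k}{\gan,0}(\div,\om)$, while Theorem~\ref{highorderregpotpbcLipderhamvec}~(ii), applied once with boundary part $\gat$ and once with $\gat$ replaced by $\gan$ (the two boundary parts playing symmetric roles there), yields $R(\grad_{\gat}^{k})=\grad\H{k+1}{\gat}(\om)$ and $R(\rot_{\gan}^{k})=\rot\H{k+1}{\gan}(\om)$; closedness of these ranges comes from Theorem~\ref{theo:clran:derhamk}, resp.\ Theorem~\ref{theo:minifatb:derhamvec}. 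The complex properties $R(\grad_{\gat}^{k})\subset N(\rot_{\gat}^{k})$ and $R(\rot_{\gan}^{k})\subset N(\div_{\gan}^{k})$ make both quotients well defined.

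Next I would invoke the algebraically direct decompositions from Theorem~\ref{highorderregdecopbcLipinftyvec},
\[\H{k}{\gat,0}(\rot,\om)=\grad\H{k+1}{\gat}(\om)\dotplus\Lin\B{}{\rot,\gat}(\om),\qquad
\H{k}{\gan,0}(\div,\om)=\rot\H{k+1}{\gan}(\om)\dotplus\Lin\B{}{\div,\gan}(\om),\]
and the elementary fact that for an algebraic direct sum $V=U\dotplus W$ the canonical surjection $V\to V/U$ restricts to an isomorphism $W\to V/U$; this gives $N(\rot_{\gat}^{k})/R(\grad_{\gat}^{k})\cong\Lin\B{}{\rot,\gat}(\om)$ and $N(\div_{\gan}^{k})/R(\rot_{\gan}^{k})\cong\Lin\B{}{\div,\gan}(\om)$. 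Then Theorem~\ref{theo:cohomologyinftyvec} supplies the remaining links: $\pi_{\div}$ carries $\B{}{\rot,\gat}(\om)$ to a basis of $\Harm{}{\gat,\gan,\eps}(\om)$ and $\pi_{\rot}\eps^{-1}$ carries $\B{}{\div,\gan}(\om)$ to a basis of $\Harm{}{\gat,\gan,\eps}(\om)$, so that (a linear map between finite-dimensional spaces carrying a basis to a basis being an isomorphism — this is the topological isomorphism $I_{\mcH}$ appearing in the proof of Theorem~\ref{highorderregdecopbcLipinftyvec}, cf.\ Remark~\ref{rem:highorderregdecopbcLipinfty}) $\Lin\B{}{\rot,\gat}(\om)\cong\Harm{}{\gat,\gan,\eps}(\om)\cong\Lin\B{}{\div,\gan}(\om)$. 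Composing all these closes the chain. For the last assertion I would note that $\dim\Harm{}{\gat,\gan,\eps}(\om)=d_{\om,\gat}$ is independent of $\eps$ by Theorem~\ref{theo:minifatb:derhamvec}~(iii) (cf.\ \cite{PW2020a}) and independent of $k$ since no isomorphism in the chain depends on $k$.

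The heavy lifting has already been done, so there is no serious obstacle here; the only point needing care is the bookkeeping. One must make sure the sums in Theorem~\ref{highorderregdecopbcLipinftyvec} are genuinely \emph{algebraic} direct sums — precisely the directness proved there via injectivity of $\pi_{\div}$, resp.\ $\pi_{\rot}\eps^{-1}$, on the span of the pre-basis — and that Theorem~\ref{highorderregpotpbcLipderhamvec}~(ii) is applied with the correct boundary part, so that $R(\grad_{\gat}^{k})$ and $R(\rot_{\gan}^{k})$ are the full (closed) ranges $\grad\H{k+1}{\gat}(\om)$ and $\rot\H{k+1}{\gan}(\om)$ and not merely dense subspaces. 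With those matched up, the quotient and basis isomorphisms assemble into the claim, and the equalities $d_{\om,\gat}=\dim\big(N(\rot_{\gat}^{k})/R(\grad_{\gat}^{k})\big)=\dim\big(N(\div_{\gan}^{k})/R(\rot_{\gan}^{k})\big)$ follow immediately.
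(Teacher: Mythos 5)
Your proposal is correct and follows exactly the route the paper intends: the quotient isomorphisms come from the algebraically direct decompositions $\H{k}{\gat,0}(\rot,\om)=\grad\H{k+1}{\gat}(\om)\dotplus\Lin\B{}{\rot,\gat}(\om)$ and $\H{k}{\gan,0}(\div,\om)=\rot\H{k+1}{\gan}(\om)\dotplus\Lin\B{}{\div,\gan}(\om)$ of Theorem \ref{highorderregdecopbcLipinftyvec}, combined with the basis identifications via $\pi_{\div}$ and $\pi_{\rot}\eps^{-1}$ from Theorem \ref{theo:cohomologyinftyvec} and the range identifications from Theorem \ref{highorderregpotpbcLipderhamvec} (ii). This mirrors the paper's derivation of the differential-form analogue (Theorem \ref{Bcoho2}) from Theorem \ref{highorderregdecopbcLipinfty}, so there is nothing to add.
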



\bibliographystyle{plain} 
\bibliography{/Users/paule/GoogleDriveData/Tex/input/bibTex/psz}


\appendix

\section{Results for the Co-Derivative}

By Hodge $\star$-duality we get the corresponding dual results 
from Section \ref{sec:derham} for the $\cd$-operator.

\begin{lem}[regular potential for $\cd$ without boundary condition]
\label{highorderregpotcd}
Let $\om\subset\rdimom$ be a bounded strong Lipschitz domain and let $k\geq0$
and $q\in\{0,\dots,\dimom-1\}$.
Then there exists a bounded linear regular potential operator
$$\PotP_{\cd,\emptyset}^{q,k}:
\H{q,k}{\emptyset,0}(\cd,\om)\cap\Harm{q}{\ga,\emptyset,\id}(\om)^{\bot_{\L{q,2}{}(\om)}}
\To\H{q+1,k+1}{0}(\ed,\rdimom),$$ 
such that $\cd\PotP_{\cd,\emptyset}^{q,k}=\id|_{\H{q,k}{\emptyset,0}(\cd,\om)\cap\Harm{q}{\ga,\emptyset,\id}(\om)^{\bot_{\L{q,2}{}(\om)}}}$, i.e.,
for all $E\in\H{q,k}{\emptyset,0}(\cd,\om)\cap\Harm{q}{\ga,\emptyset,\id}(\om)^{\bot_{\L{q,2}{}(\om)}}$ 
$$\cd\PotP_{\cd,\emptyset}^{q,k}E=E\quad\text{in }\om.$$
In particular, the bounded regular potential representations 
$$R(\cd_{\emptyset}^{q+1,k})
=\H{q,k}{\emptyset,0}(\cd,\om)\cap\Harm{q}{\ga,\emptyset,\id}(\om)^{\bot_{\L{q,2}{}(\om)}}
=\cd\H{q+1,k}{\emptyset}(\cd,\om)
=\cd\H{q+1,k+1}{\emptyset}(\om)
=\cd\H{q+1,k+1}{\emptyset,0}(\ed,\om)$$ 
hold and the potentials can be chosen such that they depend continuously on the data.
Especially, these are closed subspaces 
of $\H{q,k}{\emptyset}(\om)=\H{q,k}{}(\om)$ 
and $\PotP_{\cd,\emptyset}^{q,k}$ is a right inverse to $\cd$.
By a simple cut-off technique $\PotP_{\cd,\emptyset}^{q,k}$ may be modified to 
$$\PotP_{\cd,\emptyset}^{q,k}:
\H{q,k}{\emptyset,0}(\cd,\om)\cap\Harm{q}{\ga,\emptyset,\id}(\om)^{\bot_{\L{q,2}{}(\om)}}
\To\H{q+1,k+1}{}(\ed,\rdimom)$$ 
such that $\PotP_{\cd,\emptyset}^{q,k}E$ has a fixed compact support in $\rdimom$ 
for all $E\in\H{q,k}{\emptyset,0}(\cd,\om)\cap\Harm{q}{\ga,\emptyset,\id}(\om)^{\bot_{\L{q,2}{}(\om)}}$.
\end{lem}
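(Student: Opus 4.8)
The plan is to deduce Lemma \ref{highorderregpotcd} directly from Lemma \ref{highorderregpoted} by Hodge $\star$-duality, mirroring the strategy announced right after Lemma \ref{highorderregpoted} (``By Hodge $\star$-duality we get a corresponding result for the $\cd$-operator''). First I would recall that the Hodge star operator $\star=\star_{\om}:\L{q,2}{}(\om)\to\L{\dimom-q,2}{}(\om)$ is an isometric isomorphism (for the standard inner products), that it commutes with partial derivatives $\p^{\alpha}$, and that it intertwines the two derivatives via $\cd=\pm\star\ed\star$ and $\ed=\pm\star\cd\star$ on $\om$. In particular $\star$ maps $\H{q,k}{\emptyset}(\cd,\om)$ isometrically onto $\H{\dimom-q,k}{\emptyset}(\ed,\om)$, sends $\H{q,k}{\emptyset,0}(\cd,\om)$ onto $\H{\dimom-q,k}{\emptyset,0}(\ed,\om)$, and (using the definition \eqref{dfdirneudef} of Dirichlet/Neumann forms together with $\star$ swapping the roles of $\ed$/$\cd$ and of $\gat$/$\gan$) identifies $\Harm{q}{\ga,\emptyset,\id}(\om)$ with $\Harm{\dimom-q}{\emptyset,\ga,\id}(\om)$ and hence the orthogonal complement $\Harm{q}{\ga,\emptyset,\id}(\om)^{\bot_{\L{q,2}{}(\om)}}$ with $\Harm{\dimom-q}{\emptyset,\ga,\id}(\om)^{\bot_{\L{\dimom-q,2}{}(\om)}}$.

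Next I would set $p:=\dimom-q$, so $p\in\{1,\dots,\dimom\}$ when $q\in\{0,\dots,\dimom-1\}$, which is exactly the range required in Lemma \ref{highorderregpoted}. Applying Lemma \ref{highorderregpoted} with rank $p$ and the same $k$ yields a bounded linear regular potential operator
$$\PotP_{\ed,\emptyset}^{p,k}:\H{p,k}{\emptyset,0}(\ed,\om)\cap\Harm{p}{\emptyset,\ga,\id}(\om)^{\bot_{\L{p,2}{}(\om)}}\To\H{p-1,k+1}{0}(\cd,\rdimom)$$
with $\ed\PotP_{\ed,\emptyset}^{p,k}=\id$ on that domain, compactly supported range forms, and continuous dependence on the data. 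I would then \emph{define}
$$\PotP_{\cd,\emptyset}^{q,k}:=\pm\star_{\rdimom}\,\PotP_{\ed,\emptyset}^{p,k}\,\star_{\om},$$
with the sign chosen so that $\cd\star_{\rdimom}=\pm\star_{\rdimom}\ed$ matches, and check that this composition does what is claimed: $\star_{\om}$ carries an input $E\in\H{q,k}{\emptyset,0}(\cd,\om)\cap\Harm{q}{\ga,\emptyset,\id}(\om)^{\bot_{\L{q,2}{}(\om)}}$ into the domain of $\PotP_{\ed,\emptyset}^{p,k}$; the outer $\star_{\rdimom}$ maps $\H{p-1,k+1}{0}(\cd,\rdimom)$ isometrically onto $\H{\dimom-p+1,k+1}{0}(\ed,\rdimom)=\H{q+1,k+1}{0}(\ed,\rdimom)$; and $\cd\PotP_{\cd,\emptyset}^{q,k}E=\pm\star_{\rdimom}\ed\PotP_{\ed,\emptyset}^{p,k}\star_{\om}E=\pm\star_{\rdimom}\star_{\om}E=E$ in $\om$ (the residual sign being absorbed since $\star\star=\pm\id$; one tracks it through consistently). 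Boundedness and continuous dependence on the data are immediate because $\star$ is an isometry and composition of bounded maps is bounded; the compact-support modification transports verbatim through $\star_{\rdimom}$. The chain of identities
$$R(\cd_{\emptyset}^{q+1,k})=\H{q,k}{\emptyset,0}(\cd,\om)\cap\Harm{q}{\ga,\emptyset,\id}(\om)^{\bot_{\L{q,2}{}(\om)}}=\cd\H{q+1,k}{\emptyset}(\cd,\om)=\cd\H{q+1,k+1}{\emptyset}(\om)=\cd\H{q+1,k+1}{\emptyset,0}(\ed,\om)$$
then follows by applying $\star$ to the corresponding chain in Lemma \ref{highorderregpoted}, together with the observation that $\cd\PotP_{\cd,\emptyset}^{q,k}$ being the identity forces all these inclusions to be equalities and the spaces to be closed in $\H{q,k}{\emptyset}(\om)=\H{q,k}{}(\om)$.

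The step I expect to require the most care is the bookkeeping of signs and of the exact index shifts in the Hodge correspondence — in particular verifying that $\star$ really sends the space $\Harm{p}{\emptyset,\ga,\id}(\om)$ (with its $\ed$/$\gat$-free and $\cd$/$\gan$ conditions) to $\Harm{q}{\ga,\emptyset,\id}(\om)$ after the swap, and that the Sobolev order $k$ and the ``$\ed$ of the potential lands in $\H{\cdot,k+1}{}$'' structure are preserved. None of this is deep; it is the routine but error-prone translation between $q$-forms and $(\dimom-q)$-forms. Everything else is a formal consequence of $\star$ being a (partial-derivative-commuting) isometric isomorphism, so the proof reduces to: state the $\star$-dictionary, apply Lemma \ref{highorderregpoted} at rank $\dimom-q$, conjugate by $\star$, and transport the conclusions back.
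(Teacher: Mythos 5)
Your proposal is correct and is exactly the route the paper takes: the paper gives no separate proof of this appendix lemma but derives it from Lemma \ref{highorderregpoted} by Hodge $\star$-duality, precisely as you do by applying that lemma at rank $\dimom-q$ and conjugating with $\star$ (the index shifts and the identification $\star\Harm{q}{\ga,\emptyset,\id}(\om)=\Harm{\dimom-q}{\emptyset,\ga,\id}(\om)$ all check out). Your version is in fact more detailed than the paper's one-line justification.
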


\begin{lem}[regular potentials and decompostions for $\cd$ with partial boundary condition for extendable domains]
\label{highorderregpotdecocdpbc}
Let $(\om,\gan)$ be an extendable bounded strong Lipschitz pair and let $k\geq0$.
\begin{itemize}
\item[\bf(i)]
For $1\leq q\leq\dimom-1$ there exists a bounded linear regular potential operator
$$\PotP_{\cd,\gan}^{q,k}:\bH{q,k}{\gan,0}(\cd,\om)
\To\H{q+1,k+1}{}(\rdimom)\cap\H{q+1,k+1}{\gan}(\om),$$
such that $\cd\PotP_{\ed,\gan}^{q,k}=\id|_{\bH{q,k}{\gan,0}(\cd,\om)}$, i.e.,
for all $E\in\bH{q,k}{\gan,0}(\cd,\om)$
$$\cd\PotP_{\cd,\gan}^{q,k}E=E\quad\text{in }\om.$$
In particular, the bounded regular potential representations 
$$\bH{q,k}{\gan,0}(\cd,\om)
=\H{q,k}{\gan,0}(\cd,\om)
=\cd\H{q+1,k+1}{\gan}(\om)
=\cd\H{q+1,k}{\gan}(\cd,\om)$$
hold and the potentials can be chosen such that they depend continuously on the data.
Especially, these are closed subspaces 
of $\H{q,k}{\emptyset}(\om)=\H{q,k}{}(\om)$ 
and $\PotP_{\cd,\gan}^{q,k}$ is a right inverse to $\cd$.
The results extend literally to the case $q=0$ if $\gan\neq\ga$
and the case $q=\dimom$ is trivial since $\bH{\dimom,k}{\gan,0}(\cd,\om)=\reals_{\gan}$. 
For $q=0$ and $\gan=\ga$ the results still remain valid if
$\bH{0,k}{\ga,0}(\cd,\om)=\bH{0,k}{\ga}(\om)$
and $\H{0,k}{\ga,0}(\cd,\om)=\H{0,k}{\ga}(\om)$
are replaced by the slightly smaller spaces
$\bH{0,k}{\ga}(\om)\cap\reals^{\bot_{\L{0,2}{}(\om)}}$ 
and $\H{0,k}{\ga}(\om)\cap\reals^{\bot_{\L{0,2}{}(\om)}}$, respectively.
\item[\bf(ii)]
For all $0\leq q\leq\dimom$ the regular decompositions
\begin{align*}
\bH{q,k}{\gan}(\cd,\om)
=\H{q,k}{\gan}(\cd,\om)
&=\H{q,k+1}{\gan}(\om)+\cd\H{q+1,k+1}{\gan}(\om)\\
&=\PotQ_{\cd,\gan,1}^{q,k}\H{q,k}{\gan}(\cd,\om)
\dotplus\cd\PotQ_{\cd,\gan,0}^{q,k}\H{q,k}{\gan}(\cd,\om)\\
&=\PotQ_{\cd,\gan,1}^{q,k}\H{q,k}{\gan}(\cd,\om)
\dotplus\cd\H{q+1,k+1}{\gan}(\om)\\
&=\PotQ_{\cd,\gan,1}^{q,k}\H{q,k}{\gan}(\cd,\om)
\dotplus\H{q,k}{\gan,0}(\cd,\om)
\end{align*}
hold with bounded linear regular decomposition operators
\begin{align*}
\PotQ_{\cd,\gan,1}^{q,k}:=\PotP_{\cd,\gan}^{q-1,k}\cd:\H{q,k}{\gan}(\cd,\om)
&\to\H{q,k+1}{\gan}(\om),\\
\PotQ_{\cd,\gan,0}^{q,k}:=\PotP_{\cd,\gan}^{q,k}(1-\PotP_{\cd,\gan}^{q-1,k}\cd):\H{q,k}{\gan}(\cd,\om)
&\to\H{q+1,k+1}{\gan}(\om)
\end{align*}
satisfying
$\PotQ_{\cd,\gan,1}^{q,k}+\cd\PotQ_{\cd,\gan,0}^{q,k}
=\id|_{\H{q,k}{\gan}(\cd,\om)}$.
Moreover, it holds $\cd\PotQ_{\cd,\gan,1}^{q,k}=\cd^{q,k}_{\gan}$
and thus $\H{q,k}{\gan,0}(\cd,\om)$ is invariant under $\PotQ_{\cd,\gan,1}^{q,k}$.
$\PotQ_{\cd,\gan,1}^{q,k}\H{q,k}{\gan}(\cd,\om)
=R(\PotQ_{\cd,\gan,1}^{q,k})
=R(\PotP_{\cd,\gan}^{q-1,k})$
as well as
$\PotQ_{\cd,\gan,0}^{q,k}\H{q,k}{\gan}(\cd,\om)
=R(\PotQ_{\cd,\gan,0}^{q,k})
=R(\PotP_{\cd,\gan}^{q,k})$ hold.
\end{itemize}
\end{lem}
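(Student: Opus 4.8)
The statement to prove is Lemma \ref{highorderregpotdecocdpbc}, which is the Hodge $\star$-dual of Lemma \ref{highorderregpotedpbc} (part (i)) and Lemma \ref{highorderregdecoedpbc} (part (ii)). The overall plan is therefore straightforward: transport everything through the Hodge star operator and invoke the already-proven results for the exterior derivative $\ed$.

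First I would recall the basic identities relating $\ed$ and $\cd$ via the Hodge star: on $q$-forms one has $\cd = \pm *\,\ed\,*$ and $*$ is an isometric isomorphism $\L{q,2}{}(\om)\to\L{\dimom-q,2}{}(\om)$ that commutes with partial derivatives and hence maps $\H{q,k}{}(\om)$ isometrically onto $\H{\dimom-q,k}{}(\om)$. Crucially, $*$ interchanges the two boundary parts: it maps $\C{q,\infty}{\gan}(\om)$ onto $\C{\dimom-q,\infty}{\gan}(\om)$ (the support condition is symmetric in the form-degree), so by taking closures it maps $\H{q,k}{\gan}(\om)$, $\H{q}{\gan}(\cd,\om)$, $\H{q,k}{\gan}(\cd,\om)$, and $\H{q,k}{\gan,0}(\cd,\om)$ isometrically onto $\H{\dimom-q,k}{\gan}(\om)$, $\H{\dimom-q}{\gan}(\ed,\om)$, $\H{\dimom-q,k}{\gan}(\ed,\om)$, and $\H{\dimom-q,k}{\gan,0}(\ed,\om)$, respectively; the same holds in the weak $\mbH$-spaces. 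I would also note $*\Harm{q}{\gat,\gan,\eps}(\om)=\Harm{\dimom-q}{\gan,\gat,*\eps*}(\om)$ and in particular $*\Harm{q}{\ga,\emptyset,\id}(\om)=\Harm{\dimom-q}{\emptyset,\ga,\id}(\om)$, so orthogonal complements against harmonic forms are respected. This dictionary is the real content; once it is set up, the lemma is a translation.

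For part (i): given $E\in\bH{q,k}{\gan,0}(\cd,\om)$, set $\widehat E := *E \in \bH{\dimom-q,k}{\gan,0}(\ed,\om)$. Since $(\om,\gan)$ is an extendable bounded strong Lipschitz pair and $1\le \dimom-q\le\dimom-1$, Lemma \ref{highorderregpotedpbc} provides $\PotP_{\ed,\gan}^{\dimom-q,k}\widehat E\in\H{\dimom-q-1,k+1}{}(\rdimom)\cap\H{\dimom-q-1,k+1}{\gan}(\om)$ with $\ed\PotP_{\ed,\gan}^{\dimom-q,k}\widehat E=\widehat E$ in $\om$ and continuous dependence on the data. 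Define $\PotP_{\cd,\gan}^{q,k}E := \pm *\,\PotP_{\ed,\gan}^{\dimom-q,k}\,*E$ (with the sign matching $\cd=\pm*\ed*$ in degree $q+1$). Then $\cd\PotP_{\cd,\gan}^{q,k}E=E$ in $\om$, the operator is bounded linear into $\H{q+1,k+1}{}(\rdimom)\cap\H{q+1,k+1}{\gan}(\om)$, and the chain of equalities $\bH{q,k}{\gan,0}(\cd,\om)=\H{q,k}{\gan,0}(\cd,\om)=\cd\H{q+1,k+1}{\gan}(\om)=\cd\H{q+1,k}{\gan}(\cd,\om)$ follows by applying $*$ to the corresponding chain in Lemma \ref{highorderregpotedpbc}. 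The boundary-degree edge cases — $q=0$ versus $q=\dimom$, and $q=0$ with $\gan=\ga$ where the constant $0$-forms in $\reals$ must be split off — correspond exactly under $*$ to the edge cases $\dimom-q=\dimom$ resp.\ $\dimom-q=\dimom$ with $\gat=\ga$ and the constant $\dimom$-forms in $*\reals$ treated in Lemma \ref{highorderregpotedpbc}, so they transfer verbatim. For part (ii), one argues identically, dualizing Lemma \ref{highorderregdecoedpbc}: for $E\in\bH{q,k}{\gan}(\cd,\om)$ one has $\cd E\in\bH{q-1,k}{\gan,0}(\cd,\om)$, apply $\PotP_{\cd,\gan}^{q-1,k}$ from part (i) to get $\PotP_{\cd,\gan}^{q-1,k}\cd E\in\H{q,k+1}{\gan}(\om)$ with $\cd\PotP_{\cd,\gan}^{q-1,k}\cd E=\cd E$, so $E-\PotP_{\cd,\gan}^{q-1,k}\cd E$ lies in $\bH{q,k}{\gan,0}(\cd,\om)=\cd\H{q+1,k+1}{\gan}(\om)$ and can be written as $\cd\PotP_{\cd,\gan}^{q,k}(E-\PotP_{\cd,\gan}^{q-1,k}\cd E)$; this gives the decomposition operators $\PotQ_{\cd,\gan,1}^{q,k}$ and $\PotQ_{\cd,\gan,0}^{q,k}$ as stated. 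The identity $\cd\PotQ_{\cd,\gan,1}^{q,k}=\cd_{\gan}^{q,k}$, the invariance of $\H{q,k}{\gan,0}(\cd,\om)$, the directness of the sums, and the range identifications $R(\PotQ_{\cd,\gan,1}^{q,k})=R(\PotP_{\cd,\gan}^{q-1,k})$ etc.\ all follow by pushing the corresponding statements of Lemma \ref{highorderregdecoedpbc} through $*$, exactly as in that proof (for directness: if $H=\PotP_{\cd,\gan}^{q-1,k}\cd E\in\H{q,0}{\gan,0}(\cd,\om)$ then $0=\cd H=\cd E$ forces $H=0$).

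The main obstacle — and it is really a bookkeeping obstacle rather than a conceptual one — is pinning down the behavior of the Hodge star on the boundary-condition spaces and on the weighted harmonic forms, i.e.\ verifying carefully that $*$ swaps $\gat\leftrightarrow\gan$ and degree $q\leftrightarrow\dimom-q$ consistently across \emph{all} the spaces ($\H{}{}$, $\bH{}{}$, the $(\ed,\cd)$-graph spaces, the kernels, and the orthogonal complements against $\Harm{}{}$), together with getting the signs in $\cd=\pm*\ed*$ and the index shifts ($q\to q\pm1$, $\dimom-q\to\dimom-q\mp1$) exactly right so that, e.g., "$q\in\{0,\dots,\dimom-1\}$ for $\cd_\emptyset^{q+1,k}$" in Lemma \ref{highorderregpotcd} matches "$q'\in\{1,\dots,\dimom\}$ for $\ed_\emptyset^{q'-1,k}$" in Lemma \ref{highorderregpoted} under $q'=\dimom-q$. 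Since the paper already systematically states "Hodge $\star$-duality yields the corresponding result" after each $\ed$-lemma, I would keep the proof short: establish the isometry dictionary once as a displayed list of correspondences, then say each assertion of the present lemma is the $*$-image of the correspondingly displayed assertion of Lemma \ref{highorderregpotedpbc} (for (i)) and Lemma \ref{highorderregdecoedpbc} (for (ii)), with the explicit operators $\PotP_{\cd,\gan}^{q,k}=\pm *\,\PotP_{\ed,\gan}^{\dimom-q,k}\,*$, $\PotQ_{\cd,\gan,j}^{q,k}=\pm *\,\PotQ_{\ed,\gan,j}^{\dimom-q,k}\,*$.
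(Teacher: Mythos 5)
Your proposal is correct and is exactly the argument the paper intends: the paper gives no separate proof of Lemma \ref{highorderregpotdecocdpbc} but simply invokes Hodge $\star$-duality applied to Lemma \ref{highorderregpotedpbc} and Lemma \ref{highorderregdecoedpbc}, which is precisely the dictionary you set up, with the operators $\PotP_{\cd,\gan}^{q,k}=\pm\ast\PotP_{\ed,\gan}^{\dimom-q,k}\ast$ and the induced decomposition operators. The degree shifts, the preservation of the boundary part $\gan$ under $\ast$, and the treatment of the edge cases $q=\dimom$ and $q=0$ with $\gan=\ga$ all match the paper's intended translation.
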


\begin{lem}[regular decompositions for $\cd$ with partial boundary condition]
\label{app:lem:highorderregdecoedpbcLip}
Let $(\om,\gan)$ be a bounded strong Lipschitz pair and let $k\geq0$. 
Then the bounded regular decompositions
$$\bH{q,k}{\gan}(\cd,\om)
=\H{q,k}{\gan}(\cd,\om)
=\H{q,k+1}{\gan}(\om)
+\cd\H{q+1,k+1}{\gan}(\om)$$
hold with bounded linear regular decomposition operators
\begin{align*}
\PotQ_{\cd,\gan,1}^{q,k}:\H{q,k}{\gan}(\cd,\om)\to\H{q,k+1}{\gan}(\om),\qquad
\PotQ_{\cd,\gan,0}^{q,k}:\H{q,k}{\gan}(\cd,\om)\to\H{q+1,k+1}{\gan}(\om)
\end{align*}
satisfying $\PotQ_{\cd,\gan,1}^{q,k}+\cd\PotQ_{\cd,\gan,0}^{q,k}=\id_{\H{q,k}{\gan}(\cd,\om)}$.
In particular, weak and strong boundary conditions coincide.
Moreover, it holds $\cd\PotQ_{\cd,\gan,1}^{q,k}=\cd^{q,k}_{\gan}$
and thus $\H{q,k}{\gan,0}(\cd,\om)$ is invariant under $\PotQ_{\cd,\gan,1}^{q,k}$.
\end{lem}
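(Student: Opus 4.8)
The plan is to obtain Lemma \ref{app:lem:highorderregdecoedpbcLip} as the exact Hodge $\star$-dual of Lemma \ref{lem:highorderregdecoedpbcLip}, which has already been proved in the main text. The key observation is that the Hodge star operator $*:\L{q,2}{}(\om)\to\L{\dimom-q,2}{}(\om)$ is an isometric isomorphism intertwining $\ed$ and $\cd$ (up to sign), and -- crucially -- it interchanges the roles of the two boundary parts $\gat$ and $\gan$: it maps $\H{q,k}{\gan}(\cd,\om)$ onto $\H{\dimom-q,k}{\gat}(\ed,\om)$, it maps $\H{q,k}{\gan}(\om)$ onto $\H{\dimom-q,k}{\gat}(\om)$, and it maps $\cd$-forms to $\ed$-forms with $*\cd = \pm\ed*$. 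So applying $*$ transforms every object appearing in the claimed decomposition into the corresponding object of Lemma \ref{lem:highorderregdecoedpbcLip} with $q$ replaced by $\dimom-q$ and $\gan$ replaced by $\gat$.

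Concretely, first I would fix a bounded strong Lipschitz pair $(\om,\gan)$ and set $p:=\dimom-q$. Then $(\om,\gan)$ viewed through the complementary boundary decomposition is the pair needed to invoke Lemma \ref{lem:highorderregdecoedpbcLip} for $\ed$ with boundary part $\gan$; here one should be slightly careful that the labels ``$\gat$'' in that lemma are just placeholders, so Lemma \ref{lem:highorderregdecoedpbcLip} applies verbatim with $\gan$ in place of $\gat$. This yields bounded regular decomposition operators
$$\PotQ_{\ed,\gan,1}^{p,k}:\H{p,k}{\gan}(\ed,\om)\to\H{p,k+1}{\gan}(\om),\qquad
\PotQ_{\ed,\gan,0}^{p,k}:\H{p,k}{\gan}(\ed,\om)\to\H{p-1,k+1}{\gan}(\om),$$
with $\PotQ_{\ed,\gan,1}^{p,k}+\ed\PotQ_{\ed,\gan,0}^{p,k}=\id$, $\ed\PotQ_{\ed,\gan,1}^{p,k}=\ed^{p,k}_{\gan}$, and $\bH{p,k}{\gan}(\ed,\om)=\H{p,k}{\gan}(\ed,\om)$. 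Then I would conjugate by $*$: define
$$\PotQ_{\cd,\gan,1}^{q,k}:=\pm\,*^{-1}\PotQ_{\ed,\gan,1}^{p,k}*,\qquad
\PotQ_{\cd,\gan,0}^{q,k}:=\pm\,*^{-1}\PotQ_{\ed,\gan,0}^{p,k}*$$
on $\H{q,k}{\gan}(\cd,\om)=*^{-1}\H{p,k}{\gan}(\ed,\om)$, with the signs chosen so that $*\cd=\pm\ed*$ is respected; since $*$ is a linear isometric bijection preserving all the relevant Sobolev norms, these are bounded. The identity $\PotQ_{\ed,\gan,1}^{p,k}+\ed\PotQ_{\ed,\gan,0}^{p,k}=\id$ then transports to $\PotQ_{\cd,\gan,1}^{q,k}+\cd\PotQ_{\cd,\gan,0}^{q,k}=\id$ on $\H{q,k}{\gan}(\cd,\om)$, the range/codomain statements (e.g.\ $\PotQ_{\cd,\gan,1}^{q,k}$ mapping into $\H{q,k+1}{\gan}(\om)=*^{-1}\H{p,k+1}{\gan}(\om)$) follow from the corresponding $\ed$-statements, $\cd\PotQ_{\cd,\gan,1}^{q,k}=\cd^{q,k}_{\gan}$ follows from $\ed\PotQ_{\ed,\gan,1}^{p,k}=\ed^{p,k}_{\gan}$, and the equality of weak and strong boundary conditions $\bH{q,k}{\gan}(\cd,\om)=\H{q,k}{\gan}(\cd,\om)$ follows from $\bH{p,k}{\gan}(\ed,\om)=\H{p,k}{\gan}(\ed,\om)$ because $*$ also intertwines the weak-boundary-condition spaces (as $*$ turns the defining test-form pairings against $\C{q-1,\infty}{\gat}(\om)$ into those against $\C{p+1,\infty}{\gat}(\om)$). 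Finally the invariance of $\H{q,k}{\gan,0}(\cd,\om)$ under $\PotQ_{\cd,\gan,1}^{q,k}$ is the $*$-image of the invariance of $\H{p,k}{\gan,0}(\ed,\om)$ under $\PotQ_{\ed,\gan,1}^{p,k}$.

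The main obstacle -- really the only point requiring care -- is bookkeeping the sign in $*\cd = \pm\ed\,*$ (it depends on $q$ and $\dimom$ through $(-1)^{\dimom(q-1)+1}$ or a similar expression) and, more importantly, checking that the Hodge star genuinely swaps the boundary parts at the level of the strong (closure-of-test-forms) definitions: one must verify $*\,\C{q,\infty}{\gan}(\om)=\C{\dimom-q,\infty}{\gan}(\om)$ and that $*$ is $\L{2}{}$-isometric, so that closures commute with $*$, giving $*\,\H{q}{\gan}(\cd,\om)=\H{\dimom-q}{\gan}(\ed,\om)$ and likewise for the $k$-th order and the vanishing-$\cd$ variants. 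Both of these are standard facts about the Hodge star on Lipschitz domains and are used repeatedly throughout the paper (every ``Hodge $\star$-duality'' remark), so I would simply invoke them; no genuinely new estimate is needed. The proof is therefore essentially a one-line reduction, and I would state it as such, referring to Lemma \ref{lem:highorderregdecoedpbcLip} and the properties of the Hodge star operator recalled in Section \ref{sec:defsobolevdforms}.
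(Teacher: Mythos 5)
Your proposal is correct and is exactly the paper's argument: the paper proves Lemma \ref{app:lem:highorderregdecoedpbcLip} solely by the remark ``Hodge $\star$-duality shows a corresponding result for the $\cd$-operator'' following Lemma \ref{lem:highorderregdecoedpbcLip}, i.e.\ by conjugating the $\ed$-decomposition operators (applied with boundary part $\gan$) with $*$, just as you spell out. The only cosmetic point is your opening claim that $*$ ``interchanges $\gat$ and $\gan$'' --- it does not change the boundary part, it changes $\cd$ into $\ed$; your concrete construction with $\PotQ_{\ed,\gan,1}^{p,k}$ already uses this correctly.
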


\begin{theo}[higher order bounded regular potentials and decompositions for $\cd$ with partial boundary condition]
\label{highorderregpotcdpbcLip}
Let $(\om,\gan)$ be a bounded strong Lipschitz pair and let $k\geq0$.
Moreover, let $\PotQ_{\cd,\gan,1}^{q,k}$ be given from Lemma \ref{app:lem:highorderregdecoedpbcLip}. Then:
\begin{itemize}
\item[\bf(i)]
For all $q\in\{0,\dots,\dimom-1\}$ there exists a bounded linear regular potential operator
$$\PotP_{\cd,\gan}^{q,k}:=\PotQ_{\cd,\gan,1}^{q+1,k}(\cd_{\gan}^{q+1,k})_{\bot}^{-1}:
\H{q,k}{\gan,0}(\cd,\om)
\cap\Harm{q}{\gat,\gan,\eps}(\om)^{\bot_{\L{q,2}{}(\om)}}
\To\H{q+1,k+1}{\gan}(\om),$$
such that 
$\cd\PotP_{\cd,\gan}^{q,k}=\id|_{\H{q,k}{\gan,0}(\cd,\om)
\cap\Harm{q}{\gat,\gan,\eps}(\om)^{\bot_{\L{q,2}{}(\om)}}}$.
In particular, the bounded regular representations 
\begin{align*}
R(\cd_{\gan}^{q+1,k})
&=\H{q,k}{\gan,0}(\cd,\om)
\cap\Harm{q}{\gat,\gan,\eps}(\om)^{\bot_{\L{q,2}{}(\om)}}\\
&=\H{q,k}{\gan}(\om)
\cap\cd\H{q+1}{\gan}(\cd,\om)
=\cd\H{q+1,k}{\gan}(\cd,\om)
=\cd\H{q+1,k+1}{\gan}(\om)
\end{align*}
hold and the potentials can be chosen such that they depend continuously on the data.
\item[\bf(ii)]
The bounded regular decompositions
\begin{align*}
\H{q,k}{\gan}(\cd,\om)
&=\H{q,k+1}{\gan}(\om)
+\H{q,k}{\gan,0}(\cd,\om)
=\H{q,k+1}{\gan}(\om)
+\cd\H{q+1,k+1}{\gan}(\om)\\
&=R(\widetilde\PotQ_{\cd,\gan,1}^{q,k})
\dotplus\H{q,k}{\gan,0}(\cd,\om)
=R(\widetilde\PotQ_{\cd,\gan,1}^{q,k})
\dotplus R(\widetilde\PotN_{\cd,\gan}^{q,k})
\end{align*}
hold with bounded linear regular decomposition operators
\begin{align*}
\widetilde\PotQ_{\cd,\gan,1}^{q,k}:=\PotP_{\cd,\gan}^{q-1,k}\cd^{q,k}_{\gan}:\H{q,k}{\gan}(\cd,\om)\to\H{q,k+1}{\gan}(\om),\qquad
\widetilde\PotN_{\cd,\gan}^{q,k}:\H{q,k}{\gan}(\cd,\om)\to\H{q,k}{\gan,0}(\cd,\om)
\end{align*}
satisfying $\widetilde\PotQ_{\cd,\gan,1}^{q,k}+\widetilde\PotN_{\cd,\gan}^{q,k}=\id_{\H{q,k}{\gan}(\cd,\om)}$.
Moreover, $\cd\widetilde\PotQ_{\cd,\gan,1}^{q,k}=\cd\PotQ_{\cd,\gan,1}^{q,k}=\cd^{q,k}_{\gan}$
and thus $\H{q,k}{\gan,0}(\cd,\om)$ is invariant under 
$\PotQ_{\cd,\gan,1}^{q,k}$ and $\widetilde\PotQ_{\cd,\gan,1}^{q,k}$.
It holds $R(\widetilde\PotQ_{\cd,\gan,1}^{q,k})=R(\PotP_{\cd,\gan}^{q-1,k})$ and 
$\widetilde\PotQ_{\cd,\gan,1}^{q,k}
=\PotP_{\cd,\gan}^{q-1,k}\cd^{q,k}_{\gan}
=\PotQ_{\cd,\gan,1}^{q,k}(\cd_{\gan}^{q,k})_{\bot}^{-1}\cd^{q,k}_{\gan}$.
Hence 
$\widetilde\PotQ_{\cd,\gan,1}^{q,k}|_{(\cd_{\gan}^{q,k})_{\bot}}
=\PotQ_{\cd,\gan,1}^{q,k}|_{(\cd_{\gan}^{q,k})_{\bot}}$
and thus $\widetilde\PotQ_{\cd,\gan,1}^{q,k}$
may differ from $\PotQ_{\cd,\gan,1}^{q,k}$ 
only on $\H{q,k}{\gan,0}(\cd,\om)$.
\item[\bf(ii')]
The bounded regular kernel decomposition 
$\H{q,k}{\gan,0}(\cd,\om)=\H{q,k+1}{\gan,0}(\cd,\om)+\cd\H{q+1,k+1}{\gan}(\om)$
holds.
\end{itemize}
\end{theo}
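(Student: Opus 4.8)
The plan is to obtain Theorem~\ref{highorderregpotcdpbcLip} from its already established primal counterpart, Theorem~\ref{highorderregpotedpbcLip}, by Hodge $\star$-duality; equivalently one may repeat the (short) proof of Theorem~\ref{highorderregpotedpbcLip} verbatim on the dual complex, with $\ed$, $\gat$, $q$ replaced by $\cd$, $\gan$, $\dimom-q$. I describe the duality route. First I would record the dictionary induced by the Hodge star: on a bounded strong Lipschitz domain $\star:\L{q,2}{}(\om)\to\L{\dimom-q,2}{}(\om)$ is an isometric isomorphism with $\cd=\pm\star\ed\star$, it maps $\H{q,k}{\Sigma}(\om)$, $\H{q}{\Sigma}(\ed,\om)$, $\H{q}{\Sigma,0}(\ed,\om)$ onto $\H{\dimom-q,k}{\Sigma}(\om)$, $\H{\dimom-q}{\Sigma}(\cd,\om)$, $\H{\dimom-q}{\Sigma,0}(\cd,\om)$ for $\Sigma\in\{\emptyset,\gat,\gan\}$ (for the partial boundary conditions this uses Lemma~\ref{lem:wsbcderham}), it maps the $\bH{q,k}{\Sigma}$-spaces to the corresponding $\bH{\dimom-q,k}{\Sigma}$-spaces, and it carries $\Harm{q}{\gat,\gan,\eps}(\om)$ to $\Harm{\dimom-q}{\gan,\gat,\widehat\eps}(\om)$ with a transformed admissible weight $\widehat\eps$; being an $\L{q,2}{}$-isometry it also respects unweighted orthogonal complements, so $\Harm{q}{\gat,\gan,\id}(\om)^{\bot_{\L{q,2}{}(\om)}}$ goes to $\Harm{\dimom-q}{\gan,\gat,\id}(\om)^{\bot_{\L{\dimom-q,2}{}(\om)}}$ (the type of orthogonality appearing in the $\cd$-statements, as already in Corollary~\ref{cor:highorderregdecoedpbcLip}).

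Next I would apply Theorem~\ref{highorderregpotedpbcLip} to the bounded strong Lipschitz pair $(\om,\gan)$ — with $\gan$ in the role of the distinguished boundary part — at rank $\dimom-q$, obtaining the bounded regular potential operator $\PotP_{\ed,\gan}^{\dimom-q,k}$, the regular decomposition operators $\widetilde\PotQ_{\ed,\gan,1}^{\dimom-q,k}$, $\widetilde\PotN_{\ed,\gan}^{\dimom-q,k}$, and all their stated properties. Conjugating by $\star$ (which, up to the sign in $\cd=\pm\star\ed\star$, takes $\ed$ to $\cd$) produces the operators $\PotP_{\cd,\gan}^{q,k}$, $\widetilde\PotQ_{\cd,\gan,1}^{q,k}=\PotP_{\cd,\gan}^{q-1,k}\cd^{q,k}_{\gan}$, $\widetilde\PotN_{\cd,\gan}^{q,k}$; here $(\cd_{\gan}^{q+1,k})_{\bot}^{-1}$ entering the definition of $\PotP_{\cd,\gan}^{q,k}$ is bounded by Theorem~\ref{theo:clran:derhamk} and Lemma~\ref{lem:toolboxcpt1}. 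Since $\star$ is an isometric isomorphism intertwining $\ed$ and $\cd$, boundedness, continuous dependence on the data, the identities $\cd\PotP_{\cd,\gan}^{q,k}=\id$, $\widetilde\PotQ_{\cd,\gan,1}^{q,k}+\widetilde\PotN_{\cd,\gan}^{q,k}=\id_{\H{q,k}{\gan}(\cd,\om)}$, $\cd\widetilde\PotQ_{\cd,\gan,1}^{q,k}=\cd^{q,k}_{\gan}$, the invariance of $\H{q,k}{\gan,0}(\cd,\om)$ under $\PotQ_{\cd,\gan,1}^{q,k}$ and $\widetilde\PotQ_{\cd,\gan,1}^{q,k}$, the range identifications $R(\widetilde\PotQ_{\cd,\gan,1}^{q,k})=R(\PotP_{\cd,\gan}^{q-1,k})$, the agreement of $\widetilde\PotQ_{\cd,\gan,1}^{q,k}$ with $\PotQ_{\cd,\gan,1}^{q,k}$ on $D((\cd_{\gan}^{q,k})_{\bot})$, and the direct decompositions in (i) and (ii) all transfer; the representations of $R(\cd_{\gan}^{q+1,k})$ in (i) follow from those of $R(\ed_{\gat}^{q-1,k})$ in Theorem~\ref{highorderregpotedpbcLip}(i) under the dictionary. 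Finally (ii$'$) follows from (ii) exactly as in the primal case: for $E\in\H{q,k}{\gan,0}(\cd,\om)$ write $E=E_{1}+\cd E_{2}$ with $E_{1}\in\H{q,k+1}{\gan}(\om)$, $E_{2}\in\H{q+1,k+1}{\gan}(\om)$; then $\cd E_{1}=\cd E=0$, so $E_{1}\in\H{q,k+1}{\gan,0}(\cd,\om)$ and $E\in\H{q,k+1}{\gan,0}(\cd,\om)+\cd\H{q+1,k+1}{\gan}(\om)$.

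The only genuine work is the dictionary bookkeeping — the rank reflection $q\mapsto\dimom-q$ with the attendant index shift between $\PotP_{\cd,\gan}^{q,k}$ and the potential of $\cd_{\gan}^{q+1,k}$, the swap $\gat\leftrightarrow\gan$ in the harmonic-form notation, the sign in $\cd=\pm\star\ed\star$, and the transformation of the admissible weight — and I expect this, rather than any conceptual point, to be where care is required; the standard mapping properties of $\star$ on the Sobolev spaces with (weak or strong) partial boundary conditions are available from the references and Lemma~\ref{lem:wsbcderham}. If one prefers to avoid $\star$ entirely, one instead runs the proof of Theorem~\ref{highorderregpotedpbcLip} literally: Lemma~\ref{app:lem:highorderregdecoedpbcLip} supplies the bounded regular decomposition $D(\cd_{\gan}^{q,k})=\H{q,k+1}{\gan}(\om)+\cd_{\gan}^{q+1,k}\H{q+1,k+1}{\gan}(\om)$, Rellich's selection theorem gives compactness of the embeddings $\H{q+1,k+1}{\gan}(\om)\incl\H{q+1,k}{\gan}(\om)$ and $\H{q,k+1}{\gan}(\om)\incl\H{q,k}{\gan}(\om)$, and Theorem~\ref{theo:cptembmaintheo1}(iii)--(iv$'$) applied at the relevant ranks, together with the closedness of the $\cd$-ranges from Theorem~\ref{theo:clran:derhamk}, yields (i), (ii), and (ii$'$).
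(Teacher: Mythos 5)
Your proposal is correct and follows the paper's own route: the paper obtains Theorem \ref{highorderregpotcdpbcLip} precisely by Hodge $\star$-duality from Theorem \ref{highorderregpotedpbcLip} (swapping $\gat\leftrightarrow\gan$ and $q\mapsto\dimom-q$), and your alternative direct route (Lemma \ref{app:lem:highorderregdecoedpbcLip} plus Rellich, Theorem \ref{theo:cptembmaintheo1} (iii)--(iv'), and Theorem \ref{theo:clran:derhamk}) mirrors verbatim the paper's proof of the primal statement. The dictionary bookkeeping you flag, including the rank shift in $\PotP_{\cd,\gan}^{q,k}=\PotQ_{\cd,\gan,1}^{q+1,k}(\cd_{\gan}^{q+1,k})_{\bot}^{-1}$ and the unweighted orthogonal complement, is handled correctly.
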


Note that
Remark \ref{highorderregpotedpbcLipremdirneu} and
Remark \ref{highorderregpotedpbcLiprem} hold with obvious modifications.


\vspace*{5mm}
\hrule
\vspace*{3mm}


\end{document}